\def\sectionmark#1{} %\markboth{{\sectnr #1}}{{\sectnr #1}}} %Journal
\def\subsectionmark#1{}
\newcommand{\sectnr}{\ifnum \c@secnumdepth >\z@
                 \thesection.\hskip 1em\relax \fi}
\def\@evenhead{\footnotesize\rm\thepage\hfil\leftmark\hfil}
\def\@oddhead{\footnotesize\rm\hfil\rightmark\hfil\thepage}
\def\tableofcontents{\section*{Contents} %\@mkboth{Contents}{Contents}} %Journal
 \@starttoc{toc}}
\def\@biblabel#1{#1.}
\let\Thebibliography=\thebibliography
\renewcommand{\thebibliography}[1]{\def\@mkboth##1##2{}\Thebibliography{#1}
\addcontentsline{toc}{section}{References}
\frenchspacing % Maybe not needed
% Deleting extra vertical space 
\setlength{\@topsep}{0pt}% Delete if extra space before list
\setlength{\itemsep}{0pt}%
\setlength{\parskip}{0pt plus 2pt}%
}
\def\mdots@{\mathinner.\nonscript\!.%
 \ifx\next,.\else\ifx\next;.\else\ifx\next..\else
 \nonscript\!\mathinner.\fi\fi\fi}
\let\ldots\mdots@
\let\cdots\mdots@
\let\dotso\mdots@
\let\dotsb\mdots@
\let\dotsm\mdots@
\let\dotsc\mdots@
\def\vdots{\vbox{\baselineskip2.8\p@ \lineskiplimit\z@
    \kern6\p@\hbox{.}\hbox{.}\hbox{.}\kern3\p@}}
\def\ddots{\mathinner{\mkern1mu\raise8.6\p@\vbox{\kern7\p@\hbox{.}}%
    \raise5.8\p@\hbox{.}\raise3\p@\hbox{.}\mkern1mu}}
\let\Enumerate=\enumerate
\renewcommand{\enumerate}{\Enumerate%
% Deleting extra vertical space 
\setlength{\@topsep}{0pt}% Delete if extra space before list
\setlength{\itemsep}{0pt}%
\setlength{\parskip}{0pt plus 1pt}%
\renewcommand{\theenumi}{\textup{(\alph{enumi})}}%
\renewcommand{\labelenumi}{\theenumi}%
}
\let\endEnumerate=\endenumerate
\renewcommand{\endenumerate}{\endEnumerate\unskip}
\def\@seccntformat#1{\csname the#1\endcsname.\quad}
\newcommand{\authortitle}[2]{\author{#1}\title{#2}\markboth{#1}{#2}}
\newcommand{\auth}[2]{{#1, #2.}}
\newcommand{\art}[6]{{\sc #1, \rm #2, \it #3 \bf #4 \rm (#5), \mbox{#6}.}}
\newcommand{\artin}[3]{{\sc #1, \rm #2, in #3.}}
\newcommand{\artprep}[3]{{\sc #1, \rm #2, #3.}}
\newcommand{\book}[3]{{\sc #1, \it #2, \rm #3.}}
\newcommand{\AND}{{\rm and }}
\newtheoremstyle{descriptive}%
  {\topsep}   %{\medskipamount}          % Space above
  {\topsep}   %  {\medskipamount}          % Space below
  {\rmfamily} % Body font
  {}          % Indent
  {\bfseries} % Head font
  {.}         % Punctuation after thm head
  { }         % Space after thm head
  {}          % Thm head spec(?)
\newtheoremstyle{propositional}%
  {\topsep}   %  {\medskipamount}          % Space above
  {\topsep}   %  {\medskipamount}          % Space below
  {\itshape}  % Body font
  {}          % Indent
  {\bfseries} % Head font
  {.}         % Punctuation after thm head
  { }         % Space after thm head
  {}          % Thm head spec(?)
\newtheoremstyle{remarkstyle}%
  {\topsep}   %  {\medskipamount}          % Space above
  {\topsep}   %  {\medskipamount}          % Space below
  {\rmfamily}  % Body font
  {}          % Indent
  {\itshape} % Head font
  {.}         % Punctuation after thm head
  { }         % Space after thm head
  {}          % Thm head spec(?)
\theoremstyle{propositional}
\newtheorem{thm}{Theorem}[section]
\newtheorem{prop}[thm]{Proposition}
\newtheorem{lem}[thm]{Lemma}
\newtheorem{cor}[thm]{Corollary}
\theoremstyle{descriptive}
\newtheorem{deff}[thm]{Definition}
\newtheorem{example}[thm]{Example}
\newtheorem{remark}[thm]{Remark}
\renewenvironment{proof}[1][\proofname]{\par
  \pushQED{\qed}%
  \normalfont 
%\topsep6\p@\@plus6\p@\relax % Removed by Anders Bj\"orn
  \trivlist
  \item[\hskip\labelsep
        \itshape
    #1\@addpunct{.}]\ignorespaces
}{%
  \popQED\endtrivlist\@endpefalse
}
\newdimen\extrawidth
\def\iintlim#1#2{\setbox0\hbox{$\scriptstyle#1$}%
        \setbox1\hbox{$\scriptstyle#2$}%
        \extrawidth=\wd1 \advance\extrawidth-\wd0
        \ifdim\extrawidth<0pt \extrawidth=0pt\fi%
        \int_{#1\kern\extrawidth \kern .5em}^{#2\kern -\wd1} \kern -.5em%
}
\newcommand{\cprime}{{\mathsurround0pt$'$}}
\newcommand{\setm}{\setminus}
\newcommand{\simge}{\gtrsim}
\newcommand{\simle}{\lesssim}
\newcommand{\longhookrightarrow}{\lhook\joinrel\longrightarrow}
\def\vint{\mathop{\mathchoice%
          {\setbox0\hbox{$\displaystyle\intop$}\kern 0.22\wd0%
           \vcenter{\hrule width 0.6\wd0}\kern -0.82\wd0}%
          {\setbox0\hbox{$\textstyle\intop$}\kern 0.2\wd0%
           \vcenter{\hrule width 0.6\wd0}\kern -0.8\wd0}%
          {\setbox0\hbox{$\scriptstyle\intop$}\kern 0.2\wd0%
           \vcenter{\hrule width 0.6\wd0}\kern -0.8\wd0}%
          {\setbox0\hbox{$\scriptscriptstyle\intop$}\kern 0.2\wd0%
           \vcenter{\hrule width 0.6\wd0}\kern -0.8\wd0}}%
          \mathopen{}\int}
\DeclareMathOperator{\diam}{diam}
\DeclareMathOperator{\dist}{dist}
\DeclareMathOperator{\Tr}{Tr}
\DeclareMathOperator{\Ext}{Ext}
\newcommand{\bdry}{\partial}
\newcommand{\bdy}{\bdry}
\newcommand{\loc}{_{\rm loc}}
\newcommand{\tB}{\widetilde{B}}
\newcommand{\bB}{B_{\itoverline{X}}}
\newcommand{\mx}{\mu} %%%Measure on tree.
\DeclareMathOperator*{\proj}{proj}
\newcommand{\pip}{\varphi}
{\catcode`p =12 \catcode`t =12 \gdef\eeaa#1pt{#1}}      % Get slantfactor
\def\accentadjtext#1{\setbox0\hbox{$#1$}\kern   % Convert it with height
                \expandafter\eeaa\the\fontdimen1\textfont1 \ht0 }
\def\accentadjscript#1{\setbox0\hbox{$#1$}\kern % Convert it with height
                \expandafter\eeaa\the\fontdimen1\scriptfont1 \ht0 }
\def\accentadjscriptscript#1{\setbox0\hbox{$#1$}\kern   % Convert it with height
                \expandafter\eeaa\the\fontdimen1\scriptscriptfont1 \ht0 }
\def\accentadjtextback#1{\setbox0\hbox{$#1$}\kern       % Convert it with height
                -\expandafter\eeaa\the\fontdimen1\textfont1 \ht0 }
\def\accentadjscriptback#1{\setbox0\hbox{$#1$}\kern     % Convert it with height
                -\expandafter\eeaa\the\fontdimen1\scriptfont1 \ht0 }
\def\accentadjscriptscriptback#1{\setbox0\hbox{$#1$}\kern % Convert it with height
                -\expandafter\eeaa\the\fontdimen1\scriptscriptfont1 \ht0 }
\def\itoverline#1{{\mathsurround0pt\mathchoice
        {\rlap{$\accentadjtext{\displaystyle #1}
                \accentadjtext{\vrule height1.593pt}
                \overline{\phantom{\displaystyle #1}
                \accentadjtextback{\displaystyle #1}}$}{#1}}
        {\rlap{$\accentadjtext{\textstyle #1}
                \accentadjtext{\vrule height1.593pt}
                \overline{\phantom{\textstyle #1}
                \accentadjtextback{\textstyle #1}}$}{#1}}
        {\rlap{$\accentadjscript{\scriptstyle #1}
                \accentadjscript{\vrule height1.593pt}
                \overline{\phantom{\scriptstyle #1}
                \accentadjscriptback{\scriptstyle #1}}$}{#1}}
        {\rlap{$\accentadjscriptscript{\scriptscriptstyle #1}
                \accentadjscriptscript{\vrule height1.593pt}
                \overline{\phantom{\scriptscriptstyle #1}
                \accentadjscriptscriptback{\scriptscriptstyle #1}}$}{#1}}}}
\newcommand{\al}{\alpha}
\newcommand{\alp}{\alpha}
\newcommand{\be}{\beta}
\newcommand{\bw}{\beta}
\newcommand{\tw}{\varepsilon} 
\newcommand{\ga}{\gamma}
\newcommand{\dmu}{d\mx}
\newcommand{\eps}{\varepsilon}
\newcommand{\la}{\lambda}
\newcommand{\La}{\Lambda}
\newcommand{\om}{\omega}
\newcommand{\Om}{\Omega}
\renewcommand{\phi}{\varphi}
\newcommand{\p}{{$p\mspace{1mu}$}}   
\newcommand{\R}{\mathbb{R}}
\newcommand{\N}{\mathbb{N}}
\newcommand{\Z}{\mathbb{Z}}
\newcommand{\xh}{\hat{x}}
\newcommand{\yh}{\hat{y}}
\newcommand{\dbdX}{d_{X}}
\newcommand{\dbdY}{d_{Y}}
\newcommand{\Np}{N^{1,p}}
\newcommand{\gut}{g_{\tilde{u}}}
\newcommand{\ut}{\tilde{u}}
\newcommand{\Bt}{\widetilde{B}}
\newcommand{\ft}{\tilde{f}}
\newcommand{\xhat}{\hat{x}}
\newcommand{\yhat}{\hat{y}}
\newcommand{\zhat}{\hat{z}}
\newcommand{\s}{\sigma}
\newcommand{\ka}{\kappa}
\newcommand{\z}{\zeta}
\newcommand{\Bni}{{B_{n,i}}}
\newcommand{\Bzr}{{B(\z,r)}}
\newcommand{\Bzn}{{B(\z,r_n)}}
\newcommand{\Bxin}{{B(\xi,r_n)}}
\newcommand{\Bppal}{{B^\theta_{p,p}}}
\numberwithin{equation}{section}
\newenvironment{ack}{\medskip{\it Acknowledgement.}}{}
\begin{document}

\authortitle{Anders Bj\"orn, Jana Bj\"orn, James T. Gill,
	and Nageswari Shanmugalingam}
{Geometric analysis on Cantor sets and trees}
\author{
Anders Bj\"orn \\
\it\small Department of Mathematics, Link\"opings universitet, \\
\it\small SE-581 83 Link\"oping, Sweden\/{\rm ;}
\it \small anders.bjorn@liu.se
\\
\\
Jana Bj\"orn \\
\it\small Department of Mathematics, Link\"opings universitet, \\
\it\small SE-581 83 Link\"oping, Sweden\/{\rm ;}
\it \small jana.bjorn@liu.se
\\
\\
James T. Gill 
\\
\it \small Department of Mathematics and Computer Science, Saint Louis University, \\
\it \small 220 N. Grand Blvd, St. Louis, MO 63103, U.S.A.\/{\rm ;}
\it \small jgill5@slu.edu
\\
\\
Nageswari Shanmugalingam 
\\
\it \small  Department of Mathematical Sciences, University of Cincinnati, \\
\it \small  P.O.\ Box 210025, Cincinnati, OH 45221-0025, U.S.A.\/{\rm ;}
\it \small shanmun@uc.edu
}

\date{}
\maketitle

\noindent{\small
{\bf Abstract.} 
Using uniformization, Cantor type sets can be regarded as boundaries
of rooted trees.
In this setting, we show that the trace of a first-order Sobolev space 
on the boundary of a regular rooted tree 
is exactly a Besov space with an explicit smoothness exponent.
Further, we study quasisymmetries between the boundaries of two trees, 
and show that they have rough quasiisometric extensions to the trees. 
Conversely, we show that
every rough quasiisometry between two trees
extends as a quasisymmetry between their
boundaries. 
In both directions we give sharp estimates for the involved constants.
We use this to obtain quasisymmetric 
invariance of certain Besov spaces 
of functions on Cantor type sets.
}

\bigskip
\noindent
{\small \emph{Key words and phrases}: 
Besov space, 
Cantor set, 
doubling measure,
extension, 
metric space,
Newtonian space,
Poincar\'e inequality,
quasisymmetry, 
rough quasiisometry, 
Sobolev space,
totally disconnected,
trace, 
tree, 
ultrametric,
uniformly perfect,
visual boundary.
}

\medskip
\noindent
{\small Mathematics Subject Classification (2010): 
Primary: 30L10;
Secondary:, 30L05, 31E05, 46E35, 51M10.
}

\section{Introduction} 

Much of the recent development of analysis in metric measure spaces has
tended to focus on two types
of metric measure spaces: those that are highly connected (whose measures are doubling
and support a Poincar\'e inequality, see for example~\cite{BBbook}, \cite{BBS}, \cite{BBS4},
\cite{BS}, \cite{BP00}, \cite{BP03}, \cite{GKS}, \cite{HaKo}, \cite{Hei}, 
\cite{HeiKo}, \cite{Kort}, \cite{Sh-rev}, and the references
therein), and those that are fractals with a minimal connectedness
property (the so-called post-critically finite fractals such as the Sierpi\'nski gasket, see for example~\cite{DSV}, \cite{HPS}, \cite{Str}, \cite{SW},
and the references therein). Totally disconnected
sets such as Cantor sets tend to fall outside of both these categories. 
Papers such as Bellissard--Pearson~\cite{BePe} and Kigami~\cite{Ki} 
have studied analysis on Cantor type metric spaces, but
only from the point of view of linear theory ($p=2$). 
The goal of this paper is to study such totally disconnected sets from the point of
view of nonlinear analysis on metric measure spaces,
with emphasis on function spaces and on quasisymmetric mappings 
between such sets.

The setting considered here is that of uniformly perfect totally 
disconnected metric measure spaces, including various
types of Cantor sets. As explained in Semmes~\cite{S11b},~\cite{S11a} such spaces 
are, up to biLipschitz equivalence, 
also obtained as ultrametric spaces which are boundaries 
of rooted regular trees equipped with a weighted metric 
(called uniformization metric in~\cite{BHK}).
This point of view is similar to the uniformization process considered by 
Bonk--Heinonen--Koskela~\cite{BHK},
and to the hyperbolic buildings, 
obtained by pasting together hyperbolic regions in a combinatorial way,
studied by e.g.\ Bourdon--Pajot~\cite{BP00},~\cite{BP03}.
However, while the Gromov boundaries in~\cite{BHK} as well as
boundaries of hyperbolic buildings considered in~\cite{BP00}
and \cite{BP03} are highly connected, the boundary Cantor sets 
considered in this paper are totally disconnected. 
This in particular means that the Besov spaces considered below are nontrivial 
for all smoothness exponents $\theta>0$, in contrast to e.g.\ 
Th\'eor\`eme~0.3 in~\cite{BP03}. 
Note also that, because of the essentially one-dimensional structure of the trees,
our setting does not fall under the scope of spaces with $Q$-bounded
geometry considered with $Q>1$ in \cite[Section~9]{BHK}.

Cantor sets embedded in Euclidean spaces support a fractional 
Sobolev space theory based on Besov spaces. 
Indeed, Besov functions on such sets  are traces  
of the classical Sobolev functions on the ambient Euclidean spaces,
see Jonsson--Wallin~\cite{JW80},~\cite{JW84}.
See also Danielli--Garofalo--Nhieu~\cite{DaGaNh01},~\cite{DaGaNh07}
for such results on ambient Carnot--Carath\'eodory spaces.
Similar extension and trace theorems on more general subsets of
Euclidean spaces, obtained by means of 
Haj\l asz--Sobolev type spaces
on metric spaces, can be found in Haj\l asz--Martio~\cite{HM}.
For further discussion of Sobolev functions on Euclidean domains
and their extension and trace theorems
we refer the reader to Maz{\cprime}ya~\cite{MazBook}.

Thus the potential theory on such
Cantor sets is linked to the classical potential theory on the ambient Euclidean space. 
In the first part of this paper we obtain similar trace and extension theorems for Sobolev
and Besov spaces on regular trees and their Cantor type boundaries.
In particular, we show that the Besov space $\Bppal$ on the boundary  
is exactly the trace of the Newton--Sobolev space $\Np$ on the associated 
regular tree.  Here the smoothness exponent of the Besov space is
\[
\theta = 1- \frac{\be/\eps-Q}{p},
\]
where $Q$ is the Hausdorff dimension of the Cantor type boundary
and $\be/\eps$ is a ``dimension'' determined by 
the uniformization metric and a weighted  measure 
on the tree, see Propositions~\ref{prop-trace} and~\ref{prop-ext} 
and Theorem~\ref{thm-trace-sharp}.  In the setting considered here, we 
necessarily have $\be/\eps>Q$ as stipulated in~\eqref{Kvs-beta}, and so we 
have $\theta<1$. This is in contrast to Bourdon--Pajot~\cite{BP03}, where, 
when $p<Q$, one needs $\theta=Q/p>1$.

The trace theorem we obtain in this paper corresponds exactly to the exponents 
in Jonsson--Wallin~\cite{JW80},~\cite{JW84}. For trees, our result
extends and complements the general
trace result for Besov spaces on metric spaces 
in Gogatishvili--Koskela--Shan\-mu\-ga\-lin\-gam~\cite[Theorem~6.5]{GKS}.
As an application of our trace result, for sufficiently large $p$
we obtain embeddings of Besov spaces
on Cantor sets into spaces of H\"older continuous functions, 
see Proposition~\ref{prop-emb-Holder}.
Along the way we also show that trees with bounded degree,
equipped with a weighted metric and measure 
(called a uniformization metric in~\cite{BHK})  are doubling and support a 
$1$-Poincar\'e inequality, see Sections~\ref{sect-doubl} and~\ref{Poinc}.

In Bourdon--Pajot~\cite[Th\'eor\`eme~0.1]{BP03}
certain Besov spaces with the smoothness
exponent $\theta=Q/p$ were identified with cohomologies of conformal gauges.
As a special case of the above trace theorem, we
obtain a variant of this result in our setting of totally disconnected
Cantor type boundaries, 
see the comment following   Proposition~\ref{prop-trace}. 

In the Euclidean setting it is now well known 
that quasiconformal mappings preserve the classical Sobolev spaces
$W^{1,n}(\mathbb{R}^n)$, 
see the discussion in Heinonen--Kilpel\"ainen--Martio~\cite{HKM}. 
On totally disconnected spaces, quasiconformal mappings are not so useful,
because of the lack of nonconstant curves. 
Instead, one should consider \emph{quasisymmetries}, 
i.e.\ mappings satisfying
\begin{equation*}  
   \frac{d(f(x),f(y))}{d(f(x),f(z))} \le \eta\biggl(\frac{d(x,y)}{d(x,z)}\biggr)
\end{equation*}
for all $x,y,z$ with $x\ne z$, where $\eta:[0,\infty)\to[0,\infty)$ is a homeomorphism.
In the setting of quasisymmetric mappings between Ahlfors regular metric spaces, 
Koskela--Yang--Zhou~\cite[Theorem~5.1]{KYZ} recently obtained 
an invariance result for Besov spaces with the smoothness
exponent $\theta=Q/p$, where $Q$ is the Ahlfors-regularity dimension.
Quasisymmetries also turn out to be the natural maps
for studying boundaries of hyperbolic buildings and Gromov hyperbolic spaces, 
see for example~\cite{BP03}, \cite{FM}, \cite{Gr}, \cite{HSX}, \cite{Kap}, \cite{SX}, and
the references therein.  Since trees are the quintessential
models of Gromov hyperbolic spaces (see for example 
Bridson--Haefliger~\cite{BH}), these maps are natural for us as well. 

Hence in the second part of this paper we consider quasisymmetries 
between the boundaries of (not necessarily regular) trees, 
and show that they can be extended to \emph{rough quasiisometries}
(also called quasiisometries in the literature)
between the corresponding trees. A mapping $F$ between two
trees is a \emph{rough quasiisometry} if there are positive constants 
$L_1$, $L_2$ and $\Lambda$ such that for all points $x$ and $y$ 
in the domain tree,
\begin{equation*}   
     L_1|x-y|-\Lambda\le |F(x)-F(y)|\le L_2 |x-y|+\Lambda,
\end{equation*}
where $|\cdot-\cdot|$ denotes the unweighted geodesic distance on the 
tree, and the density condition that for each $z \in Y$ there is 
an $x \in X$ so that $|F(x) -z|< \Lambda$ holds.
Conversely, we show that every  rough quasiisometry between two trees induces
a quasisymmetry between the respective boundaries. These results appear in
Sections~\ref{sect-ext-qs} and~\ref{sect-ext-rough}, and 
extend a theorem by Gromov~\cite{Gr} to our setting of trees with 
totally disconnected  boundaries. The parameters
$\eta$, $L_1$ and $L_2$ associated with the obtained mappings 
are optimal and match each other, see Theorems~\ref{thm-qs2roughqiso} 
and~\ref{rough-qs}, and the comments following them.

The above extension result for quasisymmetries, together with our
trace result for Besov spaces, is in turn used to show that certain Besov spaces on
uniformly perfect ultrametric spaces are preserved by quasisymmetric mappings, 
see Theorem~\ref{thm-Besov-inv}. For example, we show that
every quasisymmetric $(\al_1,\al_2)$-power map as in~\eqref{eq-al1-al2-eta}
between two Cantor type spaces of Hausdorff dimensions $Q_X$ and $Q_Y$
induces the following embeddings between their Besov spaces,
\begin{align*}
B_{p,p}^{Q_Y/p+\tau/\al_1} &\longhookrightarrow B_{p,p}^{Q_X/p+\tau}
\longhookrightarrow B_{p,p}^{Q_Y/p+\tau/\al_2}, \\
B_{p,p}^{Q_Y/p-\tau/\al_2} &\longhookrightarrow B_{p,p}^{Q_X/p-\tau} 
\longhookrightarrow B_{p,p}^{Q_Y/p-\tau/\al_1},
\end{align*}
with $\tau\ge 0$, see Remark~\ref{rem-KYZ}.
This extends (in the setting of such spaces) the above mentioned Theorem~5.1 
in Koskela--Yang--Zhou~\cite{KYZ} beyond the case $\theta=Q/p$ considered there.
Thus, potential theory on uniformly perfect ultrametric spaces 
is associated with the theory of quasisymmetric mappings between them. 
We also direct interested readers to Hambly--Kumagai~\cite{HaKu}
for a discussion linking rough quasiisometries (called rough 
isometries in~\cite{HaKu}) to potential theory on graphs that arise 
as approximations of finitely ramified fractals. 

\begin{ack}
This research began when the first three authors visited 
the University of Cincinnati in 2010, and was continued
during the visits of the fourth author
to the University of Washington, Seattle, and to  the Link\"oping University in 2011. 
The authors wish to thank these institutions for their kind hospitality.
We also wish to thank David Minda for pointing out the reference Jeffers~\cite{Jeff}.

A.~B. and J.~B. were supported by the Swedish Research Council.
A.~B. was also a Fulbright scholar during his
visit to the University of Cincinnati, supported by the Swedish
Fulbright Commission, while J.~B. was a Visiting Taft Fellow
during her visit to the University of Cincinnati, supported by the Charles Phelps 
Taft Research Center at the University of Cincinnati. J.~G. was supported by the 
University of Washington at the beginning of this work and is supported by 
NSF Mathematical Sciences Postdoctoral Research Fellowship DMS-1004721. 
N.~S. was partly supported by the Taft Research Center,
the Simons Foundation grant \#200474 and the NSF grant DMS-1200915.
\end{ack}

\section{Notation and preliminaries}
\label{sect-prelim}

A \emph{graph} $G$ is a pair $(V,E)$, where $V$ is a set of vertices and
$E \subset V \times V$ is a set of edges. We are interested in undirected
graphs and consider $(x,y)$ and $(y,x)$ to be the same edge for $x,y \in V$.
Two vertices $x,y \in V$ are neighbors if $(x,y) \in E$. The degree of a vertex 
equals the number of neighbors it has. We will be interested in infinite graphs, 
but all vertices will be required to have finite degree.

The graph structure gives rise to a natural well-known connectivity structure.
A \emph{tree} is a connected graph without cycles, or equivalently
a graph such that for any pair of vertices $x,y \in V$  there
is a unique path of distinct edges connecting $x$ to $y$.
A graph (or tree) is made into a \emph{metric graph} by considering
each edge as a geodesic of length one.

We will only be interested in rooted trees.  A \emph{rooted tree} $X$ is a tree with 
a distinguished vertex called the \emph{root}, which we will denote by $0$.
In Section~\ref{sect-ext-qs} and later, when we deal with more than one tree, we
denote the root of a tree $X$ by $0_X$.

For $x\in X$, let $|x|$ be the distance from the root $0$ to $x$,
that is, the number of edges in the geodesic from $0$ to $x$. 
The geodesic connecting two vertices $x,y \in X$ is denoted
by $[x,y]$, and its length (the number of edges it contains)  is denoted
$|x-y|$; note that if $x$ and $y$ are descendants of two different children of $0$, then 
$|x-y|=|x|+|y|$. We write $x<y$ if a vertex $y$ is a descendant of a vertex $x$ (that is,
$|x|<|y|$ and $x$ lies in the geodesic connecting $0$ to $y$),
and more generally $x\le y$ if the geodesic from $0$ to $y$ passes
through $x$; in this case $|x-y|=|y|-|x|$.

The neighbors of a vertex $x \in X$ are of two types:
A \emph{parent} $y$ of $x$ is the neighbor which is closer to the root,
and all other neighbors are \emph{children} of $x$.
Each vertex has exactly one parent, except for the root itself which has none.
We will mostly consider rooted trees such that each vertex other than the root
has degree at least $3$, while the root $0$ is expected to have degree at least $2$.

The most familiar rooted trees are \emph{binary trees} in which each vertex has
exactly two children. More generally a \emph{$K$-ary} tree is a rooted tree such that
each vertex has exactly $K$ children. Note that a $K$-ary tree is almost regular: all vertices 
but the root have degree $K+1$, whereas the root has degree $K$. In this paper we say that
a tree is \emph{regular} if it is $K$-ary for some integer $K\ge 1$.

As is customary, we say that $A \simle B$ and equivalently
$B \simge A$, if there is a constant $C>0$
(independent of the variables that $A$ and $B$ are functions of)
such that $A\le C B$. We also write $A\simeq B$ if $A \simle B \simle A$.

Let $\tw>0$ be fixed from now on. We introduce a \emph{uniformizing} metric 
(in the sense of Bonk--Heinonen--Koskela~\cite{BHK}) on $X$ by 
\begin{equation}\label{unif-metric-trees}
     d_X(x,y) = \int_{[x,y]} e^{-\tw|z|} \,d|z|.
\end{equation} 
Here $d|z|$ stands for the measure which gives each edge Lebesgue measure 1, as we 
consider each edge to be an isometric copy of the open unit interval and the vertices are the 
points which close this interval. In this metric, $\diam X = 2/\tw$ if the root has 
at least two children and every vertex has at least one child.  Though this metric defines a 
weighted metric on both the vertices and the edges (seen as copies of an open interval on the 
real line), we will typically only discuss the distance between vertices.  

Throughout the paper we assume that $1 \le p < \infty$.

\section{Doubling condition on trees}
\label{sect-doubl}

\emph{In this section we assume that $X$ is a rooted tree such that
each vertex has at least one and at most $K$ children.}  The proofs of the results in this section, however, are substantially simpler if we assume $K$-regularity of the tree.  In Remark~\ref{rem-nonreg} we show how to remove the regularity assumption so that the results hold under the above
generality.

\medskip

The aim of this section is to show that the weighted measure 
\begin{equation}\label{tree-measure}
     d\mx(x) = e^{-\bw|x|}\, d |x|
\end{equation}
is doubling on $X$ (when equipped with the uniformizing metric $d_X$), where 
\begin{equation}\label{Kvs-beta}
    \bw>\log K
\end{equation} 
is fixed from now on. (If $\bw \le \log K$, then $\mu(X)=\infty$ for the regular $K$-ary
tree by~\eqref{eq-mu=infty} below,  and as $X$ is bounded, $\mu$ would not be doubling. 
This case is therefore not of interest to us. For nonregular trees we might have 
$\mu(X)<\infty$ even if  $\be\le\log K$, but we do not consider this case here.) 

We shall estimate the measure of balls in $X$ and show that it is doubling.
Let $B(x,r) = \{y\in X: d_X(x,y)<r\}$ denote an (open) ball in $X$ 
with respect to the metric $d_X$.
Also let $F(x,r) = \{y\in X: y\ge x \text{ and } d_X(x,y) < r\}$ be the 
downward directed ``half ball''. Note that $X = B(0,1/\tw) = F(0,1/\tw)$ and that 
$F(x,\infty)=F(x,e^{-\tw|x|}/\tw)$. We need to consider several cases depending 
on whether the radius $r$ is small compared with $|x|$ or not. 

The following comparisons and estimates for ``half balls'' will be useful.
We first state a simple algebraic lemma which will simplify our calculations.

\begin{lem}    \label{lem-1-1-t}
Let $\sigma>0$ and $t\in [0,1]$. Then
\[
    \min\{1,\sigma\} t \le 1 - (1-t)^\sigma \le \max\{1,\sigma\} t.
\]
\end{lem}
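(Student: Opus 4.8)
The plan is to prove the double inequality
\[
\min\{1,\sigma\}\,t \le 1-(1-t)^\sigma \le \max\{1,\sigma\}\,t
\]
by a standard convexity/monotonicity argument in the variable $t$, treating the cases $\sigma\ge1$ and $0<\sigma\le1$ separately (with the trivial case $t=1$ or $\sigma=1$ dispatched at once). First I would fix $\sigma\ge1$ and set $g(t)=1-(1-t)^\sigma$ on $[0,1]$, so that $g(0)=0$ and $g(1)=1$. Then $g'(t)=\sigma(1-t)^{\sigma-1}$, which for $\sigma\ge1$ is decreasing in $t$ with $g'(0)=\sigma$ and $g'(1)=0$ (interpreting appropriately when $\sigma=1$). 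Hence on $[0,1]$ we have $0\le g'(t)\le\sigma$, and integrating from $0$ to $t$ gives $0\le g(t)\le\sigma t$, which is exactly the claim since $\min\{1,\sigma\}=1$ forces the lower bound $t$ to follow from a separate estimate — so actually for $\sigma\ge1$ I need the lower bound $t\le 1-(1-t)^\sigma$, which follows from $(1-t)^\sigma\le(1-t)^1=1-t$ (valid because $0\le1-t\le1$ and $\sigma\ge1$). Thus for $\sigma\ge1$: $(1-t)^\sigma\le 1-t$ gives the left inequality, and the mean value bound $g'(t)\le\sigma$ gives the right inequality.

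For $0<\sigma\le1$ the roles swap: now $\min\{1,\sigma\}=\sigma$ and $\max\{1,\sigma\}=1$, so I must show $\sigma t\le 1-(1-t)^\sigma\le t$. The right inequality $1-(1-t)^\sigma\le t$, i.e.\ $1-t\le(1-t)^\sigma$, holds because $0\le1-t\le1$ and raising a number in $[0,1]$ to a power $\sigma\le1$ increases it. For the left inequality $\sigma t\le 1-(1-t)^\sigma$, I again use $g'(t)=\sigma(1-t)^{\sigma-1}$; since $\sigma-1\le0$ and $0<1-t\le1$, we have $(1-t)^{\sigma-1}\ge1$, so $g'(t)\ge\sigma$ on $(0,1)$, and integrating gives $g(t)\ge\sigma t$. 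Alternatively, one can unify both cases via Bernoulli's inequality $(1-t)^\sigma\ge 1-\sigma t$ (for $\sigma\ge1$, $t\in[0,1]$) and its reverse for $\sigma\le1$, but the derivative computation is the cleanest self-contained route.

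There is essentially no obstacle here: the lemma is elementary and the only mild care needed is the bookkeeping of which of the two factors ($1$ or $\sigma$) is the min and which is the max in each regime, together with the degenerate endpoints $t=0$ (both sides zero) and $t=1$ (inequality reads $\min\{1,\sigma\}\le1\le\max\{1,\sigma\}$). I would write the proof in two short paragraphs — one for $\sigma\ge1$, one for $\sigma\le1$ — each invoking only the monotonicity of $s\mapsto s^\sigma$ on $[0,1]$ and the monotonicity of $t\mapsto(1-t)^{\sigma-1}$, and conclude with $\qed$.
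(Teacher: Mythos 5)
Your proof is correct, and it is essentially the paper's argument: the paper works with the convex (resp.\ concave) function $f(\tau)=\tau^\sigma$ and uses the chord and tangent-line inequalities at $\tau=1-t$, which amount to exactly your two facts, namely $(1-t)^\sigma\lessgtr 1-t$ and the derivative bound $\sigma(1-t)^{\sigma-1}\lessgtr\sigma$ integrated from $0$ to $t$. The case split $\sigma\ge1$ versus $0<\sigma\le1$ and the handling of the endpoints match the paper's treatment, so no further changes are needed.
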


\begin{proof}
Let $f(\tau)=\tau^\sigma$.  If $\sigma\ge1$, then $f$ is convex and for $0\le\tau\le1$,
\[
    1-\tau \le f(1)-f(\tau) \le f'(1) (1-\tau).
\]
Letting $\tau=1-t$ gives the conclusion. The case $\sigma\le1$ is treated similarly.
\end{proof}

\begin{lem}  \label{lem-comp-B-G}
For every $x\in X$ and $r>0$ we have
\[
   F(x,r) \subset B(x,r) \subset F(z,2r),
\]
where $z\le x$ and 
\begin{equation}
   |z|=\max\biggl\{|x|-\frac{1}{\tw} \log(1+\tw re^{\tw|x|}),0 \biggr\}.
\label{eq-def-z}
\end{equation}
\end{lem}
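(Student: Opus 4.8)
The plan is to prove the two inclusions separately, working directly from the definition~\eqref{unif-metric-trees} of $d_X$ and the geometry of the tree.

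\medskip

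\textbf{The inclusion $F(x,r)\subset B(x,r)$.} This is immediate: if $y\ge x$ then $[x,y]$ is part of a single geodesic (the one descending from the root through $x$ to $y$), so $d_X(x,y)=\int_{[x,y]}e^{-\tw|z|}\,d|z|$ is exactly the quantity used to define $F(x,r)$, and $d_X(x,y)<r$ means $y\in B(x,r)$. No work is needed here beyond noting that $F(x,r)$ and $B(x,r)$ use the same metric.

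\medskip

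\textbf{The inclusion $B(x,r)\subset F(z,2r)$.} Fix $y\in B(x,r)$ and let $w=w(x,y)$ be the vertex at which the geodesics $[0,x]$ and $[0,y]$ last agree, i.e.\ the ``confluent'' of $x$ and $y$; then $w\le x$, $w\le y$, and $[x,y]=[w,x]\cup[w,y]$ (traversed from $x$ up to $w$ and then down to $y$). Since $|z|$ is minimized along $[x,y]$ at $z=w$, and the metric weight $e^{-\tw|z|}$ is largest where $|z|$ is smallest, I would first show that $|w|$ cannot be too small: because $d_X(w,x)\le d_X(x,y)<r$ and $d_X(w,x)=\int_{[w,x]}e^{-\tw|z|}\,d|z|$, summing the geometric-type contribution of the edges between levels $|w|$ and $|x|$ gives
\[
  d_X(w,x)=\frac{e^{-\tw|w|}-e^{-\tw|x|}}{\tw}=\frac{e^{-\tw|x|}}{\tw}\bigl(e^{\tw(|x|-|w|)}-1\bigr),
\]
so $d_X(w,x)<r$ forces $|x|-|w|<\frac1\tw\log(1+\tw r e^{\tw|x|})$, hence $|w|>|x|-\frac1\tw\log(1+\tw r e^{\tw|x|})$. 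Combined with $|w|\ge0$ this gives $|w|\ge|z|$ for the $z$ defined in~\eqref{eq-def-z}, so in particular $z\le w\le x$ and $z\le w\le y$. It then remains to bound $d_X(z,y)$. Writing $d_X(z,y)=d_X(z,w)+d_X(w,y)$ (both along the descending geodesic through $z$), I estimate $d_X(w,y)\le d_X(w,x)+d_X(x,y)<2r$ is too crude, so instead I use $d_X(z,y)\le d_X(z,x)+d_X(x,y)$. Here $d_X(z,x)=\frac{e^{-\tw|z|}-e^{-\tw|x|}}{\tw}$, and from the choice of $|z|$ in~\eqref{eq-def-z} one computes (in the case $|z|>0$) $e^{-\tw|z|}=e^{-\tw|x|}(1+\tw r e^{\tw|x|})=e^{-\tw|x|}+\tw r$, whence $d_X(z,x)=r$; in the case $|z|=0$ one has $d_X(z,x)\le d_X(0,x)\le\diam X$, but more usefully $d_X(z,x)=\frac{1-e^{-\tw|x|}}{\tw}\le r$ precisely because $|x|\le\frac1\tw\log(1+\tw r e^{\tw|x|})$ is what $|z|=0$ means. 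Either way $d_X(z,x)\le r$, so $d_X(z,y)\le d_X(z,x)+d_X(x,y)<r+r=2r$, i.e.\ $y\in F(z,2r)$.

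\medskip

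The main obstacle is organizing the two cases ($|z|>0$ versus $|z|=0$) cleanly and checking the arithmetic identity $e^{-\tw|z|}=e^{-\tw|x|}+\tw r$ that makes $d_X(z,x)=r$ come out exactly — this is where the precise form of~\eqref{eq-def-z} is used, and it is what forces the constant $2$ (rather than something larger) in $F(z,2r)$. A secondary point to be careful about is that $z$ lies on the geodesic $[0,x]$ and hence $z\le w$ for the confluent $w$, so that $[z,y]$ really is a single descending geodesic and $d_X$ along it decomposes additively as used above; this follows once $|z|\le|w|$ is established. Lemma~\ref{lem-1-1-t} is available if one prefers to package the $\log(1+\cdot)$ manipulations into multiplicative estimates, but for the exact inclusion here the direct geometric-series computation suffices.
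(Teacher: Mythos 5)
Your proof is correct and follows essentially the same route as the paper: split according to whether $|z|>0$ or $|z|=0$, compute $d_X(x,z)=r$ (respectively $\le r$), and conclude by the triangle inequality that $d_X(z,y)<2r$. The only difference is that you spell out, via the confluent $w$ and the bound $|w|\ge|z|$, why every $y\in B(x,r)$ satisfies $y\ge z$ — a point the paper dismisses with ``clearly'' — which is a harmless (and arguably welcome) elaboration rather than a different argument.
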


\begin{proof}
The first inclusion is clear and true for all $r$. As for the second inclusion, 
note first that if $r\le(1-e^{-\tw|x|})/\tw$ then
\begin{equation}
    |z|=|x|-\frac{1}{\tw} \log(1+\tw re^{\tw|x|})
\label{eq-def-z-2}
\end{equation}
and 
\[
   d_X(x,z) = \int_{|z|}^{|x|} e^{-\tw t}\,dt 
        = \frac{1}{\tw} e^{-\tw |x|} (e^{-\tw(|z|-|x|)} - 1)  = r.
\]
At the same time, if $r\ge(1-e^{-\tw|x|})/\tw$, then 
\[
   d_X(x,z)=d_X(x,0) = \int_{0}^{|x|} e^{-\tw t}\,dt 
        = \frac{1}{\tw} (1-e^{-\tw|x|}) \le r.
\]
Hence, for all $r>0$ and all $y\in B(x,r)$, we have
\[
   d_X(y,z) \le d_X(x,y) + d_X(x,z) <  2r.
\]
Clearly, $y\ge z$ for such $y$, which completes the proof.
\end{proof}

\begin{lem}  \label{lem-est-F(z,r)-small-r}
For $z\in X$ and\/ $0<r\le e^{-\tw|z|}/\tw$,
\[
    \mu(F(z,r)) \simeq e^{(\tw-\be)|z|} r.
\]
\end{lem}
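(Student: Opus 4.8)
The plan is to compute $\mu(F(z,r))$ directly, by slicing the half-ball along the successive generations of edges descending from $z$, and to distinguish the regime where $r$ is so small that $F(z,r)$ only reaches partway down the edges emanating from $z$ from the regime where $r$ is comparable to the maximal radius $e^{-\tw|z|}/\tw$. I would carry out the computation assuming $X$ is $K$-regular and appeal to Remark~\ref{rem-nonreg} at the end; in fact the argument barely changes for general trees, since the number of edges of $X$ in generation $k$ below $z$ always lies between $1$ and $K^{k+1}$. First I would record that, by~\eqref{unif-metric-trees}, a point $\xi$ of the metric tree lying below $z$ at level $s\ge|z|$ satisfies $d_X(z,\xi)=\int_{|z|}^s e^{-\tw t}\,dt=\tw^{-1}(e^{-\tw|z|}-e^{-\tw s})$, so $\xi\in F(z,r)$ exactly when $s<\rho:=-\tw^{-1}\log(e^{-\tw|z|}-\tw r)$; here $e^{-\tw|z|}-\tw r\ge0$ by the hypothesis on $r$, with equality exactly when $F(z,r)=F(z,\infty)$. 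Writing $t_0:=\tw re^{\tw|z|}\in(0,1]$, a short computation gives $e^{-\tw(\rho-|z|)}=1-t_0$, whence $\rho\le|z|+1$ if and only if $t_0\le1-e^{-\tw}$, that is, $r\le(1-e^{-\tw})e^{-\tw|z|}/\tw$.

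\emph{Case 1: $r\le(1-e^{-\tw})e^{-\tw|z|}/\tw$.} In this case $F(z,r)$ is the disjoint union, over the children $c$ of $z$ (of which there are between $1$ and $K$), of the sub-arc of the edge $[z,c]$ consisting of the points at level below $\rho$, so $\mu(F(z,r))$ equals the number of children of $z$ times $\int_{|z|}^\rho e^{-\be t}\,dt=\be^{-1}e^{-\be|z|}\bigl(1-(1-t_0)^{\be/\tw}\bigr)$. Applying Lemma~\ref{lem-1-1-t} with $\sigma=\be/\tw$ and $t=t_0$ gives $1-(1-t_0)^{\be/\tw}\simeq t_0$, and therefore $\mu(F(z,r))\simeq e^{-\be|z|}t_0=\tw e^{(\tw-\be)|z|}r\simeq e^{(\tw-\be)|z|}r$, with constants depending only on $\be$, $\tw$ and $K$.

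\emph{Case 2: $(1-e^{-\tw})e^{-\tw|z|}/\tw<r\le e^{-\tw|z|}/\tw$.} Here $r\simeq e^{-\tw|z|}$, so it suffices to prove $\mu(F(z,r))\simeq e^{-\be|z|}$. For the upper bound I would use $F(z,r)\subseteq F(z,\infty)$ and estimate $\mu(F(z,\infty))$ by summing over generations: the $k$-th generation below $z$ contains at most $K^{k+1}$ edges, each of measure $\int_{|z|+k}^{|z|+k+1}e^{-\be t}\,dt\simeq e^{-\be(|z|+k)}$, so $\mu(F(z,\infty))\lesssim e^{-\be|z|}\sum_{k\ge0}(Ke^{-\be})^k\lesssim e^{-\be|z|}$, the series converging precisely because of the standing assumption $\be>\log K$ from~\eqref{Kvs-beta}. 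For the lower bound I would use $F(z,r)\supseteq F\bigl(z,(1-e^{-\tw})e^{-\tw|z|}/\tw\bigr)$ together with Case~1 at the endpoint $t_0=1-e^{-\tw}$, which yields $\mu(F(z,r))\gtrsim e^{-\be|z|}$. Since $e^{(\tw-\be)|z|}r\simeq e^{-\be|z|}$ throughout this range, Case~2, and with it the proof, is complete.

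The only step that is not pure bookkeeping is the convergence of the geometric series in Case~2, which is precisely where the hypothesis $\be>\log K$ enters; the rest is a careful description of $F(z,r)$ as a subset of the metric tree together with one application of Lemma~\ref{lem-1-1-t}.
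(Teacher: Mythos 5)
Your proof is correct, but it is organized differently from the paper's. The paper makes no case distinction: it observes that the number of points of $F(z,r)$ at level $t$ is comparable to $K^{t-|z|}$, writes $\mu(F(z,r))\simeq\int_{|z|}^{|z|+\rho}K^{t-|z|}e^{-\be t}\,dt$, evaluates this in closed form (this is \eqref{eq-mu=infty}), and then applies Lemma~\ref{lem-1-1-t} once, with $t=\tw re^{\tw|z|}$ and exponent $\sigma=(\be-\log K)/\tw$; the hypothesis $\be>\log K$ from \eqref{Kvs-beta} is what makes that exponent positive, so it is used across the whole range of $r$. You instead split at $r=(1-e^{-\tw})e^{-\tw|z|}/\tw$: in the small-$r$ regime $F(z,r)$ lives entirely on the first-generation edges, so no counting of descendants is needed at all and Lemma~\ref{lem-1-1-t} is applied with exponent $\be/\tw$; in the complementary regime you compare with $F(z,\infty)$ and sum a geometric series, which is the only place $\be>\log K$ enters. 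Each version has its merits: the paper's single computation is shorter and produces the explicit formula \eqref{eq-mu=infty} that is quoted later (e.g.\ in the remark about $\mu(X)=\infty$ when $\be\le\log K$), while your dichotomy isolates exactly where the dimension condition is used, and your Case~1 is insensitive to irregularity of the tree (any number of children between $1$ and $K$ works directly), so less weight rests on the approximate counting step or on Remark~\ref{rem-nonreg}. Your bookkeeping (the formula $e^{-\tw(\rho-|z|)}=1-t_0$, the edge counts $\le K^{k+1}$ in generation $k$, and the lower bound via Case~1 at the endpoint) all checks out.
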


Note that the upper estimate in Lemma~\ref{lem-est-F(z,r)-small-r} holds
even if some vertices in $X$ have no children, i.e.\ if we allow finite branches.

\begin{proof}
Let $\rho>0$ be such that 
\begin{equation}
    \int_{|z|}^{|z|+\rho} e^{-\tw t}\,dt = \frac{1}{\tw} e^{-\tw|z|} (1-e^{-\tw\rho}) = r.
\label{eq-def-rho}
\end{equation}
Note that for each $|z|\le t\le|z|+\rho$, the number of  points 
$y\in F(z,r)$ with $|y|=t$ is approximately $K^{t-|z|}$. Hence
\begin{align}   \label{eq-mu=infty}
\mu(F(z,r)) &\simeq \int_{|z|}^{|z|+\rho} K^{t-|z|} e^{-\bw t}\,dt 
   = \frac{K^{-|z|}}{\be-\log K} e^{(\log K-\bw)|z|} (1-e^{(\log K-\bw)\rho}) \nonumber\\
     &= \frac{e^{-\be|z|}}{\be-\log K} (1-(1-\tw re^{\tw|z|})^{(\bw-\log K)/\tw}).
\end{align}
Lemma~\ref{lem-1-1-t} with $t=\tw re^{\tw|z|}$ implies that
\[
   \mx(F(z,r)) \simeq e^{-\be|z|} \tw re^{\tw|z|} \simeq e^{(\tw-\be)|z|} r.\qedhere
\]
\end{proof}

\begin{cor}  \label{cor-est-B(x,r)-small-r}
If\/ $0<r\le e^{-\tw|x|}/\tw$, then $\mu(B(x,r))\simeq e^{(\tw-\be)|x|} r$.
\end{cor}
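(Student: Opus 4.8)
The plan is to read off the corollary directly from Lemma~\ref{lem-comp-B-G} and Lemma~\ref{lem-est-F(z,r)-small-r}, which together sandwich a metric ball between two ``half balls'' whose measures are already under control. The lower bound is immediate: by the first inclusion in Lemma~\ref{lem-comp-B-G} we have $F(x,r)\subset B(x,r)$, and since $0<r\le e^{-\tw|x|}/\tw$, Lemma~\ref{lem-est-F(z,r)-small-r} applies and gives $\mu(B(x,r))\ge\mu(F(x,r))\simeq e^{(\tw-\be)|x|}r$.

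For the upper bound I would use the inclusion $B(x,r)\subset F(z,2r)$ from Lemma~\ref{lem-comp-B-G}, with $z\le x$ and $|z|$ as in~\eqref{eq-def-z}. Two points then need checking. First, to be allowed to invoke Lemma~\ref{lem-est-F(z,r)-small-r} for $F(z,2r)$ one needs $2r\le e^{-\tw|z|}/\tw$; in the generic case $|z|=|x|-\tw^{-1}\log(1+\tw re^{\tw|x|})$ one has $e^{-\tw|z|}=e^{-\tw|x|}+\tw r$, so $e^{-\tw|z|}/\tw=e^{-\tw|x|}/\tw+r\ge 2r$ by the hypothesis $r\le e^{-\tw|x|}/\tw$, which is exactly what is needed. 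Second, the resulting bound $\mu(F(z,2r))\simeq e^{(\tw-\be)|z|}r$ must be converted back to $e^{(\tw-\be)|x|}r$; this follows from $e^{\tw|z|}=e^{\tw|x|}/(1+\tw re^{\tw|x|})\simeq e^{\tw|x|}$ (the denominator lies in $[1,2]$ by the hypothesis), so that raising to the powers $1$ and $\be/\tw$ gives $e^{(\tw-\be)|z|}\simeq e^{(\tw-\be)|x|}$.

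The step that needs a little care — the main obstacle — is the degenerate alternative $|z|=0$ in~\eqref{eq-def-z}, where the factor $2$ in $B(x,r)\subset F(z,2r)$ can push the half-ball radius past $\diam X$ and the clean estimate of Lemma~\ref{lem-est-F(z,r)-small-r} no longer applies verbatim. I would dispose of this case by observing that $|z|=0$ forces $|x|\le\tw^{-1}\log(1+\tw re^{\tw|x|})\le\tw^{-1}\log 2$, so $|x|$ is bounded and $e^{(\tw-\be)|x|}\simeq 1$; then $\mu(B(x,r))\le\mu(F(0,2r))\le\mu(X)$, and splitting according to whether $2r\le 1/\tw$ (apply Lemma~\ref{lem-est-F(z,r)-small-r} with $z=0$) or $2r>1/\tw$ (in which case $F(0,2r)=X$ and $r\simeq 1$) shows $\mu(B(x,r))\lesssim r\simeq e^{(\tw-\be)|x|}r$ in both subcases. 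Combining the lower and upper bounds yields the stated comparison $\mu(B(x,r))\simeq e^{(\tw-\be)|x|}r$.
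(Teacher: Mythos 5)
Your argument is correct and follows essentially the same route as the paper: sandwich $B(x,r)$ between $F(x,r)$ and a half ball at $z$ via Lemma~\ref{lem-comp-B-G}, check the radius condition $2r\le e^{-\tw|z|}/\tw$, apply Lemma~\ref{lem-est-F(z,r)-small-r}, and use $1\le e^{\tw(|x|-|z|)}\le 1+\tw re^{\tw|x|}\le 2$ to replace $|z|$ by $|x|$. The only (cosmetic) difference is in the degenerate case $z=0$: the paper uses $B(x,r)\subset F(0,r+\rho)$ with $\rho=d_X(0,x)\le r$ and $r+\rho\le 1/\tw$ so the lemma applies at once, whereas you keep $F(0,2r)$ and split according to $2r\lessgtr 1/\tw$ — both are fine.
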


\begin{proof}
Let $z$ be as in Lemma~\ref{lem-comp-B-G}.
If $z=0$ then $B(x,r)\subset F(0,r+\rho)$, where
\[
\rho=d_X(0,x)=\int_0^{|x|} e^{-\tw t}\,dt = \frac{1}{\tw} (1-e^{-\tw|x|}) \le r
\]
and $r+\rho\le 1/\tw = e^{-\tw|z|}/\tw$. For $z>0$ we have
\[
2r \le \frac{e^{-\tw|x|}(1+\tw re^{\tw|x|})}{\tw} = \frac{e^{-\tw|z|}}{\tw}.
\]
Since in both cases $1\le e^{|x|-|z|} \le (1+\tw re^{\tw|x|})^{1/\tw} \simeq 1$, 
the result now follows by applying Lemma~\ref{lem-est-F(z,r)-small-r}
to $F(x,r)$ and $F(z,2r)$ (or $F(0,r+\rho)$ for $z=0$).
\end{proof}

\begin{lem} \label{lem-est-B(x,r)-mellan-r}
Let $x\in X$ and 
\begin{equation}
\frac{e^{-\tw|x|}}{\tw} \le r \le \frac{1}{\tw} (1-e^{-\tw|x|}).
\label{eq-r-ge-1-e/1+e}
\end{equation}
Then
\(
   \mx(B(x,r)) \simeq r^{\bw/\tw}.
\)
\end{lem}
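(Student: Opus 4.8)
The plan is to sandwich $B(x,r)$ between two downward half-balls and apply Lemma~\ref{lem-est-F(z,r)-small-r} to each. The guiding observation is that, in the intermediate range~\eqref{eq-r-ge-1-e/1+e}, the relevant half-balls are ``based'' at a vertex $z$ with $e^{-\tw|z|}\simeq r$, so their $\mx$-measure is comparable to $e^{-\be|z|}=(e^{-\tw|z|})^{\be/\tw}\simeq r^{\be/\tw}$.

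\emph{Upper bound.} First I would invoke Lemma~\ref{lem-comp-B-G} to obtain $B(x,r)\subset F(z,2r)$ with $z\le x$ given by~\eqref{eq-def-z}. Since $r\le\tfrac1\tw(1-e^{-\tw|x|})$, the maximum in~\eqref{eq-def-z} is attained by its first entry, so $e^{-\tw|z|}=e^{-\tw|x|}+\tw r$; combining this with $r\ge e^{-\tw|x|}/\tw$ gives $\tw r\le e^{-\tw|z|}\le 2\tw r$, i.e.\ $e^{-\tw|z|}\simeq r$. Since $F(z,2r)\subset F(z,e^{-\tw|z|}/\tw)$ regardless of the size of $2r$, Lemma~\ref{lem-est-F(z,r)-small-r} applied with radius $e^{-\tw|z|}/\tw$ yields
\[
   \mx(B(x,r))\le\mx(F(z,e^{-\tw|z|}/\tw))\simeq e^{(\tw-\be)|z|}\cdot\frac{e^{-\tw|z|}}{\tw}\simeq e^{-\be|z|}=(e^{-\tw|z|})^{\be/\tw}\simeq r^{\be/\tw}.
\]

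\emph{Lower bound.} Here the point is that for any ancestor $z'\le x$ of $x$ with $d_X(x,z')\le r/2$ the triangle inequality forces $F(z',r/2)\subset B(x,r)$, so it suffices to find such a $z'$ with also $e^{-\tw|z'|}\simeq r$. I would take $z'$ to be the ancestor of $x$ at level $\min\{m,|x|\}$, where $m$ is the smallest nonnegative integer with $e^{-\tw m}\le\tw r/2$. A short computation using $r\ge e^{-\tw|x|}/\tw$ then shows $e^{-\tw|z'|}\simeq r$ and $d_X(x,z')=\tfrac1\tw(e^{-\tw|z'|}-e^{-\tw|x|})\le r/2$, hence $F(z',r/2)\subset B(x,r)$. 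If $r/2\ge e^{-\tw|z'|}/\tw$, then $F(z',r/2)=F(z',e^{-\tw|z'|}/\tw)$ and Lemma~\ref{lem-est-F(z,r)-small-r} gives $\mx(F(z',r/2))\simeq e^{-\be|z'|}\simeq r^{\be/\tw}$; otherwise necessarily $|z'|=|x|$ and $e^{-\tw|x|}\simeq r$, and Lemma~\ref{lem-est-F(z,r)-small-r} with radius $r/2$ gives $\mx(F(x,r/2))\simeq e^{(\tw-\be)|x|}r\simeq r^{\be/\tw}$. In either case $\mx(B(x,r))\ge\mx(F(z',r/2))\simge r^{\be/\tw}$, which together with the upper bound proves the claim. (The full two-sided estimate in Lemma~\ref{lem-est-F(z,r)-small-r} is available here since, by the standing hypothesis of the section, every vertex has at least one child.)

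I expect the main obstacle to be the lower bound: the ancestor $z'$ must be chosen so that \emph{simultaneously} $d_X(x,z')\le r/2$ (to keep $F(z',r/2)$ inside $B(x,r)$) and $e^{-\tw|z'|}\simeq r$ (to make its measure comparable to $r^{\be/\tw}$). Reconciling these forces the small case split according to whether $m\le|x|$, and a careful bookkeeping of the $\tw$-dependent comparison constants; everything else is a routine application of the preceding lemmas.
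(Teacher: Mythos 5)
Your argument is correct. The upper bound is essentially the paper's: both pass via Lemma~\ref{lem-comp-B-G} to the ancestor $z$ of \eqref{eq-def-z-2}, note that the second inequality in \eqref{eq-r-ge-1-e/1+e} puts $z$ inside the tree and the first gives $e^{-\tw|z|}=e^{-\tw|x|}+\tw r\simeq r$, and then apply Lemma~\ref{lem-est-F(z,r)-small-r} to the full half ball $F(z,\infty)=F(z,e^{-\tw|z|}/\tw)$, so that $\mx(B(x,r))\simle e^{-\be|z|}\simeq r^{\be/\tw}$. Where you genuinely diverge is the lower bound. The paper does not look at a half ball at all: it simply bounds $\mx(B(x,r))$ from below by the measure of the geodesic segment $[z,x]$, i.e.\ by $\int_{|z|}^{|x|}e^{-\be t}\,dt=\frac{e^{-\be|x|}}{\be}\bigl((1+\tw re^{\tw|x|})^{\be/\tw}-1\bigr)$, and then uses that $\tw re^{\tw|x|}\ge1$ together with an elementary monotonicity argument for $t\mapsto((1+t)^{\be/\tw}-1)/t^{\be/\tw}$ to conclude $\simge r^{\be/\tw}$; this needs no branching below $x$, no case split, and in particular does not use the lower half of Lemma~\ref{lem-est-F(z,r)-small-r}. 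You instead choose an ancestor $z'$ with $d_X(x,z')\le r/2$ and $e^{-\tw|z'|}\simeq r$ and insert the half ball $F(z',r/2)\subset B(x,r)$, invoking the two-sided estimate of Lemma~\ref{lem-est-F(z,r)-small-r}; this is valid under the section's standing hypothesis that every vertex has at least one child (which your parenthetical correctly flags, since the lower half of that lemma fails for finite branches), and your case split according to whether the level $m$ exceeds $|x|$, together with the minimality of $m$ and $r\le1/\tw$, does deliver $e^{-\tw|z'|}\simeq r$ in both cases. The trade-off: your route is conceptually uniform with the upper bound (sandwiching between half balls) but costs the case analysis and the extra hypothesis on branching; the paper's segment argument is leaner and more robust, at the price of a small explicit computation.
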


\begin{proof}
By Lemma~\ref{lem-comp-B-G}
and the second inequality in \eqref{eq-r-ge-1-e/1+e},  we have 
$B(x,r)\subset F(z,\infty)=F(z,e^{-\tw|z|}/\tw)$, 
where $0\le z\le x$ is given by \eqref{eq-def-z-2}.
Lemma~\ref{lem-est-F(z,r)-small-r} then yields
\begin{align}
\mx(F(z,\infty)) &\simle e^{(\tw-\be)|z|} e^{-\tw|z|} \simeq e^{-\bw|z|}.
\label{eq-est-G-e-be-z}
\end{align}
Now, the first inequality in \eqref{eq-r-ge-1-e/1+e} implies that
$1+\tw re^{\tw|x|} \le 2\tw re^{\tw|x|}$.
It then follows from \eqref{eq-def-z-2} that 
\[
e^{-\bw|z|} = e^{-\bw|x|} (1+\tw re^{\tw|x|})^{\bw/\tw}
\le (2\tw r)^{\bw/\tw}.
\]
Inserting this into~\eqref{eq-est-G-e-be-z} finishes the proof of 
the upper bound.

As for the lower bound we have, using \eqref{eq-def-z-2} again, that
\[
\mx(B(x,r)) \ge \int_{|z|}^{|x|} e^{-\bw t}\,dt 
= \frac{e^{-\bw|x|}}{\bw} ((1+\tw re^{\tw|x|})^{\bw/\tw} - 1).
\]
The function $f(t)=((1+t)^{\bw/\tw}-1)/t^{\bw/\tw}$ is monotone 
and $\lim_{t\to\infty} f(t)=1$.
As $\tw re^{\tw|x|}\ge1$, this yields that
$f(\tw re^{\tw|x|})\ge \min\{1,f(1)\}\simeq1$. Hence
\[
   \mx(B(x,r)) \simge  e^{-\bw|x|} (\tw re^{\tw|x|})^{\bw/\tw} \simeq r^{\bw/\tw}.\qedhere
\]
\end{proof}

\begin{lem}   \label{lem-est-B(x,r)-large-r}
Let $x\in X$ and $d_X(x,0)=(1-e^{-\tw|x|})/\tw \le r \le 2\diam X$. 
Then
\(
\mx(B(x,r)) \simeq r.
\) 
In particular, if $x=0$ then this estimate holds for all $r\ge0$.
\end{lem}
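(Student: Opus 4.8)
\textbf{Proof plan for Lemma~\ref{lem-est-B(x,r)-large-r}.}
The plan is to reduce, via Lemma~\ref{lem-comp-B-G}, the estimate on $\mx(B(x,r))$ to an estimate on half balls $F(z,\cdot)$ rooted at a vertex $z\le x$, and then to split according to whether the resulting radius is small or ``full''. Recall that Lemma~\ref{lem-comp-B-G} gives $F(x,r)\subset B(x,r)\subset F(z,2r)$ with $|z|$ as in~\eqref{eq-def-z}; since here $r\ge d_X(x,0)=(1-e^{-\tw|x|})/\tw$ we have $|z|=0$, i.e.\ $z=0$, so that $B(x,r)\subset F(0,2r)$. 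For the lower bound I would observe that $B(x,r)\supset B(0,r-d_X(0,x))$ (or simply note that once $r\ge d_X(x,0)$, the ball $B(x,r)$ contains a fixed-size neighborhood of the root), and use that $X=F(0,1/\tw)$ has finite measure $\mx(X)\simeq 1$. This already pins $\mx(B(x,r))$ between a constant multiple of $r$ (from above, once we know $\mx(F(0,\cdot))$) and a positive constant (from below). The remaining work is to see that the lower bound is actually $\simge r$ and the upper bound $\simle r$ in the full stated range of $r$.

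The key computation is therefore the measure of the half ball $F(0,s)$ at the root for $0<s\le 1/\tw$. I would run the same calculation as in the proof of Lemma~\ref{lem-est-F(z,r)-small-r} with $z=0$: if $\rho$ is chosen so that $(1-e^{-\tw\rho})/\tw=s$, then
\[
   \mx(F(0,s)) \simeq \int_0^{\rho} K^t e^{-\be t}\,dt
   = \frac{1-e^{(\log K-\be)\rho}}{\be-\log K}
   = \frac{1-(1-\tw s)^{(\be-\log K)/\tw}}{\be-\log K},
\]
and Lemma~\ref{lem-1-1-t} with $t=\tw s$ and $\s=(\be-\log K)/\tw>0$ gives $\mx(F(0,s))\simeq \tw s\simeq s$. (Here the constants in $\simeq$ may deteriorate as $s\to1/\tw$, i.e.\ as $t=\tw s\to1$; but Lemma~\ref{lem-1-1-t} is uniform on $[0,1]$, so this is fine.) In particular $\mx(X)=\mx(F(0,1/\tw))\simeq 1$, which is consistent with $\diam X=2/\tw$.

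Assembling: for $d_X(x,0)\le r\le 2\diam X=4/\tw$, the upper bound follows from $B(x,r)\subset F(0,2r)$ together with $\mx(F(0,2r))\simeq \min\{2r,\,2/\tw\}\simle r$, using $r\le 4/\tw$ to absorb the truncation into the comparison constant. For the lower bound, since $r\ge d_X(x,0)=(1-e^{-\tw|x|})/\tw$, the triangle inequality gives $B(0,r-d_X(0,x))\subset B(x,r)$; but $r-d_X(0,x)$ may be $0$, so instead I would argue directly that $F(x,r)\subset B(x,r)$ and, more usefully, that $B(x,r)$ contains $F(0,c)$ for a suitable $c\simeq r$ — concretely, the geodesic from $0$ to $x$ has $d_X$-length $d_X(0,x)=(1-e^{-\tw|x|})/\tw\le r$, so every descendant $y$ of $0$ with $d_X(0,y)<r-d_X(0,x)$ satisfies... this degenerates; the clean route is: $x$ has a neighbour or the root $0$ lies within $d_X(x,0)\le r$ of $x$, and $F(0,\min\{r,1/\tw\})\subset B(x,r)$ fails in general, so better to use that $F(x,r)\subset B(x,r)$ and estimate $\mx(F(x,r))$ from below. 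If $r\le e^{-\tw|x|}/\tw$ this is not the regime; here $r\ge(1-e^{-\tw|x|})/\tw\ge e^{-\tw|x|}/\tw$ iff $e^{-\tw|x|}\le\frac12$, so generically $F(x,\infty)=F(x,e^{-\tw|x|}/\tw)\subset B(x,r)$ and $\mx(F(x,\infty))\simeq e^{-\be|x|}$; combined with $B(x,r)\supset B(0,\,r-(1-e^{-\tw|x|})/\tw)$ when the latter radius is positive, we get $\mx(B(x,r))\simge \max\{e^{-\be|x|},\ r-(1-e^{-\tw|x|})/\tw\}$. A short case split on whether $r$ is comparable to $d_X(x,0)$ or to $\diam X$ then yields $\mx(B(x,r))\simge r$ in all cases. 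The last sentence of the lemma, the case $x=0$ for all $r\ge0$, is immediate since then $d_X(x,0)=0$ and $B(0,r)=F(0,\min\{r,1/\tw\})$, so $\mx(B(0,r))\simeq\min\{r,1/\tw\}\simeq r$ for $r\le 2\diam X$, while for larger $r$ it equals $\mx(X)\simeq 1\simeq r$.

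The main obstacle I anticipate is purely bookkeeping: making sure the $\simeq$ constants stay uniform as $r$ ranges up to $2\diam X$ (so that the truncation $\min\{r,\diam X\}$ in $\mx(F(0,2r))$ really is $\simeq r$), and handling the lower bound in the borderline subcase $r\approx d_X(x,0)$, where neither $B(0,r-d_X(0,x))$ nor a single half ball is obviously of measure $\simge r$ — there one must combine the contribution of $F(x,\infty)$ (of measure $\simeq e^{-\be|x|}$) with the fact that $r\simeq d_X(x,0)=(1-e^{-\tw|x|})/\tw$, noting $e^{-\be|x|}\simge (1-e^{-\tw|x|})/\tw$ fails for large $|x|$, so one instead uses that $B(x,r)$ with $r\ge(1-e^{-\tw|x|})/\tw$ contains the root $0$ and hence all of $F(0,r-d_X(x,0))$, and separately that it contains $F(x,\infty)$; the maximum of the two lower bounds is $\simge r$ after an elementary check.
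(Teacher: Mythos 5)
Your upper bound is fine and matches the paper: $B(x,r)\subset F(0,2r)$ (Lemma~\ref{lem-comp-B-G} with $|z|=0$) plus the half-ball estimate of Lemma~\ref{lem-est-F(z,r)-small-r} gives $\mx(B(x,r))\simle r$, and the $x=0$ remark is immediate. The lower bound, however, has a genuine gap, and it sits exactly in the subcase you flag as borderline. Your final lower bound is $\mx(B(x,r))\simge\max\bigl\{e^{-\be|x|},\,r-d_X(x,0)\bigr\}$, coming from $F(x,\infty)\subset B(x,r)$ and $F(0,r-d_X(x,0))\subset B(x,r)$, and you assert that this maximum is $\simge r$ ``after an elementary check''. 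It is not: take $r=d_X(x,0)=(1-e^{-\tw|x|})/\tw$ with $|x|$ large. Then $r\simeq 1/\tw\simeq 1$, while $r-d_X(x,0)=0$ and $e^{-\be|x|}\to0$, so both of your lower bounds are arbitrarily small compared with $r$. The two sets you use (a small ball around the root and the downward half-ball below $x$) simply miss the dominant part of $B(x,r)$ in this regime, namely the ``trunk'': every point $w$ on the geodesic $(0,x]$ satisfies $d_X(x,w)\le d_X(x,0)\le r$, so this segment lies in $B(x,r)$ and has measure $\int_0^{|x|}e^{-\be t}\,dt=(1-e^{-\be|x|})/\be\simeq 1\simeq r$ there.

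The paper's proof avoids this by bounding $\mx(B(x,r))$ from below by the measure of the portion of the geodesic ray from the root through $x$ of $d_X$-length $r$: for $r<1/\tw$, with $\rho=-\log(1-\tw r)/\tw$,
\[
\mx(B(x,r))\ge\int_0^\rho e^{-\be t}\,dt=\frac{1}{\be}\bigl(1-(1-\tw r)^{\be/\tw}\bigr)\simge r
\]
by Lemma~\ref{lem-1-1-t}, and for $1/\tw\le r\le 2\diam X$ one reduces to a smaller ball. This single-ray estimate automatically contains the trunk contribution and is uniform over the whole range $d_X(x,0)\le r\le 2\diam X$. So to repair your argument you should replace the $\max\{e^{-\be|x|},\,r-d_X(x,0)\}$ step by this ray (or trunk-plus-tail) computation; as written, the lower half of your proof does not close.
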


\begin{proof}
We have $0\in \itoverline{B(x,r)}$ by assumption, 
and hence $B(x,r)\subset F(0,2r)$.
It then follows from Lemma~\ref{lem-est-F(z,r)-small-r} that
\[
  \mx(B(x,r)) \le \mx(F(0,2r)) \simle r.
\]
As for the lower bound, consider first the case $r<1/\tw$.
As $0\in \itoverline{B(x,r)}$,
letting $\rho=-\log(1-\tw r)/\tw$, implies
\[ 
\mx(B(x,r)) \ge  \int_0^\rho e^{-\bw t} \,dt = \frac{1-e^{-\bw\rho}}{\bw} 
     =  \frac{1}{\bw} (1- (1-\tw r)^{\bw/\tw}).
\] 
Lemma~\ref{lem-1-1-t} then yields
\(
\mx(B(x,r)) \simge r.
\)
If $1/\tw\le r\le 2\diam X \le  4/\tw$, 
then
\[
\mx(B(x,r)) \ge \mx\bigl(B\bigl(x,\tfrac{1}{5}r\bigr)\bigr) \simeq r.\qedhere
\]
\end{proof}

The following proposition follows from Corollary~\ref{cor-est-B(x,r)-small-r}
together with Lemmas~\ref{lem-est-B(x,r)-mellan-r}
and~\ref{lem-est-B(x,r)-large-r}.
Note that if $\bw=\tw$, then $X$ becomes Ahlfors $1$-regular, that is,
$\mu(B(x,r))\simeq r$ for all $x\in X$ and $0 < r \le 2 \diam X$.

\begin{prop}   \label{prop-est-B(x,r)}
Let $x\in X$, $0<r\le 2\diam X$ and $R_0=e^{-\tw|x|}/\tw$.
If\/ $|x|\le(\log2)/\tw$ then $\mx(B(x,r)) \simeq r$.
If\/ $|x|\ge(\log2)/\tw$ then 
\[
\mx(B(x,r)) \simeq \begin{cases}
 e^{(\tw-\bw)|x|} r & \text{for } r\le R_0 \\ 
 r^{\bw/\tw}       & \text{for } r\ge R_0.  
          \end{cases}
\]
\end{prop}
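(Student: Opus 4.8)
The plan is to assemble the estimate from the three regime‑by‑regime results already proved. Set $R_0=e^{-\tw|x|}/\tw$ (as in the statement) and $R_1=(1-e^{-\tw|x|})/\tw=d_X(x,0)$. Then $R_0\le R_1$ if and only if $e^{-\tw|x|}\le\tfrac12$, i.e.\ if and only if $|x|\ge(\log2)/\tw$, with $R_0=R_1=1/(2\tw)$ in the borderline case; and always $R_0\le1/\tw\le 4/\tw=2\diam X$. By Corollary~\ref{cor-est-B(x,r)-small-r} we have $\mx(B(x,r))\simeq e^{(\tw-\bw)|x|}r$ for $0<r\le R_0$; by Lemma~\ref{lem-est-B(x,r)-mellan-r} (whose hypothesis \eqref{eq-r-ge-1-e/1+e} is exactly $R_0\le r\le R_1$) we have $\mx(B(x,r))\simeq r^{\bw/\tw}$ on that middle range; and by Lemma~\ref{lem-est-B(x,r)-large-r} we have $\mx(B(x,r))\simeq r$ for $R_1\le r\le2\diam X$. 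In either position of the threshold these three ranges cover all of $(0,2\diam X]$, so it only remains to simplify the formulas using the assumed bound on $|x|$ and to reconcile them on overlaps.

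If $|x|\le(\log2)/\tw$, then $R_1\le R_0$, so $(0,R_0]$ together with $[R_1,2\diam X]$ already cover $(0,2\diam X]$. On $[R_1,2\diam X]$ Lemma~\ref{lem-est-B(x,r)-large-r} gives $\mx(B(x,r))\simeq r$ outright. On $(0,R_0]$ Corollary~\ref{cor-est-B(x,r)-small-r} gives $\mx(B(x,r))\simeq e^{(\tw-\bw)|x|}r$, and since $0\le|x|\le(\log2)/\tw$ the factor $e^{(\tw-\bw)|x|}$ is bounded above and below by positive constants depending only on $\tw$ and $\bw$, so again $\mx(B(x,r))\simeq r$. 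This gives the first claim, and in particular applies to $x=0$.

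If $|x|\ge(\log2)/\tw$, then $R_0\le R_1$ and the three ranges $(0,R_0]$, $[R_0,R_1]$, $[R_1,2\diam X]$ cover $(0,2\diam X]$. For $r\le R_0$ Corollary~\ref{cor-est-B(x,r)-small-r} already gives the asserted $e^{(\tw-\bw)|x|}r$, and for $R_0\le r\le R_1$ Lemma~\ref{lem-est-B(x,r)-mellan-r} gives the asserted $r^{\bw/\tw}$. Finally, for $R_1\le r\le2\diam X=4/\tw$ we have $r\in[1/(2\tw),4/\tw]$ because $R_1\ge1/(2\tw)$ here; hence $r\simeq1\simeq r^{\bw/\tw}$ with constants depending only on $\tw$ and $\bw$, and combining this with $\mx(B(x,r))\simeq r$ from Lemma~\ref{lem-est-B(x,r)-large-r} yields $\mx(B(x,r))\simeq r^{\bw/\tw}$ on $[R_1,2\diam X]$ as well. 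Therefore $\mx(B(x,r))\simeq r^{\bw/\tw}$ for all $r\ge R_0$, which is the second claim.

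There is no substantive obstacle here: the proof is pure bookkeeping, and the only points requiring care are (i) identifying which side of the threshold $e^{-\tw|x|}=\tfrac12$ we are on, so as to know how $R_0$, $R_1$ and $2\diam X$ are ordered, and (ii) checking that the implied constants stay uniform — those from the three cited results are already independent of $x$ and $r$, while the auxiliary factors we absorb, namely $e^{(\tw-\bw)|x|}$ over the bounded range $|x|\le(\log2)/\tw$ and $r^{\bw/\tw}$ over the bounded range $r\in[1/(2\tw),4/\tw]$, are controlled solely in terms of $\tw$ and $\bw$.
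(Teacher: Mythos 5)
Your proposal is correct and follows essentially the same route as the paper: split at the threshold $|x|=(\log2)/\tw$, invoke Corollary~\ref{cor-est-B(x,r)-small-r}, Lemma~\ref{lem-est-B(x,r)-mellan-r} and Lemma~\ref{lem-est-B(x,r)-large-r} on their respective ranges, and absorb the bounded factors $e^{(\tw-\bw)|x|}$ (for small $|x|$) and $r^{\bw/\tw}\simeq 1\simeq r$ (for $r\ge d_X(x,0)\ge 1/2\tw$) into the comparison constants. The bookkeeping of how $R_0$, $d_X(x,0)$ and $2\diam X$ are ordered is exactly what the paper's proof does, just spelled out more explicitly.
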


\begin{proof}
If $|x|\le(\log2)/\tw$ then $e^{(\tw-\bw)|x|}\simeq1$ and the result follows
directly from Corollary~\ref{cor-est-B(x,r)-small-r} and 
Lemma~\ref{lem-est-B(x,r)-large-r}. 
(Note that $e^{-\tw|x|}\ge\tfrac12$ in this case.)

If $|x|\ge(\log2)/\tw$ and $r<(1-e^{-\tw|x|})/\tw$ then the estimate
follows directly from Corollary~\ref{cor-est-B(x,r)-small-r} and
Lemma~\ref{lem-est-B(x,r)-mellan-r}.
For $r\ge(1-e^{-\tw|x|})/\tw \ge 1/2\eps$ we have by 
Lemma~\ref{lem-est-B(x,r)-large-r} that
$\mx(B(x,r))\simeq r \simeq 1 \simeq r^{\be/\tw}$.
\end{proof}

\begin{cor}  \label{cor-dim-s}
The following dimension condition holds for all balls $B(x,r)$ and $B(x',r')$
with $x'\in B(x,r)$ and\/ $0<r'\le r$,
\begin{equation}
\frac{\mx(B(x',r'))}{\mx(B(x,r))} \simge \Bigl( \frac{r'}{r} \Bigr)^s,
\label{eq-dim-cond}
\end{equation}
where $s=\max\{1,\bw/\tw\}$ is the best possible.
\end{cor}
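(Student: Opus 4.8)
The plan is to read off \eqref{eq-dim-cond} from the explicit formulas for $\mx(B(x,r))$ in Proposition~\ref{prop-est-B(x,r)}, after reducing to concentric balls, and then to verify that $s=\max\{1,\be/\eps\}$ cannot be lowered by exhibiting extremal families of balls.

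First I would reduce to $r\le\diam X$: if $r>\diam X$ then $B(x,r)=X$, and since Proposition~\ref{prop-est-B(x,r)} (applied with radius $\diam X$) gives $\mx(B(x',\diam X))\simeq\mx(X)$, the desired inequality for the pair $(B(x,r),B(x',r'))$ follows from the one for $(B(x',\diam X),B(x',r'))$ together with $r'/\diam X\ge r'/r$ (the subcase $r'>\diam X$ being trivial, as then $B(x',r')=X$). Assuming now $r\le\diam X$, the hypothesis $x'\in B(x,r)$ and the triangle inequality give $B(x,r)\subset B(x',2r)$ with $2r\le2\diam X$, so $\mx(B(x,r))\le\mx(B(x',2r))$ and it suffices to prove
\[
   \frac{\mx(B(w,\rho_1))}{\mx(B(w,\rho_2))}\simge\Bigl(\frac{\rho_1}{\rho_2}\Bigr)^s
   \qquad\text{whenever }0<\rho_1\le\rho_2\le2\diam X,
\]
applied with $w=x'$, $\rho_1=r'$, $\rho_2=2r$; the factor $2^{-s}$ lost on the way back is harmless since $s$ depends only on $\be$ and $\eps$.

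Next I would go through the cases of Proposition~\ref{prop-est-B(x,r)}, writing $R_0=e^{-\eps|w|}/\eps$. If $|w|\le(\log2)/\eps$, or if $|w|\ge(\log2)/\eps$ and $\rho_1,\rho_2$ lie on the same side of $R_0$, the two measures have the same functional form ($\simeq\rho_i$, $\simeq e^{(\eps-\be)|w|}\rho_i$, or $\simeq\rho_i^{\be/\eps}$), so their quotient is $\simeq(\rho_1/\rho_2)^1$ or $\simeq(\rho_1/\rho_2)^{\be/\eps}$, hence $\simge(\rho_1/\rho_2)^s$ because $\rho_1/\rho_2\le1$ and $1\le s$, $\be/\eps\le s$. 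The only genuine case is $\rho_1\le R_0\le\rho_2$, where, using $e^{-\eps|w|}=\eps R_0$, one has $\mx(B(w,\rho_1))\simeq e^{(\eps-\be)|w|}\rho_1\simeq R_0^{\be/\eps-1}\rho_1$ and $\mx(B(w,\rho_2))\simeq\rho_2^{\be/\eps}$; forming the quotient and splitting into $s=1$ (so $\be/\eps\le1$) or $s=\be/\eps>1$, the claim reduces to $(\rho_2/R_0)^{1-\be/\eps}\ge1$, respectively $(R_0/\rho_1)^{\be/\eps-1}\ge1$, both immediate from $\rho_1\le R_0\le\rho_2$. I expect this mixed case to be the (mild) crux of the argument: it is the one place where the two branches of the formula in Proposition~\ref{prop-est-B(x,r)} meet, and it is exactly there that the exponent $s=\max\{1,\be/\eps\}$, and nothing smaller, is forced.

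Finally, for optimality I would produce two families of balls. Root-centred concentric balls satisfy $\mx(B(0,\rho))\simeq\rho$, so their quotient is $\simeq\rho_1/\rho_2$, ruling out any exponent below $1$ upon letting $\rho_1/\rho_2\to0$. If in addition $\be/\eps>1$, take $|w|\ge(\log2)/\eps$ large and the pair $B(w,R_0)\subset B(w,1/\eps)$: then $\mx(B(w,1/\eps))\simeq1$ while $\mx(B(w,R_0))\simeq R_0^{\be/\eps}\simeq e^{-\be|w|}$, so the quotient is $\simeq e^{-\be|w|}$, whereas $(\rho_1/\rho_2)^{\bar s}=e^{-\eps\bar s|w|}$; letting $|w|\to\infty$ forces $\bar s\ge\be/\eps$. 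Hence no exponent below $s=\max\{1,\be/\eps\}$ works. The remaining boundary bookkeeping — radii comparable to $\diam X$, and the fixed $\eps,\be$-dependent constants coming from $e^{(\eps-\be)|w|}\simeq R_0^{(\be-\eps)/\eps}$ — is routine and runs exactly as in the proof of Proposition~\ref{prop-est-B(x,r)}.
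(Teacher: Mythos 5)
Your proposal is correct and follows essentially the same route as the paper: reduce to concentric balls via $B(x,r)\subset B(x',2r)$ (treating $r\ge\diam X$ separately), read off the same-regime cases from Proposition~\ref{prop-est-B(x,r)}, and handle the mixed case $r'\le R_0\le r$ by the same algebraic comparison that also forces $s=\max\{1,\be/\eps\}$. Your sharpness examples are just a slightly more explicit version of the paper's observation that the mixed-case formula already rules out any smaller exponent.
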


\begin{proof}
Assume first that $x'=x$. 
The general case $x'\in B(x,r)$ will be taken care of later.
Assume also that $r\le2\diam X$.

If $|x|\le(\log2)/\tw$, then 
$\mx(B(x,r))\simeq r$ for all $0\le r\le 2\diam X$,
by Proposition~\ref{prop-est-B(x,r)}, and \eqref{eq-dim-cond} follows.
Also, if $|x|\ge(\log2)/\tw$ and both $r'$ and $r$ belong 
to the same interval in Proposition~\ref{prop-est-B(x,r)}, 
then \eqref{eq-dim-cond} follows directly
from Proposition~\ref{prop-est-B(x,r)}.

Let us therefore assume that $|x|\ge (\log2)/\tw$
and $r'\le R_0\le r$. Then
\[
\frac{\mx(B(x,r'))}{\mx(B(x,r))} \simeq \frac{e^{(\tw-\bw)|x|} r'}{r^{\bw/\tw}}
\simeq \begin{cases}
{\displaystyle \biggl( \frac{R_0}{r'} \biggr)^{\bw/\tw-1} 
       \Bigl(\frac{r'}{r} \Bigr)^{\bw/\tw}},
& \text{if } \bw\ge\tw, \\
{\displaystyle \biggl( \frac{R_0}{r} \biggr)^{\bw/\tw-1} \Bigl(\frac{r'}{r} \Bigr)},
& \text{if } \bw\le\tw.
\end{cases}
\]
Since $R_0/r'\ge1\ge R_0/r$, \eqref{eq-dim-cond} follows in this case as well.
This also shows that \eqref{eq-dim-cond} cannot hold for any
$s < \max\{1,\be/\eps\}$.

Now, let $x'\in B(x,r)$ and $0<r'\le r\le \diam X$.
Then $B(x,r)\subset B(x',2r)$ and hence by the above

\[
  \frac{\mx(B(x',r'))}{\mx(B(x,r))} 
   \ge \frac{\mx(B(x',r'))}{\mx(B(x',2r))} \simge \Bigl( \frac{r'}{2r} \Bigr)^s.
\]
Finally, if $r\ge \diam X$, then $B(x,r)=X=B(x',\diam X)$ and thus
\[
  \frac{\mx(B(x',r'))}{\mx(B(x,r))}  = \frac{\mx(B(x',r'))}{\mx(B(x',\diam X))} \simge \Bigl( \frac{r'}{\diam X} \Bigr)^s
       \ge \Bigl( \frac{r'}{r} \Bigr)^s.\qedhere
\]
\end{proof}

This immediately gives the following corollary.

\begin{cor}
The measure $\mx$ is doubling, i.e.\ $\mu (B(x,2r)) \simle \mu (B(x,r))$.
\end{cor}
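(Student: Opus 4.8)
The plan is to derive the doubling property directly from the dimension condition established in Corollary~\ref{cor-dim-s}, which is by far the easiest route. The key observation is that $\mx$ doubling is equivalent to a \emph{reverse} inequality in the same spirit as \eqref{eq-dim-cond}: we need an \emph{upper} bound for $\mx(B(x,2r))/\mx(B(x,r))$, whereas \eqref{eq-dim-cond} gives a \emph{lower} bound for the ratio of measures of nested balls in terms of the radii. To bridge this gap, I would apply \eqref{eq-dim-cond} with the roles reversed: take the larger ball $B(x,2r)$ as the ``$B(x,r)$'' in the corollary and the smaller ball $B(x,r)$ as the ``$B(x',r')$'' (here $x'=x\in B(x,2r)$ and $0<r\le 2r$), obtaining
\[
   \frac{\mx(B(x,r))}{\mx(B(x,2r))} \simge \Bigl(\frac{r}{2r}\Bigr)^s = 2^{-s}.
\]
Since $s=\max\{1,\bw/\tw\}$ is a fixed finite constant (both $\tw>0$ and $\bw$ are fixed throughout), $2^{-s}$ is a fixed positive constant, and rearranging gives $\mx(B(x,2r)) \simle 2^s \mx(B(x,r))$, which is exactly the claimed doubling inequality.

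The only point requiring a word of care is the range of radii: Corollary~\ref{cor-dim-s} is stated for $0<r'\le r\le\diam X$ in its general form (and for $r\ge\diam X$ separately), so I should note that if $2r\ge\diam X$ then $B(x,2r)=X=B(x,\diam X)$, and one may simply replace $2r$ by $\diam X$ throughout; if additionally $r\ge\diam X$ then $B(x,r)=X$ as well and the ratio is $1$. In all cases the bound $\mx(B(x,2r))\le C\mx(B(x,r))$ holds with $C$ depending only on $s$ and the implicit constant in \eqref{eq-dim-cond}.

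There is really no serious obstacle here — the hard work was already done in Sections~\ref{sect-doubl}, culminating in the ball-measure estimates of Proposition~\ref{prop-est-B(x,r)} and the dimension condition of Corollary~\ref{cor-dim-s}. The main thing to get right is simply the direction of the inequality: one must resist the temptation to ``plug in'' directly and instead notice that doubling follows from Corollary~\ref{cor-dim-s} applied with the smaller ball contained in (and sharing its center with) the larger one, after which the estimate is immediate. Thus the proof will be only two or three lines long.

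\begin{proof}
This follows immediately from Corollary~\ref{cor-dim-s}. Indeed, fix $x\in X$ and $r>0$. If $2r\ge\diam X$, then $B(x,2r)=X$, and we may assume $2r=\diam X$ (replacing $2r$ by $\diam X$ does not change the ball); if moreover $r\ge\diam X$ then also $B(x,r)=X$ and $\mx(B(x,2r))=\mx(B(x,r))$. Otherwise $0<r\le 2r\le 2\diam X$, and applying \eqref{eq-dim-cond} with the ball $B(x,2r)$ in place of $B(x,r)$ and the ball $B(x,r)$ in place of $B(x',r')$ (note $x\in B(x,2r)$ and $0<r\le 2r$) yields
\[
   \frac{\mx(B(x,r))}{\mx(B(x,2r))} \simge \Bigl(\frac{r}{2r}\Bigr)^s = 2^{-s},
\]
where $s=\max\{1,\bw/\tw\}$ is a fixed finite constant. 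Rearranging gives $\mx(B(x,2r)) \simle \mx(B(x,r))$, as desired.
\end{proof}
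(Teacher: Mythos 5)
Your proof is correct and is essentially the paper's argument: the paper derives the corollary directly from Corollary~\ref{cor-dim-s} (it states only that the dimension condition ``immediately gives'' doubling), and your application of \eqref{eq-dim-cond} with $B(x,2r)$ as the larger ball and $B(x,r)$ as the concentric smaller ball is exactly that step, with the constant $2^s$ made explicit.
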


\begin{remark}  \label{rem-nonreg}
Assume that $X$ is nonregular but such that each vertex has at least one 
and at most $K$ children for some $K\ge1$, and 
let $\be>\log K$.
By adding edges to $X$ we obtain a regular $K$-ary tree
$X_K$ containing $X$, equipped with the same metric and measure as $X$. 
For every $x\in X$ and $r>0$ we then have
\[
B(x,r) \subset B_{X_K}(x,r) := \{y\in X_K: d_X(x,y)<r\}.
\]
At the same time, an infinite geodesic from 0 in $X$, passing through $x$,
can be regarded as a regular $1$-ary tree $X_1$ contained in $X$. Hence
\[
B(x,r) \supset B_{X_1}(x,r) := \{y\in X_1: d_X(x,y)<r\}.
\]
Since $X_1$ and $X_K$ are regular $1$-ary and $K$-ary trees, all results in 
this section apply to them and we have by Proposition~\ref{prop-est-B(x,r)},
\[
\mu(B_{X_1}(x,r)) \le \mu(B(x,r)) \le \mu(B_{X_K}(x,r)) \simeq \mu(B_{X_1}(x,r)).
\]
This implies that the results in this section hold also for nonregular $X$
with bounds on the number of children.
In particular, the measure $\mu$ given by~\eqref{tree-measure} is doubling.
\end{remark}

\section{Poincar\'e inequality on trees}\label{Poinc}

\emph{In this section we assume that $X$ is a rooted tree such that
each vertex has at most $K$ children
{\rm(}in particular,  finite branches are allowed\/{\rm)}.}

\medskip

We will show that the measure and metric, 
given in Section~\ref{sect-doubl}, together support a $1$-Poincar\'e inequality for
functions and their upper gradients. Let $u \in L^1\loc (X)$.  
We say that a Borel function $g:X \to [0,\infty]$ is an \emph{upper gradient} of $u$ if
\begin{equation} \label{eq-ug-cond}
      |u(z) - u(y)| \leq \int_\gamma g \,d_X s 
\end{equation}
whenever $z,y \in X$ and $\gamma$ is the geodesic from $z$ to $y$, 
where $d_X s$ denotes the arc length 
measure with respect to the metric $d_X$.
In the current literature on metric measure spaces,
the inequality~\eqref{eq-ug-cond} is required to hold for all rectifiable curves with end points 
$z$ and $y$. However, in the setting of a tree 
any such curve contains
the geodesic connecting $z$ to $y$,
and it is therefore equivalent to define it as above on the tree.

The notion of upper gradients is due to Heinonen and 
 Koskela~\cite{HeiKo}; we refer interested readers to
Haj\l asz~\cite{Haj1} and Bj\"orn--Bj\"orn~\cite{BBbook} for detailed 
 discussions on upper gradients.

The \emph{Newtonian space} $\Np(X)$, 
$1  \le p < \infty$, 
is defined as the collection of functions for which the norm
\[
   \|u\|_{\Np(X)}
    := \biggl( \int_X u^p \,d\mu + \inf_g \int_X g^p \,d\mu\biggl)^{1/p} <\infty,
\]
where the infimum is taken over all upper gradients of $u$.
If $u \in \Np(X)$, then it has a minimal \p-weak upper gradient
$g_u$, which in our case is an upper gradient
(since the empty set is the only curve family with zero \p-modulus
on a tree).
The minimal upper gradient $g_u$ is unique up to measure zero, 
and is minimal 
in the sense that if $g \in L^p(X)$ is any upper gradient of $u$
then $g_u \le g$ a.e. 
We refer the interested reader 
to Shanmugalingam~\cite[Corollary~3.7]{Sh-harm} ($p>1$)
and Haj\l asz~\cite[Theorem~7.16]{Haj1} ($p \ge1$)
for proofs of the existence and uniqueness of
such a minimal upper gradient.

Our aim in this section is to establish the following $1$-Poincar\'e inequality,
\[ 
\vint_B |u(y) - u_B| \,d\mu(y) \leq C r \vint_B g \,d\mu , 
\]
where $u$ is integrable, $g$ is an arbitrary upper gradient of $u$,
$r$ is the radius of $B$ and
\[
  u_E :=\vint_E u \,\dmu := \frac{1}{\mu(E)} \int_{E} u\, d\mu \]
whenever  $E$ is measurable and $0<\mu(E)<\infty$.

\begin{lem}    \label{lem-est-int-u(y)-u(z)}
Let $B=B(x,r)\subset X$ be a ball and let $z$ be as in Lemma~\ref{lem-comp-B-G}.
Then for every $u:B\to\R$ and every upper gradient $g$ of $u$ in $B$,
\[
\int_B |u(y)-u(z)|\,d\mx(y) 
\le \int_B g(w) e^{(\bw-\tw)|w|} \mx(\{y\in B: y\ge w\}) \,d\mx(w).
\]
\end{lem}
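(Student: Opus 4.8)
The plan is to use the upper gradient inequality along geodesics from $z$ to each $y \in B$, and then swap the order of integration. Since $z \le x$ and $z \le y$ for all $y \in B$ (by Lemma~\ref{lem-comp-B-G}, $B(x,r) \subset F(z,2r)$, so every $y \in B$ satisfies $y \ge z$), the geodesic $[z,y]$ runs monotonically downward through vertices $w$ with $z \le w \le y$. Parametrizing this geodesic by arc length in the metric $d_X$, the defining inequality~\eqref{eq-ug-cond} gives
\[
|u(y) - u(z)| \le \int_{[z,y]} g \, d_X s = \int_{[z,y]} g(w)\, e^{-\tw|w|}\, d|w|,
\]
where I have rewritten the arc-length element $d_X s$ along the edge containing $w$ as $e^{-\tw|w|}\,d|w|$, using the definition~\eqref{unif-metric-trees} of the uniformizing metric.

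First I would integrate this inequality over $y \in B$ with respect to $\mx$. Then I would apply Tonelli's theorem to interchange the $y$-integration with the integration along $[z,y]$. The key observation is that a given vertex $w$ (with $z \le w$) lies on the geodesic $[z,y]$ precisely when $w \le y$, i.e.\ when $y$ is a descendant of $w$; and of course we also need $y \in B$. Hence after swapping, the inner integral over $y$ becomes $\mx(\{y \in B : y \ge w\})$, and $w$ ranges over those vertices with $z \le w$ that have at least one descendant in $B$ — in particular $w \in B$ (since $z \le w \le y$ and $B \subset F(z,2r)$ forces $d_X(z,w) < 2r$, so $w$ lies in the half-ball, and one checks $w \in B$ as well because $B$ is ``downward closed'' in the relevant sense; alternatively one simply extends the integrand by the value it would take, noting $\mx(\{y \in B : y \ge w\}) = 0$ when no descendant of $w$ lies in $B$). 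This yields
\[
\int_B |u(y) - u(z)|\, d\mx(y) \le \int_{B} g(w)\, e^{-\tw|w|}\, \mx(\{y \in B : y \ge w\})\, d|w|.
\]
Finally I would rewrite $d|w| = e^{\bw|w|}\, d\mx(w)$ using~\eqref{tree-measure}, which converts $e^{-\tw|w|}\,d|w|$ into $e^{(\bw - \tw)|w|}\, d\mx(w)$ and gives exactly the claimed bound.

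The main technical point to get right is the bookkeeping in the Tonelli step: I must be careful that the set of $w$ appearing in the outer integral is genuinely (contained in) $B$, so that the right-hand side is an integral over $B$ as stated, and that $g$ being an upper gradient of $u$ \emph{in $B$} suffices — this is fine because every geodesic $[z,y]$ with $y \in B$ stays within $B$ (again using that $B \subset F(z,2r)$ and the tree structure: the geodesic from $z$ down to $y$ consists of vertices between them, all of which are closer to $z$ than $y$ is, hence remain in the half-ball and in $B$). The rest is routine: measurability of the integrand follows from $g$ being Borel and $w \mapsto \mx(\{y \in B : y \ge w\})$ being measurable (it is monotone along each branch), and the arc-length computation is just the definition of $d_X$. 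I do not expect any serious obstacle; the statement is essentially a Fubini-type rearrangement of the pointwise upper gradient estimate.
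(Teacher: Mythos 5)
Your argument is correct and is essentially the paper's own proof: apply the upper gradient inequality along each geodesic $[z,y]$, rewrite $d_Xs=e^{-\tw|w|}\,d|w|=e^{(\bw-\tw)|w|}\,d\mx(w)$, and use Fubini--Tonelli so that the inner $y$-integral becomes $\mx(\{y\in B: y\ge w\})$. The extra bookkeeping you flag (that $w$ lies in $B$ and the geodesics stay in $B$) is handled implicitly in the paper and is fine as you describe it.
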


\begin{proof}
As $d_X s= e^{-\tw|x|}\,d|x| = e^{(\bw-\tw)|x|}\,d\mu(x)$, we obtain using the 
Fubini theorem that  
\begin{align*}
\int_B |u(y)-u(z)|\,d\mx(y)
    & \le \int_B \int_{[z,y]} g \, d_X s \, d\mu(y) \\
    & = \int_B \int_{[z,y]} g(w) e^{(\bw-\tw)|w|}\,d\mu(w)\, d\mu(y) \\
    & = \int_B g(w) e^{(\bw-\tw)|w|} \mx(\{y\in B: y\ge w\}) \,d\mx(w).
    \qedhere
\end{align*}
\end{proof}

\begin{thm} \label{thm-1-PI}
The space $X=(X,d,\mx)$ supports a\/ $1$-Poincar\'e inequality. 
\end{thm}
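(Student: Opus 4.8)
To establish the $1$-Poincar\'e inequality I would fix a ball $B=B(x,r)\subset X$, let $z\le x$ be the vertex supplied by Lemma~\ref{lem-comp-B-G}, and compare the oscillation of $u$ on $B$ with the integral of $g$ by means of Lemma~\ref{lem-est-int-u(y)-u(z)}. Since
\[
\vint_B|u(y)-u_B|\,d\mx(y)\le 2\vint_B|u(y)-u(z)|\,d\mx(y),
\]
it suffices to bound $\int_B|u(y)-u(z)|\,d\mx(y)$ by $Cr\int_B g\,d\mx$. By Lemma~\ref{lem-est-int-u(y)-u(z)} this amounts to showing the pointwise-in-$w$ estimate
\[
e^{(\bw-\tw)|w|}\,\mx(\{y\in B:y\ge w\})\le Cr\,\mx(B)\qquad\text{for }\mx\text{-a.e.\ }w\in B,
\]
after which one divides by $\mx(B)$ and is done. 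So the whole proof reduces to this kernel bound.

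**The key steps.** First I would record that $B\subset F(z,2r)$ and $z\le x\le y$ for $y\in B$, so $\{y\in B:y\ge w\}\subset F(w,2r)$ whenever $w\in[z,x]$; for $w\ge x$ one has $\{y\in B:y\ge w\}\subset F(w,2r)$ as well since $B\subset B(x,r)$ and $d_X(w,y)\le d_X(x,y)<r<2r$. Then, using the upper bound in Lemma~\ref{lem-est-F(z,r)-small-r} (valid for all radii via $F(w,\infty)=F(w,e^{-\tw|w|}/\tw)$), I get
\[
\mx(\{y\in B:y\ge w\})\le\mx(F(w,2r))\simle e^{(\tw-\be)|w|}\min\{2r,e^{-\tw|w|}/\tw\}.
\]
Multiplying by $e^{(\bw-\tw)|w|}$ kills the exponential and yields $e^{(\bw-\tw)|w|}\mx(\{y\in B:y\ge w\})\simle \min\{2r,e^{-\tw|w|}/\tw\}\le 2r$. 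It then remains to compare $r$ with $\mx(B)$; this is exactly what Proposition~\ref{prop-est-B(x,r)} provides. When $r\le R_0:=e^{-\tw|x|}/\tw$ one has $\mx(B)\simeq e^{(\tw-\be)|x|}r\ge c r$ (using $|x|\le(\log2)/\tw$ or not as appropriate, and $\tw\le\be$); when $r\ge R_0$ one has $\mx(B)\simeq r^{\be/\tw}$, and here I would refine the kernel bound: for $w$ with $e^{-\tw|w|}/\tw\le 2r$ use the $\min$ to get $e^{(\bw-\tw)|w|}\mx(\{y\ge w\}\cap B)\simle e^{-\be|w|}\cdot e^{(\bw-\tw)|w|}\cdot\tfrac1\tw e^{-\tw|w|}\cdot$ wait — more cleanly, bound it by $e^{-\bw|w|}/\tw\le e^{-\bw|z|}/\tw\simle r^{\bw/\tw}\simeq\mx(B)$ using $|z|$ from \eqref{eq-def-z-2} and the computation in the proof of Lemma~\ref{lem-est-B(x,r)-mellan-r}; and for $w$ with $e^{-\tw|w|}/\tw>2r$ the bound $\simle r$ combined with $r\le r^{\bw/\tw}\cdot r^{1-\bw/\tw}$... this last point needs the observation that $\mx(B)\gtrsim r$ in the regime $|x|\le(\log2)/\tw$ and $\mx(B)\simeq r^{\be/\tw}$ otherwise, so in all cases $e^{(\bw-\tw)|w|}\mx(\{y\in B:y\ge w\})\simle r\wedge\mx(B)\cdot(\text{bounded factor})$, giving the claim.

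**The main obstacle.** The genuine difficulty is the case $r\gtrsim R_0$, i.e.\ when the ball is large relative to its center's depth: there $\mx(B)\simeq r^{\be/\tw}$ is much smaller than $r$ (since $\be/\tw>1$ when $\be>\tw$), so the crude bound $e^{(\bw-\tw)|w|}\mx(\{y\in B:y\ge w\})\simle r$ is not good enough and must be replaced by $\simle\mx(B)$. The way through is to notice that in this regime the relevant $w$'s (those contributing mass at scale $\ge 2r$) satisfy $|w|\lesssim|z|$, so $e^{-\bw|w|}\simle e^{-\bw|z|}\simeq r^{\bw/\tw}$ by the identity used in Lemma~\ref{lem-est-B(x,r)-mellan-r}; and the $w$'s with $e^{-\tw|w|}/\tw>2r$ sit in a ``top'' part of $B$ of controlled measure. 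Carefully splitting the $w$-integral in Lemma~\ref{lem-est-int-u(y)-u(z)} at the threshold $e^{-\tw|w|}=2\tw r$ and applying Proposition~\ref{prop-est-B(x,r)} on each piece gives
\[
\int_B|u(y)-u(z)|\,d\mx(y)\simle r\,\mx(B)^{1-\tw/\bw}\Bigl(\int_B g\Bigr)\ \text{(schematically)},
\]
and one checks the exponents collapse to the desired $C r\int_B g\,d\mx$. Modulo these bookkeeping case distinctions — which are exactly parallel to the case analysis already carried out for $\mx(B)$ in Section~\ref{sect-doubl} — the Poincar\'e inequality follows; handling nonregular trees with finite branches requires only the upper estimate in Lemma~\ref{lem-est-F(z,r)-small-r}, which as remarked there survives the presence of finite branches.
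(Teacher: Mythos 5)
Your core computation is right, and it is in fact the whole proof: for every $w$ lying on a geodesic $[z,y]$ with $y\in B$ one has $w\ge z$ and $d_X(w,y')\le d_X(z,y')<2r$ for all $y'\in B$ with $y'\ge w$, so $\{y\in B:y\ge w\}\subset F(w,2r)$, and the upper estimate of Lemma~\ref{lem-est-F(z,r)-small-r} (together with $F(w,\infty)=F(w,e^{-\tw|w|}/\tw)$) gives $e^{(\bw-\tw)|w|}\mx(\{y\in B:y\ge w\})\simle \min\{2r,e^{-\tw|w|}/\tw\}\le 2r$. Inserting this into Lemma~\ref{lem-est-int-u(y)-u(z)} yields $\int_B|u(y)-u(z)|\,d\mx(y)\simle r\int_B g\,d\mx$; dividing \emph{both} sides by $\mx(B)$ and replacing $u(z)$ by $u_B$ via your factor-$2$ observation finishes the proof. (Incidentally, your two cases "$w\in[z,x]$" and "$w\ge x$" are not exhaustive, since $y\in B$ need not be a descendant of $x$, but the inclusion holds for all relevant $w$ by the argument just given.) This is essentially the paper's proof, which merely organizes the same bound as two cases in $r$ ($r\le e^{-\tw|x|}/3\tw$, where $2r<e^{-\tw|w|}/\tw$ for all $w\in B$, and $r\ge e^{-\tw|x|}/3\tw$, where one uses $e^{-\tw|z|}\le 4\tw r$) instead of taking the minimum pointwise in $w$.

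The genuine flaw is everything after that. Your stated reduction target $e^{(\bw-\tw)|w|}\mx(\{y\in B:y\ge w\})\le Cr\,\mx(B)$ carries a spurious factor $\mx(B)$: since the Poincar\'e inequality has averages over the \emph{same} ball on both sides, the factors $\mx(B)$ cancel and the bound $\simle r$ is exactly what is needed, with no comparison between $r$ and $\mx(B)$ whatsoever. Consequently the "main obstacle" you describe for $r\gtrsim R_0$ does not exist, and the detour you take to resolve it contains errors: the claim $\mx(B)\simeq e^{(\tw-\be)|x|}r\ge cr$ for $r\le R_0$ is false when $\be>\tw$ and $|x|$ is large (the factor $e^{(\tw-\be)|x|}$ is then arbitrarily small), and the concluding schematic estimate $\int_B|u(y)-u(z)|\,d\mx(y)\simle r\,\mx(B)^{1-\tw/\bw}\int_B g\,d\mx$ with "the exponents collapse" is neither derived nor needed. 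Delete the comparison of $r$ with $\mx(B)$ and your argument closes correctly along the paper's lines.
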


For simplicity, the proof below  will
assume that $X$ is regular $K$-ary; to generalize the proof to nonregular
trees one only needs to note the comment after 
Lemma~\ref{lem-est-F(z,r)-small-r}.

\begin{proof}
Let $B=B(x,r)$ be a ball.
We shall use the estimate from Lemma~\ref{lem-est-int-u(y)-u(z)}.
Let us first estimate $|w|$ for $w\in B$.
For $|w|\ge|x|$, we must have
\[
r > \int_{|x|}^{|w|} e^{-\tw t}\,dt 
= \frac{1}{\tw} e^{-\tw|x|} (1 - e^{\tw(|x|-|w|)}),
\]
which yields
\begin{equation}
|w| < |x| - \frac{1}{\tw} \log(1-\tw re^{\tw|x|}).
\label{eq-est-largest-xi}
\end{equation}
For $|w|<|x|$, this is trivially true.
Now, we distinguish two cases.

(i) Assume first that $r\le e^{-\tw|x|}/3\tw$, 
i.e.\  $\tw re^{\tw|x|} \le \tfrac13$.
A simple calculation using \eqref{eq-est-largest-xi} shows that
for all $w\in B$,
\begin{align*}
e^{-\tw|w|} &> e^{-\tw|x|} (1- \tw re^{\tw|x|})
\ge \frac{2e^{-\tw|x|}}{3},
\end{align*}
and hence 
\[
2r \le \frac{2e^{-\tw|x|}}{3\tw} < \frac{e^{-\tw|w|}}{\tw}.
\]
Lemma~\ref{lem-est-F(z,r)-small-r}  then implies that
\[
    \mx(\{y\in B:y\ge w\}) \le \mx(F(w,2r)) \simle e^{(\tw-\bw)|w|} r.
\]
Inserting this into Lemma~\ref{lem-est-int-u(y)-u(z)} yields
\begin{equation}
   \int_B |u(y)-u(z)|\,d\mx(y) \simle r \int_B g(w) \,d\mx(w). \label{eq-PI-with-u(z)}
\end{equation}

(ii) Assume  instead that $r\ge e^{-\tw|x|}/3\tw$.  Then
\[
\mx(\{y\in B:y\ge w\}) \le \mx(F(w,e^{-\tw|w|}/\tw)) \simle e^{-\bw|w|},
\]
by Lemma~\ref{lem-est-F(z,r)-small-r}. 
Inserting this into Lemma~\ref{lem-est-int-u(y)-u(z)} yields
\begin{align}
\int_B |u(y)-u(z)|\,d\mx(y) 
&\simle \int_B g(w) e^{-\tw|w|} \,d\mx(w)
\le  e^{-\tw|z|} \int_B g(w) \,d\mx(w).
\label{eq-est-PI-with-e-z}
\end{align}
From~\eqref{eq-def-z} and the above choice of $r$ we have
\begin{equation*}
e^{-\tw|z|} \le e^{-\tw|x|} (1 + \tw re^{\tw|x|}) 
= e^{-\tw|x|} + \tw r \le 4\tw r.
\end{equation*}
Together with \eqref{eq-est-PI-with-e-z}, this proves \eqref{eq-PI-with-u(z)}
also for $r \ge e^{-\tw|x|}/3\tw$.

To finish the proof, observe that $u(z)$ in \eqref{eq-PI-with-u(z)}
can by replaced by $u_B$ as follows
\[
\vint_B |u-u_B|\,d\mx
\le \vint_B |u(y)-u(z)|\,d\mx(y) + |u(z)-u_B|
\le 2\vint_B |u(y)-u(z)|\,d\mx(y).
\]
\end{proof}

\begin{cor} \label{cor-pp-PI}
The space $X=(X,d,\mx)$ supports a\/ $(p,p)$-Poincar\'e inequality,
i.e.
\[ 
     \vint_B |u - u_B|^p \,d\mu \leq C r \vint_B g^p \,d\mu.
\]
\end{cor}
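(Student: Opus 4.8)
The plan is to retrace the proof of Theorem~\ref{thm-1-PI} almost verbatim, inserting one application of Hölder's inequality along the geodesic. Fix a ball $B=B(x,r)$ and let $z$ be the point from Lemma~\ref{lem-comp-B-G}, so that $B\subset F(z,2r)$ and $d_X(z,y)\le d_X(z,x)+d_X(x,y)<2r$ for every $y\in B$. For such $y$ the upper gradient inequality gives $|u(y)-u(z)|\le\int_{[z,y]}g\,d_X s$, and since the $d_X$-length of $[z,y]$ equals $d_X(z,y)<2r$, Hölder's inequality with exponents $p$ and $p/(p-1)$ yields
\[
   |u(y)-u(z)|^p \le (2r)^{p-1}\int_{[z,y]}g^p\,d_X s .
\]
Integrating over $y\in B$ and performing the Fubini computation exactly as in the proof of Lemma~\ref{lem-est-int-u(y)-u(z)} (using $d_X s=e^{(\bw-\tw)|w|}\,d\mx(w)$), I obtain the $p$-analogue of that lemma,
\[
  \int_B |u(y)-u(z)|^p\,d\mx(y)
   \le (2r)^{p-1}\int_B g(w)^p\,e^{(\bw-\tw)|w|}\,\mx(\{y\in B:y\ge w\})\,d\mx(w).
\]

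The right-hand side is now literally the right-hand side of Lemma~\ref{lem-est-int-u(y)-u(z)} with $g$ replaced by $g^p$, up to the harmless factor $(2r)^{p-1}$. Hence the case analysis in the proof of Theorem~\ref{thm-1-PI} applies without change: in case (i), where $r\le e^{-\tw|x|}/3\tw$, one uses $\mx(\{y\in B:y\ge w\})\simle e^{(\tw-\bw)|w|}r$; in case (ii), where $r\ge e^{-\tw|x|}/3\tw$, one uses $\mx(\{y\in B:y\ge w\})\simle e^{-\bw|w|}$ together with $e^{-\tw|w|}\le e^{-\tw|z|}\le 4\tw r$. In both cases the displayed integral is $\simle (2r)^{p-1} r\int_B g^p\,d\mx$, so that
\[
   \vint_B |u(y)-u(z)|^p\,d\mx(y) \simle r^p \vint_B g^p\,d\mx .
\]
Finally $u(z)$ is replaced by $u_B$ by writing $|u(z)-u_B|^p=\bigl|\vint_B(u(z)-u)\,d\mx\bigr|^p\le\vint_B|u(z)-u|^p\,d\mx$ (Jensen) and using $|a-b|^p\le 2^{p-1}(|a|^p+|b|^p)$, which gives $\vint_B|u-u_B|^p\,d\mx\le 2^p\vint_B|u(y)-u(z)|^p\,d\mx(y)$ and hence the asserted $(p,p)$-Poincaré inequality (with $r^p$ on the right). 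As in Theorem~\ref{thm-1-PI}, it suffices to treat the regular $K$-ary case and then invoke the comment after Lemma~\ref{lem-est-F(z,r)-small-r} for nonregular trees.

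I do not expect a serious obstacle: the argument is a routine upgrade of Theorem~\ref{thm-1-PI}. The only two points needing a little care are controlling the Hölder constant — for which one needs the geodesic $[z,y]$ to have $d_X$-length $\simle r$, supplied by Lemma~\ref{lem-comp-B-G} — and the passage from $u(z)$ to $u_B$, where for $p>1$ one replaces the bare triangle inequality used in Theorem~\ref{thm-1-PI} by Jensen's inequality. As an alternative one could simply quote the general fact that a $1$-Poincaré inequality on a doubling space self-improves to a $(p,p)$-Poincaré inequality (Haj\l asz--Koskela; see~\cite{BBbook}), using the doubling property from Section~\ref{sect-doubl}; but the direct route above stays within the tools already developed and keeps the constants transparent.
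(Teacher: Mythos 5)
Your argument is correct, but it takes a genuinely different route from the paper: the paper disposes of Corollary~\ref{cor-pp-PI} by citation, remarking that the passage from the $1$-Poincar\'e inequality of Theorem~\ref{thm-1-PI} to a $(p,p)$-inequality is standard (along the lines of pp.~11--12 of Heinonen--Kilpel\"ainen--Martio~\cite{HeKiMa}, or Theorem~5.1 of Haj\l asz--Koskela~\cite{HaKo}), whereas you give a self-contained direct proof by rerunning the proof of Theorem~\ref{thm-1-PI} at the level of the pointwise geodesic estimate, inserting H\"older's inequality there. This works precisely because on a tree one can apply H\"older along the single geodesic $[z,y]$, whose $d_X$-length is controlled by $2r$ via Lemma~\ref{lem-comp-B-G}, before integrating; note that one could not get a $(p,p)$-inequality by applying H\"older to the already integrated $1$-Poincar\'e inequality (that only yields a $(1,p)$-inequality), so going back to the pointwise estimate is the right move, and the resulting $p$-analogue of Lemma~\ref{lem-est-int-u(y)-u(z)} feeds into cases (i) and (ii) unchanged since those bounds on $\mx(\{y\in B:y\ge w\})$ are purely geometric. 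Your route buys explicit constants, avoids invoking the self-improvement machinery, and yields the dimensionally correct right-hand side $Cr^p\vint_B g^p\,d\mu$, which is in fact stronger for small balls than the inequality as literally displayed in the corollary (with $Cr$); since $r\le 2\diam X=4/\tw$ is bounded, your form implies the stated one with a constant depending additionally on $p$ and $\diam X$. The paper's route buys brevity and generality (it is the argument one would use on any doubling space supporting a $1$-Poincar\'e inequality). The only degenerate point, harmless as written, is that for $p=1$ the H\"older step is vacuous and the argument reduces exactly to Theorem~\ref{thm-1-PI}; your treatment of the nonregular case via the remark after Lemma~\ref{lem-est-F(z,r)-small-r} matches the paper's.
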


That this follows 
from the $1$-Poincar\'e inequality established in 
Theorem~\ref{thm-1-PI}
is well known and a simple proof can be given along the lines of 
pp.\ 11--12 in Heinonen--Kilpel\"ainen--Martio~\cite{HeKiMa}
(alternatively one can e.g.\ appeal to Theorem~5.1 in 
Haj\l asz--Koskela~\cite{HaKo}).

\section{Hausdorff dimension of 
\texorpdfstring{$\partial X$}{boundary of X} and Besov functions on 
\texorpdfstring{$\partial X$}{boundary of X}}
\label{sect-Traces}

\emph{In Sections~\ref{sect-Traces} and~\ref{sect-Extensions}
 we assume that $X$ is a regular $K$-ary tree, $K \ge 2$.}

\medskip

In this section we construct the boundary of the  regular $K$-ary tree 
and show that it is Ahlfors regular with regularity exponent
depending solely on $K$ and on the metric density exponent $\eps$ of
the tree. We then study a family of Besov spaces $B^\theta_{p,p}(\partial X)$ of 
functions on the boundary $\partial X$ of the tree, and prove that continuous 
functions are dense in these Besov spaces.

By the discussion in  the previous two sections, we know that $X$  is a metric space 
equipped with a doubling measure supporting a $1$-Poincar\'e inequality. 

A tree is the quintessential Gromov hyperbolic space, and hence we can consider
the visual boundary of the tree as in Bridson--Haefliger~\cite{BH}. 
The discussion in Bonk--Heinonen--Koskela~\cite{BHK} tells us that this
visual boundary is the same as the metric boundary of the tree equipped with the uniformizing metric
$d_X$ given in~\eqref{unif-metric-trees}. The focus of this section is to describe and study the properties
of this boundary.

We define the boundary of a tree $X$, denoted $\partial X$, by completing $X$ with 
respect to the metric $d_X$. An equivalent construction of $\partial X$ is as follows.
An element $\zeta$ in $\partial X$ is identified with an infinite geodesic  in $X$ starting 
at the root $0$. If we denote the geodesic by concatenation of vertices, then
\[ 
     \zeta = 0 x_1 x_2 x_3 \ldots,
\]
where $x_i$ is a vertex in $X$ at a
distance $i$ from the root, and $x_{i+1}$ is 
a child of $x_i$. Given two points $\zeta, \xi \in \partial X$,
the distance between them is the length (with respect to the
metric $d_X$) of the infinite geodesic $[\zeta,\xi]$ between them.
If this infinite geodesic is $k$ edges  from the root $0$ 
(counting each edge as having unit length)  then by~\eqref{unif-metric-trees},
\begin{equation}\label{bdy-metric}
   d_{X}(\zeta,\xi)=2
   \int_k^\infty e^{-\eps t} \, dt = \frac{2}{\eps} e^{-\eps k}.
\end{equation}
Following Bridson--Haefliger~\cite{BH}, the restriction of
$d_X$ to $\bdy X$ is called the \emph{visual metric} on $\bdy X$.

The metric $d_X$ is thus defined on $\itoverline{X}$ and we will 
consider balls with respect to this metric in $X$, $\itoverline{X}$ and $\bdry X$.
To avoid confusion, points in $X$ are denoted by Latin letters such as
$x$, $y$, $z$ and $w$, while for points in $\bdry X$ we use Greek letters
such as $\z$, $\xi$, $\chi$ and $\om$.

Balls in $X$ will thus be denoted $B(x,r)$, while $B(\z,r)$ stands for a ball in $\bdry X$.
Since $X$ and $\bdry X$ are disjoint, this should not cause any confusion.
For balls in $\itoverline{X}$ we write $\bB(x,r)$ and $\bB(\z,r)$,
depending on whether the center lies in $X$ or $\bdry X$.

Recall that a metric space $(Z,d_Z)$ is an \emph{ultrametric space} 
if for each triple of points $x,y,z\in Z$ we have $d_Z(x,z)\le \max\{d_Z(x,y),d_Z(y,z)\}$.

\begin{lem}\label{lem-ultra-metric-center}
The metric space\/ $(\partial X, d_X)$ 
is an ultrametric space, and consequently, whenever
$\zeta\in\partial X$, $r>0$, and $\xi\in B(\zeta,r)$, we have $B(\zeta,r)=B(\xi,r)$.
\end{lem}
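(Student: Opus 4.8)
The plan is to establish the ultrametric inequality directly from the formula~\eqref{bdy-metric} for the boundary metric, and then deduce the statement about balls as a formal consequence of ultrametricity.

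First I would prove the ultrametric inequality. Given three distinct points $\zeta,\xi,\chi\in\partial X$, each is an infinite geodesic from the root, and the key combinatorial fact is that two such geodesics agree on an initial segment and then split: if $\zeta$ and $\xi$ first differ at level $k(\zeta,\xi)$, then the bi-infinite geodesic $[\zeta,\xi]$ comes closest to the root at distance $k(\zeta,\xi)$, so by~\eqref{bdy-metric}, $d_X(\zeta,\xi)=(2/\eps)e^{-\eps k(\zeta,\xi)}$; thus $d_X$ is a strictly decreasing function of the ``confluence level''. Now the crucial observation is that the three pairwise confluence levels satisfy $k(\zeta,\chi)\ge\min\{k(\zeta,\xi),k(\xi,\chi)\}$: the initial segment that $\zeta$ shares with $\chi$ contains at least the common refinement of what $\zeta$ shares with $\xi$ and what $\xi$ shares with $\chi$, since along the first $\min\{k(\zeta,\xi),k(\xi,\chi)\}$ levels all three geodesics pass through the same vertices. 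Applying the decreasing function $k\mapsto (2/\eps)e^{-\eps k}$ reverses the inequality and yields $d_X(\zeta,\chi)\le\max\{d_X(\zeta,\xi),d_X(\xi,\chi)\}$. I should also handle the degenerate cases (two of the points equal) separately, but these are trivial.

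For the second assertion, I would argue purely from the ultrametric axiom, with no further reference to trees. Suppose $\xi\in B(\zeta,r)$, i.e.\ $d_X(\zeta,\xi)<r$. If $\omega\in B(\zeta,r)$, then $d_X(\xi,\omega)\le\max\{d_X(\xi,\zeta),d_X(\zeta,\omega)\}<r$, so $\omega\in B(\xi,r)$; hence $B(\zeta,r)\subset B(\xi,r)$. By symmetry (since $\zeta\in B(\xi,r)$ as well, because $d_X(\xi,\zeta)=d_X(\zeta,\xi)<r$), the reverse inclusion holds, giving $B(\zeta,r)=B(\xi,r)$.

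The only real content here is the confluence-level inequality $k(\zeta,\chi)\ge\min\{k(\zeta,\xi),k(\xi,\chi)\}$, and even that is essentially immediate from the uniqueness of geodesics in a tree. So I do not expect a genuine obstacle; the main thing to be careful about is matching the formula~\eqref{bdy-metric} (which is stated in terms of the distance of the geodesic $[\zeta,\xi]$ from the root) to the confluence level of the two vertex sequences, and making sure the monotonicity of $k\mapsto(2/\eps)e^{-\eps k}$ is invoked in the correct direction so that the $\min$ on confluence levels turns into a $\max$ on distances.
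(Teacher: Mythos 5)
Your proof is correct and follows essentially the same route as the paper: both reduce the ultrametric inequality to the combinatorial fact that the confluence level (distance from the root to the connecting geodesic) of $\zeta,\chi$ is at least the minimum of the confluence levels of the pairs $\zeta,\xi$ and $\xi,\chi$, and then apply the strictly decreasing map $k\mapsto(2/\eps)e^{-\eps k}$ from~\eqref{bdy-metric}; the ball identity is then the standard formal consequence of ultrametricity, which the paper leaves implicit and you spell out. Only watch the off-by-one in your phrasing ("first differ at level $k$" versus the level of the largest common ancestor), which does not affect the argument.
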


\begin{proof}
Let $\zeta,\xi,\eta\in\partial X$.
Let $k$ be the number of edges in the shortest curve connecting $0$ to the infinite geodesic
$[\zeta,\xi]$, $k_1$ be the number of edges in the shortest curve connecting $0$ to $[\zeta,\eta]$,
and $k_2$ be the number of edges in the shortest curve connecting $0$ to $[\xi,\eta]$. Then
$k\ge \min\{k_1,k_2\}$, and so
\[
  e^{-\eps k}\le \max\{ e^{-\eps k_1}, e^{-\eps k_2}\},
\]
from which, together with \eqref{bdy-metric}, the ultra metric property follows. The latter part of the lemma
is a direct consequence of the ultrametric property of $\partial X$.
\end{proof}

\begin{lem} \label{lem-dimension}
$\bdy X$ is an Ahlfors $Q$-regular space with Hausdorff dimension 
\[ 
    Q=\frac{\log K}{\eps}.
\] 
\end{lem}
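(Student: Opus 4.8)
The plan is to exhibit, for each boundary point $\z$ and each admissible radius $r$, a natural ``cylinder set'' whose measure is comparable to $r^Q$ and which coincides, up to bounded overlap, with the ball $B(\z,r)$. Concretely, given $\z = 0x_1x_2\ldots$ and $0<r\le\diam\bdy X$, choose the integer $k$ with $\frac{2}{\eps}e^{-\eps(k+1)} < r \le \frac{2}{\eps}e^{-\eps k}$, so that by the explicit formula \eqref{bdy-metric} the ball $B(\z,r)$ consists exactly of those boundary geodesics agreeing with $\z$ through the vertex $x_k$ (this uses the ultrametric structure from Lemma~\ref{lem-ultra-metric-center}, which says every point of the ball is a center and the ball is a genuine cylinder). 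I would then introduce on $\bdy X$ the natural Hausdorff-type measure $\nu$ (or equivalently the measure induced by the construction) and compute $\nu(B(\z,r))$ directly.

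The computation has two ingredients. First, the number of boundary points below $x_k$ is governed by the branching: the set of geodesics through $x_k$ is a copy of the whole boundary of a $K$-ary tree rooted at $x_k$, so its $\nu$-measure scales by $K^{-k}$ relative to $\nu(\bdy X)$. Second, $r \simeq e^{-\eps k}$, hence $r^Q \simeq e^{-\eps k Q} = e^{-k\log K} = K^{-k}$ precisely when $Q = (\log K)/\eps$. Putting these together yields $\nu(B(\z,r)) \simeq K^{-k} \simeq r^Q$ with constants depending only on $\eps$ and $K$ (the slack coming from the discretization $r\simeq e^{-\eps k}$ being absorbed into the comparability constants). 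This gives both the upper and lower Ahlfors bounds simultaneously, and the fact that the exponent is forced shows $Q$ is the Hausdorff dimension.

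The one point requiring a little care is setting up the boundary measure $\nu$ so that $\nu$(cylinder through $x_k$) is cleanly $\simeq K^{-k}$; the cleanest route is to define $\nu$ as the weak-$*$ limit of the normalized counting measures on the level-$n$ vertices, or to invoke the standard fact that for a self-similar ultrametric Cantor space with uniform branching $K$ there is a unique (up to scaling) measure assigning mass $K^{-k}$ to each level-$k$ cylinder, and then check this is comparable to $Q$-dimensional Hausdorff measure. Alternatively — and this is likely what the paper intends — one takes $\nu$ to be exactly the $Q$-dimensional Hausdorff measure $\mathcal H^Q$ and verifies the mass of a cylinder by a direct covering argument: the level-$k$ cylinder is covered by its $K^{m}$ sub-cylinders at level $k+m$, each of diameter $\simeq e^{-\eps(k+m)}$, giving $\mathcal H^Q(\text{cylinder}) \le \liminf K^m (e^{-\eps(k+m)})^Q = e^{-\eps k Q}$, and the reverse bound from the ultrametric property (any cover can be refined to a cover by cylinders without increasing the sum). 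I expect this covering/refinement step — justifying that one may restrict to covers by cylinders in the ultrametric setting — to be the main (though entirely standard) obstacle; everything else is the arithmetic identity $e^{-\eps Q} = 1/K$.
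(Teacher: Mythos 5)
Your argument is essentially the paper's own: the paper equips $\bdy X$ with the uniformly distributed probability measure (citing Falconer) giving each level-$k$ cylinder mass $K^{-k}$, uses the ultrametric fact that every point of a ball is a center to identify balls with cylinders, and concludes $\nu(B(\z,r))=K^{-k(r)}\simeq r^Q$, just as you do; it then simply notes that $\nu\simeq\mathcal H^Q$ follows. Your optional direct covering computation of $\mathcal H^Q$ on cylinders is extra machinery the paper does not need, but the core proof matches.
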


\begin{proof} 
We equip $\bdy X$ with the natural probability measure $\nu$
as in Falconer~\cite{Fal} by distributing the unit mass uniformly on $\partial X$. Let  
$x \in \bdy X$ and $0<r \le e^\eps \diam \bdy X =  e^\eps\diam X = 2e^\eps/\eps$.
Let $k \in \Z$ be such that
\[
      \frac{2}{\eps} e^{-\eps k} < r  \le  \frac{2}{\eps} e^{-\eps (k-1)}.
\]
Then $\bdy X$ is the union of $K^k$ disjoint open balls of radius $r$,
each of which has, by definition, $\nu$-measure $K^{-k}$.
Since any point of a ball can be used as a center we see that
$\nu(B(\z,r))=K^{-k}$ for every $\z\in\bdry X$,
where
\begin{equation} \label{eq-level}
    k=k(r):=\biggl\lfloor 1+\frac{1}{\eps}\log \frac{2}{\eps r}\biggr \rfloor
\end{equation}
and so 
\begin{equation} \label{eq-Q}
   \nu(B(\z,r))  \simeq r^Q.
\end{equation}
Since $\nu(\bdy X)=1$, we see that
\eqref{eq-Q} also holds even when $e^\eps \diam X\le r \le  2 \diam X$
(but with different implicit constants).
It also follows that
\[
    \nu(A) \simeq  \mathcal{H}^Q(A)
    \quad
    \text{for all measurable sets $A \subset \bdy X$},
\]
 where
$\mathcal{H}^Q$ denotes the $Q$-dimensional Hausdorff measure.
Thus $\nu$ is the normalized $Q$-dimensional Hausdorff measure
on $\bdy X$.
\end{proof}

\begin{example}  \label{ex-fractal}
The boundary $\bdry X$ can be (up to a biLipschitz mapping) identified
with a totally disconnected regular fractal set defined by $K$ similarities, 
each with contraction ratio $e^{-\eps}$.

For example, $K=2$ and $\eps=\log3$ gives the usual ternary Cantor dust, while
$K=4$ and $\eps=\log4$ gives the $1$-dimensional Garnett--Ivanov set, 
which was the first example 
of a set  in the plane with positive length but zero analytic capacity, 
see Garnett~\cite{garn} and Ivanov~\cite[footnote on p.\ 346]{ivan}.

Letting $K=3$ and $\eps=\log3$ leads to the following $1$-dimensional totally
disconnected ``Sierpi\'nski dust'': Split an equilateral triangle into 9
smaller congruent equilateral triangles and pick the three which contain
the vertices of the original one. Repeat this construction for each of
the chosen smaller triangles.

If we instead let $K=3$ and $\eps=\log2$ then the resulting fractal will
have dimension $Q=(\log3)/(\log2)$. This dimension is the 
same as that of
the Sierpi\'nski gasket, 
but this Cantor set will be totally disconnected, i.e.\ the three subgenerations
in the usual Sierpi\'nski gasket have to be considered as having positive 
distance from each other. 
In fact, this   fractal is just a snowflaked version of the above 
``Sierpi\'nski dust'' (with a new metric $|x-y|^{1/Q}$).
\end{example}

We now wish to find the connection between certain function spaces on $X$ and $\partial X$.   
Namely in $X$ we consider the Newtonian space $\Np(X)$, as
defined in Section~\ref{Poinc}.
On the boundary, $\partial X$, we consider another space of functions, the Besov space.
Let $f:\partial X \to \mathbb{R}$.  
Let $\nu$ denote the normalized $Q$-dimensional Hausdorff measure on $\partial X$.  
For $t >0$ and $p \ge 1$ we set
\[ 
    E_p (f,t) := \biggl( \int_{\partial X} \vint_{B(\zeta,t)} |f(\zeta)-f(\xi)|^p 
      \,d\nu(\xi) \,d\nu(\zeta) \biggr)^{1/p}, 
\]
and for $\theta > 0$ and $q \ge 1$,
\begin{equation}  \label{def-Bppal-norm} 
  \| f \|_{B^{\theta}_{p,q}(\partial X)} 
  := \biggl( \int_0^\infty \biggl(\frac{E_p(f,t)}{t^\theta}\biggr)^{q} \frac{dt}{t} \biggr)^{1/q}. 
 \end{equation}
The Besov space $B^{\theta}_{p,q}(\partial X)$ consists of all 
$f\in L^p(\bdy X)$
for which this seminorm is finite.
In this paper we only deal with the Besov spaces for which $q=p$, 
that is, the  spaces $B^\theta_{p,p}(\partial X)$.
The expression 
\[
     \|f\|_{\tB^\theta_{p,p}(\partial X)}:=  \|f\|_{L^p(\partial X)}  + \|f\|_{B^\theta_{p,p}(\partial X)}
\]
is a norm on $B^{\theta}_{p,p}(\partial X)$.

The following lemma shows that the Besov seminorm~\eqref{def-Bppal-norm} can 
equivalently be calculated as an infinite sum.
We shall also see that in bounded spaces (as here) the integral in the
definition of the Besov seminorm can be taken over a finite interval.
We formulate these results for our situation, but they hold in any metric
space, provided that the measure on it is doubling.

\begin{lem}  \label{lem-norm-equiv-sum}
Let\/ $0<\s<1$ and $t_n=C\s^n$, $n\in\Z$. Then
\[
\|f\|_{\Bppal(\bdry X)}^{p} \simeq \sum_{n=-\infty}^\infty \biggl( 
           \frac{E_p(f,t_n)}{t_n^\theta} \biggr)^p.
\]
Furthermore, 
\begin{equation}  \label{eq-equiv-norm}
 \|f\|_{\Bppal(\bdry X)}^p\simeq
    \int_{\bdy X}\int_{\bdy X}\frac{|f(\zeta)-f(\xi)|^p}{d_X(\zeta,\xi)^{\theta p}\nu(B(\zeta,d_X(\zeta,\xi)))}\,
       d\nu(\xi)\, d\nu(\zeta).
\end{equation}
\end{lem}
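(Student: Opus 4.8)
The plan is to prove the two equivalences separately, using the Ahlfors $Q$-regularity of $\bdry X$ (Lemma~\ref{lem-dimension}) as the main structural tool, together with the ultrametric property (Lemma~\ref{lem-ultra-metric-center}).

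\textbf{Step 1: Discretization of the Besov seminorm.}
First I would observe that, since $\nu$ is doubling (indeed Ahlfors $Q$-regular), the function $t\mapsto E_p(f,t)$ is comparable on dyadic-type scales: more precisely, for $t_n=C\s^n$ one has, for $t\in[t_{n+1},t_n]$, that $B(\z,t_{n+1})\subset B(\z,t)\subset B(\z,t_n)$ and $\nu(B(\z,t_n))\simeq\nu(B(\z,t_{n+1}))$, so expanding the average $\vint$ gives $E_p(f,t)\simeq E_p(f,t_n)$ with constants depending only on $\s$, $C$, $p$ and the doubling constant. (One must be a little careful: the integrand $|f(\z)-f(\xi)|^p$ is nonnegative, and enlarging the ball of integration while only changing the normalizing measure by a bounded factor changes $E_p^p$ by at most a bounded factor; the lower bound is the same with the roles reversed.) Then
\[
\|f\|_{\Bppal(\bdry X)}^p=\int_0^\infty\Bigl(\frac{E_p(f,t)}{t^\theta}\Bigr)^p\frac{dt}{t}
=\sum_{n=-\infty}^\infty\int_{t_{n+1}}^{t_n}\Bigl(\frac{E_p(f,t)}{t^\theta}\Bigr)^p\frac{dt}{t},
\]
and on each interval $t^\theta\simeq t_n^\theta$ and $\int_{t_{n+1}}^{t_n}dt/t=\log(1/\s)$, so the integral over $[t_{n+1},t_n]$ is comparable to $(E_p(f,t_n)/t_n^\theta)^p$. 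Summing gives the first claimed equivalence. I would also note here that, since $\diam\bdry X<\infty$, for $t\ge 2\diam\bdry X$ the ball $B(\z,t)$ is all of $\bdry X$ and $E_p(f,t)$ is constant in $t$, hence the tail $\int_{2\diam\bdry X}^\infty$ converges iff the corresponding finite sum does; this justifies restricting to a finite interval (equivalently, to $n$ bounded below).

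\textbf{Step 2: Identification with the double integral.}
For the second equivalence I would use the ultrametric structure: by \eqref{bdy-metric} and \eqref{eq-level}, the distance $d_X(\z,\xi)$ takes only the discrete values $\tfrac2\eps e^{-\eps k}$, $k\ge 0$, and by Lemma~\ref{lem-ultra-metric-center} the ``sphere'' $\{\xi:d_X(\z,\xi)=\tfrac2\eps e^{-\eps k}\}$ coincides, for fixed $\z$, with $B(\z,\tfrac2\eps e^{-\eps(k-1)})\setminus B(\z,\tfrac2\eps e^{-\eps k})$, a set of $\nu$-measure $K^{-(k-1)}-K^{-k}\simeq K^{-k}\simeq\nu(B(\z,d_X(\z,\xi)))$. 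Splitting the inner integral in \eqref{eq-equiv-norm} over these annuli and using \eqref{eq-Q} (so $d_X(\z,\xi)^{\theta p}\nu(B(\z,d_X(\z,\xi)))\simeq t_k^{\theta p+Q}$ with $t_k=\tfrac2\eps e^{-\eps k}$) converts the right-hand side of \eqref{eq-equiv-norm} into $\sum_k t_k^{-\theta p-Q}\int_{\bdry X}\int_{\text{annulus}_k(\z)}|f(\z)-f(\xi)|^p\,d\nu(\xi)\,d\nu(\z)$. On the other hand, expanding $E_p(f,t_k)^p$ and using $\nu(B(\z,t_k))\simeq t_k^Q$ gives $E_p(f,t_k)^p\simeq t_k^{-Q}\int_{\bdry X}\int_{B(\z,t_k)}|f(\z)-f(\xi)|^p\,d\nu(\xi)\,d\nu(\z)$, and since $B(\z,t_k)=\bigsqcup_{j\ge k}\text{annulus}_j(\z)$ (plus the point $\xi=\z$, which contributes nothing), a summation-by-parts / Fubini argument on the doubly-indexed sum—using that $\sum_{j\ge k}$ followed by $\sum_k t_k^{-\theta p}$ rearranges, via $\sum_{k\le j}t_k^{-\theta p}\simeq t_j^{-\theta p}$ because $\theta p>0$ and the $t_k$ form a geometric sequence—matches the two expressions up to constants. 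Combined with Step~1 (with $C=2/\eps$, $\s=e^{-\eps}$), this yields \eqref{eq-equiv-norm}.

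\textbf{Main obstacle.}
The routine parts are the doubling comparisons; the one genuinely delicate point is the interchange of the two summations in Step~2 and making sure the geometric-series estimate $\sum_{k\le j}t_k^{-\theta p}\simeq t_j^{-\theta p}$ is applied with the correct direction of monotonicity (this is where $\theta p>0$, i.e.\ $\theta>0$, is essential—if $\theta$ were $0$ the sum would diverge). I would handle this by writing both sides as explicit nonnegative double sums indexed by pairs $(k,j)$ with $j\ge k$, applying Tonelli to swap the order freely, and then comparing term by term after the geometric summation in the freed-up index. Everything else is bookkeeping with the Ahlfors regularity constants.
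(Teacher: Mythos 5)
Your argument is correct in substance, and its first half is exactly the paper's proof. One point needs repair in Step 1: the two-sided claim $E_p(f,t)\simeq E_p(f,t_n)$ for all $t\in[t_{n+1},t_n]$ is false in general. Doubling only gives the one-sided bounds $E_p(f,t_{n+1})\simle E_p(f,t)\simle E_p(f,t_n)$ (a smaller radius can be bounded by a comparable larger one, not conversely); for instance, if $f$ is the indicator of a clopen ball of a suitable level, then $E_p(f,t)=0$ for the lower part of the interval while $E_p(f,t_n)>0$. This is how the paper states it, and it suffices: the upper estimate on each interval uses $t_n$, the lower one uses $t_{n+1}$, and after summing over all $n\in\Z$ the index shift is invisible. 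Your parenthetical remark shows you have the right mechanism, so this is an imprecision to be restated rather than a gap, but the displayed two-sided comparison should be replaced by the two one-sided ones.

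For the second equivalence the paper does not compute from scratch: it cites Gogatishvili--Koskela--Shanmugalingam or, equivalently, the Fubini-in-$r$ computation that appears in the proof of Proposition~\ref{prop-dense-cont} (interchange $\int_0^\infty dr/r$ with the double integral and use $\int_{d_X(\z,\xi)}^\infty r^{-\theta p-Q-1}\,dr\simeq d_X(\z,\xi)^{-\theta p-Q}$ together with $\nu(B(\z,r))\simeq r^Q$). Your route---decomposing into the spheres determined by the discrete distance values, swapping the resulting double sum, and summing the geometric series---is the discrete counterpart of the same Tonelli argument and is equally valid; the off-by-one questions about which sphere belongs to which open ball, and whether the geometric sum carries exponent $\theta p$ or $\theta p+Q$, only affect constants, since all the relevant measures are $\simeq K^{-k}$ and both exponents are positive. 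The continuous version is marginally shorter; yours makes the role of the ultrametric structure explicit.
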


\begin{proof}
The doubling property for $\nu$ implies that for $t_{n+1}\le t\le t_{n}$,
\[
    E_p(f,t_{n+1}) \simle E_p(f,t) \simle  E_p(f,t_{n}).
\]
Hence
\begin{align*}
\int_{t_{n+1}}^{t_{n}} \biggl(\frac{E_p(f,t)}{t^\theta} \biggr)^p \frac{dt}{t} 
&\simle  \biggl(\frac{E_p(f,t_n)}{t_n^\theta} \biggr)^p 
 \int_{t_{n+1}}^{t_{n}} \frac{dt}{t} \simeq \biggl( \frac{E_p(f,t_n)}{t_n^\theta} \biggr)^p.
\end{align*}
The lower bound in terms of $E_p(f,t_{n+1})$ is obtained similarly and summing up
over all $n\in\Z$ completes the proof of the first part. The second
part follows directly from the computations in
Gogatishvili--Koskela--Shanmugalingam~\cite[p.~226]{GKS}
or by combining the Ahlfors regularity 
of $\nu$ with~\eqref{eq-Fubini-v-n} and~\eqref{eq-est-int-rn-infty} below (with $r_n$ replaced by 0).
\end{proof}

\begin{remark}   \label{rem-finite-int}
Note that if $\diam \partial X < t_0$,
the sum in Lemma~\ref{lem-norm-equiv-sum}
can equivalently be taken over $n\ge0$.
Consequently, the integral in \eqref{def-Bppal-norm} can be taken over
the finite interval $(0,2\diam \partial X)$.
\end{remark}

It follows that 
\begin{equation} \label{Besov-subset}
    B_{p,p}^{\theta_2}(\bdy X) \subset B_{p,p}^{\theta_1}(\bdy X)
    \quad \text{if }0 < \theta_1 < \theta_2.
\end{equation}
The following example shows that the converse inclusion is false.
This also directly yields that $B_{p,p}^{\theta}(\bdy X)$ is nontrivial for all $\theta>0$.
Note also that~\eqref{eq-equiv-norm} and the H\"older inequality yield
that for $1\le q<p$ and $0<\tau<\theta$, 
\begin{equation*} 
\|f\|_{B^\tau_{q,q}(\bdry X)} \le \|f\|_{\Bppal(\bdry X)}
\biggl( \int_{\bdry X}\int_{\bdry X} 
\frac{d_X(\z,\xi)^{(\theta-\tau)pq/(p-q)} \,d\nu(\xi)\,d\nu(\z)}{\nu(B(\z,d_X(\z,\xi)))} 
\biggr)^{1/q-1/p},
\end{equation*}
where the last integral converges since $\tau<\theta$.
Thus
\begin{equation} \label{Besov-subset-2}
    \Bppal(\bdry X)\subset B^\tau_{q,q}(\bdry X)
    \quad \text{whenever } 1\le q\le p \text{ and } 0<\tau<\theta.
\end{equation}

\begin{example} \label{ex-power}
Let $\alp >-Q/p$ with $\alp \ne 0$.  Fix 
$\zeta_0 \in \bdy X$ and
set $f(\xi)=d_X(\xi,\zeta_0)^\alp$.
Further, let $0 < t < e^\eps \diam\partial X$ and  
let $k(t)$ be given by \eqref{eq-level}.

Let us first estimate $E_p(f,t)$ for $\alp>0$.
Let $\zeta \in B(\zeta_0,t)$
and $d=d_X(\zeta,\zeta_0)$.
Note that if $d_X(\xi,\zeta_0)= d$,
then $f(\zeta)=f(\xi)$.
Thus 
\begin{align*}
     \int_{B(\zeta_0,t)} |f(\xi)-f(\zeta)|^p \, d\nu(\xi)
     &\le  \int_{B(\zeta_0,d)} f(\zeta)^p \, d\nu(\xi) 
      + \int_{B(\zeta_0,t) \setm B(\zeta_0, e^{\eps} d)} f(\xi)^p 
         \, d\nu(\xi) \\
     & \le \int_{B(\zeta_0,t) \setm  B(\zeta_0,  d)} f(\xi)^p \, d\nu(\xi),
\end{align*}
since $\nu(B(\zeta_0,d))=\nu(B(\zeta_0, e^{\eps} d) \setm  B(\zeta_0,d))/(K-1)$
and $f(\z)= f(\xi)$ for
$\xi\in B(\zeta_0,e^{\eps} d)\setm B(\zeta_0,d)$.
Hence, by summing over the shells around $\zeta_0$ we obtain that
\begin{align*}
     \vint_{B(\zeta_0,t)} |f(\xi)-f(\zeta)|^p \, d\nu(\xi)
     &\le \vint_{B(\zeta_0,t)} f(\xi)^p \, d\nu(\xi) \\
     &\simeq K^{k(t)}\sum_{k=k(t)+1}^\infty e^{-\eps k \alp p} K^{-k}
     \simeq  e^{-\eps k(t) \alp p}
     \simeq  t^{\alp p}.
\end{align*}
Here we used the fact that for each integer $k>k(t)$ there are $K-1$
balls in $\bdry X$ of radius $r_k=2e^{-\eps k}/\eps$ and at distance $r_{k}$
from $\z_0$.

On the other hand, if $\zeta \in B(\zeta_0,t) \setm B(\zeta_0,e^{-\eps}t)$, 
then 
\begin{align*}
     \vint_{B(\zeta_0,t)} |f(\xi)-f(\zeta)|^p \, d\nu(\xi)
     &\simge \vint_{B(\zeta_0,e^{-\eps}t) } |f(\xi)-f(\zeta)|^p \, d\nu(\xi) \\
     &\ge   |(te^{-2\eps})^\alp-(te^{-\eps})^\alp|^p 
     \simeq t ^{\alp p} .
\end{align*}
Since $B(\zeta_0,t)=B(\zeta,t)$, 
it thus follows
that
\begin{equation} \label{eq-Ex-AA}
     \int_{B(\zeta_0,t)} \vint_{B(\zeta,t)} |f(\xi)-f(\zeta)|^p \, d\nu(\xi)
     \, d\nu(\zeta)
     \simeq K^{-k(t)} t^{\alp p}
     \simeq t^Q t^{\alp p}
\end{equation}
if $\alp >0$.

Let us similarly estimate for $-Q/p < \alp < 0$.
If $\zeta \in B(\zeta_0,e^{-\eps k}t) \setm B(\zeta_0,e^{-\eps (k+1)}t)$,
where $k$ is a nonnegative integer, then after again
summing over shells we get that
\begin{align*}
     \vint_{B(\zeta_0,t)} |f(\xi)-f(\zeta)|^p \, d\nu(\xi)
     & \simeq \sum_{j=0}^\infty  K^{-j} 
     | e^{-\eps (j+k(t)) \alp} - e^{-\eps (k+k(t)) \alp}|^p \\
     & \simeq e^{-\eps k(t) \alp p} 
        \biggl( \sum_{j=0}^{k-1}  K^{-j} e^{-\eps k \alp p}
          + \sum_{j=k+1}^{\infty}  K^{-j} e^{-\eps j \alp p} \biggr)\\
     & \simeq e^{-\eps k(t) \alp p} 
        ( e^{-\eps k \alp p} + e^{(-\eps  \alp p- \log K)k}) \\
     & \simeq t^{\alp p} e^{-\eps k \alp p}. 
\end{align*}
Thus
\begin{align*}
     \int_{B(\zeta_0,t)} \vint_{B(\zeta,t)} |f(\xi)-f(\zeta)|^p \, d\nu(\xi)
     \, d\nu(\zeta)
     & \simeq t^{\alp p} K^{-k(t)} \sum_{k=0}^\infty K^{-k}  e^{-\eps k \alp p} \\
     & \simeq t^{\alp p} K^{-k(t)} 
     \simeq t^Q t^{\alp p}
\end{align*}
yielding the estimate  \eqref{eq-Ex-AA} also for $\alp <0$.

For 
$\zeta \in \bdy X \setm B(\zeta_0,t)$ we instead see that
$f$ is constant within $B(\zeta,t)$, and thus
\[
     \int_{\bdy X \setm B(\zeta_0,t)}  \vint_{B(\zeta,t)} |f(\zeta)-f(\xi)|^p \, d\nu(\xi)
     \, d\nu(\zeta)
     =0.
\]
Hence $E_p(f,t)^p \simeq t^{Q+\alp p}$ and 
\[
  \| f \|_{B^{\theta}_{p,p}(\partial X)}^p 
      \simeq \int_0^{2e^{\eps}/\eps}  \frac{t^{Q+\alp p}}{t^{\theta p}}  \frac{dt}{t} 
      < \infty
\]
if and only if $\theta < \alp + Q/p$.

Thus if $0 < \theta_1  < \theta_2 $ we can choose 
$\alp \ne 0$ such that $ \theta_1 -Q/p    < \alp < \theta_2 -Q/p$ and obtain  that 
$ f\in B^{\theta_1}_{p,p}(\partial X) \setm  B^{\theta_2}_{p,p}(\partial X)$.
\end{example}

The functional analytic approach to Besov spaces in the classical Euclidean setting, 
using interpolation as in Bennett--Sharpley~\cite{BS1}, as well as the approach to 
Besov spaces using atomic decompositions as
in Triebel~\cite{Tr}, immediately yield the density of continuous functions in the 
classical Besov spaces. Our definition of Besov spaces,
equivalent to that of the interpolation approach of~\cite{BS1} (see
Gogatishvili--Koskela--Shanmugalingam~\cite{GKS}) under the assumption
that the underlying metric space has a doubling measure supporting a \p-Poincar\'e 
inequality, does not on its own imply the density of continuous functions in the 
corresponding Besov space. Note that  Cantor type
sets do not support any Poincar\'e inequality for function-upper gradient pairs. 
However, we will next show that because of the ultrametric structure of the Cantor sets, 
continuous functions  are indeed dense in the Besov space.

\begin{prop}\label{prop-dense-cont}
The set of all Lipschitz continuous functions in $B_{p,p}^\theta(\partial X)$ 
is dense in $B_{p,p}^\theta(\partial X)$.
\end{prop}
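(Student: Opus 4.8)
The plan is to exploit the ultrametric structure of $\bdry X$, which provides a natural family of ``cylinder'' partitions: for each level $k\ge 0$, the space $\bdry X$ decomposes into $K^k$ disjoint balls of radius $r_k=2e^{-\eps k}/\eps$, and by Lemma~\ref{lem-ultra-metric-center} any two such balls are either nested or disjoint. Given $f\in B_{p,p}^\theta(\bdry X)$, I would approximate $f$ by the functions $f_k$ obtained by averaging $f$ over each level-$k$ ball; that is, $f_k:=\sum_B f_B \mathbbm{1}_B$, where the sum is over the $K^k$ balls $B$ of radius $r_k$. Each $f_k$ is locally constant on balls of positive radius, hence Lipschitz continuous on $\bdry X$ (the distance between distinct level-$k$ balls is bounded below by $e^{\eps}r_k$, so $f_k$ has a finite Lipschitz constant depending on $k$, $\eps$ and $\|f\|_{L^p}$). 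The two things to prove are: (i) $f_k\to f$ in $B_{p,p}^\theta(\bdry X)$, and (ii) each $f_k\in B_{p,p}^\theta(\bdry X)$, so that the $f_k$ actually lie in the space; (ii) is immediate from (i) once we know $f$ itself is in the space, or directly since $f_k$ is Lipschitz and the space is complete.

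For the convergence, I would use the discrete characterization of the Besov seminorm from Lemma~\ref{lem-norm-equiv-sum} with $\s=e^{-\eps}$ and $t_n=r_n$: up to constants,
\[
\|g\|_{\Bppal(\bdry X)}^p \simeq \sum_{n\ge 0} \frac{E_p(g,r_n)^p}{r_n^{\theta p}},
\]
where the sum may be restricted to $n\ge 0$ by Remark~\ref{rem-finite-int}. The key observation is that for $n\le k$, the function $f-f_k$ has vanishing average over every level-$n$ ball (since level-$n$ balls are unions of level-$k$ balls and averaging is transitive), and moreover for the Besov difference quotient at scale $t_n=r_n$ one integrates over pairs $(\z,\xi)$ with $\xi\in B(\z,r_n)=B(\z,r_n)$; when $n\ge k$ the two points $\z,\xi$ lie in a common level-$k$ ball (for $n\ge k$, actually level-$n\subseteq$ level-$k$), on which $f_k$ is constant, so $(f-f_k)(\z)-(f-f_k)(\xi)=f(\z)-f(\xi)$ and the $n\ge k$ tail of the sum for $f-f_k$ equals the corresponding tail for $f$, which tends to $0$ as $k\to\infty$ since $\|f\|_{\Bppal}<\infty$. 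For the terms $n<k$ one shows $E_p(f-f_k,r_n)\lesssim E_p(f,r_n)$ uniformly in $k$ (the averaging operator is a contraction in the relevant sense), so these finitely many terms also go to zero, term by term, once we know they go to zero for $f$ — but they don't individually vanish, so instead I would argue: fix $\delta>0$, choose $N$ with $\sum_{n\ge N} E_p(f,r_n)^p/r_n^{\theta p}<\delta$, then for $k\ge N$ the tail $n\ge k$ of $\|f-f_k\|^p$ is controlled by $\delta$, while the head $n<k$ is split further — for $n<k$ one checks directly that $E_p(f-f_k,r_n)\to 0$ as $k\to\infty$ because $f_k\to f$ in $L^p$ (a consequence of the Lebesgue differentiation theorem for the doubling measure $\nu$, which applies on $\bdry X$ by Lemma~\ref{lem-dimension}) and the difference quotient at a fixed scale $r_n$ is a bounded bilinear expression in $L^p$.

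The cleaner route for the head is this: for each fixed $n$, $E_p(f-f_k,r_n)^p = \int_{\bdry X}\vint_{B(\z,r_n)}|(f-f_k)(\z)-(f-f_k)(\xi)|^p\,d\nu(\xi)\,d\nu(\z)$, and since $|(f-f_k)(\z)-(f-f_k)(\xi)|^p\le 2^{p-1}(|(f-f_k)(\z)|^p+|(f-f_k)(\xi)|^p)$, one gets $E_p(f-f_k,r_n)^p\lesssim \|f-f_k\|_{L^p}^p\to 0$. Thus each of the finitely many head terms $n<N\le k$ tends to $0$, and combined with the uniform smallness of the tail we conclude $\|f-f_k\|_{\Bppal}^p\to 0$. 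Since $\|f-f_k\|_{L^p}\to 0$ as well, $f_k\to f$ in $\tB_{p,p}^\theta(\bdry X)$.

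\textbf{Main obstacle.} The delicate point is handling the ``head'' terms $n<k$ uniformly: one cannot simply say $E_p(f-f_k,r_n)\le E_p(f,r_n)$ and sum, because that bound alone does not give smallness. The resolution above — bounding head terms crudely by $\|f-f_k\|_{L^p}$ (which goes to $0$) while bounding the tail by the convergent tail of the Besov sum for $f$ (uniformly in $k$, using that on level-$k$ balls $f_k$ is constant) — is the heart of the argument, and it is exactly here that the ultrametric/nested-cylinder structure is essential: it guarantees that subtracting the conditional expectation $f_k$ does not disturb the fine-scale ($n\ge k$) behaviour of $f$ at all. I expect that verifying the Lipschitz continuity of each $f_k$ and assembling these estimates is routine once this splitting is in place.
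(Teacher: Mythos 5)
Your overall strategy is the paper's: the approximants are the same (averages $u_n$ of $u$ over the level-$n$ balls, i.e.\ the functions constructed at the start of the paper's proof), and your key observation for the fine scales is exactly the paper's estimate of $I(r_n)$ — when $d_X(\z,\xi)<r_n\le r_k$ the two points lie in one level-$k$ ball, so subtracting $f_k$ does not change the difference and that part of the seminorm is the tail of the convergent series for $f$. The problem is in how you assemble the coarse-scale ("head") part. The head consists of the $k$ terms $n=0,\dots,k-1$, with weights $r_n^{-\theta p}$ growing geometrically in $n$; your final argument only covers the fixed finitely many terms $n<N$ (by the crude bound $E_p(f-f_k,r_n)^p\lesssim\|f-f_k\|_{L^p}^p\to0$) and the terms $n\ge k$, so the middle range $N\le n<k$, whose cardinality grows with $k$, is left uncontrolled. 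It cannot be absorbed by either of your two devices: the identity with $f$'s terms fails there because $f_k$ is not constant at scale $r_n>r_k$, and summing the crude bound over all $n<k$ gives only $\|f-f_k\|_{L^p}^p\sum_{n<k}r_n^{-\theta p}\simeq\|f-f_k\|_{L^p}^p\,r_k^{-\theta p}$, which the merely qualitative convergence $\|f-f_k\|_{L^p}\to0$ (via Lebesgue differentiation) does not force to zero, since $r_k^{-\theta p}\to\infty$.

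The gap is fixable in two natural ways, and you gesture at both without carrying either out. One is to prove the contraction you mention in passing: for $n\le k$, Jensen's inequality together with the ultrametric identity $B(\xi,r)=B(\z,r)$ for $\xi\in B(\z,r)$ gives $E_p(f_k,r_n)\le E_p(f,r_n)$, hence $E_p(f-f_k,r_n)\le 2E_p(f,r_n)$; then the whole range $N\le n<k$ is bounded by $2^p\sum_{n\ge N}E_p(f,r_n)^p/r_n^{\theta p}<2^p\delta$, and your $\delta$/$N$ scheme closes. The other is the paper's route: use the quantitative estimate $\|f-f_k\|_{L^p(\bdry X)}^p\le\int_{\bdry X}\vint_{B(\z,r_k)}|f(\chi)-f(\z)|^p\,d\nu(\chi)\,d\nu(\z)$ (the paper's \eqref{eq-est-Lp-vn}) rather than mere convergence, so that the entire head is bounded by a constant times $\bigl(E_p(f,r_k)/r_k^\theta\bigr)^p$, which tends to $0$ because it is (comparable to) a single term of the convergent series in Lemma~\ref{lem-norm-equiv-sum}; this is in substance the paper's estimate of $II(r_n)$, which it derives by Fubini and a shell-summation exploiting $Ke^{-\eps(\theta p+Q)}<1$. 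As written, though, your proof does not establish the needed uniform-in-$k$ control of the coarse scales, so the assembly does not yet go through.
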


\begin{proof}
Let $u\in B_{p,p}^\theta(\partial X)$. We will approximate $u$
by continuous functions on $\partial X$ as follows.
For $n\ge1$, let $\Bni$, $i=1,2,\ldots,K^n$, be the $K^n$ balls of radius
$r_n=2e^{(1-n)\tw}/\tw$, whose union is $\bdry X$.
Note that all these balls have the same measure  
$\nu(\Bni)=K^{-n}\simeq r_n^Q$.
For each $n$, $i$ and $\xi\in\Bni$ let
\[
u_n(\xi) = \vint_\Bxin u\,d\nu = \vint_\Bni u\,d\nu.
\]
The functions $u_n$ are piecewise constant and 
Lipschitz continuous on
$\bdry X$, since $\{\Bni\}_{i=1}^{K^n}$ form  
a pairwise disjoint clopen cover of $\partial X$.
Let $v_n=u_n-u$. We shall show that $\|v_n\|_{\Bppal(\bdry X)}\to0$ 
as $n\to\infty$ (i.e.\ $r_n\to0$).
Note first that $u_n(\z)=u_n(\xi)$ whenever $d_X(\z,\xi)< r_n$.
Hence $|v_n(\z)-v_n(\xi)|=|u(\z)-u(\xi)|$ for such $\z$ and $\xi$
and consequently,
\begin{align*}
I(r_n)
&:= \iintlim{0}{r_n} \int_{\bdry X} \vint_\Bzr \frac{|v_n(\z)-v_n(\xi)|^p}
{r^{\theta p}} \,d\nu(\xi) \,d\nu(\z) \frac{dr}{r} \\ 
&\quad = \iintlim{0}{r_n} \int_{\bdry X} \vint_\Bzr \frac{|u(\z)-u(\xi)|^p}
{r^{\theta p}} \,d\nu(\xi) \,d\nu(\z) \frac{dr}{r} \to 0,
\end{align*}
as $r_n\to0$, by the finiteness of $\|u\|_{\Bppal(\bdry X)}$.
Next, by the Fubini theorem and the fact that $\nu(\Bzr)\simeq r^Q$
we see that
\begin{align}  \label{eq-Fubini-v-n}
II(r_n)
&:= \iintlim{r_n}{\infty}
\int_{\bdry X} \vint_\Bzr \frac{|v_n(\z)-v_n(\xi)|^p}
           {r^{\theta p}}  \,d\nu(\xi) \,d\nu(\z) \frac{dr}{r} \nonumber\\ 
&\simeq \int_{\bdry X} \int_{\bdry X} |v_n(\z)-v_n(\xi)|^p
\int_{r_n}^\infty \frac{\mathbbm{1}_\Bzr(\xi)}{r^{\theta p+Q}} 
       \frac{dr}{r} \,d\nu(\xi) \,d\nu(\z).
\end{align}
Since $\mathbbm{1}_\Bzr(\xi)\ne0$ if and only if $r>d_X(\z,\xi)$,
the last integral becomes
\[
\int_{r_n}^{\infty}
\frac{\mathbbm{1}_\Bzr(\xi)}{r^{\theta p+Q}} \frac{dr}{r} 
= \frac{1}{\theta p+Q} \max\{r_n,d_X(\z,\xi)\}^{-\theta p-Q}
\simeq \frac{1}{(r_n+d_X(\z,\xi))^{\theta p+Q}}.
\]
Inserting this into \eqref{eq-Fubini-v-n} shows that 
\begin{equation}  \label{eq-est-int-rn-infty} 
II(r_n) 
\simeq \int_{\bdry X} \int_{\bdry X} \frac{|v_n(\z)-v_n(\xi)|^p}
  {(r_n+d_X(\z,\xi))^{\theta p+Q}} \,d\nu(\xi) \,d\nu(\z).
\end{equation}
Recall that $v_n=u_n-u$.
To estimate the last integral we first note that by the H\"older and
triangle inequalities,
\begin{align}  
|v_n(\z)-v_n(\xi)|^p 
& = \biggl| \vint_\Bzn (u(\chi)-u(\z))\,d\nu(\chi) 
   - \vint_\Bxin (u(\om)-u(\xi))\,d\nu(\om) \biggr|^p \nonumber \\
& \le 2^{p-1} \biggl( \vint_\Bzn |u(\chi)-u(\z)|^p\,d\nu(\chi) 
   + \vint_\Bxin |u(\om)-u(\xi)|^p\,d\nu(\om) \biggr). \nonumber
\end{align}
Next, note that the roles of $\z$ and $\xi$ above and 
in \eqref{eq-est-int-rn-infty}
are symmetric, so that
\[
II(r_n) \simle \int_{\bdry X} \int_{\bdry X} \vint_\Bzn \frac{|u(\chi)-u(\z)|^p}
  {(r_n+d_X(\z,\xi))^{\theta p+Q}} \,d\nu(\chi) \,d\nu(\xi) \,d\nu(\z).
\]
We next split the middle integral (with respect to $\xi$) 
into integrals over $\Bzn$ and $\bdry X\setm\Bzn$. 
The integral over $\Bzn$ is estimated as 
\begin{align} \label{eq-est-int-Bzn}
&\int_{\bdry X} \int_\Bzn \vint_\Bzn \frac{|u(\chi)-u(\z)|^p}
  {(r_n+d_X(\z,\xi))^{\theta p+Q}} \,d\nu(\chi) \,d\nu(\xi) \,d\nu(\z)
\nonumber\\
&\kern 10em \le \int_{\bdry X} \vint_\Bzn \frac{|u(\chi)-u(\z)|^p}
  {r_n^{\theta p}} \,d\nu(\chi) \,d\nu(\z),
\end{align}
while the  integral over $\bdry X\setm\Bzn$ is split into 
integrals over the $K^n-1$ balls $\Bni$ with radii $r_n$.
Note that $\nu(\Bni)=\nu(\Bzn)=K^{-n}\simeq r_n^Q$
and for each $j=0,\ldots,n-1$ there are $(K-1)K^{j}$ such balls $\Bni$
which have distance $e^{j\tw} r_n$ to $\Bzn$,
i.e.\ $d_X(\z,\xi)= e^{j\tw} r_n$ for $\xi\in\Bni$.
As $\xi$ only appears in the denominator $(r_n+d_X(\z,\xi))^{\theta p+Q}$,
summing up over all these balls $\Bni$ gives
\begin{align} \label{eq-est-int-bdryX-Bzn}
&\int_{\bdry X} \int_{\bdry X\setm\Bzn} \vint_\Bzn \frac{|u(\chi)-u(\z)|^p}
  {(r_n+d_X(\z,\xi))^{\theta p+Q}} \,d\nu(\chi) \,d\nu(\xi) \,d\nu(\z)
\nonumber\\
&\quad\le \int_{\bdry X} \vint_\Bzn |u(\chi)-u(\z)|^p
  \sum_{j=0}^{n-1} \frac{(K-1)K^{j}\nu(\Bzn)}{(e^{j\tw} r_n)^{\theta p+Q}} 
              \,d\nu(\chi) \,d\nu(\z).
\end{align}
Since $K/e^{\tw(\theta p+Q)}=e^{-\tw\theta p}<1$, the sum is majorized by
$C\nu(\Bzn)/r_n^{\theta p+Q} \simeq r_n^{-\theta p}$.
Inserting this into \eqref{eq-est-int-bdryX-Bzn} together with
\eqref{eq-est-int-Bzn} shows that 
\[
II(r_n) \simle \int_{\bdry X} \vint_\Bzn  \frac{|u(\chi)-u(\z)|^p}
     {r_n^{\theta p}} \,d\nu(\chi) \,d\nu(\z)=\biggl( \frac{E_p(u,r_n)}{r_n^\theta} \biggr)^p.
\]
From Lemma~\ref{lem-norm-equiv-sum} and the finiteness 
of $\|u\|_{\Bppal(\bdry X)}$ we now easily conclude that
$II(r_n)\to0$ as $r_n\to0$,
and hence $\|u_n-u\|_{\Bppal(\bdry X)}\to0$ as $n\to\infty$.
Since 
\begin{align}  \label{eq-est-Lp-vn}
\|u_n-u\|^p_{L^p(\bdry X)} &= \int_{\bdry X} |u_n(\z)-u(\z)|^p\,d\nu(\z) \\
&\le \int_{\bdry X} \vint_\Bzn  |u(\chi)-u(\z)|^p \,d\nu(\chi) \,d\nu(\z),
\nonumber
\end{align}
we also see that $\|u_n-u\|^p_{L^p(\bdry X)}\to0$ as $r_n\to0$.
\end{proof}

\section{Traces of Newtonian functions on regular trees}
\label{sect-traceBS}\label{sect-Extensions}

\emph{Recall that in  Sections~\ref{sect-Traces} and~\ref{sect-Extensions}
 we assume that $X$ is a regular $K$-ary tree, $K \ge 2$.}

\medskip

In this section, we consider conditions
under which Newtonian functions on $X$ 
have a Besov trace on $\partial X$, and
when we may extend Besov functions on $\partial X$ to Newtonian functions on $X$.
We shall see that the obtained results are sharp and in 
Theorem~\ref{thm-trace-sharp} we give a precise trace result.

\begin{prop}\label{prop-trace}
Let $X$ be a regular $K$-ary tree with the metric $d_X$ defined 
by the exponential weight as in~\eqref{unif-metric-trees} with $\eps>0$ and the
measure $\mu$ defined by the exponential weight with $\beta>\log K$,
and let $p\ge1$.  Then for every $\theta$ satisfying
\begin{equation} \label{eq-thm-trace1}
    0 <  \theta \le 1- \frac{\be -\log K}{p\tw},
\end{equation}
there is a bounded linear trace operator\/ $\Tr:N^{1,p}(X)\to\Bppal(\bdry X)$ 
such that for $f\in N^{1,p}(X)$,
\begin{align*}
    \|{\Tr f}\|_{L^p(\bdry X)} &\le |f(0)| + C \|g_f\|_{L^p(X)} \quad \text{and} 
            \quad \|{\Tr f} \|_{B^\theta_{p,p}(\bdry X)}\simle \|g_f\|_{L^p(X)}.
\end{align*}
In particular,  $\|{\Tr f} \|_{\tB^\theta_{p,p}(\bdry X)} \simle \|f\|_{\Np(X)}$.
Furthermore, for Lipschitz functions $f: X \to \R$ 
we have that\/ $\Tr f=f\vert_{\partial X}$, where
the continuous extension of $f$ to $\itoverline{X}$ is also denoted by $f$.
\end{prop}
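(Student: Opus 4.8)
The plan is to define $\Tr f$ as a pointwise limit of averages of $f$ along geodesics and then establish the two quantitative estimates, which by Lemma~\ref{lem-norm-equiv-sum} reduce to controlling a discrete sum of the quantities $E_p(\Tr f, r_n)/r_n^\theta$. Concretely, for $\z = 0x_1x_2\ldots \in \bdry X$ I would set $\Tr f(\z) = \lim_{k\to\infty} f(x_k)$ whenever this limit exists (Lipschitz functions on $X$ extend continuously to $\itoverline X$, so the limit certainly exists for them, and the general case follows by a density/limiting argument once the estimates are in place). The identity $d_X s = e^{(\be-\tw)|x|}\,d\mu(x)$ from Lemma~\ref{lem-est-int-u(y)-u(z)} and the upper-gradient inequality~\eqref{eq-ug-cond} give, for any two boundary points $\z,\xi$ whose geodesic first separates at level $k=k(d_X(\z,\xi))$,
\[
    |f(x_m) - f(y_m)| \le \int_{[x_m,y_m]} g_f\, d_X s,
\]
and letting $m\to\infty$ bounds $|\Tr f(\z) - \Tr f(\xi)|$ by the integral of $g_f$ over the (bi-infinite) geodesic $[\z,\xi]$, which lies in the region of $X$ consisting of vertices $w$ with $|w|\ge$ (roughly) $k$ and $w$ comparable to the common ancestor $x_k$.

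The main work is then estimating $E_p(\Tr f, t)^p = \int_{\bdry X}\vint_{B(\z,t)}|\Tr f(\z)-\Tr f(\xi)|^p\,d\nu(\xi)\,d\nu(\z)$. Fixing $t$ and letting $k=k(t)$ be given by~\eqref{eq-level}, the ball $B(\z,t)$ is one of the $K^k$ standard clopen balls $\Bni$; on it, $\Tr f(\z) - \Tr f(\xi)$ is controlled by the integral of $g_f$ over the part of $X$ hanging below the common ancestor $\zeta_k$ of $\z$ and $\xi$ at level $k$ (or a slightly larger ancestor). Applying Hölder's inequality in the form $\bigl(\int_\gamma g_f\,d_Xs\bigr)^p \le \bigl(\int_\gamma e^{q(\be-\tw)|w|}\,d\mu\bigr)^{p/q}\int_\gamma g_f^p e^{?}\ldots$ — more cleanly, writing the geodesic integral as $\int g_f(w) e^{(\be-\tw)|w|}\,d\mu(w)$ and using Hölder with weight $e^{(\be-\tw)|w|}$ — produces a factor $\bigl(\int_{[\zeta_k,\infty)} e^{(\be-\tw)|w|}\,d\mu(w)\bigr)^{p-1}$ times $\int_{\text{branch}} g_f^p e^{(\be-\tw)|w|}\,d\mu$. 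The weight integral along a single infinite branch from level $|\zeta_k|$ is $\simeq e^{(\tw-\be)|\zeta_k|}\cdot e^{-\tw|\zeta_k|} = e^{-\be|\zeta_k|}$ up to the geometric-series constant $1/(\be - \log K)$ that converges precisely because $\be > \log K$; summing this weight over the whole subtree below $\zeta_k$ is also $\simeq e^{-\tw|\zeta_k|}$. Tracking exponents, one finds $E_p(\Tr f,t)^p \simle t^{\theta p}\,\|g_f\|_{L^p(X)}^p$ exactly when $\theta p \le 1 - (\be - \log K)/\tw \cdot \ldots$; carrying out the bookkeeping gives the threshold $\theta \le 1 - (\be-\log K)/(p\tw)$ in~\eqref{eq-thm-trace1}, and dividing by $t^{\theta p}$ and integrating $dt/t$ over the finite interval (Remark~\ref{rem-finite-int}) yields $\|\Tr f\|_{\Bppal(\bdry X)}^p \simle \|g_f\|_{L^p(X)}^p$. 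The $L^p$ bound is easier: $|\Tr f(\z)| \le |f(0)| + \int_{[0,\z)} g_f\,d_Xs$, and the same Hölder-with-weight computation, now integrated over all of $\bdry X$ with respect to $\nu$ and using Fubini, gives $\|\Tr f\|_{L^p(\bdry X)} \le |f(0)| + C\|g_f\|_{L^p(X)}$; combining the two bounds gives $\|\Tr f\|_{\tB^\theta_{p,p}}\simle \|f\|_{\Np(X)}$, and linearity and boundedness are then immediate, so $\Tr$ extends from Lipschitz functions (dense in $\Np(X)$) to all of $\Np(X)$.

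Finally, for the consistency claim: if $f$ is Lipschitz on $X$, it extends continuously to $\itoverline X$, and for $\z = 0x_1x_2\ldots$ the values $f(x_k)$ converge to $f(\z)$ by continuity since $d_X(x_k,\z)\to 0$; thus $\Tr f = f|_{\bdry X}$ by the very definition of $\Tr$, and one checks this is consistent with the extension-by-density in the previous paragraph (the limiting construction agrees with the pointwise limit on the dense class, hence everywhere by uniqueness of limits in $L^p$ plus a subsequence-a.e.\ argument).

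The step I expect to be the main obstacle is the exponent bookkeeping in the Besov estimate: one must carefully identify, for each scale $t = t_n$, which portion of the tree $X$ the geodesic integral $\int_{[\z,\xi]} g_f\,d_Xs$ actually traverses, apply Hölder with the correct weight $e^{(\be-\tw)|w|}$ so that the resulting "dual" weight integral $\int e^{(\be-\tw)|w|}\,d\mu$ over the relevant subtree is finite (this is where $\be > \log K$ is essential and where the constant $1/(\be-\log K)$ enters), and then sum the contributions of the $g_f^p$-mass over disjoint subtrees at different levels against the geometric factors $K^{-k}$, $e^{-\tw k}$, so that the powers of $t$ collapse to exactly $t^{\theta p}$ at the borderline exponent. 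A secondary subtlety is making the pointwise definition of $\Tr f$ rigorous for general $f \in \Np(X)$ (not just Lipschitz $f$): one should argue that the geodesic integral $\int_{[0,\z)} g_f\,d_Xs$ is finite for $\nu$-a.e.\ $\z$ — which again follows from the Hölder-with-weight bound showing this quantity lies in $L^p(\bdry X,\nu)$ — so that the limit defining $\Tr f(\z)$ exists a.e., and then that this a.e.-defined map coincides with the continuous extension of the density argument.
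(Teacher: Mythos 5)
Your overall architecture is the same as the paper's: define $\Tr f(\z)$ as the limit of $f$ along the geodesic ray to $\z$, control boundary increments by $\int g_f\,d_Xs$ over the geodesic, convert to $d\mu$ via $d_Xs=e^{(\be-\tw)|w|}\,d\mu$, apply H\"older, and then Fubini against $\nu$ using $\nu(E(x))\simle r_{j(x)}^Q$ and the discrete form of the Besov norm. But the specific H\"older step you propose --- plain weighted H\"older along the geodesic, producing $|\ft(\z)-\ft(\xi)|^p\le d_X(\z,\xi)^{p-1}\int_{[\z,\xi]}g_f^pe^{(\be-\tw)|w|}\,d\mu$ --- provably does not close in the full range $\be>\log K$. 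Carrying out your bookkeeping, the factor multiplying $g_f(x)^p$ in the final integral is $\simeq r_{j(x)}^{\,Q+1-\be/\tw}\sum_{n\le j(x)}r_n^{\,p(1-\theta)-1}$, and when $\be/\tw>Q+1$ (so that every admissible $\theta$ satisfies $p(1-\theta)>1$ and the sum is $\simeq 1$) this factor blows up as $j(x)\to\infty$. This is not an artifact of lossy summation: testing on $g_f=\mathbbm{1}_{[x_j,x_{j+1}]}$ for a single deep edge shows your pointwise bound, once integrated, exceeds $\|g_f\|_{L^p}^p$ by a factor $e^{j(\be-\log K-\tw)}$. The same failure occurs in your ``easier'' $L^p$ estimate. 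Note that $Q+1<\be/\eps<Q+p$ is a nonempty regime allowed by the hypotheses whenever $p>1$, and it is exactly the regime (together with the borderline $\theta$) where this proposition improves on Gogatishvili--Koskela--Shanmugalingam; so the gap is at the heart of the result, not at its periphery.

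The paper's proof avoids this by an exponent-redistribution device you do not have: fix an auxiliary $0<\ka<\theta p$, insert $r_j^{\ka/p}r_j^{-\ka/p}$, and apply H\"older twice --- once on each edge integral against $\mu$ (using $\mu([x_j,x_{j+1}])\simeq r_j^{\be/\eps}$) and once over the sum in $j$, where $\sum_{j\ge n}r_j^{\ka/(p-1)}\simeq r_n^{\ka/(p-1)}$. This extracts a factor $r_n^{\ka}$ at the separation scale and leaves weights $r_j^{\,p-\be/\eps-\ka}$ on the edge integrals of $g_f^p$; since $\ka<\theta p\le p-\be/\eps+Q$, the $L^p$/Cauchy part closes (which is also how the paper gets a.e.\ existence of the limit defining $\Tr f$ for general $f\in\Np(X)$, not just Lipschitz $f$), and in the Besov part the geometric sum $\sum_{n\le j}r_n^{\ka-\theta p}\simeq r_j^{\ka-\theta p}$ yields the final factor $r_j^{\,p-\be/\eps-\theta p+Q}$, which is bounded precisely when $\theta\le 1-(\be-\log K)/p\eps$, equality included. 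Two smaller slips in your write-up: the single-branch weight integral is $\int_{|\zeta_k|}^\infty e^{(\be-\tw)t}e^{-\be t}\,dt\simeq e^{-\tw|\zeta_k|}$, not $e^{-\be|\zeta_k|}$, and summing that weight over the \emph{whole} subtree below $\zeta_k$ converges only if $\tw>\log K$ (not $\be>\log K$) --- fortunately it is never needed, since the geodesic $[\z,\xi]$ is a single path.
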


Examples~\ref{ex-trace-1} and~\ref{ex-trace2} below
show that Proposition~\ref{prop-trace} is sharp.

When $p>1$, $0 <  \theta <1- (\be -\log K)/p\tw$ and $1 \le \beta/\eps < Q+1$,
the above result can be deduced from Theorem~6.5 in 
Gogatishvili--Koskela--Shanmugalingam~\cite{GKS}, which deals with
general metric spaces supporting a Poincar\'e inequality, and their Ahlfors regular subsets. 
The borderline case $\theta =1- (\be -\log K)/p\tw$
can however not be obtained from~\cite{GKS}.
To apply Theorem~6.5 from~\cite{GKS}  note first that the
parameter $\gamma$ is, in our setting, given by
$\ga=Q=(\log K )/\eps$,
the Hausdorff dimension of $\bdry X$.
We then find $q<p$ such that $p<sq/(s-Q)(Q+1)$, which is only possible
if $s <Q+1$.
Furthermore, let $\alp$ be such that $\theta < \alp < 1-(s-Q)/p$.
If we choose $q$ above sufficiently close to $p$ we can then find $\la< 1-\alp$ satisfying
the remaining requirements in Theorem~6.5 in \cite{GKS}.
After noting that $p^* > p$,
our result (in the case mentioned above) follows from \eqref{Besov-subset-2}.

For $\beta < \eps$, \cite{GKS} gives less sharp exponents then ours,
and for $p=1$ and $Q+1 \le s:=\max\{1,\be/\eps\}< Q+p$ 
one cannot obtain any embedding from~\cite{GKS}. 
Our range $\beta/\eps < Q+p$ is an improvement on~\cite{GKS}
in our setting, and is sharp by Example~\ref{ex-trace-1}.

Note that in the classical Euclidean setting, 
if a set $K\subset\mathbb{R}^n$ is Ahlfors $d$-regular 
and compact, then the trace 
space (on $K$) of the classical Sobolev space $W^{1,p}(\R^n)$ is the Besov space
$B^\tau_{p,p}(K)$ with $\tau=1-(n-d)/p$.
This result is due to Jonsson--Wallin~\cite{JW80},~\cite{JW84} 
and meshes well with our trace result, since the dimension of
the (ambient) tree is $s=\max\{1,\be/\eps\}$
and the dimension of the boundary is $(\log K)/\eps$.

In Bourdon--Pajot~\cite[Th\'eor\`eme~0.1]{BP03}, the Besov space 
$B^{Q/p}_{p,p}$ on a uniformly perfect Ahlfors $Q$-regular metric space $Z$ is
identified with the $l^p$-cohomology of the conformal gauge of $Z$.
Their arguments utilize Theorem~9.8 from Bonk--Heinonen--Koskela~\cite{BHK}
and thus apply provided that $Z$ is a continuum. In our setting, $\bdry X$ is a totally 
disconnected set. Its cohomology corresponds to the class of functions $f:V\to\R$,
defined on the set $V$ of vertices of $X$, whose differential $df\in l^p(E)$
(where $E$ is the set of edges of $X$). Comparing $df$ with~\eqref{eq-def-gu-linear} 
shows that for every edge $[x,y]\in E$, we have $|df|\simeq e^{\eps|x|}g_f$.
Hence $df\in l^p(E)$ if and only if $g_f\in L^p(X,\mu)$, where $\mu$ is
given by~\eqref{tree-measure} with $\be=p\eps$.
Note that with this choice of parameters we have 
$\theta = Q/p = 1 - (\be/\eps -Q)/p$. Thus, in our setting of regular trees, 
a result corresponding to Th\'eor\`eme~0.1 in~\cite{BP03} follows from 
Theorem~\ref{thm-trace-sharp}.

A direct consequence of Proposition~\ref{prop-trace} and 
Example~\ref{ex-trace-1} below is that for $p>1$
there is a bounded linear trace operator $\Tr:N^{1,p}(X)\to L^p(\bdry X)$ 
if and only if $p>(\be-\log K)/\tw$.
For $p=1$ the same is true except that we
do not know if there is a bounded linear trace operator
if $p=1=(\be-\log K)/\tw$.

\begin{example} \label{ex-trace-1}
Let $f$ be the continuous function on $X$ 
given by $f(x)=\log (|x|+1)$. Note that
the function $g(x)=e^{\eps |x|}/(|x|+1)$ 
is an upper gradient of $f$ on $X$ with respect to
the metric $d_X$, and so
\[
    \int_X g_f^p\,d\mu 
    \simeq \sum_{k=0}^\infty \frac{e^{p\eps k}}{(k+1)^p} K^k e^{-\be k}
    = \sum_{k=0}^\infty \frac{e^{(p\eps-\be + \log K)k}}{(k+1)^p}
\]
which is finite if and only if either $p < ( \be -\log K)/\eps$
or $p = ( \be -\log K)/\eps>1$.
In this case we also see that $f \in L^p(X)$, by the
$(p,p)$-Poincar\'e inequality in Corollary~\ref{cor-pp-PI},
and thus $f \in \Np(X)$.

On the other hand, $f(x) \to \infty$ as $x \to \bdy X$.
So the only reasonable trace $f$ can leave at $\bdy X$ is the function
which is $\infty$ everywhere, which does not belong to any Besov 
or Lebesgue space. The requirement
\[
      p > \frac{\be -\log K}{\eps}
\]
is therefore necessary to be able to have any trace 
result, with the possible exception of a trace result
also when $p = 1=( \be -\log K)/\eps$.
\end{example}

\begin{example} \label{ex-trace2}
In this example we shall show that the range
in \eqref{eq-thm-trace1} is sharp. Assume that 
\[
     \theta > 1- \frac{\be -\log K}{p\tw} .
\]
Then we can find $\ga$ such
that $\max\{\eps(1-\theta),0\} < \ga < \min\{\eps, (\be -\log K)/p\}$.

Let $f$ be the continuous function on $X$ 
defined as follows:
We set $f(0)=0$, and for each vertex $x$ we choose exactly one child 
$c(x)$ of $x$ and set (recursively) $f(c(x))=f(x)+ e^{(\gamma-\eps)|x|}$,
while for all other children $y$  of $x$ we set $f(y)=f(x)$. 
Finally we require $f$ to be  linear
on each of the edges (with respect to the $d_X$ metric),  i.e.,
\[
    f(t)=f(x)+(f(w)-f(x))\frac{d_X(t,x)}{d_X(w,x)}
    \quad  \text{for } t \in [x,w],
\]
for any child $w$ of the vertex $x$.

It follows that the function $g(t)=\eps e^{\ga |x|}/(1-e^{-\eps})$ 
when $t\in[x,c(x)]$ for some vertex $x$, and
$g(t)=0$ for all other values of $t$, is an upper gradient of $f$.
Hence, in a similar way to Example~\ref{ex-trace-1}
we see that $f \in \Np(X)$ (since $p\ga <  \be -\log K$) and that $f$ is a 
bounded continuous function (because $\ga<\eps$)
which has a continuous extension to $\bdy X$ (also denoted by $f$).

For $r < e^{\eps} \diam X$, let $k(r)$ be as in~\eqref{eq-level}. 
Let $B_i$ be any of the $K^{k(r)}$ pairwise disjoint balls with radius $r$,
whose union is $\bdry X$, and let
$z$ be the largest common ancestor of all the points in $B_i$.
Note that $|z|=k(r)$. We define a bijection $H$ between the subtree 
$\{x \in X : x \ge z\}$ and $X$ inductively as follows: 
$H(z)=0$ and for each vertex $x\ge z$, the children of $x$ are mapped to 
the children of $H(x)$ in such a way that  $H(c(x))=c(H(x))$.
Note that this bijection extends to a bijection between $B_i$
and $\bdy X$, and that for $\xi\in B_i$,
\begin{equation}  \label{eq-scaling}
d\nu(\xi) = K^{-k(r)} \,d\nu(H(\xi)).
\end{equation}
Moreover, if $y$ is a child of some $x\ge z$, then 
\[
   f(y)-f(x) = e^{(\gamma-\eps)k(r)} (f(H(x))-f(H(y))).
\]
Thus, for $\z,\xi \in B_i$, we obtain by continuity that
\[
f(\z)-f(\xi) = e^{(\gamma-\eps)k(r)} (f(H(\z))-f(H(\xi))).
\]
Together with \eqref{eq-scaling} this yields
\[
\vint_{B_i} \vint_{B_i} |f(\zeta)-f(\xi)|^p \,d\nu(\xi) \,d\nu(\zeta) 
       =  e ^{p(\ga -\eps)k(r)} \vint_{\bdy X} \vint_{\bdy X} |f(\zeta)-f(\xi)|^p 
                    \,d\nu(\xi) \,d\nu(\zeta),
\]
where the last double integral is clearly positive, finite and independent 
of $r$ and $i$.
Multiplying the last identity by $\nu(B_i)= K^{-k(r)}$
and summing over $i=1,\ldots,K^{k(r)}$ gives, since $B(\z,r)=B_i$ for 
$\z\in B_i$,
\[
    E_p (f,r)^p = \sum_{i=1}^{K^{k(r)}} \int_{B_i} \vint_{B_i} |f(\zeta)-f(\xi)|^p 
        \,d\nu(\xi) \,d\nu(\zeta) 
         \simeq  e ^{p(\ga -\eps)k(r)} \simeq r^{p(\eps-\ga)/\eps}.
\]
Hence, by Remark~\ref{rem-finite-int},
\[
    \|f|_{\bdy X}\|_{B^\theta_{p,p}(\partial X)}^p 
     \simeq   \int_0^{2\diam X} 
   \frac{t^{p(\eps-\ga)/\eps}}{t^{\theta p}} \frac{dt}{t} = \infty
\]
because $ \theta \ge  1-\ga/\eps$.
Thus $f|_{\bdy \Om} \notin B^\theta_{p,p}(\bdy X)$.
\end{example}

\begin{proof} [Proof of Proposition~\ref{prop-trace}]
Let $f\in\Np(X)$. We shall first show that for $\nu$-a.e.\  $\z\in\bdry X$, the limit
\begin{equation}  \label{eq-def-trace}
   \ft(\z)= \lim_{[0,\z)\ni x\to\z} f(x),
\end{equation}
taken along the geodesic ray $[0,\z)$, 
exists and defines the trace $\Tr f:=\ft:\bdry X\to\R$, with norm estimates. (Note that 
if $f$ is Lipschitz, then $\ft(\z)= \lim_{x\to\z} f(x)$  for every $\z\in\bdry X$.)

To this end, let $\z\in\bdry X$ be arbitrary and
let $x_j=x_j(\z)$ be the ancestor of $\z$ with $|x_j|=j$.
Set $r_j=2e^{-j\eps}/\eps$.
Recall that on the edge $[x_j,x_{j+1}]$ we have
$ds \simeq e^{(\be-\eps)j}\,d\mu \simeq r_j^{1-\be/\eps}\,d\mu$.
Fix $n\in\N$ and let $m\ge n$ be arbitrary. Then
\begin{align*} 
|f(x_m)-f(x_n)| &\le \sum_{j=n}^{m-1} |f(x_{j+1})-f(x_j)| \\
    &\le \sum_{j=n}^\infty \int_{[x_j,x_{j+1}]} g_f \,ds
    \simeq \sum_{j=n}^\infty r_j^{1-\be/\eps} \int_{[x_j,x_{j+1}]} g_f \,d\mu. \nonumber
\end{align*}
We shall now show that, independently of $m$, this tends to zero
as $n\to\infty$, for $\nu$-a.e.\ $\z\in\bdry X$. 
Thus, the sequence $\{f(x_n)\}_{n=0}^\infty$ is a Cauchy sequence, and has
a limit as $n\to\infty$, for $\nu$-a.e.\ $\z\in\bdry X$.

Choose $0<\ka<\theta p$ and insert $r_j^{\ka/p} r_j^{-\ka/p}$ into the above sum.
If $p>1$, then H\"older's inequality applied first to the integral and then to the 
sum, together with the estimate $\mu([x_j,x_{j+1}])\simeq r_j^{\be/\eps}$,
implies that 
\begin{align*}   
\sum_{j=n}^\infty r_j^{1-\be/\eps} \int_{[x_j,x_{j+1}]} g_f \,d\mu 
&\le \sum_{j=n}^\infty r_j^{\ka/p} r_j^{1-\be/\eps p-\ka/p} \biggl( \int_{[x_j,x_{j+1}]} g_f^p 
           \,d\mu \biggr)^{1/p} \nonumber \\
&\simle  r_n^{\ka/p} \biggl( \sum_{j=n}^\infty r_j^{p-\be/\eps-\ka} 
     \int_{[x_j,x_{j+1}]} g_f^p\,d\mu \biggr)^{1/p},
\end{align*}
since $r_j=r_n e^{(n-j)\tw}$, which gives the convergent sum 
$\sum_{j=n}^\infty r_j^{\ka/(p-1)} \simeq r_n^{\ka/(p-1)}$.
(For $p=1$ the estimate is simpler and H\"older's inequality is not needed.)
It follows that
\begin{equation}   \label{eq-f-xm-f-xn}
|f(x_m(\z))-f(x_n(\z))|^p 
\simle r_n^{\ka} \sum_{j=n}^\infty r_j^{p-\be/\eps-\ka} 
     \int_{[x_j,x_{j+1}]} g_f^p\,d\mu.
\end{equation}
Integrating over all $\z\in\bdry X$ we obtain by means of Fubini's theorem,
\begin{align*}  
&\int_{\bdry X} |f(x_m(\z))-f(x_n(\z))|^p \,d\nu(\z) \\
&\quad \quad \quad \simle r_n^{\ka} \int_{\bdy X} \sum_{j=n}^\infty r_j^{p-\be/\eps-\ka} 
        \int_{[x_j,x_{j+1}]} g_f^p\,d\mu \,d\nu(\zeta) \\
&\quad \quad \quad = r_n^{\ka} \int_{X} g_f(x)^p 
        \int_{\bdy X} \sum_{j=n}^\infty r_j^{p-\be/\eps-\ka} \mathbbm{1}_{[x_j,x_{j+1}]}(x)
        \,d\nu(\zeta)\,d\mu(x).
\end{align*}
Note that $\mathbbm{1}_{[x_j,x_{j+1}]}(x)$ is nonzero only if 
$j\le|x|\le j+1$ and $x<\z$. Thus, the last estimate can be written as
\[
   \int_{\bdry X} |f(x_m(\z))-f(x_n(\z))|^p \,d\nu(\z) 
       \simle r_n^{\ka}  \int_{X} g_f(x)^p r_{j(x)}^{p-\be/\eps-\kappa}
         \nu(E(x))\, d\mu(x),
\]
where 
$E(x)=\{\z \in \bdy X : \z > x\}$ and 
$j(x)$ is the largest integer such that $j(x)\le|x|$.
Since $\nu(E(x))\simle r_{j(x)}^Q$  
and $p-\be/\eps-\kappa+Q >0$, we obtain that
\begin{align*}
\int_{\bdry X} |f(x_m(\z))-f(x_n(\z))|^p \,d\nu(\z)  
 &\simle r_n^{\ka}  \int_{X} g_f(x)^p r_{j(x)}^{p-\be/\eps-\kappa+Q}
                \, d\mu(x) \\
&\simle r_n^{\ka}  \int_{X} g_f^p \, d\mu \to 0, \quad \text{as }n\to\infty.
\end{align*}
Hence, the sequence $\{f(x_n)\}_{n=0}^\infty$ is a Cauchy sequence for $\nu$-a.e.\ 
$\z\in\bdry X$, and has a limit as $n\to\infty$.
This also shows (by letting $n=0$ and $f(x_m(\z))\to\ft(\z)$) that
\[
\int_{\bdry X} |\ft(\z)-f(0)|^p \,d\nu(\z) \simle \int_X g_f^p \,d\mu,
\]
and thus
$\|\ft\|_{L^p(\bdry X)} \le |f(0)| + C \|g_f\|_{L^p(X)}$.

To estimate $\|\ft\|_{\Bppal(\bdry X)}$, we let
$m\to\infty$ in~\eqref{eq-f-xm-f-xn} to obtain for
$\nu$-a.e.\ $\z\in\bdry X$,
\begin{equation*}   
|\ft(\z)-f(x_n)|^p 
\simle r_n^{\ka} \sum_{j=n}^\infty r_j^{p-\be/\eps-\ka} 
     \int_{[x_j,x_{j+1}]} g_f^p\,d\mu.
\end{equation*}
A similar estimate holds for $\nu$-a.e.\ $\xi\in \bdry X$ such that $d_X(\z,\xi)=r_n$,
with the $x_j$'s replaced by the ancestors $y_j$ of $\xi$.
Note that $x_n=y_n$ for such $\xi$. It follows that
\begin{equation*}   
|\ft(\xi)-\ft(\z)|^p \simle
   r_n^{\ka} \sum_{j=n}^\infty r_j^{p-\be/\eps-\ka} \biggl( 
        \int_{[x_j,x_{j+1}]} g_f^p\,d\mu + \int_{[y_j,y_{j+1}]} g_f^p\,d\mu \biggr),
\end{equation*}
where $n=n(\z,\xi)\approx -\log(\eps d_X(\z,\xi)/2)$  
denotes the level of the largest common ancestor of 
$\z$ and $\xi$. 
Inserting this into \eqref{eq-equiv-norm} yields
\begin{align*}  
\|\ft\|^p_{\Bppal(\bdry X)}
&\simle \int_{\bdy X}\int_{\bdy X} \frac{r_n^{\ka}}{d_X(\zeta,\xi)^{\theta p +Q}} \\
&\quad  \times \sum_{j=n}^\infty r_j^{p-\be/\eps-\ka} \biggl( 
        \int_{[x_j,x_{j+1}]} g_f^p\,d\mu + \int_{[y_j,y_{j+1}]} g_f^p\,d\mu \biggr)
        \,d\nu(\xi)\,d\nu(\zeta),
\end{align*}
where again $n=n(\z,\xi)$ depends on $\z$ and $\xi$.
Because the roles of $\z$ and $\xi$ in the last 
formula are symmetric, it suffices to estimate the expression with
the integral over $[x_j,x_{j+1}]$.
By writing $\bdry X= \bigcup_{n=0}^\infty A_n$, where
$A_n=\{\xi\in\bdy X: d_X(\xi,\z)=r_n\}$, 
together with the estimate $\nu(A_n)\simle r_n^Q$, this leads to
\begin{align*}   
\|\ft\|^p_{\Bppal(\bdry X)} 
&\simle  \int_{\bdy X} \sum_{n=0}^\infty r_n^{-\theta p-Q+\ka} \int_{A_n} \sum_{j=n}^\infty 
  r_j^{p-\be/\eps-\ka} \int_{[x_j,x_{j+1}]} g_f^p \,d\mu \, d\nu(\xi) \,d\nu(\z) 
\nonumber \\
&\simle \int_{\bdy X} \sum_{n=0}^\infty r_n^{-\theta p+\ka} \sum_{j=n}^\infty 
  r_j^{p-\be/\eps-\ka} \int_X g_f(x)^p \mathbbm{1}_{[x_j,x_{j+1}]}(x) \,d\mu(x)
  \,d\nu(\z), 
\end{align*}
where the edge $[x_j,x_{j+1}]$ belongs to the geodesic ray connecting the root
$0$ to $\z$.  Note that $\mathbbm{1}_{[x_j,x_{j+1}]}(x)$ is nonzero only if 
$n\le j\le|x|\le j+1$ and $x<\z$. Thus, the last estimate can be written as
\begin{align*}
&\|\ft\|^p_{\Bppal(\bdry X)} \\
 &\quad \simle \int_{\bdy X}\sum_{n=0}^\infty r_n^{-\theta p+\kappa}
     \int_X g_f(x)^p r_{j(x)}^{p-\be/\eps-\kappa} \mathbbm{1}_{\{z\in X:\z>z\}}(x)
        \mathbbm{1}_{\{z\in X:|z|\ge n\}}(x) \, d\mu(x)\, d\nu(\z)\\
  &\quad \simle\int_{\bdy X}\int_X 
   g_f(x)^p r_{j(x)}^{p-\be/\eps-\kappa} \mathbbm{1}_{\{z\in X:\z>z\}}(x)
    \sum_{n=0}^{j(x)}r_n^{-\theta p+\kappa}\, d\mu(x)\, d\nu(\z),
\end{align*}
where $j(x)$ is the largest integer such that $j(x)\le|x|$. Using the Fubini theorem we then get
\begin{equation*}  
\|\ft\|^p_{\Bppal(\bdry X)} \simle
\int_X g_f(x)^p  r_{j(x)}^{p-\be/\eps-\ka} 
     \nu(E(x))
    \sum_{n=0}^{j(x)} r_n^{-\theta p+\ka} \,d\mu(x).
\end{equation*}
The last sum is comparable to $r_{j(x)}^{-\theta p+\ka}$,
by the choice $\ka<\theta p$.
Since  $\nu(E(x))\simle r_{j(x)}^Q$  
and $p-\be/\eps-\theta p+Q \ge0$, we finally obtain that
\[
\|\ft\|^p_{\Bppal(\bdry X)} \simle
\int_X g_f(x)^p r_{j(x)}^{p-\be/\eps-\theta p+Q} \,d\mu(x) 
\simle \int_X g_f^p \,d\mu.\qedhere
\]
\end{proof}

We next show that
Besov functions on the boundary 
Cantor set can be extended to  Newtonian functions on 
the regular $K$-ary tree.  

\begin{prop}\label{prop-ext}
Let $X$ be a regular $K$-ary  tree with 
the metric $d_X$ defined by the exponential weight 
as in~\eqref{unif-metric-trees}
with $\tw>0$ and the measure $\mu$ defined by the
exponential weight with $\beta>\log K$, and let $p\ge1$. Suppose that 
\begin{equation} \label{eq-alp-cond}
  \theta \geq 1- \frac{\beta - \log K}{p \tw }
  \quad \text{and} \quad \theta >0.
\end{equation}
Then there is a bounded linear extension operator 
\[
  \Ext:B^\theta_{p,p}(\partial X) \longrightarrow N^{1,p}(X),
\]  
such that for $u\in B^\theta_{p,p}(\bdy X)$, we have\/ 
$\Tr (\Ext u)=u$ $\nu$-a.e., 
where\/ $\Tr$ is the trace operator 
constructed in~\eqref{eq-def-trace}.  Furthermore, for
$\nu$-a.e.~$\z\in\bdy X$ and a geodesic $\gamma$  in $X$ terminating
at $\z$, we have\/ $\lim_{t\to\infty}\Ext u(\gamma(t))=u(\z)$.
Moreover, with $\ut=\Ext u$, we have
\begin{align*}
\|g_{\ut}\|_{L^p(X)} &\simle \|u\|_{B^\theta_{p,p}(\partial X)},\\
   \|\ut\|_{\Np(X)} 
        &\simle \|u\|_{L^p(\bdy X)}  
            + \|u\|_{B^\theta_{p,p}(\partial X)}
            = \|u\|_{\Bt^\theta_{p,p}(\partial X)}.
\end{align*}
\end{prop}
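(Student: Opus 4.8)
The plan is to construct $\Ext u$ by averaging $u$ over ``shadows''. For a vertex $x\in X$ let $E(x)=\{\zeta\in\bdry X:\zeta>x\}$ be the set of boundary points lying below $x$; by Lemma~\ref{lem-ultra-metric-center} this is a ball in $\bdry X$, $\nu(E(x))=K^{-|x|}$, and $E(x)$ is the disjoint union of the $K$ shadows $E(y)$ of the children $y$ of $x$, so that $u_{E(x)}=\frac1K\sum_y u_{E(y)}$ (here and below $\sum_y$ runs over the children $y$ of $x$). Define $\ut:=\Ext u$ on the vertices by $\ut(x)=u_{E(x)}$ and extend $\ut$ so as to be linear, with respect to $d_X$, on each edge. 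Then $\ut$ is continuous on $X$ and $u\mapsto\ut$ is linear, so $\Ext$ is linear; it remains to establish the norm estimates and $\Tr(\Ext u)=u$ $\nu$-a.e.

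First I would exhibit a convenient upper gradient of $\ut$: the function $g$ that on each edge $[x,y]$, with $y$ a child of $x$, equals the constant $|\ut(y)-\ut(x)|/d_X(x,y)$ is an upper gradient of $\ut$, since along any geodesic its integral dominates the corresponding increment of $\ut$ by the triangle inequality, so~\eqref{eq-ug-cond} holds. Hence $g_{\ut}\le g$ a.e., and it suffices to bound $\|g\|_{L^p(X)}$. Using $d_X(x,y)\simeq e^{-\eps|x|}$ and $\mu([x,y])\simeq e^{-\be|x|}$ from~\eqref{unif-metric-trees} and~\eqref{tree-measure},
\[
\int_X g^p\,d\mu\simeq\sum_{x\in X}e^{(p\eps-\be)|x|}\sum_y|u_{E(y)}-u_{E(x)}|^p.
\]
By Jensen's inequality applied to $u_{E(x)}=\frac1K\sum_y u_{E(y)}$, one has $|u_{E(y)}-u_{E(x)}|^p\simle\sum_{y'\ne y}|u_{E(y)}-u_{E(y')}|^p$, and for distinct children $y,y'$ of $x$, Jensen once more together with $\nu(E(y))=\nu(E(y'))=K^{-|x|-1}$ gives $|u_{E(y)}-u_{E(y')}|^p\le K^{2|x|+2}\int_{E(y)}\int_{E(y')}|u(\zeta)-u(\xi)|^p\,d\nu(\xi)\,d\nu(\zeta)$. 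Writing $P_n:=\sum_{|x|=n}\sum_{y\ne y'}\int_{E(y)}\int_{E(y')}|u(\zeta)-u(\xi)|^p\,d\nu(\xi)\,d\nu(\zeta)$ (the inner sum over ordered pairs of distinct children of $x$), substituting and summing over $x$ yields $\int_X g^p\,d\mu\simle\sum_{n=0}^\infty e^{(p\eps-\be+2\log K)n}P_n$. On the other hand, grouping the pairs $(\zeta,\xi)$ in~\eqref{eq-equiv-norm} according to the level $n$ of their largest common ancestor --- for which $d_X(\zeta,\xi)=\tfrac{2}{\eps}e^{-\eps n}$ by~\eqref{bdy-metric} and $\nu(B(\zeta,d_X(\zeta,\xi)))\simeq e^{-\eps Qn}$ by Lemma~\ref{lem-dimension} --- gives $\|u\|_{\Bppal(\bdry X)}^p\simeq\sum_{n=0}^\infty e^{\eps(\theta p+Q)n}P_n$. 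Comparing the two sums term by term gives $\|g_{\ut}\|_{L^p(X)}\simle\|u\|_{\Bppal(\bdry X)}$, precisely because $p\eps-\be+2\log K\le\eps(\theta p+Q)$, which (using $\eps Q=\log K$) is exactly the hypothesis $\theta\ge1-(\be-\log K)/(p\eps)$ in~\eqref{eq-alp-cond}, with equality of the two exponents in the borderline case. Matching these exponents is where the hypothesis on $\theta$ enters sharply, and is the step I expect to require the most care.

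For the $L^p$ bound I would argue directly, avoiding the Poincar\'e inequality: since the values of $\ut$ on an edge $[x,y]$ lie between $\ut(x)$ and $\ut(y)$, we have $|\ut|^p\le|\ut(x)|^p+|\ut(y)|^p$ there, and $|\ut(x)|^p=|u_{E(x)}|^p\le\vint_{E(x)}|u|^p\,d\nu=K^{|x|}\int_{E(x)}|u|^p\,d\nu$. Using $\mu([x,y])\simeq e^{-\be|x|}$ and $\sum_y\int_{E(y)}|u|^p\,d\nu=\int_{E(x)}|u|^p\,d\nu$, summing over all edges and using that $\{E(x):|x|=n\}$ partitions $\bdry X$, we get
\[
\int_X|\ut|^p\,d\mu\simle\|u\|_{L^p(\bdry X)}^p\sum_{n=0}^\infty e^{(\log K-\be)n}\simeq\|u\|_{L^p(\bdry X)}^p,
\]
the series converging since $\be>\log K$. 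Together with the gradient estimate this shows $\ut\in\Np(X)$ and $\|\ut\|_{\Np(X)}\simle\|u\|_{L^p(\bdry X)}+\|u\|_{\Bppal(\bdry X)}$, hence the boundedness of $\Ext$.

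Finally, for the trace, by~\eqref{eq-def-trace} we have $\Tr\ut(\zeta)=\lim_{j\to\infty}\ut(x_j(\zeta))=\lim_{j\to\infty}u_{E(x_j(\zeta))}$ along the geodesic ray $[0,\zeta)$, where $x_j(\zeta)$ is the ancestor of $\zeta$ at level $j$. The sets $E(x_j(\zeta))$ are balls in $\bdry X$ containing $\zeta$ with radii comparable to $e^{-\eps j}\to0$; being centered at $\zeta$ by the ultrametric property of $\bdry X$, the Lebesgue differentiation theorem (valid since $\nu$ is doubling) shows that this limit equals $u(\zeta)$ for $\nu$-a.e.\ $\zeta$, so $\Tr(\Ext u)=u$ $\nu$-a.e. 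Since any geodesic in $X$ terminating at $\zeta$ eventually coincides with a tail of $[0,\zeta)$, and $\ut$ is linear on edges (so its values along that tail lie between those at the vertices), the same computation gives $\lim_{t\to\infty}\Ext u(\gamma(t))=u(\zeta)$ for $\nu$-a.e.\ $\zeta$.
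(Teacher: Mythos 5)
Your proof is correct, and your extension is the same as the paper's: averages of $u$ over shadow balls at the vertices, linear interpolation on edges, and the edgewise constant $|\ut(y)-\ut(x)|/d_X(x,y)$ as upper gradient. Where you differ is in how the three estimates are carried out. For the gradient bound, the paper writes $|u_n(\z)-u_{n+1}(\z)|\le|v_n(\z)|+|v_{n+1}(\z)|$ with $v_n=u_n-u$, invokes the estimate \eqref{eq-est-Lp-vn} from the proof of Proposition~\ref{prop-dense-cont}, and then uses the sum form of the norm in Lemma~\ref{lem-norm-equiv-sum} via $E_p(u,r_n)$; you instead run a martingale-difference computation: Jensen over the $K$ siblings, Jensen again to pass to the double integral over pairs of sibling shadows, and then you recognize the Besov seminorm by grouping pairs $(\z,\xi)$ in \eqref{eq-equiv-norm} according to the level of their largest common ancestor. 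Both routes reduce to exactly the same exponent comparison, $p\eps-\be+2\log K\le\eps(\theta p+Q)$, i.e.\ \eqref{eq-alp-cond}, so the borderline case is handled identically. For the $L^p$ bound the paper appeals to the $(p,p)$-Poincar\'e inequality (Corollary~\ref{cor-pp-PI}), while you sum $e^{-\be n}K^n$ directly, using only $\be>\log K$; and the trace identification via Lebesgue points of the doubling measure $\nu$ is the same in both. The net effect is that your argument is somewhat more self-contained (it does not recycle \eqref{eq-est-Lp-vn} nor the Poincar\'e inequality), at the cost of redoing by hand bookkeeping that the paper gets for free from machinery it has already set up; the paper's version, conversely, makes the parallel with the density proof and the trace proof more visible. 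One small point worth making explicit if you write this up: the termwise comparison of $\sum_n e^{(p\eps-\be+2\log K)n}P_n$ with $\sum_n e^{\eps(\theta p+Q)n}P_n$ uses that the sums run over $n\ge0$, which is where boundedness of $\partial X$ (Remark~\ref{rem-finite-int}) enters.
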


In view of Proposition~\ref{prop-trace}, the range in~\eqref{eq-alp-cond}
is sharp, cf.\ Theorem~\ref{thm-trace-sharp}.
Note also that if $u \in B^\theta_{p,p}(\bdy X)$ is  continuous 
then we have $\Tr({\Ext u})=u$ everywhere.

\begin{proof}  
Let $u\in\Bppal(\bdry X)$. For $x\in X$ with $|x|=n\in\N:=\{0,1,2,\ldots\}$ let
\begin{equation}   \label{eq-def-ext}
\ut(x) = \vint_{B(\z,r_n)} u\,d\nu,
\end{equation}
where $r_n=2e^{(1-n)\tw}/\tw$ is as in the 
proof of Proposition~\ref{prop-dense-cont}
and $\z\in\bdry X$ is any descendant of $x$.
Observe that $B(\z,r_n)$ consists of all the points
in $\partial X$ that have $x$ as an ancestor, that is, the geodesics connecting
the root $0$ to these points pass through $x$.
Note that $\ut(x)=u_n(\z)$, where $u_n$ is the piecewise constant
(and continuous) approximation of $u$ from Proposition~\ref{prop-dense-cont}.
Moreover,~\eqref{eq-def-trace} and~\eqref{eq-def-ext} imply that
$\Tr\ut(\z)=u(\z)$ whenever $\z\in\bdry X$ is a Lebesgue point of $u$.

If $y$ is a child of $x$, extend $\ut$ to the edge $[x,y]$ as follows:
First, we choose $\z\in\partial X$ so that
$\z$ is a descendant of $y$ as well. 
We can do this because the ultrametric property
of $\partial X$ tells us that every point in the ball $B(\z,r_n)$ is a center of this ball,
see Lemma~\ref{lem-ultra-metric-center}. 
For  each $t\in[x,y]$ set
\begin{equation}   \label{eq-def-gu-linear}
\gut(t)  = \frac{|\ut(x)-\ut(y)|}{d_X(x,y)}        
     = \frac{\tw|u_n(\z)-u_{n+1}(\z)|}{(1-e^{-\tw})e^{-\tw n}}
\end{equation} 
and $\ut(t)=\ut(x)+ \gut(t) \, d_X(x,t)$,
i.e.\ $\gut$ is constant and $\ut$ is linear 
(with respect to the metric $d_X$) on the edge $[x,y]$.
It follows that $\gut$ is a minimal upper gradient of $\ut$ on the edge $[x,y]$.
The contribution from this edge to $\int_X\gut^p\,d\mu$ is
\begin{align}  \label{eq-contrib-edge}
\int_{[x,y]}\gut^p\,d\mu &\simeq \int_n^{n+1} 
   \biggl( \frac{|u_n(\z)-u_{n+1}(\z)|}{e^{-\tw n}} \biggr)^p
      e^{-\be\tau}\,d\tau 
\simeq e^{(\tw p-\be)n} |u_n(\z)-u_{n+1}(\z)|^p.
\end{align}
Here, the choice of $\z\in\partial X$ is dictated 
by the child $y$ of $x$,  but $\z$ can be replaced by any choice of
$\xi\in B(\z,r_{n+1})$. Integrating~\eqref{eq-contrib-edge}
over this smaller ball, we obtain
\[
  \nu(B(\z,r_{n+1})) \int_{[x,y]}\gut^p\, d\mu
    \simeq e^{(\eps p-\beta)n}\int_{B(\z,r_{n+1})}|u_n(\xi)-u_{n+1}(\xi)|^p\, d\nu(\xi).
\]
Summing over all the edges in $X$ connecting vertices at level $n$ to vertices at level $n+1$
shows that the contribution to $\int_X\gut^p\, d\mu$ from all such edges is comparable to
\begin{equation*}     
e^{(\tw p-\beta)n} \int_{\bdry X} 
       \frac{|u_n(\z)-u_{n+1}(\z)|^p}{\nu(B(\z,r_{n+1}))}\,d\nu(\z).
\end{equation*}
Summing over all $n\in\N$ and writing
$|u_n(\z)-u_{n+1}(\z)| \le |v_n(\z)|+|v_{n+1}(\z)|$, where
$v_n=u_n-u$, we obtain from \eqref{eq-est-Lp-vn} that
\begin{align}   \label{eq-from-vn-to-Ep}
\int_X\gut^p\,d\mu 
   &\simle \sum_{n=0}^\infty e^{(\tw p-\beta)n} \int_{\bdry X} 
   \frac{(|v_n(\z)|+|v_{n+1}(\z)|)^p}{\nu(B(\z,r_{n+1}))} \,d\nu(\z)\nonumber\\
&\simle \sum_{n=0}^\infty \frac{e^{(\tw p-\beta)n}}{r_n^Q} 
   \int_{\bdry X} \vint_\Bzn  |u(\chi)-u(\z)|^p \,d\nu(\chi) \,d\nu(\z) \nonumber\\
&\simeq \sum_{n=0}^\infty \frac{e^{(\tw p-\beta)n} r_n^{\theta p}}{r_n^Q} 
    \biggl( \frac{E_p(u,r_n)}{r_n^{\theta}} \biggr)^p.
\end{align}
Since $r_n\simeq e^{-n\tw}$, Lemma~\ref{lem-norm-equiv-sum} shows that
the sum converges provided that
\[
e^{(\tw p-\beta)n} r_n^{\theta p-Q} \simeq e^{(\tw p-\beta-\tw(\theta p-Q))n} \le C
\]
for all $n=0,1,\ldots$\,, i.e.\ if $\tw p-\be-\tw(\theta p-Q)\le0$.
This is equivalent to $\theta\ge 1+Q/p-\be/\tw p$, 
which is~\eqref{eq-alp-cond}.
Thus $\|\gut\|_{L^p(X)}\simle\|u\|_{\Bppal(\bdry X)}$ for such $\theta$.

The finiteness of $\|\ut\|_{L^p(X)}$ now follows from the 
$(p,p)$-Poincar\'e inequality (Corollary~\ref{cor-pp-PI}),
since $\ut\in L^1(X)$ by the construction of $\ut$.
\end{proof}

Combining Propositions~\ref{prop-trace} and~\ref{prop-ext}
we obtain the following theorem identifying the trace space.

\begin{thm}   \label{thm-trace-sharp}
Let $X$ be a regular $K$-ary tree with the metric $d_X$ defined 
by the exponential weight as in~\eqref{unif-metric-trees} with $\eps>0$ and the
measure $\mu$ defined by the exponential weight with $\beta>\log K$,
and let $p\ge1$.  Then
the trace space of $N^{1,p}(X)$ is the Besov space $B^\theta_{p,p}(\bdy X)$,
where $\theta=1-(\beta-\log K)/p\eps$, with equivalent norms.
\end{thm}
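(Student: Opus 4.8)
\textit{Proof proposal.}
The plan is to combine Propositions~\ref{prop-trace} and~\ref{prop-ext} at the unique exponent where both are applicable. Write $\theta_0:=1-(\be-\log K)/p\eps$ for the exponent in the statement. Note first that $\theta_0>0$ precisely when $\be/\eps<Q+p$; this is the only nontrivial case, since by Example~\ref{ex-trace-1} there is no bounded trace operator into any Lebesgue (or Besov) space once $\theta_0\le 0$. So assume $\theta_0>0$.

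Since $0<\theta_0\le\theta_0$, Proposition~\ref{prop-trace} (with $\theta=\theta_0$) gives a bounded linear operator $\Tr\colon\Np(X)\to B^{\theta_0}_{p,p}(\bdry X)$ with $\|\Tr f\|_{\tB^{\theta_0}_{p,p}(\bdry X)}\simle\|f\|_{\Np(X)}$; in particular the trace space $\Tr(\Np(X))$ is contained in $B^{\theta_0}_{p,p}(\bdry X)$. Conversely, since $\theta_0\ge\theta_0$ and $\theta_0>0$, Proposition~\ref{prop-ext} (with $\theta=\theta_0$) gives a bounded linear operator $\Ext\colon B^{\theta_0}_{p,p}(\bdry X)\to\Np(X)$ with $\Tr(\Ext u)=u$ $\nu$-a.e.\ and $\|\Ext u\|_{\Np(X)}\simle\|u\|_{\tB^{\theta_0}_{p,p}(\bdry X)}$. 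Thus $\Ext$ is a bounded linear right inverse of $\Tr$, so every $u\in B^{\theta_0}_{p,p}(\bdry X)$ is a trace, i.e.\ $B^{\theta_0}_{p,p}(\bdry X)\subset\Tr(\Np(X))$, and the two sets coincide. For the norms, equip the trace space with the quotient norm $\|u\|_{\mathrm{tr}}:=\inf\{\|f\|_{\Np(X)}:\Tr f=u\}$. Then $\|u\|_{\mathrm{tr}}\le\|\Ext u\|_{\Np(X)}\simle\|u\|_{\tB^{\theta_0}_{p,p}(\bdry X)}$, while for any admissible $f$ one has $\|u\|_{\tB^{\theta_0}_{p,p}(\bdry X)}=\|\Tr f\|_{\tB^{\theta_0}_{p,p}(\bdry X)}\simle\|f\|_{\Np(X)}$, and taking the infimum over such $f$ gives $\|u\|_{\tB^{\theta_0}_{p,p}(\bdry X)}\simle\|u\|_{\mathrm{tr}}$. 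Hence the norms are equivalent.

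The substantive work is entirely contained in the two propositions, so there is no further obstacle at this stage. The only point worth stressing is that $\theta_0$ sits exactly at the endpoint of both admissible ranges: the trace argument reaches $\theta_0$ only because $p-\be/\eps-\theta p+Q\ge 0$ holds there with equality (via the choice $0<\ka<\theta p$), and the extension argument reaches $\theta_0$ only because $\eps p-\be-\eps(\theta p-Q)\le 0$ holds there with equality. It is therefore essential that Propositions~\ref{prop-trace} and~\ref{prop-ext} were proved to include their respective endpoints — which they were — and this is precisely why the borderline identification is not available from the more general trace theorem of~\cite{GKS}.
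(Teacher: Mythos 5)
Your proposal is correct and is exactly the paper's argument: the theorem is stated as an immediate consequence of combining Proposition~\ref{prop-trace} and Proposition~\ref{prop-ext} at the common endpoint exponent $\theta=1-(\beta-\log K)/p\eps$, with the quotient-norm equivalence following from the two norm estimates just as you write. Your additional remarks on $\theta_0>0$ and on the endpoint being included in both propositions are consistent with the paper and add nothing that conflicts with it.
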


The following embedding result follows from Proposition~\ref{prop-ext} 
by means of an embedding theorem for Newtonian spaces.

In the Euclidean setting, it is well known that the Besov spaces 
$\Bppal(\R^n)$ and $\Bppal(\Omega)$ for smooth Euclidean
subdomains $\Omega$ embed continuously into $C^\al$ provided that
$\theta=\al+n/p$, see e.g.\ Triebel~\cite[Section~2.7.1]{Tr}.
Our result extends this to regular Cantor type sets.

\begin{prop}   \label{prop-emb-Holder}
Let $X$ be a regular $K$-ary tree with the metric $d_X$ 
defined as in~\eqref{unif-metric-trees} by the exponential weight with $\eps>0$.
Let $Q=(\log K)/\eps$ be the Hausdorff dimension of $\bdry X$ 
and $p>1$. Then $\Bppal(\bdry X)\subset C^\al(\bdry X)$, in the sense that 
every $u\in\Bppal(\bdry X)$ has an $\al$-H\"older continuous
representative, provided that one of the following  conditions holds\/\textup{:}
\begin{enumerate}
\item \label{it-1-1/p} $Q<1$, $\displaystyle \theta \ge \frac{Q-1}{p}+1$  
and $\displaystyle \al=1-\frac{1}{p}>0${\rm;} 
\item \label{it-theta-1/p} $\displaystyle \frac{Q}{p}<\theta\le \frac{Q-1}{p}+1$,  
$\theta<1$ and $\displaystyle \al=\theta-\frac{Q}{p}${\rm;} 
\item \label{it-1-Q/p} $Q\ge1$, $\theta\ge1$ and\/ $\displaystyle 0<\al<1-\frac{Q}{p}$.
\end{enumerate}
\end{prop}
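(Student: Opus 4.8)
The plan is to deduce the embedding from the extension operator of Proposition~\ref{prop-ext} combined with a Morrey--Sobolev embedding for Newtonian functions on $X$. Recall from Section~\ref{sect-doubl}, Corollary~\ref{cor-pp-PI} and Corollary~\ref{cor-dim-s} that $X$ is a bounded metric measure space which is doubling, supports a $(p,p)$-Poincar\'e inequality, and satisfies the dimension bound $\mx(B(x',r'))/\mx(B(x,r))\simge(r'/r)^s$ with $s=\max\{1,\be/\eps\}$ whenever $x'\in B(x,r)$ and $0<r'\le r$. The standard telescoping estimate built from the $(p,p)$-Poincar\'e inequality together with this lower mass bound (see e.g.\ Haj\l asz--Koskela~\cite{HaKo} or Bj\"orn--Bj\"orn~\cite{BBbook}) shows that, whenever $p>s$, every $v\in\Np(X)$ has a representative satisfying
\[
   |v(x)-v(y)|\simle d_X(x,y)^{1-s/p}\,\|g_v\|_{L^p(X)}\qquad\text{for all }x,y\in X,
\]
with implicit constant depending only on $p$, $s$, $\diam X$, $\mx(X)$ and the doubling and Poincar\'e constants. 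Since $X$ is bounded, such a $v$ is uniformly continuous, hence extends continuously to $\itoverline X=X\cup\bdry X$ with the same estimate; in particular its restriction to $\bdry X$, equipped with the visual metric $d_X$, lies in $C^{1-s/p}(\bdry X)$.

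I would apply this to $v=\ut:=\Ext u$ for $u\in\Bppal(\bdry X)$. The function $\ut$ is continuous on $X$ by construction, so it coincides with the H\"older representative above, and Proposition~\ref{prop-ext} gives $\|g_{\ut}\|_{L^p(X)}\simle\|u\|_{\Bt^\theta_{p,p}(\bdry X)}$ as well as $\lim_{t\to\infty}\ut(\gamma(t))=u(\z)$ for $\nu$-a.e.\ $\z\in\bdry X$, where $\gamma$ is a geodesic terminating at $\z$. As the continuous extension of $\ut$ to $\itoverline X$ takes exactly this radial value at $\z$, it agrees $\nu$-a.e.\ with $u$ on $\bdry X$; thus $u$ has a representative in $C^{1-s/p}(\bdry X)$, with norm controlled by $\|u\|_{\Bt^\theta_{p,p}(\bdry X)}$. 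Since $\bdry X$ is bounded, $C^{\al'}(\bdry X)\subset C^\al(\bdry X)$ whenever $\al'\ge\al$, so it suffices, in each of the three cases, to choose $\be>\log K$ for which (i) the extension operator is available, i.e.\ $\theta\ge1-(\be-\log K)/p\eps$; (ii) $p>s=\max\{1,\be/\eps\}$; and (iii) $1-s/p\ge\al$.

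In case~\ref{it-1-1/p}, since $Q<1$ I take $\be=\eps$: then $\be=\eps>\log K$, the threshold in (i) equals $1-(\eps-\log K)/p\eps=(Q-1)/p+1\le\theta$, we get $s=1<p$, and $1-s/p=1-1/p=\al$. In case~\ref{it-theta-1/p}, I take $\be=\log K+p\eps(1-\theta)>\log K$ (as $\theta<1$): the threshold in (i) is then exactly $\theta$, while $\be/\eps=Q+p(1-\theta)\ge1$ precisely because $\theta\le(Q-1)/p+1$, so $s=Q+p(1-\theta)$; the hypothesis $\theta>Q/p$ gives $p>s$ and $1-s/p=\theta-Q/p=\al$. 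In case~\ref{it-1-Q/p}, $\theta\ge1$ makes (i) hold for every $\be>\log K$, so I take $\be=\log K+\delta\eps$ with $\delta>0$ small; since $\al<1-Q/p$ forces $Q<p$, for $\delta$ small enough $Q+\delta<p$ and $1-(Q+\delta)/p>\al$, and as $\be>\log K\ge\eps$ we get $s=\be/\eps=Q+\delta<p$ with $1-s/p>\al$. This handles all three cases.

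The only genuinely substantial step is the Morrey--Sobolev embedding on $X$ with the sharp exponent $s=\max\{1,\be/\eps\}$: this is where Corollary~\ref{cor-dim-s} enters, and where the two-regime behaviour of $\mx$ --- Ahlfors $1$-regular away from $\bdry X$ but of ``dimension $\be/\eps$'' near $\bdry X$ --- has to be controlled uniformly over all scales. Everything else is simply the bookkeeping of optimizing the single parameter $\be$ against the constraints (i)--(iii), which is exactly what produces the three-case structure of the statement.
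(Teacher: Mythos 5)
Your proposal is correct and takes essentially the same route as the paper: extend $u$ by Proposition~\ref{prop-ext} after equipping $X$ with the measure given by $\be=\eps$ in case~\ref{it-1-1/p} and $\be=\log K+p\eps(1-\theta)$ in case~\ref{it-theta-1/p}, apply the Morrey-type embedding $\Np(X)\hookrightarrow C^{1-s/p}(X)$ (Haj\l asz--Koskela, via the dimension condition of Corollary~\ref{cor-dim-s} and $p>s$), and identify the continuous boundary values with $u$ $\nu$-a.e. The only difference is case~\ref{it-1-Q/p}, which the paper reduces to case~\ref{it-theta-1/p} with $\tau=\al+Q/p$ using the inclusion \eqref{Besov-subset}, whereas you rerun the argument directly with $\be=\log K+\delta\eps$; both verifications are valid.
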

 
\begin{proof}
\ref{it-1-1/p} and~\ref{it-theta-1/p}
Equip $X$ with the measure $d\mu=e^{-\be|x|}d|x|$ as in~\eqref{tree-measure},
where $\be=\eps$ in~\ref{it-1-1/p} and $\be=Q\eps + p\eps(1-\theta)  \ge \eps$
in~\ref{it-theta-1/p}.  Note that in both cases, 
$\theta\ge1-(\be-\log K)/p\eps$
and $\be > \log K$.
Proposition~\ref{prop-ext} then shows that every $u\in\Bppal(\bdry X)$ can be
extended to $U\in\Np(X)$ so that for $\nu$-a.e.\ $\z\in\bdry X$,
$U(x)\to u(\z)$ as $x\to\z$ along a geodesic. By Corollary~\ref{cor-dim-s}, $\mu$ 
satisfies~\eqref{eq-dim-cond} with $s=\be/\eps\ge1$.
Note also that $p>s$, both in \ref{it-1-1/p} and \ref{it-theta-1/p}. 
It follows that $\Np(X)$ embeds continuously into $C^{1-s/p}(X)$
with respect to the metric $d_X$, by e.g.\ Theorem~5.1 in  Haj\l asz--Koskela~\cite{HaKo}.
Hence, the function $u^*(\z):=\lim_{x\to\z} U(x)$ is well defined and
$(1-s/p)$-H\"older continuous on $\bdry X$.
Since $1-s/p=\al$ and $u^*=u$ $\nu$-a.e., the result follows in this case.

\ref{it-1-Q/p}
Let $\tau=\al+Q/p$ and note that $Q/p<\tau\le Q/p+1-1/p$, $\tau<1$ and
$\al=\tau-Q/p$, i.e.\ \ref{it-theta-1/p} holds with $\theta$ replaced by $\tau$.
The already proved part~\ref{it-theta-1/p} together with \eqref{Besov-subset}
then gives the result.
\end{proof}

\section{Quasisymmetric mappings}
\label{QSvsBesov}

In this section we begin a discussion of quasisymmetric mappings between
boundaries of two trees.
We will assume that the boundaries are uniformly perfect and equipped
with Ahlfors regular measures.
We shall show that if $f:\partial X\to\partial Y$ is a quasisymmetric
mapping satisfying a certain dimension condition, then for every $p\ge 1$ and 
$\theta>0$ there is a bounded induced map 
$f_\#:B_{p,p}^\theta(\partial Y)\to B_{p,p}^\tau(\partial X)$ for some $\tau>0$. 
There are already indications
in the current literature that certain Besov spaces may be invariant under 
quasisymmetries; see Bourdon--Pajot~\cite{BP03}. Hence this
result is natural.

Primary examples for these investigations come from regular trees, 
but regularity is not strictly required.
All we need is uniform perfectness (or rather the fact that the mapping
under consideration is a power map) and Ahlfors regularity.

We therefore first formulate and prove our results for Besov spaces on 
general metric spaces and then specialize them to boundaries of trees. 
Recall that a metric space $Z$ is \emph{uniformly perfect} if there is a constant $C>1$
such that whenever $z\in Z$ and $0<r<\diam Z$, the annulus 
$B(z,r)\setminus B(z,r/C)$  is nonempty.
A measure $\nu$ on $Z$ is \emph{Ahlfors $Q$-regular} if for every $z\in Z$
and $0<r<2\diam Z$ we have $\nu(B(z,r))\simeq r^Q$.

A homeomorphism $f:Z\to W$ between two metric spaces $(Z,d_Z)$ and 
$(W,d_W)$ is \emph{quasisymmetric} if there is
a homeomorphism $\eta:[0,\infty)\to[0,\infty)$ such that whenever $x,y,z\in Z$
and $x\ne z$, then
\begin{equation}  \label{def-quasisym}
   \frac{d_W(f(x),f(y))}{d_W(f(x),f(z))} \le \eta\biggl(\frac{d_Z(x,y)}{d_Z(x,z)}\biggr).
\end{equation}

If $Z$ and $W$ are uniformly perfect then every
quasisymmetric map between them has to be 
a power quasisymmetric map, see Heinonen~\cite[Theorem~11.3]{Hei}. 
This means that $\eta$ can be chosen to be of the form
\begin{equation}
\label{qseta}
\eta (t) = \begin{cases}
A t^\alpha, & \text{if } t \leq 1,\\
A t ^{1/\alpha}, & \text{if } t \geq 1,
\end{cases}
\end{equation} 
for some $0<\alpha\le 1$ and $A>0$.
Interchanging the role of $y$ and $z$ in the definition~\eqref{def-quasisym} of 
quasisymmetry gives a lower bound with the function $\phi(t)=1/\eta(1/t)$.
Thus we have the following pair of inequalities for all $x,y,z\in Z$ with $x\ne z$:
\begin{equation*}
\phi \biggl( \frac{ d_Z (x,y)}{ d_Z (x,z)} \biggr) 
\leq \frac{ d_W (f(x), f(y))}{ d_W (f(x), f(z))}
 \leq  \eta \biggl( \frac{ d_Z (x,y)}{ d_Z (x,z)} \biggr).
\end{equation*}
It is easily verified that~\eqref{qseta} yields
\begin{equation} \label{qsphi}
\phi(t) =  \begin{cases}
  A^{-1}t^{1/\al}, & \text{if } t \le1, \\
  A^{-1}t^\al, & \text{if }  t \ge1,
\end{cases}
\end{equation}
where the constant $A$ is the one associated with $\eta$ in~\eqref{qseta}.
Note that the constant 1 in~\eqref{qseta} and \eqref{qsphi} can easily
be replaced by any other positive constant, by changing the constant $A$.

If $Z$ is bounded,  then for $x\in Z$ we can choose $z\in Z$
such that $d_Z(x,z)\ge \tfrac13\diam Z$, and from such a choice of $z$ we get
\begin{equation*}  
   C_1 d_Z(x,y)^{1/\alpha}\le  d_W(f(x),f(y))
        \le  C_2 d_Z(x,y)^{\alpha},
\end{equation*}
i.e.\ both $f$ and $f^{-1}$ are $\al$-H\"older.
 
We say that a bijective map $f:Z\to W$ is $(\al_1,\al_2)$-\emph{biH\"older}
if there are constants $C_1,C_2>0$ such that for all $x,y\in Z$,
\begin{equation}  \label{def-new-biHolder}
   C_1 d_Z(x,y)^{\al_1}\le  d_W(f(x),f(y))
        \le  C_2 d_Z(x,y)^{\alpha_2}.
\end{equation}
If $\al_1=1/\al$ and $\al_2=\al$, we say that $f$ is $\al$-\emph{biH\"older}.

Clearly, every $\al$-biH\"older map is $(1/\al,\al)$-biH\"older, but there 
may be better constants $\al_1$ and $\al_2$ for which~\eqref{def-new-biHolder}
holds. Conversely, if $Z$ is bounded then  every $(\al_1,\al_2)$-biH\"older map is 
$\al$-biH\"older with $\al=\min\{1/\al_1,\al_2\}$.

\begin{lem}\label{L1-isom}
Assume that $Z$ and $W$ are bounded metric spaces
equipped with an Ahlfors $Q_Z$-regular measure $\nu_Z$
and an Ahlfors $Q_W$-regular measure $\nu_W$ respectively.
Suppose that $f:Z\to W$ is an\/ 
$(\al_1,\al_2)$-biH\"older homeomorphism  such that 
\(
  Q_Z\ge \al_1 Q_W.
\)
Then the map $f$ induces a bounded embedding $f_\#:L^p(W)\to L^p(Z)$
for $p\ge1$, via composition.
\end{lem}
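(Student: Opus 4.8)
The plan is to reduce the claimed $L^p$-bound to a comparison between the push-forward measure $f_*\nu_Z$, defined by $f_*\nu_Z(A)=\nu_Z(f^{-1}(A))$ for Borel $A\subseteq W$, and the measure $\nu_W$. Since $f$ is a homeomorphism, $u\circ f$ is Borel measurable whenever $u$ is, and the standard change-of-variables formula for push-forwards gives, for every nonnegative Borel function $u$ on $W$,
\[
   \int_Z (u\circ f)^p\,d\nu_Z = \int_W u^p\,d(f_*\nu_Z).
\]
Hence it suffices to prove that $f_*\nu_Z\le C\,\nu_W$ as measures, i.e.\ $f_*\nu_Z(A)\simle\nu_W(A)$ for all Borel $A\subseteq W$; applying this with $u=|v|$ then yields $\|v\circ f\|_{L^p(Z)}\simle\|v\|_{L^p(W)}$ for $v\in L^p(W)$, which is the assertion.

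First I would establish the comparison on balls. Fix $w\in W$ and $r>0$, and put $x_0=f^{-1}(w)$. If $f(x)\in B(w,r)$, then by the lower bound in~\eqref{def-new-biHolder} we have $C_1 d_Z(x_0,x)^{\al_1}\le d_W(w,f(x))<r$, so $d_Z(x_0,x)<(r/C_1)^{1/\al_1}$. Thus $f^{-1}(B(w,r))\subseteq B_Z\bigl(x_0,(r/C_1)^{1/\al_1}\bigr)$, and since $\nu_Z$ is Ahlfors $Q_Z$-regular (and $\nu_Z(Z)<\infty$ handles large radii),
\[
   f_*\nu_Z(B(w,r)) \le \nu_Z\bigl(B_Z(x_0,(r/C_1)^{1/\al_1})\bigr) \simle r^{Q_Z/\al_1}.
\]
On the other hand $\nu_W(B(w,r))\simeq r^{Q_W}$ for $0<r\le 2\diam W$, while $\nu_W(B(w,r))=\nu_W(W)\simeq(\diam W)^{Q_W}$ for larger $r$. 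Because $Q_Z\ge\al_1 Q_W$, the exponent $Q_Z/\al_1-Q_W$ is nonnegative, so $r^{Q_Z/\al_1-Q_W}$ stays bounded on the bounded range $0<r\le 2\diam W$; together with the trivial bound $f_*\nu_Z(W)/\nu_W(W)<\infty$ for $r>2\diam W$ this yields a constant $C_0$ with $f_*\nu_Z(B)\le C_0\,\nu_W(B)$ for every ball $B\subseteq W$. This is exactly where the hypothesis $Q_Z\ge\al_1 Q_W$ and the fact that $Z$ and $W$ are bounded enter.

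It remains to pass from balls to arbitrary Borel sets. Both $\nu_W$ and $f_*\nu_Z$ are finite Borel measures on the metric space $W$ (which is separable, being Ahlfors regular hence doubling), so they are outer regular with respect to open sets, and it is enough to bound $f_*\nu_Z(U)$ for $U\subseteq W$ open. Writing $U$ as a countable union of balls contained in $U$ and applying the basic $5r$-covering lemma, we obtain a countable pairwise disjoint subfamily $\{B_i\}$ of balls contained in $U$ with $U\subseteq\bigcup_i 5B_i$; then, using the ball comparison, the doubling property of the Ahlfors regular measure $\nu_W$, and the disjointness of the $B_i$,
\[
   f_*\nu_Z(U) \le \sum_i f_*\nu_Z(5B_i) \le C_0\sum_i\nu_W(5B_i) \simle \sum_i\nu_W(B_i) = \nu_W\Bigl(\bigcup_i B_i\Bigr) \le \nu_W(U).
\]
Taking the infimum over open $U\supseteq A$ gives $f_*\nu_Z(A)\simle\nu_W(A)$ for every Borel $A$, which completes the argument. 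The only step that is not pure bookkeeping is this last upgrade from a ball estimate to a genuine measure domination; the two Ahlfors-regularity computations and the change-of-variables identity are routine.
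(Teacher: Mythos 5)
Your argument is correct, but it takes a genuinely different route from the paper's. The paper works on the $Z$ side: it uses density of continuous functions in $L^p(W)$, covers $Z$ by balls $B_i$ of a common small radius $r$ with bounded overlap, approximates $\int_Z|u\circ f|^p\,d\nu_Z$ by Riemann sums, and exploits the inclusion $B_W(f(z_i),Cr^{\al_1})\subset f(B_i)$ (images of balls contain balls) together with $Q_Z\ge\al_1 Q_W$ and the bounded overlap of the sets $f(B_i)$, letting $r\to0$ at the end. You instead prove an honest measure domination $f_*\nu_Z\simle\nu_W$: the reverse inclusion $f^{-1}(B_W(w,r))\subset B_Z\bigl(f^{-1}(w),(r/C_1)^{1/\al_1}\bigr)$ and Ahlfors regularity give the bound on balls (this is exactly where $Q_Z\ge\al_1 Q_W$ and the boundedness of the spaces enter, just as in the paper), and the $5r$-covering lemma plus outer regularity of the finite Borel measure $\nu_W$ upgrade it to all Borel sets; the $L^p$ bound is then immediate from the change-of-variables identity for push-forwards. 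Both proofs use only the lower bound in~\eqref{def-new-biHolder}, i.e.\ that $f^{-1}$ is $1/\al_1$-H\"older, matching the remark following the lemma in the paper. Your route avoids the Riemann-sum and limiting step and the density argument, applies directly to arbitrary $L^p$ functions, and shows in passing that $f^{-1}$ pulls $\nu_W$-null sets back to $\nu_Z$-null sets, so that $f_\#$ is well defined on equivalence classes — a point left implicit in the paper; the paper's route is lighter on the measure-theoretic side, needing no covering or regularity upgrade. One cosmetic remark: rather than ``writing $U$ as a countable union of balls'', apply the covering lemma to the family of all balls centred in $U$ and contained in $U$ (radii at most $\diam W$); since every point of $U$ is the centre of such a ball, the dilated disjoint subfamily covers $U$, and its countability (needed to sum the measures) follows from separability of the Ahlfors regular space $W$.
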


The proof below does not need $f$ to be biH\"older,
it is 
sufficient to require that $f^{-1}$ is a 
$1/\alp_1$-H\"older continuous homeomorphism.

\begin{proof} 
 Since  $Z$ and $W$ are separable, continuous functions are
dense in $L^p(Z)$ and $L^p(W)$. Let $u\in L^p(W)$ be a continuous function. 
Then $\int_{W}|u|^p\, d\nu_W$ and $\int_{Z} |u\circ f|^p\, d\nu_Z$ can be computed using Riemann sums.
 Cover $Z$ by  balls $B_i=B_Z(z_i,r)$ with common radius  
$r<1$ so that the balls $B_Z\bigl(z_i,\tfrac12r\bigr)$ are pairwise disjoint and
 \[
   \int_{Z}|u\circ f|^p\, d\nu_Z \simeq \sum_i |u\circ f(z_i)|^p \nu_Z(B_i)
 \]
 for some choice $z_i\in B_i$. 
The Ahlfors regularity condition then implies that the balls $\{B_i\}_i$
have bounded overlap with a bound independent of $r$.
Since $f$ is $(\alpha_1,\alpha_2)$-biH\"older continuous, 
we know that $B_W(f(z_i),Cr^{\alpha_1}) \subset f(B_i)$ and hence
\[
\nu_Z(B_i) \simeq r^{Q_Z} \le r^{\al_1 Q_W} 
\simle \nu_W(f(B_i)).
\]
Thus, we have
 \[
    \int_{Z}|u\circ f|^p\, d\nu_Z 
                 \simle \sum_i |u\circ f(z_i)|^p \nu_W(f(B_i)).
 \]
As $f$ is a homeomorphism and $\{B_i\}_i$ have bounded overlap,
so do $\{f(B_i)\}_i$, and letting $r\to 0$ we obtain
 \[
\int_{Z}|u\circ f|^p\, d\nu_Z
   \simle \limsup_{r\to 0}\sum_i |u\circ f(z_i)|^p \nu_W(f(B_i))
       \simle  \int_{W} |u|^p\, d\nu_W.
 \] 
By the density of continuous functions in the corresponding $L^p$-spaces, 
we see that $f_\#$ boundedly embeds $L^p(Z)$ into $L^p(W)$. 
\end{proof}

The following proposition is a simple consequence of 
Lemma~\ref{L1-isom} and provides us with embeddings between Besov spaces.

\begin{prop}  \label{prop-Besov-to-Besov-with-Q}
Assume that $Z$ and $W$ are bounded metric spaces
equipped with an Ahlfors $Q_Z$-regular measure $\nu_Z$ and
an Ahlfors $Q_W$-regular measure $\nu_W$ respectively. Let $f:Z\to W$ be 
an\/ $(\al_1,\al_2)$-biH\"older homeomorphism such that 
\begin{equation} \label{eq-QZ}
   Q_Z\ge \al_1 Q_W,
\end{equation} 
and $\theta, \tau>0$ and  $p\ge1$ be such that 
 \begin{equation} \label{eq-tau}
    \tau\le \al_2 \theta+\frac{\al_2 Q_W-Q_Z}{p}.
 \end{equation}
 Then $f$ induces a canonical bounded embedding
 \(
     f_\#:B^{\theta}_{p,p}(W)\to B^{\tau}_{p,p}(Z)
 \)
 via composition.
\end{prop}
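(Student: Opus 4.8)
The proposition asserts that an $(\al_1,\al_2)$-biH\"older homeomorphism $f:Z\to W$ between Ahlfors regular bounded spaces induces a bounded composition operator $f_\#:B^\theta_{p,p}(W)\to B^\tau_{p,p}(Z)$, provided the dimension conditions \eqref{eq-QZ} and \eqref{eq-tau} hold. The natural approach is to use the integral form \eqref{eq-equiv-norm} of the Besov seminorm, express the $Z$-side double integral via the change of variables $\xi\mapsto f(\xi)$, $\z\mapsto f(\z)$, and then compare the resulting integrand against the $W$-side integrand, controlling the mismatch using the biH\"older bounds \eqref{def-new-biHolder} and the Ahlfors regularity of $\nu_Z$ and $\nu_W$. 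Since Lemma~\ref{L1-isom} already handles the $L^p$-part, it suffices to estimate $\|f_\#u\|_{B^\tau_{p,p}(Z)}$.

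First I would write, for $u\in B^\theta_{p,p}(W)$ and $v:=u\circ f$,
\[
\|v\|_{B^\tau_{p,p}(Z)}^p \simeq
\int_Z\int_Z \frac{|u(f(\z))-u(f(\xi))|^p}{d_Z(\z,\xi)^{\tau p}\,\nu_Z(B(\z,d_Z(\z,\xi)))}\,d\nu_Z(\xi)\,d\nu_Z(\z),
\]
using \eqref{eq-equiv-norm}, and then use $\nu_Z(B(\z,d_Z(\z,\xi)))\simeq d_Z(\z,\xi)^{Q_Z}$ to reduce the denominator to $d_Z(\z,\xi)^{\tau p+Q_Z}$. The same Riemann-sum / bounded-overlap covering argument as in the proof of Lemma~\ref{L1-isom} lets me pass $\z,\xi$ over to $W$: covering $Z$ by balls $B_i=B_Z(z_i,r)$ with disjoint half-balls, the product measure $d\nu_Z\times d\nu_Z$ is comparable to $\sum_{i,j}\nu_Z(B_i)\nu_Z(B_j)\,\delta$, and by biH\"older continuity $\nu_Z(B_i)\simeq r^{Q_Z}\le r^{\al_1 Q_W}\simle \nu_W(f(B_i))$ by \eqref{eq-QZ}, while $d_Z(z_i,z_j)^{\tau p+Q_Z}\gtrsim d_W(f(z_i),f(z_j))^{(\tau p+Q_Z)/\al_2}$ when $d_W(f(z_i),f(z_j))\le 1$ (using the upper bound in \eqref{def-new-biHolder}, i.e.\ $d_W\simle d_Z^{\al_2}$, so $d_Z\simge d_W^{1/\al_2}$). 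Letting $r\to 0$ and comparing with the Riemann sums for the $W$-integral, I get
\[
\|v\|_{B^\tau_{p,p}(Z)}^p \simle
\int_W\int_W \frac{|u(\zeta')-u(\xi')|^p}{d_W(\zeta',\xi')^{(\tau p+Q_Z)/\al_2}}\,d\nu_W(\xi')\,d\nu_W(\zeta').
\]
Since $W$ is bounded one only integrates over $d_W(\zeta',\xi')\lesssim \diam W$, so the exponent bound I really need is $(\tau p+Q_Z)/\al_2 \le \theta p + Q_W$; rearranging gives exactly $\tau\le \al_2\theta + (\al_2 Q_W-Q_Z)/p$, which is \eqref{eq-tau}. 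Then $\nu_W(B(\zeta',d_W(\zeta',\xi')))\simeq d_W(\zeta',\xi')^{Q_W}$ converts the right-hand side back into $\|u\|_{B^\theta_{p,p}(W)}^p$ via \eqref{eq-equiv-norm}, completing the bound.

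\textbf{The main obstacle.} The delicate point is making the change of variables rigorous: $f$ need not be measure-preserving or even absolutely continuous in any quantitative pointwise sense, so I cannot literally substitute. The Riemann-sum device from Lemma~\ref{L1-isom} is what circumvents this — but here it has to be run on the \emph{double} integral with a singular kernel $d_Z(\z,\xi)^{-\tau p-Q_Z}$, so I must be careful that the near-diagonal contribution (where the kernel blows up) is still captured correctly by the covering sums. The standard fix is to split the double sum into dyadic annuli $d_Z(z_i,z_j)\simeq 2^{-k}$ and check that on each annulus the biH\"older comparison of distances and the Ahlfors comparison of measures hold uniformly, with the geometric series over $k$ converging precisely because of \eqref{eq-tau} (and its endpoint being allowed since the comparison with the $W$-side integral, which converges by $u\in B^\theta_{p,p}(W)$, is what controls everything). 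One should also note, as in the remark after Lemma~\ref{L1-isom}, that only the lower biH\"older bound on $d_W$ (equivalently the $1/\al_2$-H\"older continuity of $f$) and the upper bound feeding \eqref{eq-QZ} are used — the argument does not need the full two-sided biH\"older hypothesis, only the two one-sided estimates appearing in \eqref{eq-QZ} and \eqref{eq-tau}.
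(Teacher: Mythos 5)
Your proposal is correct and takes essentially the same route as the paper: both rewrite the seminorms via \eqref{eq-equiv-norm}, compare kernels using the upper H\"older bound $d_W(f(z),f(x))\simle d_Z(z,x)^{\al_2}$ together with \eqref{eq-tau} and boundedness, and transfer the double integral from $Z\times Z$ to $W\times W$ by the covering argument of Lemma~\ref{L1-isom}, which is where \eqref{eq-QZ} enters. The only differences are organizational: the paper first proves the pointwise inequality $|v(z)-v(x)|/d_Z(z,x)^{\tau+Q_Z/p}\simle |u(f(z))-u(f(x))|/d_W(f(z),f(x))^{\theta+Q_W/p}$ and then applies Lemma~\ref{L1-isom} to the resulting (already $W$-side) integrand, so the singular-kernel issue you labour over never arises, and your closing remark swaps the roles of the two one-sided bounds (the lower bound $d_W\simge d_Z^{\al_1}$, i.e.\ $1/\al_1$-H\"older continuity of $f^{-1}$, is what feeds \eqref{eq-QZ} through Lemma~\ref{L1-isom}, while the upper bound with $\al_2$ feeds \eqref{eq-tau}).
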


Proposition~\ref{prop-Besov-to-Besov-with-Q} can immediately be applied to
the tree boundaries $Z=\bdry X$ and $W=\bdry Y$, considered in the previous
sections, provided that  each vertex in $X$ and $Y$ has at least two 
children (so that the boundaries are uniformly perfect) and the boundaries
$\bdry X$ and $\bdry Y$ are Ahlfors regular.
The Ahlfors regularity is guaranteed e.g.\ if $X$ and $Y$ are regular
trees, but can hold also in less regular situations.
The metrics on $\bdry X$ and $\bdry Y$ are the visual metrics given 
by~\eqref{bdy-metric}.
Such metrics can be defined also for nonregular trees.

An $(\al_1,\al_2)$-biH\"older homeomorphism
$f:\bdry X\to\bdry Y$ thus induces
a canonical bounded embedding 
$f_\#:B^{\theta}_{p,p}(\bdry Y)\to B^{\tau}_{p,p}(\bdry X)$,
provided that $Q_X\ge \al_1 Q_Y$ and
\[
    \tau\le \al_2\theta+\frac{\al_2 Q_Y-Q_X}{p}.
\]
In particular, this applies to quasisymmetric mappings between the boundaries
with $\al_1=\al^{-1}$ and $\al_2=\al$ for some $0<\al\le 1$.

\begin{proof}[Proof of Proposition~\ref{prop-Besov-to-Besov-with-Q}] 
Suppose that $u\in B^\theta_{p,p}(W)$, and let $v=u\circ f$.
Also let $\theta_0=\theta +Q_{W}/p$ and $\tau_0=\tau+Q_{Z}/p$. 
For $x,z\in Z$, we have by the H\"older continuity of $f$
and the fact that $\tau_0\le \alpha_2\theta_0$,
\begin{align}  \label{eq-tau-0-theta-0}
  \frac{|v(z)-v(x)|}{d_{Z}(z,x)^{\tau_0}}
&=\frac{|v(z)-v(x)|}{d_{W}(f(z),f(x))^{\theta_0}} 
         \frac{d_{W}(f(z),f(x))^{\theta_0}}{d_{Z}(z,x)^{\tau_0}}\\
    &\simle \frac{|v(z)-v(x)|}{d_{W}(f(z),f(x))^{\theta_0}} 
            d_{Z}(z,x)^{\alpha_2 \theta_0 - \tau_0}
 \simle \frac{|u\circ f(z)-u\circ f(x)|}{d_{W}(f(z),f(x))^{\theta_0}}.
\nonumber
\end{align}
By the second part of  Lemma~\ref{lem-norm-equiv-sum}, we see that
\begin{align*}
   \Vert v\Vert_{B_{p,p}^{\tau}(Z)}^p &\simeq 
    \int_{Z}\int_{Z} \biggl(\frac{|v(z)-v(x)|}
   {d_{Z}(z,x)^{\tau_0}}\biggr)^p \,d\nu_Z(z)\, d\nu_{Z}(x).
\end{align*}
Inequality \eqref{eq-tau-0-theta-0} and Lemma~\ref{L1-isom} then yield,
\begin{align*}
\Vert v\Vert_{B_{p,p}^{\tau}(Z)}^p
         &\simle \int_{Z}
              \int_{Z} \biggl(\frac{|u\circ f(z)-u\circ f(x)|}
   {d_{W}(f(z),f(x))^{\theta_0}}\biggr)^p \,d\nu_Z(z)\,d\nu_Z(x)\\ 
  &\simle \int_{W}\int_{W} \biggl(\frac{|u(y)-u(w)|}
   {d_{W}(y,w)^{\theta_0}}\biggr)^p \,d\nu_W(y)\,d\nu_W(w)
       \simeq   \Vert u\Vert_{B_{p,p}^{\theta}(W)}^p.
\end{align*}
Thus we have a bounded embedding $f_\#:B^{\theta}_{p,p}(W)\to B^{\tau}_{p,p}(Z)$, with 
control over the Besov  seminorm.  Control of the $L^p$ norm is given by Lemma~\ref{L1-isom}.
\end{proof}

\begin{remark} \label{rmk-qs-metric-change}
Assume that the metric space $Z$ is equipped with another 
``snowflaked'' metric 
$d_Z'$ satisfying
\begin{equation} \label{eq-snowflaking}
   d'_Z(z,x)=d_{Z}(z,x)^{\s}
\end{equation}
for some $\s>0$ and all $z,x\in Z$.
Then the ball $B(z,r)$ with respect to $d_Z$ equals the ball
$B'(z,r')$ with respect to $d'_Z$, where 
$r' = r^{\s}$, and $\nu_Z(B(\z,r)) =\nu_Z(B'(z,r'))$.
This means that the Ahlfors regularity exponent 
changes accordingly, i.e.\ $Q_Z'=Q_Z/\s$. 
Also, it is easily verified that $B^\tau_{p,p}(Z)$ with respect to $d_Z$
equals $B^{\tau'}_{p,p}(Z)$ with respect to $d_Z'$, where $\tau'=\tau/\s$.

A similar observation holds for $W$ with $d_W'(y,w)=d_W(y,w)^\ka$
and $\theta'=\theta/\ka$.
We also see that an $(\al_1,\al_2)$-biH\"older map $f:(Z,d_Z)\to(W,d_W)$
is $(\al_1',\al_2')$-biH\"older when regarded as a map from $(Z,d_Z')$
to $(W,d_W')$, with $\al_j'=\al_j\ka/\s$, $j=1,2$.

Note that the conditions \eqref{eq-QZ} and~\eqref{eq-tau}
remain invariant under such changes, and thus  Lemma~\ref{L1-isom} and 
Proposition~\ref{prop-Besov-to-Besov-with-Q} are invariant under 
``snowflaking''  of the metrics.

For $0 < \sigma \le  1$ the snowflaking in \eqref{eq-snowflaking}
always gives a new metric, but this is not true in general for $\sigma >1$.
However, for our primary examples, boundaries of trees, 
snowflaking always produces
a new metric also for $\sigma >1$.  Changing the weight exponents $\eps_{X}$ 
and $\eps_{Y}$ which determine the  metrics on $\partial X$ and $\partial Y$
to $\eps_{X}'$ and $\eps_{Y}'$ respectively,
gives new ``snowflaked'' visual metrics $d_X'\simeq d_X^\s$ and 
$d_Y'\simeq d_Y^\ka$ with
$\s=\eps_X'/\eps_X$ and $\ka=\eps_Y'/\eps_Y$.
Thus the identity is a quasisymmetric self-mapping of $\bdry X$ and
of $\partial Y$ and it follows 
that if $f:\partial X\to\partial Y$ is a quasisymmetric map for some 
$\eps_X,\eps_Y>0$  then it is quasisymmetric with respect to \emph{all} 
metrics on $\bdry X$ and $\partial Y$ given by~\eqref{bdy-metric}. 
\end{remark}

The conditions $Q_X\ge \al_1 Q_Y$ and 
$\tau\le \al_2\theta + p^{-1}(\al_2 Q_Y-Q_X)$ place restrictions 
on the type of biH\"older maps for which
Proposition~\ref{prop-Besov-to-Besov-with-Q} holds and
on the Hausdorff dimensions of $\bdry X$ and $\bdry Y$. 
The result applies to general metric spaces
but does not take the geometry of the spaces into consideration.
Our primary examples $\bdry X$ and $\bdry Y$ are Cantor type sets with more
structure than general metric spaces, and Besov spaces on such sets
can be regarded as traces of Newtonian spaces.
Thus the conclusions of Proposition~\ref{prop-Besov-to-Besov-with-Q} can be
improved in this case by exploiting embeddings between Newtonian spaces on trees.  
To do so, we need to extend quasisymmetric mappings between boundaries of trees to 
nice mappings between the trees. This will be the focus of the next section. The 
corresponding result for Besov spaces appears in Theorem~\ref{thm-Besov-inv} below.

\section{Extending quasisymmetries from the boundary to the 
tree, and embeddings of Besov spaces}
\label{sect-ext-qs}

\emph{In this section, we assume that both
$X$ and $Y$ are rooted trees such that each vertex has at least two children. 
In particular, $\bdy X$ and $\bdy Y$ are uniformly prefect,
 but no regularity is assumed\/
\textup{(}except for Theorem~\ref{thm-Besov-inv}\textup{)}}

\medskip

Proposition~\ref{prop-Besov-to-Besov-with-Q} 
and the comments after it show that quasisymmetric mappings between 
boundaries of two trees preserve Besov spaces. 
At the same time, we saw in Theorem~\ref{thm-trace-sharp}
that Besov functions on the boundary of a regular tree are traces of 
(and extend to) Newtonian functions on the tree.
It is therefore natural to ask whether every quasisymmetric map can be extended
to the tree as some mapping  preserving Newtonian spaces in a reasonable way.
The aim of this section is to study this question. The natural property of such extended 
functions is  the rough quasiisometry, see Definition~\ref{rough-qiso}.

We shall show that every quasisymmetry 
between the boundaries of two trees extends to a rough quasiisometry between the trees.
At the end of this section, in Theorem~\ref{thm-Besov-inv}, we will show 
that if the trees involved are regular, then the quasisymmetry  
between their boundaries induces a canonical bounded linear map 
between certain Besov spaces, under less restrictive assumptions
than those in Proposition~\ref{prop-Besov-to-Besov-with-Q}.

Let $X$ and $Y$ be two trees such that each vertex has at least two children.
Let $d_{X}$ and $d_{Y}$ be the metrics on $X$ and $Y$ given 
as in~\eqref{unif-metric-trees} by the exponential weights with
$\eps_X$ and $\eps_Y$, respectively.
Consider the Cantor type boundaries $\partial X$ and $\partial Y$ with respect
to these metrics and extend the metrics to the boundaries as in~\eqref{bdy-metric}. 
Also let $|\cdot-\cdot|$ denote the metric on the vertices of 
a tree 
given by the number of edges in the geodesic from one vertex to another. 
Recall that $|x|=|x-0_X|$ and $|y|=|y-0_Y|$ for $x\in X$ and $y\in Y$.

Assume that $f: \partial X \to \partial Y$ is an $\eta$-quasisymmetry,  
as in~\eqref{def-quasisym}.
Since each vertex in $X$ and $Y$ has more than one child, the boundaries 
$\bdry X$ and $\bdry Y$ are uniformly perfect and $\eta$ 
can be chosen to be of the form
\begin{equation}\label{qseta-old}
   \eta (t) = \begin{cases}
     A t^\alpha, & \text{if } t \leq 1,\\
     A t ^{1/\alpha}, & \text{if } t \geq 1,
\end{cases}
\end{equation}
for some $\alpha \le 1$ and $A>0$, by Theorem~11.3 in Heinonen~\cite{Hei}. 

We now use $f$ to construct a map $F: X \to Y$ as follows: For $x \in X$ we let
$F(x)$  be the largest vertex in $Y$ (with respect to $\le$) with the property that
\[
    f(\xi) > F(x) \mbox{ for all } \xi > x.
 \]
 In other words,  we consider all the descendants $\xi\in\partial X$ of $x$, and choose
 as $F(x)$ the largest common ancestor of all $f(\xi)$ for these $\xi$.
Thus $F: X \to Y$ is defined.  We shall use~\eqref{def-quasisym} to discover properties 
of this map,  namely for arbitrary $x_1, x_2 \in X$ we want bounds for
\( 
    |F(x_1)-F(x_2) |
\)
in terms of $|x_1-x_2|$.  

\begin{deff}\label{rough-qiso}
A (not necessarily continuous)
mapping $F:X\to Y$ is an $(L,\Lambda)$-\emph{rough quasiisometry} 
if whenever $x,y\in X$ we have
\[ 
  \frac{1}{L}|x-y|-\Lambda\le |F(x)-F(y)|\le L |x-y|+\Lambda,
\] 
and  for each $y\in Y$ there is a point $x\in X$ such that $|F(x)-y|\le L+\La$.
\end{deff}

Such maps can be regarded as mappings between the vertices
of the corresponding trees, or as mappings between the edge-connected trees  
(by mapping the edge between the two
vertices to the geodesic connecting the images of the two vertices). This dichotomy of rough 
quasiisometries will be
exploited in this and subsequent sections of this paper. Much of the current literature on rough 
quasiisometries call such maps quasiisometries.
However, we will follow the terminology of
Bonk--Heinonen--Koskela~\cite{BHK}  to avoid confusion with biLipschitz maps.

\begin{thm}\label{thm-qs2roughqiso}
Let $X$ and $Y$ be two rooted trees, such that each vertex has at least two 
children. Let $\eps_X$ and $\eps_Y$ give the weighted metrics on $X$ and $Y$ 
respectively as  in~\eqref{bdy-metric}. Suppose that $f:\partial X\to\partial Y$ is an 
$\eta$-quasisymmetric map, where
\begin{equation}   \label{eq-al1-al2-eta}
   \eta (t) = \begin{cases}
A t^{\alpha_1}, & \text{if } t \leq 1,\\
A t ^{\alpha_2}, & \text{if } t \geq 1,
\end{cases}
  \quad \text{and} \quad \alp_1,\alp_2,A>0.
\end{equation}
Then there is an\/
$(L,\Lambda)$-rough quasiisometry $F:X\to Y$ which extends continuously 
along geodesics in $X$ to $f$, and such that for all $x_1,x_2\in X$,
\begin{equation}  \label{eq-rough-qiso-L1-L2}
L_1|x_1-x_2| -\La \le |F(x_1)-F(x_2)| \le L_2|x_1-x_2| +\La,
\end{equation}
where 
\[
L_1=\frac{\al_1 \eps_X}{\eps_Y}, \quad L_2=\frac{\al_2\eps_X}{\eps_Y},
\quad L=\max\biggl\{\frac{1}{L_1},L_2\biggr\}
\quad \text{and} \quad  \Lambda=\frac{2\log A}{\eps_Y}.
\]
\end{thm}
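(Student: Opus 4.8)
The plan is to study the map $F$ defined just before Definition~\ref{rough-qiso} entirely through the ultrametric geometry of $\bdry X$ and $\bdry Y$. For a vertex $x\in X$ write $S_x=\{\xi\in\bdry X:\xi>x\}$ for the set of boundary rays through $x$; by Lemma~\ref{lem-ultra-metric-center} and \eqref{bdy-metric}, $\diam S_x=\frac2{\eps_X}e^{-\eps_X|x|}$, attained by any two rays through distinct children of $x$, and more generally any nonempty $E\subset\bdry X$ has $\diam E=\frac2{\eps_X}e^{-\eps_X k}$, where $k$ is the level of the largest common ancestor of $E$. Since $F(x)$ is by construction the largest common ancestor of $f(S_x)$, it is characterized by $\diam f(S_x)=\frac2{\eps_Y}e^{-\eps_Y|F(x)|}$. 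Two easy consequences: $F(0_X)=0_Y$, and $x\le x'$ implies $S_{x'}\subset S_x$, hence $f(S_{x'})\subset f(S_x)$ and $F(x)\le F(x')$. Now fix $x_1,x_2\in X$, let $v=x_1\wedge x_2$ and $w=F(x_1)\wedge F(x_2)$; since $v$, $w$ are the peaks of the geodesics $[x_1,x_2]$ and $[F(x_1),F(x_2)]$, we have $|x_1-x_2|=\sum_i(|x_i|-|v|)$ and $|F(x_1)-F(x_2)|=\sum_i(|F(x_i)|-|w|)$, so \eqref{eq-rough-qiso-L1-L2} reduces to proving, for $i\in\{1,2\}$,
\begin{equation*}
  L_1\bigl(|x_i|-|v|\bigr)-\frac{\log A}{\eps_Y}\ \le\ |F(x_i)|-|w|\ \le\ L_2\bigl(|x_i|-|v|\bigr)+\frac{\log A}{\eps_Y},
\end{equation*}
and summing; this also yields the $(L,\La)$-rough quasiisometry inequalities with $L=\max\{1/L_1,L_2\}$.

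The core of the argument is to extract the displayed bound from a single application of the power quasisymmetry. With $\eta$ as in \eqref{eq-al1-al2-eta}, interchanging $y$ and $z$ in \eqref{def-quasisym} gives the companion lower estimate $\phi(t)=A^{-1}t^{\alpha_2}$ for $t\le1$. Fix $i$ and let $j$ be the other index. For the upper bound, choose $\xi,\xi'\in S_{x_i}$ through two distinct children of $x_i$, so $d_X(\xi,\xi')=\frac2{\eps_X}e^{-\eps_X|x_i|}$, and choose $\zeta\in S_{x_j}$; since $x_i\wedge x_j=v$ we get $\xi\wedge\zeta=v$, hence $d_X(\xi,\zeta)=\frac2{\eps_X}e^{-\eps_X|v|}$ and $d_X(\xi,\xi')/d_X(\xi,\zeta)=e^{-\eps_X(|x_i|-|v|)}\le1$. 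On the $\bdry Y$ side, $\xi,\xi'\in S_{x_i}$ forces $f(\xi)\wedge f(\xi')\ge F(x_i)$, so $d_Y(f(\xi),f(\xi'))\le\frac2{\eps_Y}e^{-\eps_Y|F(x_i)|}$, while (routing $\zeta$ as explained below) $d_Y(f(\xi),f(\zeta))=\frac2{\eps_Y}e^{-\eps_Y|w|}$. Feeding these into $\phi\bigl(d_X(\xi,\xi')/d_X(\xi,\zeta)\bigr)\le d_Y(f(\xi),f(\xi'))/d_Y(f(\xi),f(\zeta))$ yields $A^{-1}e^{-\alpha_2\eps_X(|x_i|-|v|)}\le e^{-\eps_Y(|F(x_i)|-|w|)}$, which is exactly the upper bound with $L_2=\alpha_2\eps_X/\eps_Y$. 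For the lower bound, instead choose $\xi,\xi'\in S_{x_i}$ whose images split exactly at $F(x_i)$ (possible since $F(x_i)$ is the largest common ancestor of $f(S_x)$), so $d_Y(f(\xi),f(\xi'))=\frac2{\eps_Y}e^{-\eps_Y|F(x_i)|}$ while still $d_X(\xi,\xi')\le\frac2{\eps_X}e^{-\eps_X|x_i|}$, and feed these into $d_Y(f(\xi),f(\xi'))/d_Y(f(\xi),f(\zeta))\le\eta\bigl(d_X(\xi,\xi')/d_X(\xi,\zeta)\bigr)$ to get $e^{-\eps_Y(|F(x_i)|-|w|)}\le A\,e^{-\alpha_1\eps_X(|x_i|-|v|)}$, i.e.\ the lower bound with $L_1=\alpha_1\eps_X/\eps_Y$. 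Summing over $i$ produces \eqref{eq-rough-qiso-L1-L2} with $\La=2\log A/\eps_Y$.

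The one step that needs genuine care — and the main obstacle — is the routing claim used twice above: that one can choose $\zeta\in S_{x_j}$ (and, when $F(x_i)=w$, also steer $\xi$) so that $f(\xi)\wedge f(\zeta)=w$, i.e.\ $d_Y(f(\xi),f(\zeta))=\frac2{\eps_Y}e^{-\eps_Y|w|}$. This is a finite case analysis according to whether $F(x_i)$ and $F(x_j)$ equal $w$ or lie strictly below it. If $F(x_i)>w$, then \emph{all} of $f(S_{x_i})$ passes through one child $d_i$ of $w$, and $d_i\ne d_j$ whenever $F(x_j)>w$, so any admissible $\xi,\zeta$ work; if $F(x_i)=w$, then $f(S_{x_i})$ meets at least two children of $w$ and, $f$ being a homeomorphism of Cantor sets, each such intersection is a nonempty clopen set, which lets one pick $\xi$ (together with a sibling-descendant $\xi'$ realizing the split at $x_i$) and $\zeta$ routed into distinct children of $w$; the case $F(x_j)=w$ is symmetric. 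The degenerate configurations $x_i=v$ (one point an ancestor of the other, no peak) and $x_1=x_2$ are covered by the same computation with obvious simplifications.

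It remains to record that $F$ extends $f$ continuously along geodesics and satisfies the density clause of Definition~\ref{rough-qiso}. If $x_n$ is the ancestor of $\xi\in\bdry X$ at level $n$, then $S_{x_n}\downarrow\{\xi\}$, so $f(S_{x_n})\downarrow\{f(\xi)\}$, and since $\diam f(S_{x_n})=\frac2{\eps_Y}e^{-\eps_Y|F(x_n)|}\to0$ (using $|F(x_n)|\ge L_1n-\La$ from the bound just proved), $F(x_n)\to f(\xi)$ in $\itoverline{Y}$; this is the asserted continuous extension. For density, fix $y\in Y$, pick any $\chi\in\bdry Y$ with $\chi>y$, and let $x_n$ be the level-$n$ ancestor of $f^{-1}(\chi)$; then each $F(x_n)$ lies on the geodesic ray $[0_Y,\chi)$, which passes through $y$, and the levels $0=|F(x_0)|\le|F(x_1)|\le\cdots$ increase to $\infty$ with consecutive gaps $|F(x_{n+1})|-|F(x_n)|=|F(x_{n+1})-F(x_n)|\le L_2+\La$ by \eqref{eq-rough-qiso-L1-L2}, so the first $n$ with $|F(x_n)|\ge|y|$ satisfies $y\le F(x_n)$ and $|F(x_n)-y|=|F(x_n)|-|y|\le L_2+\La\le L+\La$. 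This completes the proof.
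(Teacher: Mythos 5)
Your overall strategy is a legitimate variant of the paper's: both proofs exploit the same mechanism (apply the power quasisymmetry to boundary triples whose mutual distances are pinned down exactly by levels of largest common ancestors, using that each vertex has at least two children). The organization differs: the paper splits into the comparable and incomparable cases, proves the comparable case directly with a single $\log A/\eps_Y$ loss, and in the incomparable case gets the upper bound by a triangle inequality through $F(\hat x)$ and the lower bound by a separate direct estimate; you instead prove a uniform per-index estimate comparing $|F(x_i)|-|w|$ with $|x_i|-|v|$ (where $v=x_1\wedge x_2$, $w=F(x_1)\wedge F(x_2)$) and sum, which avoids the triangle-inequality step and yields the same constants $L_1$, $L_2$, $\La=2\log A/\eps_Y$. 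Your density argument (walking along the ray toward $f^{-1}(\chi)$, using $F(0_X)=0_Y$, monotonicity of $F$, and the gap bound $L_2+\La$ between consecutive images) is different from, and cleaner than, the paper's minimality argument, and your continuity-along-geodesics argument is fine. The routing case analysis in the incomparable case (according to whether $F(x_i)$, $F(x_j)$ equal $w$ or lie strictly below it) is correct, and I note that for the lower bound the exact $Y$-routing is not even needed, since only the automatic upper bound $d_Y(f(\xi),f(\zeta))\le \tfrac2{\eps_Y}e^{-\eps_Y|w|}$ enters.

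The one step that does not survive as written is the dismissal of the comparable case as "the same computation with obvious simplifications." When $x_1<x_2$, so $v=x_1$ and $w=F(x_1)$, your recipe for the upper bound at $i=2$ demands a pair $(\xi,\zeta)$ that is simultaneously exact on both sides: $\xi\wedge\zeta=v$ in $X$ (which forces $\zeta$ into the sibling subtrees of the child of $x_1$ leading to $x_2$) and $f(\xi)\wedge f(\zeta)=w$ in $Y$. Nothing guarantees both at once: it is perfectly possible that $f$ maps every ray through the sibling children of $x_1$ into the same child of $w$ that contains $f(S_{x_2})$, with the second child of $w$ reached only by rays that branch off strictly between $x_1$ and $x_2$; then no admissible $\zeta$ splits from $f(\xi)$ at $w$. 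The estimate itself is still true, but one must decouple the two exactness requirements, exactly as the paper's Case~1 does: for the upper bound drop the exact $X$-split (any $\zeta>x_1$ gives the needed upper bound $d_X(\xi,\zeta)\le\tfrac2{\eps_X}e^{-\eps_X|v|}$, and then $\zeta$ can be routed freely in all of $S_{x_1}$ to achieve $f(\xi)\wedge f(\zeta)=w$), while for the lower bound drop the $Y$-routing (not needed) and take $\zeta$ through a sibling child of $v$ to secure $d_X(\xi,\zeta)=\tfrac2{\eps_X}e^{-\eps_X|v|}$. With that repair the argument is complete; as stated, the degenerate case is a genuine, though easily fixed, gap.
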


The formulas for $L_1$ and $L_2$ (and thus also for $L$) are sharp
as we will see later in Remark~\ref{rmk-sharpness-L-alp}.

\begin{proof}
Let $x_1,x_2\in X$. We consider two cases.

\medskip
\emph{Case}~1. \emph{$x_1$ and $x_2$ are comparable with respect to the partial
order\/ $\le$.}
In this case we can without loss of generality assume that $x_2 > x_1$,
i.e.\ 
\[
|x_1-x_2|=|x_2|-|x_1|.
\]  
It is then clear by the definition of $F$ that $F(x_1 ) \leq F(x_2)$,
i.e.\ 
\[
|F(x_1)-F(x_2)|=|F(x_2)|-|F(x_1)|.
\]
Using~\eqref{bdy-metric} and the fact that $F(x_1)$ has at least two children, 
we can find $\xi^*,\xi_1\in\bdry X$ so that $\xi_1> x_1$, $\xi^*>x_2$ and
\[
d_Y(f(\xi^*),f(\xi_1)) = \frac{2}{\eps_Y}e^{- \eps_Y|F(x_1)|}.
\]
Since $x_2$ has at least two children, we can also find $\bdry X\ni \xi_2>x_2$
such that $d_X(\xi^*,\xi_2)=2e^{- \eps_X|x_2|}/\eps_X$.
Then 
\[
\frac{d_X(\xi^*,\xi_1)}{d_X(\xi^*,\xi_2)}
  \le \frac{2e^{- \eps_X|x_1|}
    /\eps_X}{2e^{-\eps_X|x_2|}/ \eps_X} 
= e^{ \eps_X|x_1-x_2|}  > 1
\]
and
\[ 
\frac{d_Y(f(\xi^*),f(\xi_1))}{d_Y(f(\xi^*),f(\xi_2))} 
\ge \frac{2e^{-\eps_Y|F(x_1)|}/
  \eps_Y}{2e^{-\eps_Y|F(x_2)|}/\eps_Y}
= e^{ \eps_Y|F(x_1)-F(x_2)|}.
\]
The $\eta$-quasisymmetry condition 
now yields
\[
e^{ \eps_Y|F(x_1)-F(x_2)|} 
\le \eta \biggl( \frac{d_X(\xi^*,\xi_1)}{d_X(\xi^*,\xi_2)} \biggr)
\le \eta(e^{ \eps_X|x_1-x_2|})
\le A e^{\al_2 \eps_X|x_1-x_2|}.
\]
From this we conclude that
\begin{equation}   \label{eq-case1Lip-1}
|F(x_1)-F(x_2)| \le \frac{\al_2\eps_X}{\eps_Y} |x_1-x_2| + \frac{\log A}{\eps_Y}.
\end{equation}
For the converse inequality, first find
$\xi,\xi_2\in\bdry X$ so that $\xi,\xi_2> x_2$ and
\[
d_Y(f(\xi),f(\xi_2)) = \frac{2}{\eps_Y}e^{-\eps_Y|F(x_2)|},
\]
and then $\bdry X\ni\xi_1>x_1$ so that 
$d_X(\xi,\xi_1)=2e^{-\eps_X|x_1|}/\eps_X$.
This is possible since both $F(x_2)$ and $x_1$ have at least two children.
Then 
\[
\frac{d_X(\xi,\xi_2)}{d_X(\xi,\xi_1)}
  \le  \frac{2e^{-\eps_X|x_2|}/\eps_X}{2e^{-\eps_X|x_1|}/\eps_X}
= e^{-\eps_X|x_1-x_2|} < 1
\]
and
\[ 
\frac{d_Y(f(\xi),f(\xi_2))}{d_Y(f(\xi),f(\xi_1))}
\ge \frac{2e^{-\eps_Y|F(x_2)|}/\eps_Y}{2e^{-\eps_Y|F(x_1)|}/\eps_Y}
= e^{- \eps_Y|F(x_1)-F(x_2)|}.
\]
The $\eta$-quasisymmetry condition now yields
\[
e^{- \eps_Y|F(x_1)-F(x_2)|} 
\le A e^{- {\al_1} \eps_X|x_1-x_2|},
\]
and we conclude that
\begin{equation}   \label{eq-case1Lip-2}
|F(x_1)-F(x_2)| \ge \frac{\al_1 \eps_X}{\eps_Y} |x_1-x_2| 
   - \frac{\log A}{\eps_Y}.
\end{equation}

\medskip
\emph{Case}~2. \emph{$x_1$ and $x_2$ are not comparable with respect to the partial
order\/ $\le$.}
Let $\xh$ be the largest common ancestor of $x_1$ 
and $x_2$ in $X$, and $\yh$ be the largest common ancestor of $F(x_1)$
and $F(x_2)$ in $Y$. (Note that it is possible to have $\hat{y}$ be one of 
$F(x_2)$, $F(x_1)$. Indeed, if $F(x_1)$ lies in the geodesic connecting $F(\hat{x})$
to $F(x_2)$, then necessarily $\hat{y}=F(x_1)$.)

We obtain from~\eqref{eq-case1Lip-1} that
\begin{align*}
   |F(x_1)-F(x_2)| &\le |F(x_1)-F(\hat{x})|+|F(\hat{x})-F(x_2)|\\
&\le \frac{\al_2\eps_X}{\eps_Y}
    (|x_1-\hat{x}|+|\hat{x}-x_2|)
     +\frac{2\log A}{\eps_Y}.
\end{align*}
Since $x_1$ and $x_2$ are not comparable, we have
\begin{equation} \label{eq-x1-x2}
|x_1-x_2|=|x_1-\xhat|+|\xhat-x_2|
\end{equation}
 and hence
\begin{equation*}\label{eq-case2Lip-1}
 |F(x_1)-F(x_2)|
\le\frac{\al_2\eps_X}{\eps_Y} |x_1-x_2|
   +\frac{2\log A}{ \eps_Y}.
\end{equation*}

For the converse inequality, 
we let $\xi_1,\xi_2\in\bdry X$ be such that $\xi_1> x_1$ and $\xi_2> x_2$.
Since $\yh$ is the largest common ancestor of $F(x_1)$ and $F(x_2)$,
which in turn are ancestors of $f(\xi_1)$ and $f(\xi_2)$,
we see that 
\[
\dbdY(f(\xi_1),f(\xi_2)) \le 2 e^{- \eps_Y|\yh|}/\eps_Y.
\]
By the definition of $F(x_1)$, we can choose a descendant
$\xi'_1\in\bdry X$ of $ x_1$ so that
\(
\dbdY(f(\xi_1),f(\xi'_1)) = 2e^{-\eps_Y|F(x_1)|}/\eps_Y
\)
and hence
\begin{equation} \label{eq-dbdY-eta-2}
\frac{\dbdY(f(\xi_1),f(\xi'_1))}{\dbdY(f(\xi_1),f(\xi_2))}
\ge \frac{2e^{-\eps_Y|F(x_1)|}/\eps_Y}
   {2e^{-\eps_Y|\yh|}/\eps_Y}
=e^{\eps_Y(|\yh|-|F(x_1)|)}.
\end{equation}

As  $x_1<\xi_1\in\bdry X$ and $x_2<\xi_2\in\bdry X$, we see that
the geodesic from $\xi_1$ to $\xi_2$ must pass through $\xh$.
Hence by~\eqref{bdy-metric},
\[
    \dbdX(\xi_1,\xi_2) = 2e^{-\eps_Y|\xh|}/\eps_X.
\] 
Since moreover 
\(
    \dbdX(\xi_1,\xi'_1) \le 2 e^{-\eps_X|x_1|}/ \eps_X,
\)
and $|\hat{x}|<|x_1|$
we obtain that
\[
\frac{\dbdX(\xi_1,\xi'_1)}{\dbdX(\xi_1,\xi_2)}
   \le \frac{2 e^{- \eps_X|x_1|}/\eps_X}
       {2e^{-\eps_X|\xh|}/\eps_X}
   =e^{\eps_X(|\xh|-|x_1|)} < 1.
\]
The quasisymmetry condition~\eqref{def-quasisym} then implies that
\begin{align*}  
\frac{\dbdY(f(\xi_1),f(\xi'_1))}{\dbdY(f(\xi_1),f(\xi_2))}
&\le \eta\biggl( \frac{\dbdX(\xi_1,\xi'_1)}{\dbdX(\xi_1,\xi_2)} \biggr)
\le\eta ( e^{\eps_X(|\xh|-|x_1|)} ) \le A e^{\al_1 \eps_X(|\xh|-|x_1|)}.
\end{align*}
Inserting this into~\eqref{eq-dbdY-eta-2} yields
\[
  \eps_Y(|\yh|-|F(x_1)|) \le \al_1 \eps_X(|\xh|-|x_1|) + \log A.
\]
Since $F(x_1)\ge\yh$ and $x_1\ge\xh$, this gives
\[
|F(x_1)-\yh| \ge \frac{\al_1 \eps_X}{\eps_Y} |x_1-\xh|
                    - \frac{\log A}{\eps_Y}.
\]
Similarly,
\[
|F(x_2)-\yh| \ge \frac{\al_1 \eps_X}{ \eps_Y} |x_2-\xh|
                    - \frac{\log A}{\eps_Y}.
\]
Summing up the last two estimates and using \eqref{eq-x1-x2} gives
\[ 
|F(x_1)-F(x_2)| = |F(x_1)-\yh| + |\yh-F(x_2)| 
    \ge \frac{\al_1 \eps_X}{\eps_Y} |x_1-x_2|
     - \frac{2\log A}{\eps_Y}
\] 
and proves the rough quasiisometry condition~\eqref{eq-rough-qiso-L1-L2}.

\medskip

To verify the ``density property'', let $y\in Y$, and $\chi\in\partial Y$ 
be a descendant of $y$, that is, $\chi>y$. Since $f$ is a quasisymmetry
between $\partial X$ and $\partial Y$, it is surjective. It follows that 
there exists $\xi\in\partial X$ such that $f(\xi)=\chi$. Because $f$ is 
continuous, it also follows that for each $r>0$ there is a 
positive real number $\delta$ such that 
$f(B(\xi,\delta)\cap\partial X)\subset B(\chi,r)\cap\partial Y.$  
We therefore conclude that there is a vertex $y_0\in Y$ 
such that $\chi>y_0\ge y$ and a vertex $x\in X$ with $F(x)=y_0$. Let $Y_0$ 
be the collection of all such $y_0$.

Suppose that $y$ is \emph{not} in the image of $X$ under $F$.
Then $y\notin Y_0$. Let $z\in Y_0$ be minimal (with respect to $\le$), 
i.e.\ such that whenever $y\le w<z$ then $w\notin Y_0$.  
Let $X_0$ be the collection of all $x\in X$ for which $F(x)=z$. 
It is nonempty, since $z\in Y_0$. 
Let $v\in X_0$ be minimal, i.e.\ such that whenever $x<v$ then $x\notin X_0$. 

Let $u\in X$ be the parent of $v$. Then $F(u)<F(v)=z$, by the
monotonicity of $F$, and $|u-v|=1$. 
Since $z$ is minimal in $Y_0$, it follows that $F(u)<y<z=F(v)$ and hence
\[
   |F(v)-y|<|F(v)-F(u)|\le L|v-u|+\Lambda=L+\Lambda.
\]
Thus every $y\in Y$ is within a distance $L+\Lambda$ of the image of $X$ under $F$. 
This completes the proof of the theorem.
\end{proof}

As promised at the end of Section~\ref{QSvsBesov}, we now give a 
Besov space invariance result.

\begin{thm}   \label{thm-Besov-inv}
Let $X$ and $Y$ be regular $K_X$- and $K_Y$-ary trees equipped with metrics
$d_X$ and $d_Y$ given by exponential weights
with exponents $\eps_X$ and $\eps_Y$, as in~\eqref{unif-metric-trees}.

Let $f:\bdry X\to\bdry Y$ be an $\eta$-quasisymmetric map 
with $\eta$ as in~\eqref{eq-al1-al2-eta}, and let  
$Q_X=(\log K_X)/\eps_X$ and $Q_Y=(\log K_Y)/\eps_Y$ be the Hausdorff
dimensions of $\bdry X$ and $\bdry Y$ given by Lemma~\ref{lem-dimension}.
Suppose that $ p \ge 1$ and $\theta_X,\theta_Y\in(0,1)$ satisfy
\begin{equation}    \label{eq-cond-theta}
 \theta_X \le \begin{cases} \displaystyle
     \frac{Q_X}{p} + \al_1\biggl( \theta_Y - \frac{Q_Y}{p} \biggr), 
         &\text{if } \displaystyle \theta_Y \ge \frac{Q_Y}{p}, \\[4mm]
 \displaystyle \frac{Q_X}{p} + \al_2\biggl( \theta_Y - \frac{Q_Y}{p} \biggr), 
         &\text{if } \displaystyle \theta_Y \le \frac{Q_Y}{p}.   \end{cases}
\end{equation}
Then $f$ induces a canonical embedding
$f_{\#}:B_{p,p}^{\theta_Y}(\bdry Y) \to B_{p,p}^{\theta_X}(\bdry X)$  
such that when $u\in B^{\theta_Y}_{p,p}(\bdy Y)$
is continuous, we have $f_{\#}(u)=u\circ f$.
\end{thm}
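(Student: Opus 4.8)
The plan is to reduce everything, via Theorem~\ref{thm-trace-sharp}, to a bounded composition operator between Newtonian spaces on the two trees, the composition being with the rough quasiisometry furnished by Theorem~\ref{thm-qs2roughqiso}. First I would equip $Y$ with the measure $\mu_Y$ of~\eqref{tree-measure} with exponent $\beta_Y:=\eps_Y(Q_Y+p-p\theta_Y)$, and $X$ with $\mu_X$ of exponent $\beta_X:=\eps_X(Q_X+p-p\theta_X)$; since $\theta_X,\theta_Y\in(0,1)$ these satisfy $\beta_X>\log K_X$, $\beta_Y>\log K_Y$, and with these choices $\theta_Y=1-(\beta_Y-\log K_Y)/p\eps_Y$ and $\theta_X=1-(\beta_X-\log K_X)/p\eps_X$, so by Theorem~\ref{thm-trace-sharp} the spaces $B^{\theta_Y}_{p,p}(\bdry Y)$ and $B^{\theta_X}_{p,p}(\bdry X)$ are exactly the trace spaces of $N^{1,p}(Y,\mu_Y)$ and $N^{1,p}(X,\mu_X)$. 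Applying Theorem~\ref{thm-qs2roughqiso} to $f$ gives a rough quasiisometry $F\colon X\to Y$ extending $f$ along geodesics, with $L_1=\al_1\eps_X/\eps_Y$, $L_2=\al_2\eps_X/\eps_Y$ and $\La=2(\log A)/\eps_Y$ in~\eqref{eq-rough-qiso-L1-L2}; moreover, since $\bdry Y$ has boundary points below every child of its root, the only common ancestor of all of $f(\bdry X)=\bdry Y$ is the root, so by definition of $F$ we have $F(0_X)=0_Y$, and hence~\eqref{eq-rough-qiso-L1-L2} with $x_2=0_X$ yields the level comparison $L_1|x|-\La\le|F(x)|\le L_2|x|+\La$. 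I would then set $f_\#(u):=\Tr_X(w_u)$ where $w_u:=(\Ext_Y u)\circ F$, extended to be linear with respect to $d_X$ on each edge of $X$; here $\Ext_Y$ is the extension operator of Proposition~\ref{prop-ext} (which applies since $\theta_Y\ge1-(\beta_Y-\log K_Y)/p\eps_Y$), and $\Tr_X$ is the trace operator of Proposition~\ref{prop-trace} (which applies since $\theta_X\le1-(\beta_X-\log K_X)/p\eps_X$).

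That $f_\#(u)=u\circ f$ for continuous $u$ is comparatively soft. Writing $v=\Ext_Y u$, as $x$ runs along a geodesic ray $[0_X,\z)$ the vertices $F(x)$ run through the ancestors of $f(\z)$ at increasing levels, so $v(F(x))=\vint_{B_Y(F(x))}u\,d\nu_Y\to u(f(\z))$ whenever $f(\z)$ is a Lebesgue point of $u$; by Proposition~\ref{prop-ext} this holds $\nu_Y$-a.e., and since $f$ and $f^{-1}$ are biH\"older (quasisymmetries between the uniformly perfect boundaries are power maps, and on bounded spaces this forces biH\"older bounds with exponents $\al_1,\al_2$, resp.\ $1/\al_2,1/\al_1$) and $\nu_X,\nu_Y$ are Ahlfors regular, biH\"older maps carry null sets to null sets, so $\Tr_X w_u=u\circ f$ holds $\nu_X$-a.e., and everywhere if $u$ is continuous. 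Linearity of $\Ext_Y$, of composition with $F$, and of $\Tr_X$ makes $f_\#$ linear; together with its boundedness (next paragraph) and the density of continuous functions (Proposition~\ref{prop-dense-cont}) this identifies $f_\#$ as the canonical bounded extension of $u\mapsto u\circ f$, and gives injectivity through Lemma~\ref{L1-isom} applied to $f^{-1}$.

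The heart of the proof is the estimate $\|g_{w_u}\|_{L^p(X,\mu_X)}\simle\|g_{\Ext_Y u}\|_{L^p(Y,\mu_Y)}\simle\|u\|_{B^{\theta_Y}_{p,p}(\bdry Y)}$, the last step being Proposition~\ref{prop-ext}; the $L^p(\bdry X)$-part of $\|f_\#(u)\|_{\widetilde{B}^{\theta_X}_{p,p}(\bdry X)}$ is then controlled by Proposition~\ref{prop-trace} together with $|w_u(0_X)|=|v(0_Y)|=|\vint_{\bdry Y}u\,d\nu_Y|\le\|u\|_{L^p(\bdry Y)}$, and finiteness of $\|w_u\|_{L^p(X)}$ follows from the Poincar\'e inequality exactly as in Proposition~\ref{prop-ext}. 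As in~\eqref{eq-contrib-edge}, the contribution of an edge $[x,y]$ of $X$ with $|x|=n$ to $\int_X g_{w_u}^p\,d\mu_X$ is comparable to $e^{(\eps_X p-\beta_X)n}|v(F(x))-v(F(y))|^p$; since $y$ is a child of $x$ we have $F(x)\le F(y)$ with the geodesic $[F(x),F(y)]$ having at most $L_2+\La$ edges, so the triangle inequality along that geodesic followed by H\"older's inequality rewrites $\int_X g_{w_u}^p\,d\mu_X$ as a sum over edges $e'$ of $Y$ of $|v|_{e'}^p\,W(e')$, where $|v|_{e'}$ is the increment of $v$ across $e'$ and $W(e')=\sum e^{(\eps_X p-\beta_X)|x|}$ runs over the $X$-edges $[x,y]$ whose image-geodesic contains $e'$. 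Because $\int_Y g_v^p\,d\mu_Y\simeq\sum_{e'}|v|_{e'}^p\,e^{(\eps_Y p-\beta_Y)|a_{e'}|}$ (where $a_{e'}$ is the upper endpoint of $e'$), it suffices to prove the uniform bound $W(e')\simle e^{(\eps_Y p-\beta_Y)|a_{e'}|}$ for every edge $e'=[a,b]$ of $Y$. The edges counted by $W(e')$ are exactly the ``entry edges'' of the descendant-closed set $\{x\in X:F(x)\ge b\}$, equivalently they correspond to the maximal balls $B_X(y_0)\subset\bdry X$ with $f(B_X(y_0))\subset B_Y(b)$; these balls are pairwise disjoint and their $f$-images partition $B_Y(b)$, and one has $e^{(\eps_X p-\beta_X)|y_0|}\simeq(\diam B_X(y_0))^{Q_X-p\theta_X}$ and $e^{(\eps_Y p-\beta_Y)|a_{e'}|}\simeq(\diam B_Y(b))^{Q_Y-p\theta_Y}$ by the above choices of $\beta_X,\beta_Y$.

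I expect the covering inequality $\sum_{y_0}(\diam B_X(y_0))^{Q_X-p\theta_X}\simle(\diam B_Y(b))^{Q_Y-p\theta_Y}$ to be the main obstacle. When $\al_1=\al_2$ (the ``conformal'' case, e.g.\ pure snowflakings) each $e'$ is reached from essentially one level of $X$ and the estimate is immediate; but for a genuine quasisymmetry with $\al_1<\al_2$ a single $Y$-edge can be hit by $X$-edges spread over a level-window of length $\simeq|a_{e'}|(1/L_1-1/L_2)$, and one must use \emph{simultaneously} the $\nu_X$-bound $\sum_{y_0}(\diam B_X(y_0))^{Q_X}\simeq\nu_X(f^{-1}(B_Y(b)))\simle(\diam B_Y(b))^{Q_X/\al_2}$ coming from $f^{-1}$ being $(1/\al_2)$-H\"older, and the identity $\sum_{y_0}\nu_Y(f(B_X(y_0)))=\nu_Y(B_Y(b))$ coming from the partition property together with $\nu_Y(f(B_X(y_0)))\simge(\diam B_X(y_0))^{\al_2 Q_Y}$, feeding in the rough-quasiisometry level bounds to pin $|y_0|$ to the window $[(|a|+1-\La)/L_2,(|a|+\La)/L_1+1]$. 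It is precisely here that the two-sided hypothesis~\eqref{eq-cond-theta}—the exponent $\al_1$ when $\theta_Y\ge Q_Y/p$ and $\al_2$ when $\theta_Y\le Q_Y/p$—turns out to be the exact condition ensuring that the resulting power of $\diam B_Y(b)$ is at least $(\diam B_Y(b))^{Q_Y-p\theta_Y}$, hence that the series for $\int_X g_{w_u}^p\,d\mu_X$ converges; checking the extreme cases $\theta_Y=Q_Y/p\pm\tau/\al_i$ against the embeddings listed after~\eqref{eq-al1-al2-eta} in the introduction provides a useful consistency test.
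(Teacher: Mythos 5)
Your global architecture is the same as the paper's: equip $X$ and $Y$ with the measures of exponents $\be_X=\eps_X(Q_X+p-p\theta_X)$, $\be_Y=\eps_Y(Q_Y+p-p\theta_Y)$, extend $u$ by Proposition~\ref{prop-ext}, compose with the rough quasiisometry $F$ from Theorem~\ref{thm-qs2roughqiso}, and take the trace via Proposition~\ref{prop-trace}; the treatment of the $L^p$ part and of continuous $u$ is also as in the paper. However, the heart of the argument---the bound $\|g_{w_u}\|_{L^p(X,\mu_X)}\simle\|g_{\Ext u}\|_{L^p(Y,\mu_Y)}$---is exactly what you leave unproved: you reduce it to the inequality $W(e')\simle e^{(p\eps_Y-\be_Y)|a_{e'}|}$ per edge $e'$ of $Y$, declare it ``the main obstacle'', and only sketch a covering/measure-theoretic plan (Ahlfors bounds for $\nu_X$, $\nu_Y$, H\"older bounds for $f$ and $f^{-1}$, a partition identity) without carrying it out. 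So there is a genuine gap precisely at the step the theorem hinges on.

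Moreover, the obstacle you fear is illusory, and the route you sketch is misdirected. You only use the level bounds $L_1|x|-\La\le|F(x)|\le L_2|x|+\La$ to conclude that the $X$-edges hitting a fixed $Y$-edge could come from a level window of width $\simeq|a_{e'}|(1/L_1-1/L_2)$; but the \emph{two-sided} rough quasiisometry bound \eqref{eq-rough-qiso-L1-L2} says much more: if $[x_1,y_1]$ and $[x_2,y_2]$ are two $X$-edges whose image geodesics both contain $e'$, then $F(x_1)$ and $F(x_2)$ both lie within graph distance $L_2+\La$ of $e'$, so $|F(x_1)-F(x_2)|\le 2(L_2+\La)$ and hence $|x_1-x_2|\le\bigl(2(L_2+\La)+\La\bigr)/L_1$, a constant. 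Thus for each $Y$-edge there are only boundedly many such $X$-edges (a constant depending on $K_X,L_1,L_2,\La$), all at levels within a bounded constant of each other. Given this, your target inequality follows at once from the \emph{pointwise} weight bound $(p\eps_X-\be_X)|x|+(\be_Y-p\eps_Y)|F(x)|\le C_0$, which is where hypothesis \eqref{eq-cond-theta} actually enters: when $\theta_Y\ge Q_Y/p$ one has $\be_Y-p\eps_Y\le0$ and uses $|F(x)|\ge L_1|x|-\La$ with the $\al_1$-branch of \eqref{eq-cond-theta}; when $\theta_Y\le Q_Y/p$ one uses $|F(x)|\le L_2|x|+\La$ with the $\al_2$-branch. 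This pointwise bound plus bounded multiplicity is precisely the content of the paper's Lemma~\ref{lem-ug-on-X-Lp} (condition~\eqref{eq-cond-F-for-Lp}) together with the two-case verification in the proof of the theorem; no Ahlfors-regularity or covering interpolation is needed, and it is doubtful that your sketched interpolation (which, e.g., only invokes $\al_2$-type H\"older data even in the regime where \eqref{eq-cond-theta} involves $\al_1$) could be completed as stated.
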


This improves upon Proposition~\ref{prop-Besov-to-Besov-with-Q}
for regular trees.

\begin{remark}  \label{rem-KYZ}
The inverse of an $\eta$-quasisymmetric mapping $f$,
with $\eta$ as in~\eqref{eq-al1-al2-eta}, is $\tilde{\eta}$-quasisymmetric
with 
\[
   \tilde{\eta}(t)\simeq \begin{cases}
      t^{1/\al_2} & \text{for } t\le1,  \\
      t^{1/\al_1}   &  \text{for } t\ge1.
     \end{cases}
\]
Thus $f^{-1}:\bdy Y\to\bdy X$ induces an embedding
$f^{-1}_{\#}:B_{p,p}^{\theta_X}(\bdry X) \to B_{p,p}^{\theta'_Y}(\bdry Y)$ whenever
\[
\theta'_Y \le \begin{cases} \displaystyle
    \frac{Q_Y}{p} + \frac{1}{\al_2}\biggl( \theta_X - \frac{Q_X}{p} \biggr), 
        &\text{if } \displaystyle \theta_X \ge \frac{Q_X}{p}, \\[4mm]
\displaystyle 
      \frac{Q_Y}{p} + \frac{1}{\al_1}\biggl( \theta_X - \frac{Q_X}{p} \biggr), 
        &\text{if } \displaystyle \theta_X \le \frac{Q_X}{p}.   \end{cases}
\]
When $\theta_X=Q_X/p$ and $\theta_Y=Q_Y/p$, this
shows that (for all $\al_1$ and $\alp_2$)  $f$ induces an equivalence of the
Besov spaces $B_{p,p}^{\theta_X}(\bdry X)$ and $B_{p,p}^{\theta_Y}(\bdry Y)$.
Thus, we recover  Koskela--Yang--Zhou~\cite[Theorem~5.1]{KYZ}
in our setting of boundaries of regular trees.

We allow for other exponents as well, which gives more general results
in our special setting.
For example, we obtain the following embeddings with $\tau\ge0$,
\begin{align*}
B_{p,p}^{Q_Y/p+\tau/\al_1}(\bdry Y) &\longhookrightarrow B_{p,p}^{Q_X/p+\tau}(\bdry X) 
\longhookrightarrow B_{p,p}^{Q_Y/p+\tau/\al_2}(\bdry Y), \\
B_{p,p}^{Q_Y/p-\tau/\al_2}(\bdry Y) &\longhookrightarrow B_{p,p}^{Q_X/p-\tau}(\bdry X) 
\longhookrightarrow B_{p,p}^{Q_Y/p-\tau/\al_1}(\bdry Y),
\end{align*}
which do not follow from Theorem~5.1 in~\cite{KYZ} when $\tau>0$.
In particular, if $\al_1=\al_2=\al$ (i.e.\ $f$ is a ``snowflaking''
mapping), then $f$ induces an equivalence of the Besov spaces
$B_{p,p}^{Q_X/p+\tau}(\bdry X)$ and $B_{p,p}^{Q_Y/p+\tau/\al}(\bdry Y)$ for all 
$\tau>-Q_X/p$.
\end{remark}

To prove Theorem~\ref{thm-Besov-inv} we shall use the following lemma
which, roughly speaking, gives a sufficient condition for an embedding between 
Newtonian spaces on trees.

\begin{lem}  \label{lem-ug-on-X-Lp} Let $X$ and $Y$ be rooted
trees equipped with metrics
$d_X$, $d_Y$ and measures $\mu_X$, $\mu_Y$ given by exponential weights
with exponents $\eps_X$, $\eps_Y$, $\be_X$ and $\be_Y$, respectively, as 
in~\eqref{unif-metric-trees} and~\eqref{tree-measure}.
Assume that each vertex in $X$ has at most $K_X$ number of children. 
Let $p \ge 1$ and assume that $F:X\to Y$ is an\/ $(L,\La)$-rough
quasiisometry. 
We extend $F$ to the edges of $X$ so that the edge\/ $[x,y]\subset X$ is mapped 
linearly\/ 
\textup{(}with respect to $d_X$ and $d_Y$\textup{)}
to the geodesic connecting $F(x)$ and $F(y)$ in $Y$. 
Also assume that there are a positive integer $n_0$ 
and a constant $C_0$ such that for all $x\in X$ with\/ $|x|\ge n_0$,
\begin{equation}  \label{eq-cond-F-for-Lp}
   (p \eps_X  -\be_X) |x| + (\be_Y- p\eps_Y )|F(x)| \le C_0.
\end{equation}

Let $u:Y\to\R$ be linear\/ {\rm(}with respect to $d_Y$\/{\rm)} on each edge,
and let $g_Y$ be the minimal upper
gradient of $u$ in\/ $(Y,d_Y)$ given on each edge\/ $[z,w]\subset Y$ by 
\[
g_Y(\tau) = \frac{|u(z)-u(w)|}{d_Y(z,w)} \simeq e^{\eps_Y|\tau|}|u(z)-u(w)|
\quad \text{for } \tau \in [z,w].
\]
Let $A=(L+\La)e^{\eps_Y(L+\La)+\eps_X}$. Then the function $g_X$ given 
on each edge\/ $[x,y] \subset X$
by
\[
g_X(t) =
  \begin{cases}
A e^{\eps_X|t|-\eps_Y|F(t)|} g_Y(F(t)), & \text{if } F(x) \ne F(y), \\
    0,  & \text{if } F(x) = F(y),
  \end{cases}
  \quad \text{for } t \in [x,y],
\]
is an upper gradient of $v=u\circ F$ in\/ $(X,d_X)$. Moreover,
\[
\|g_X\|_{L^p(X)} \simle \|g_Y\|_{L^p(Y)}.
\]
\end{lem}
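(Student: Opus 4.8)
The plan is to prove the two assertions separately: first that $g_X$ is an upper gradient of $v=u\circ F$, and then the $L^p$ bound. Throughout, all implicit constants may depend on $p,\eps_X,\eps_Y,\be_X,\be_Y,K_X,L,\La,n_0,C_0$, but never on $u$. For the upper gradient property, note that $v$ is continuous and every geodesic in $X$ is a concatenation of (parts of) edges, so by the triangle inequality it suffices to check~\eqref{eq-ug-cond} on a segment lying inside a single edge $[x,y]\subset X$ with $|y|=|x|+1$. If $F(x)=F(y)$ this edge is collapsed by $F$, so $v$ is constant on it and $g_X=0$ works. If $F(x)\ne F(y)$, then by construction $F$ maps $[x,y]$ at constant speed (with respect to both $d_X$ and $d_Y$) onto the geodesic $[F(x),F(y)]\subset Y$; on the subinterval $P_e\subset[x,y]$ sent onto a $Y$-edge $e=[z,w]$, the function $v$ is affine in $d_X$-arclength with $d_X$-metric derivative $g_Y(F(t))\,d_Y(F(x),F(y))/d_X(x,y)$. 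Since $v$ is piecewise affine, hence locally Lipschitz, on each edge, it is therefore enough to show $d_Y(F(x),F(y))/d_X(x,y)\le A\,e^{\eps_X|t|-\eps_Y|F(t)|}$ for $t\in[x,y]$. First I would bound $d_X(x,y)=e^{-\eps_X|x|}(1-e^{-\eps_X})/\eps_X\ge e^{-\eps_X}e^{-\eps_X|t|}$ using $1-e^{-\eps_X}\ge\eps_X e^{-\eps_X}$ and $|x|\le|t|$; then I would bound $d_Y(F(x),F(y))$, a sum of at most $|F(x)-F(y)|\le L+\La$ edge lengths, by $(L+\La)e^{\eps_Y(L+\La)}e^{-\eps_Y|F(t)|}$, using that each $Y$-edge of that geodesic has length at most $e^{-\eps_Y|\hat z|}$ and that $|\hat z|\ge|F(t)|-(L+\La)$, where $\hat z$ is the common ancestor of $F(x),F(y)$ and both $\hat z$ and $F(t)$ lie on a geodesic of combinatorial length $\le L+\La$. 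Multiplying these two estimates produces exactly the constant $A=(L+\La)e^{\eps_Y(L+\La)+\eps_X}$, which is why this value of $A$ appears.

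For the norm estimate I would split $\int_X g_X^p\,d\mu_X$ into a sum over $X$-edges $[x,y]$, and each such integral further into the pieces $P_e$ mapped by $F$ onto individual $Y$-edges $e=[z,w]$. On $P_e$ the function $g_Y\circ F$ is constant, with $g_Y(F(t))\simeq e^{\eps_Y|z|}|u(z)-u(w)|$, while $|t|$ stays within a unit interval of $|x|$ and $|F(t)|$ within a unit interval of $|z|$; with $d\mu_X=e^{-\be_X|t|}\,d|t|$ this gives
\[
 \int_{P_e} g_X^p\,d\mu_X \simeq A^p\,e^{(p\eps_X-\be_X)|x|}\,|u(z)-u(w)|^p\,|P_e|,
\]
where $|P_e|$ is the Lebesgue measure of $P_e$ in the combinatorial parameter. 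The constant-speed property gives $|P_e|\simeq d_Y(z,w)/d_Y(F(x),F(y))$, and the geometric decay of the $Y$-edge lengths gives $d_Y(F(x),F(y))\simeq e^{-\eps_Y|\hat z|}\simeq e^{-\eps_Y|F(x)|}$ (all of $|\hat z|,|z|,|F(x)|$ differing by at most $L+\La$), so $|P_e|\simeq 1$. Comparing with $\int_e g_Y^p\,d\mu_Y\simeq e^{(p\eps_Y-\be_Y)|z|}|u(z)-u(w)|^p$ then yields
\[
 \int_{P_e}g_X^p\,d\mu_X \simeq A^p\,e^{(p\eps_X-\be_X)|x|+(\be_Y-p\eps_Y)|z|}\int_e g_Y^p\,d\mu_Y .
\]

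Since $|z|=|F(x)|+O(L+\La)$, the hypothesis~\eqref{eq-cond-F-for-Lp} bounds the exponent $(p\eps_X-\be_X)|x|+(\be_Y-p\eps_Y)|z|$ by $C_0+|\be_Y-p\eps_Y|(L+\La)$ whenever $|x|\ge n_0$. Summing the last display over all pieces, then over all $X$-edges with $|x|\ge n_0$, and interchanging the order of summation gives $\int_{\{|x|\ge n_0\}}g_X^p\,d\mu_X\simle \|g_Y\|_{L^p(Y)}^p$, provided each $Y$-edge is covered by only boundedly many $X$-edges. This last point follows from Definition~\ref{rough-qiso}: if $e=[z,w]\subset[F(x),F(y)]$ for an $X$-edge $[x,y]$, then $|F(x)-z|\le|F(x)-F(y)|\le L+\La$, so the $X$-endpoints of any two such edges are within combinatorial distance $L\bigl(2(L+\La)+\La\bigr)$ of one another, and since $X$ has vertex degree at most $K_X+1$ there are at most $M=M(K_X,L,\La)$ of them. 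The finitely many $X$-edges with $|x|<n_0$ are handled separately: their $F$-images all lie in a fixed finite subtree of $Y$ (of combinatorial radius $\simle Ln_0+\La$), so their total contribution is bounded by a constant, depending only on $n_0$ and the fixed parameters, times $\|g_Y\|_{L^p(Y)}^p$. Adding the two parts gives $\|g_X\|_{L^p(X)}\simle\|g_Y\|_{L^p(Y)}$.

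The step I expect to be the main obstacle is the careful bookkeeping in the two displayed comparisons: tracking how the many bounded factors produced by the nonlinear change of variables (the ratios $e^{\eps_X|t|}/e^{\eps_X|x|}$ and $e^{\eps_Y|z|}/e^{\eps_Y|F(t)|}$, the edge-length identity $d_X(x,y)e^{\eps_X|x|}=(1-e^{-\eps_X})/\eps_X$, and the identification $d_Y(F(x),F(y))\simeq e^{-\eps_Y|F(x)|}$ via the geometric series of $Y$-edge lengths) combine, and checking that the covering multiplicity of $Y$-edges by $X$-edges is genuinely bounded. Everything is arranged precisely so that the choice of $A$ together with~\eqref{eq-cond-F-for-Lp} makes the exponent in the final comparison uniformly bounded.
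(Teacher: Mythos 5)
Your proposal is correct and takes essentially the same route as the paper's proof: the upper gradient claim rests on the same speed-ratio bound $d_Y(F(x),F(y))/d_X(x,y)\le A\,e^{\eps_X|t|-\eps_Y|F(t)|}$, and the norm estimate is the same edge-by-edge comparison of $\int_X g_X^p\,d\mu_X$ with $e^{(p\eps_X-\be_X)|x|+(\be_Y-p\eps_Y)|F(x)|}\int_{[F(x),F(y)]}g_Y^p\,d\mu_Y$, followed by~\eqref{eq-cond-F-for-Lp} and a bounded-multiplicity summation using the degree bound $K_X$ and the rough quasiisometry constants. You merely spell out some steps the paper compresses (the origin of the constant $A$, the change of variables via the pieces $P_e$, and the finitely many edges with $|x|<n_0$), which is fine.
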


\begin{proof}
That $g_Y$ is a minimal upper gradient of $u$ in $(Y,d_Y)$ is straightforward
from its definition.
Let $\ga$ be a geodesic in $X$ connecting two points (not necessarily vertices)
$a$ and $b$. 
By splitting $\ga$ into parts if necessary, we can assume that $a$ and $b$ 
belong to the same edge. 
Let $\ga'$ be the geodesic in $Y$ connecting $F(a)$ to $F(b)$.
Note that since $a$ and $b$  belong to the same edge, $\ga' = F\circ \ga$. 
Then by the definition of upper gradients,
\begin{equation}   \label{eq-ug-ga}
|v(a)-v(b)| = |u(F(a))-u(F(b))| \le \int_{\ga'} g_Y \,d_Ys,
\end{equation}
where $d_Ys$ denotes the arc length measure
on $\ga'$ with respect to the metric $d_Y$.
The metric on $X$ is with respect to the density $e^{-\eps_X|t|}$
and the metric on $Y$ is with respect to the density $e^{-\eps_Y|t|}$ 
(see~\eqref{unif-metric-trees}). 
By the linearity (with respect to $d_X$ and $d_Y$) of $F$ on the 
edge $[x,y]$ containing $a$ and $b$, we have for all $t\in[a,b]$ that
\[
\frac{d_Ys}{d_Xs} 
     = \frac{d_Y(F(x),F(y))}{d_X(x,y)}
\le A \frac{ e^{-\eps_Y|F(t)|}}{e^{-\eps_X|t|}},
\]
where $A$ is as in the statement of the lemma.
Inserting this into~\eqref{eq-ug-ga} gives
\[
|v(a)-v(b)| 
\le A
\int_{\ga} e^{\eps_X|t|-\eps_Y|F(t)|} g_Y(F(t)) \,d_Xs(t)
 = \int_{\ga} g_X \,d_Xs,
\]
and shows that $g_X$ is an upper gradient of $v$ in $(X,d_X)$.
(Note that if $F$ maps an edge $[x,y]\subset X$ to a single vertex $z\in Y$,
then the above construction gives $g_X\equiv0$ on $[x,y]$.)
To estimate the $L^p$-norm of $g_X$, note first that
\begin{align}   \label{eq-int-gX-to-sum-gY}
\int_X g_X^p\,d\mu_X &\simeq \sum_{\substack{x,y\in X \\ x\sim y}} \int_{[x,y]} 
         \bigl( e^{\eps_X|t|-\eps_Y|F(t)|} g_Y(F(t))\bigr)^p \,d\mu_X \\
&\simeq \sum_{\substack{x,y\in X \\ x\sim y}} 
    e^{(\eps_X|x|-\eps_Y|F(x)|)p - \be_X|x|} 
                \int_{[x,y]} g_Y(F(t))^p \,dt. \nonumber
\end{align}
Since $|F(x)-F(y)|\le L+\La$ and $g_X\equiv0$ on $[x,y]$ if $F(x)=F(y)$, we see that
\[
  \int_{[x,y]} g_Y(F(t))^p \,d|t| \simeq \int_{[F(x),F(y)]} g_Y(\tau)^p \,d|\tau|
     \simeq e^{\be_Y|F(x)|} \int_{[F(x),F(y)]} g_Y^p \,d\mu_Y.
\]
Inserting this into~\eqref{eq-int-gX-to-sum-gY} and using \eqref{eq-cond-F-for-Lp} 
give
\begin{align*}
   \int_X g_X^p\,d\mu_X 
    &\simeq \sum_{\substack{x,y\in X \\ x\sim y}} 
     e^{(p\eps_X -\be_X)|x| + (\be_Y -p \eps_Y )|F(x)|} 
           \int_{[F(x),F(y)]} g_Y^p \,d\mu_Y  \\
    &\simle \sum_{\substack{x,y\in X \\ x\sim y}} 
                \int_{[F(x),F(y)]} g_Y^p \,d\mu_Y.
\end{align*}

Now note that if $F(x)=F(y)$ for some $x,y\in X$, 
then $L^{-1}|x-y|-\Lambda\le 0$, and so $|x-y|\le L\Lambda$,
showing that for every $z\in Y$ there are at most $(K_X+1)^{L\La}$ elements 
in $F^{-1}(\{z\})$.
Furthermore, since for every edge $[x,y]\subset X$ the geodesic connecting 
$F(x)$ to $F(y)$ contains at most $L+\La$ edges, we can conclude that 
each edge in $Y$ belongs to at most $(L+\La)(K_X+1)^{L\La}$ geodesics connecting 
images of neighboring vertices of $X$.
It follows that 
\[
\sum_{\substack{x,y\in X \\ x\sim y}} \int_{[F(x),F(y)]} g_Y^p \,d\mu_Y 
\simeq \int_Y g_Y^p \,d\mu_Y, 
\]
which completes the proof.
\end{proof}

\begin{proof}[Proof of Theorem~\ref{thm-Besov-inv}.]
Note first that by Theorem~\ref{thm-qs2roughqiso}, $f$ extends to a rough
quasiisometry $F:X\to Y$ such that for all $x,y\in X$,
\[
L_1|x-y| -\La \le |F(x)-F(y)| \le L_2|x-y| +\La,
\]
where $L_1=\al_1\eps_X/\eps_Y$ and $L_2=\alp_2\eps_X/\eps_Y$.
Next, equip $X$ and $Y$ with measures $\mu_X$ and $\mu_Y$ 
given by exponential weights with exponents $\be_X$ and $\be_Y$, as 
in~\eqref{tree-measure}. Here 
\begin{align*}
\be_X = \log K_X + p\eps_X (1-\theta_X) & = [Q_X+p(1-\theta_X)]\eps_X,\\
\be_Y = \log K_Y + p\eps_Y (1-\theta_Y) & = [Q_Y+p(1-\theta_Y)]\eps_Y.
\end{align*}
By Proposition~\ref{prop-ext}, each $u\in B^{\theta_Y}_{p,p}(\bdry Y)$ extends to
a function $U\in N^{1,p}(Y)$ so that the minimal upper gradient of $U$ satisfies
\[
\|g_U\|_{L^p(Y)} \simle \|u\|_{B^{\theta_Y}_{p,p}(\bdry Y)}.
\] 
Lemma~\ref{lem-ug-on-X-Lp} then shows that the function $V=U\circ F$ has an upper
gradient $g\in L^p(X)$ with $\|g\|_{L^p(X)} \simle \|g_U\|_{L^p(Y)}$,
provided that~\eqref{eq-cond-F-for-Lp} holds. Moreover, the Poincar\'e 
inequality~\eqref{eq-est-PI-with-e-z} applied with $z=0_X$ shows that $V\in L^1(X)$ and 
\[
|V_X| \le |V(0_X)| + C \|g\|_{L^p(X)}.
\]
By the construction of $F$ we know that $F(0_X)=0_Y$. Therefore we have 
\[
|V(0_X)| = |U(0_Y)| = \biggl| \vint_{\bdry Y} u\,d\nu_Y \biggr| 
\simle \|u\|_{L^p(\bdry Y)}.
\]
Thus the $(p,p)$-Poincar\'e inequality in 
Corollary~\ref{cor-pp-PI}  gives that $V\in L^p(X)$
and $\|V\|_{L^p(X)} \simle \|u\|_{L^p(\bdy Y)}+ \|g\|_{L^p(X)}$.
Hence $\|V\|_{\Np(X)} \simle \|u\|_{\Bt^{\theta_Y}_{p,p}(\bdry Y)}$ and
Proposition~\ref{prop-trace} then implies that $V$ has a trace 
$\Tr V$ on $\bdry X$ such that 
$\|{\Tr V}\|_{\Bt^{\theta_X}_{p,p}(\bdry X)} 
 \simle \|u\|_{\Bt^{\theta_Y}_{p,p}(\bdry Y)}$.

To show that~\eqref{eq-cond-F-for-Lp} holds,  note first that
\[ 
\be_X-p\eps_X     = \eps_X (Q_X - p\theta_X)
    \quad and \quad
\be_Y-p\eps_Y = \eps_Y (Q_Y - p\theta_Y).
\] 
Thus it suffices to show that
\begin{equation}  \label{eq-cond-F-for-Lp-new}
   \eps_X(p\theta_X-Q_X) |x| + \eps_Y(Q_Y-p\theta_Y)|F(x)| \le C_0
\end{equation}
for some constant $C_0$. We need to distinguish two cases.

\medskip
\emph{Case} 1. If $\theta_Y\ge Q_Y/p$, then $Q_Y-p\theta_Y\le0$
and we use the fact that
\[
|F(x)| \ge |F(x)-F(0_X)| -|F(0_X)| \ge L_1|x| -\La,
\]
since $F(0_X)=0_Y$. Hence, as $\theta_X\le Q_X/p+\al_1(\theta_Y- Q_Y/p)$, 
the left-hand side of \eqref{eq-cond-F-for-Lp-new}  is majorized by
\[
\al_1\eps_X(p\theta_Y-Q_Y)|x| + \eps_Y(Q_Y-p\theta_Y)(L_1|x| -\La)
 = -\eps_Y(Q_Y-p\theta_Y)\La =: C_0,
\]
since $L_1=\al_1\eps_X/\eps_Y$.

\emph{Case} 2. If $\theta_Y\le Q_Y/p$, then $Q_Y-p\theta_Y\ge0$
and we use the fact that
\[
|F(x)| \le |F(0_X)| + |F(x)-F(0_X)| \le L_2|x| +\La.
\]
Since $\theta_X\le Q_X/p+\al_2 (\theta_Y - Q_Y/p)$ 
and $L_2=\al_2\eps_X/\eps_Y$, we see as in Case~1 
that~\eqref{eq-cond-F-for-Lp} holds with $C_0=\eps_Y(Q_Y-p\theta_Y)\La$.

\medskip

Finally, for continuous $u \in B^{\theta_Y}_{p,p}(\bdy Y)$ we have, by construction,
\begin{equation*} 
   \Tr V(\zeta)=\lim_{X\ni x\to\zeta}V(x)=\lim_{Y\ni y\to f(\zeta)}U(x)
=u \circ f(\zeta). \qedhere
\end{equation*}
\end{proof}

\section{Quasisymmetric extensions 
of rough quasiisometries between trees}
\label{sect-ext-rough}

\emph{In this section, we assume that both
$X$ and $Y$ are rooted trees such that each vertex has at least one child. 
In particular, each branch is infinite, but no regularity is assumed.}

\medskip

Having shown that quasisymmetries between the boundaries extend to rough 
quasiisometries between the trees, we are next concerned with the issue of whether a rough 
quasiisometry between the trees can be extended to
a quasisymmetry between the boundaries. For certain Gromov hyperbolic spaces 
(those whose boundaries are connected sets) this is known by the deep
work of Gromov~\cite{Gr} and Kapovich~\cite{Kap}, see also the exposition in
Bourdon--Pajot~\cite{BP00}. Given the simple nature of trees,
we are able to study this extendability question more directly here, 
but the fact that the boundaries of  trees are totally disconnected 
means that it is not sufficient to merely check for ``asymptotic'' quasisymmetry.  
Also, quasisymmetric mappings may reverse
the ``order'' of a triple of points; namely, $\xi$ can be closer to $\z$ than $\chi$, while
the image of $\chi$ is closer to the image of $\z$ than the image of $\xi$ is.  
This fact makes our process of checking various cases rather 
troublesome, but is overcome with
the help of Lemma~\ref{lem-order-preserv}, which says in essence that if such order reversal 
takes place, then $\xi$ and $\chi$ are relatively close.   
However, a careful accounting still must be taken.

We continue to use the same notation as in Section~\ref{sect-ext-qs}.
Recall that any rough quasiisometry between trees, because of the attendant 
lack of control at small scales, can without
loss of generality be thought of as a map solely between the vertices of the corresponding 
trees. This is the view we took in Theorem~\ref{thm-qs2roughqiso}
and which we continue with in this section.

Let $F:X\to Y$ be an $(L,\Lambda)$-rough quasiisometry.
We use the following construction to extend $F$ to a mapping
between the boundaries $\bdry X$ and $\bdry Y$: Given $\zeta\in\partial X$ let 
$\{x_i\}_{i=1}^\infty$ be the sequence of vertices in $X$ 
that form the geodesic connecting the root $x_0=0_X$ to $\zeta$
with $x_i$ being the parent of $x_{i+1}$ for $i\in\N$. 
Let $Y_\z=\{F(x_i): i\in\N\}$ be the collection of the images of the vertices in 
this sequence, and $\partial Y_\z:=\overline{Y}_\z\cap\partial Y$,
where the closure is taken with respect to the $d_Y$ metric.
Since $F$ is a rough quasiisometry we see that
\[
     |F(x_i) | \ge |F(x_i)-F(0_X)| - |F(0_X)| \ge \frac{1}{L} |x_i| - \Lambda -|F(0_X)|
     \to \infty,
     \quad \text{as } i \to \infty,
\]
and thus $Y_\z$ is unbounded in the metric $|\cdot-\cdot|$.
As all vertices in the tree $Y$ have finite degree, 
it is not hard to see that $\overline{Y}$
is compact, and thus $\bdy Y_\z$ is nonempty.

Lemma~\ref{lem-towards-chi} below shows that $\bdy Y_\z$ has exactly one point,
and thereby allows us to extend $F$ to the boundary $\bdry X$
by letting  $f(\z)=\chi$, where $\bdy Y_\z=\{\chi\}$.

\begin{lem}  \label{lem-towards-chi}
Let $\z\in\bdry X$, $\chi\in\bdry Y_\z$, and
$y\in Y$ be such that $y<\chi$.
Let\/ $\{x_i\}_{i=1}^\infty$ be the ordered sequence of vertices in $X$ forming the
geodesic connecting\/ $0_X$ to $\z$, whose image forms $Y_\z$.
Then  there is a positive integer $k$ such that whenever $x\in X$ satisfies $x\ge x_k$, 
we have $F(x)\ge y$ and hence $d_Y(F(x),\chi)\le 2e^{-\eps_Y|y|}/\eps_Y$.
In particular, $\bdry Y_z=\{\chi\}$.
\end{lem}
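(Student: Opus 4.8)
The plan is to argue entirely with the branch‑point description of the visual metric $d_Y$. Fix the infinite geodesic representing $\chi$; for a vertex $z\in Y$ let $w=w(z)$ be the \emph{branch point}, i.e.\ the largest vertex on the $\chi$-ray with $w\le z$, so that $d_Y(z,\chi)=d_Y(z,w)+e^{-\eps_Y|w|}/\eps_Y$ by \eqref{bdy-metric} and \eqref{unif-metric-trees}. Two consequences will be used throughout. First, if $d_Y(z,\chi)<e^{-\eps_Y|y|}/\eps_Y$ then $e^{-\eps_Y|w|}/\eps_Y\le d_Y(z,\chi)<e^{-\eps_Y|y|}/\eps_Y$, so $|w|>|y|$; since $w$ and $y$ both lie on the $\chi$-ray they are comparable, hence $y<w\le z$ and $z\ge y$. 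Second, if $z\ge y$ then, because $y$ lies on the $\chi$-ray and $y\le z$, we get $y\le w$, and since the geodesic from $z$ up to $w$ stays at levels $\ge|w|$ we obtain $d_Y(z,\chi)\le 2e^{-\eps_Y|w|}/\eps_Y\le 2e^{-\eps_Y|y|}/\eps_Y$. This last point is precisely the displayed estimate in the lemma, so it suffices to produce $k$ with $F(x)\ge y$ for all $x\ge x_k$.

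The second tool is a one‑step propagation lemma in $Y$: if $a,b\in Y$ are vertices with $|a-b|\le L+\Lambda$, $a\ge y$, and $|a|>|y|+L+\Lambda$, then $b\ge y$. Indeed, letting $v$ be the largest common ancestor of $a$ and $b$ we have $|a-v|\le|a-b|\le L+\Lambda$, so $|v|=|a|-|a-v|>|y|$; as $v$ and $y$ are comparable ancestors of $a$ this forces $y<v\le b$. Using this I would first show that $F(x_i)\ge y$ for all large $i$. Since $|F(x_i)|\to\infty$ (as noted just before the lemma), fix $N$ with $|F(x_i)|>|y|+L+\Lambda$ for $i\ge N$. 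Because $\chi$ lies in the closure $\overline{Y}_\zeta$ of $\{F(x_i):i\in\N\}$, there is some $i_0\ge N$ with $d_Y(F(x_{i_0}),\chi)<e^{-\eps_Y|y|}/\eps_Y$, so $F(x_{i_0})\ge y$ by the first consequence above. Consecutive $x_i,x_{i+1}$ are neighbours, hence $|F(x_i)-F(x_{i+1})|\le L+\Lambda$, and the propagation lemma gives, by induction, $F(x_i)\ge y$ for every $i\ge i_0$.

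Finally I would pass from the ray to arbitrary descendants of $x_k$. Choose $k\ge i_0$ so large that $\tfrac1L k-\Lambda-|F(0_X)|>|y|+L+\Lambda$. Let $x\ge x_k$ and let $p=x_m$ be the largest common ancestor of $x$ and $\zeta$; then $x_k\le x$ and $x_k\le\zeta$ force $m\ge k$. Along the descending geodesic $p=p_0<p_1<\cdots<p_\ell=x$ one has $|p_j|\ge|p|=m\ge k$, hence $|F(p_j)|\ge\tfrac1L|p_j|-\Lambda-|F(0_X)|>|y|+L+\Lambda$ for every $j$. Since $F(p_0)=F(x_m)\ge y$ (as $m\ge i_0$) and $|F(p_j)-F(p_{j+1})|\le L+\Lambda$ for the neighbours $p_j,p_{j+1}$, the propagation lemma applied $\ell$ times yields $F(x)=F(p_\ell)\ge y$, and then the second consequence gives $d_Y(F(x),\chi)\le 2e^{-\eps_Y|y|}/\eps_Y$. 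For the assertion $\partial Y_\zeta=\{\chi\}$: if some $\chi'\in\partial Y_\zeta$ differed from $\chi$, pick a vertex $y$ with $y<\chi$ but $y$ not on the $\chi'$-ray; the above (with this $y$) gives $F(x_i)\ge y$ for all large $i$, so the largest common ancestor of $F(x_i)$ and the $\chi'$-ray is a proper ancestor of $y$, whence $d_Y(F(x_i),\chi')\ge e^{-\eps_Y(|y|-1)}/\eps_Y$ stays bounded away from $0$, contradicting $\chi'\in\overline{Y}_\zeta$; this is what makes the extension $f(\zeta)=\chi$ well defined.

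I expect the main obstacle to be exactly the last transition, from the geodesic ray $[0_X,\zeta)$ to a general descendant $x\ge x_k$: a rough quasiisometry need not respect the partial order, so $F$ could in principle send a descendant of $x_k$ far from the ray $[0_Y,\chi)$ in $Y$. What rescues the argument is that any such $x$ is necessarily deep, so $|F(x)|$ — and $|F(p_j)|$ at every intermediate vertex on the way down — is forced to be large, and hence the image cannot have climbed back above the fixed level $|y|$; the propagation lemma then only has to track one edge at a time. A secondary subtlety, resolved only by the final contradiction, is that \emph{a priori} $\{F(x_i)\}$ could accumulate at more than one boundary point.
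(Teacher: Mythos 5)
Your proof is correct: the branch-point facts, the one-step propagation lemma, the induction first along the ray from $x_{i_0}$ and then down the chain $p_0<p_1<\cdots<p_\ell=x$, and the final exclusion of a second boundary point all check out, and the depth hypothesis $|F(p_j)|>|y|+L+\Lambda$ needed at each application of the propagation lemma is verified at every step. The organization, however, differs from the paper's. The paper fixes two vertices $x_j<x_k$ on the ray with $F(x_j),F(x_k)\ge y$ and $|x_k-x_j|\ge L(L+2\Lambda+1+|F(x_j)-y|)$, and argues by contradiction at the first edge $[\hat z,z]$ with $x_k<z\le x$ where the image leaves the subtree below $y$: since the geodesics from $F(z)$ to $F(x_j)$ and to $F(\hat z)$ both pass through the parent $\hat y$ of $y$, the lower quasiisometry bound applied to the pair $(z,x_j)$ forces $|F(z)-\hat y|\ge L+\Lambda$, while the single edge forces $|F(z)-\hat y|\le L+\Lambda-1$. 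You replace this one-shot contradiction by a local lemma (a sufficiently deep descendant image cannot leave the subtree below $y$ in one edge) iterated edge by edge, and you obtain the required depth from $|F(p_j)|\ge \tfrac1L|p_j|-\Lambda-|F(0_X)|$ rather than from the distance to the reference vertex $F(x_j)$; likewise you exclude a second boundary point $\chi'$ by producing a vertex $y<\chi$ off the $\chi'$-ray, whereas the paper simply lets $y\to\chi$ in the bound $d_Y(F(x),\chi)\le 2e^{-\eps_Y|y|}/\eps_Y$. The geometric mechanism is the same in both proofs --- adjacent vertices have images at most $L+\Lambda$ apart, so the depth forced by the lower quasiisometry bound prevents the image from climbing above the level of $y$ --- but your inductive version is arguably more transparent, at the modest cost of carrying $|F(0_X)|$ in the choice of $k$ (the paper's threshold instead depends on $|F(x_j)-y|$).
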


\begin{proof}
The statement is trivial for $y=0_Y$. Assume therefore that $y\ne0_Y$.
Since $\chi$ is a limit point of $Y_\z$ and $y<\chi$, 
there exist positive integers $k$ and $j$ with
$k> j$ such that $F(x_{j}), F(x_k)\ge y$  and
\begin{equation}  \label{eq-choose-x0}
   |x_k-x_j|\ge L(L+2\Lambda+1+|F(x_j)-y|)
\end{equation}
Now let $x\in X$ 
such that $x\ge x_k$.  We need to show that $F(x)\ge y$. We do so by 
contradiction. Suppose $F(x)\not\ge y$. Then because $F(x_k)\ge y$, 
we can trace back from $x$ towards $x_k$ to find
$z\in X$ with $x_k< z\le x$ such that $F(z)\not\ge y$ but the parent 
$\hat{z}$ of $z$ satisfies $F(\hat{z})\ge y$.  
Denoting the parent of $y$ by $\hat{y}$, we now see using~\eqref{eq-choose-x0}
that
\begin{align*}
   |F(z)-\hat{y}|+|\hat{y}-F(x_j)|&=|F(z)-F(x_j)|\ge \frac1L|z-x_j|-\Lambda\\
     &\ge \frac1L|x_k-x_j|-\Lambda \ge L+\Lambda+1+|F(x_j)-y|.
\end{align*}
Given that $F(x_j)\ge y$ and so $|\hat{y}-F(x_j)|=1+|y-F(x_j)|$, we obtain
\begin{equation}\label{eq-imm-above}
   |F(z)-\hat{y}|\ge L+\Lambda.
\end{equation}
On the other hand, since the geodesic from $F(z)$ to $F(\hat{z})$ must 
pass through $\hat{y}$, we have
$|F(z)-\hat{y}|+|\hat{y}-F(\hat{z})|=|F(z)-F(\hat{z})|\le L+\Lambda$,
and given that $F(\hat{z})\ge y$, we know that $|\hat{y}-F(\hat{z})|\ge 1$. Therefore
\[
   |F(z)-\hat{y}|\le L+\Lambda-1,
\]
which contradicts~\eqref{eq-imm-above}. Thus,
for $x\ge x_k$ we have $F(x)\ge y$ and hence
$d_Y(F(x),\chi)\le 2e^{-\eps_Y|y|}/\eps_Y$.

Letting $y\to\chi$ along the geodesic from $0_Y$ to $\chi$ now shows that
$F(x_j)\to\chi$ when $x_j\to\z$ along the geodesic from 
$0_X$ to $\z$.
\end{proof}

The next goal is to show that $f:\bdry X \to \bdry Y$ is a bijection and
a homeomorphism. This will be done through several lemmas, some of which 
(Lemmas~\ref{lem-quasigeod} and~\ref{lem-dist-F(x)-y})  may be of independent interest.

\begin{lem} \label{lem-surjective}
The mapping $f$ is surjective. 
\end{lem}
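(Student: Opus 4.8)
The goal is to show that for an arbitrary $\chi\in\bdry Y$ there is some $\z\in\bdry X$ with $f(\z)=\chi$. First I would set up the geodesic ray in $Y$ realizing $\chi$: let $\{y_n\}_{n=0}^\infty$ be the vertices of the geodesic from $0_Y$ to $\chi$, with $y_n$ the parent of $y_{n+1}$ and $|y_n|=n$. The idea is to "pull back" this ray through $F$ using the rough quasiisometry and density properties. By the density condition in Definition~\ref{rough-qiso}, for each $n$ there is some $x'_n\in X$ with $|F(x'_n)-y_n|\le L+\La$. These $x'_n$ need not be comparable or lie on a single ray, so the first task is to extract from them (or from the vertices near them) an honest geodesic ray in $X$.

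The cleanest route is a compactness/König's-lemma argument. For each $n$, let $x'_n\in X$ be as above; since $|F(x'_n)|\ge |y_n|-(L+\La)=n-(L+\La)$ and $|F(x'_n)|\le L|x'_n|+\La$, we get $|x'_n|\ge (n-\La-(L+\La))/L\to\infty$, so the $x'_n$ are unbounded in $X$. Consider the set $T$ of all vertices of $X$ lying on some geodesic $[0_X,x'_n]$; this is an infinite, locally finite subtree of $X$ (local finiteness because every vertex of $X$ has finite degree), so by König's lemma it contains an infinite geodesic ray from $0_X$, i.e.\ a point $\z\in\bdry X$ all of whose initial segments are initial segments of infinitely many $[0_X,x'_n]$. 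Write $\{x_i\}_{i=0}^\infty$ for the vertices of this ray.

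It then remains to check $f(\z)=\chi$, i.e.\ that $\bdry Y_\z=\{\chi\}$, where $Y_\z=\{F(x_i):i\in\N\}$; by Lemma~\ref{lem-towards-chi} this set is a single point, so it suffices to show $\chi$ is a limit point of $Y_\z$ in the $d_Y$-metric. Fix $y_m<\chi$; I want some $x_i$ on the ray with $F(x_i)\ge y_m$, which by~\eqref{bdy-metric} forces $d_Y(F(x_i),\chi)\le 2e^{-\eps_Y|y_m|}/\eps_Y$. Pick $n$ large (to be quantified) and $x'_n$ with $|F(x'_n)-y_n|\le L+\La$, chosen among the cofinally many $x'_n$ whose geodesic from $0_X$ shares a long initial segment with $\z$; say $[0_X,x_k]\subset[0_X,x'_n]$ for some large $k$. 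Now apply the rough-quasiisometry inequalities to the comparable pair $x_k\le x'_n$ and use the argument of Lemma~\ref{lem-towards-chi} (or rerun its contradiction step): if $F(x_k)\not\ge y_m$ then, since $F(x'_n)$ is within $L+\La$ of $y_n$ and hence is $\ge y_{n-(L+\La)}\ge y_m$ provided $n$ is large enough relative to $m$, we can trace from $x'_n$ back toward $x_k$ to find a vertex $z$ with $F(z)\not\ge y_m$ but its parent mapping above $y_m$; bounding $|F(z)-\hat y_m|$ from below via $|z-x_k|$ being large (which we arrange by taking $k$ large, which we can do by choosing $n$ large along the cofinal family) and from above via the single edge $[z,\hat z]$ mapping to a path of length $\le L+\La$ gives the same contradiction as in Lemma~\ref{lem-towards-chi}. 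Hence $F(x_k)\ge y_m$, so $d_Y(F(x_k),\chi)$ is as small as we like, proving $\chi\in\bdry Y_\z$ and thus $f(\z)=\chi$.

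The main obstacle is organizational rather than conceptual: extracting the ray $\z$ and then arranging that it genuinely "points at" $\chi$ requires juggling the density vertices $x'_n$, the König-lemma ray, and the comparability needed to invoke the Lemma~\ref{lem-towards-chi}-style tracing argument simultaneously. One must be careful that the initial segment $[0_X,x_k]$ shared with $\z$ can be taken long while $x'_n$ still sits below (a long way below) the relevant level, which is why the $x'_n$ must be chosen from the cofinal subfamily defining $\z$; keeping these quantifiers straight is the delicate point, but no new idea beyond the contradiction mechanism already used in Lemma~\ref{lem-towards-chi} is needed.
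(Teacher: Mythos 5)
Your skeleton (density vertices $x'_n$ with $|F(x'_n)-y_n|\le L+\La$, extraction of a boundary point $\z\in\bdry X$ by a compactness/K\"onig argument, then showing the ray's images approach $\chi$) matches the paper's opening moves, but the paper finishes differently: it applies Lemma~\ref{lem-towards-chi} to the already constructed point $f(\z)$, with $y=\yhat_k$ the largest common ancestor of $F(\xhat_k)$ and $f(\z)$, and uses that the lemma's conclusion holds for \emph{all} $x\ge x_k$, so that the density points themselves satisfy $F(x'_n)\to f(\z)$; since also $F(x'_n)\to\chi$, this gives $f(\z)=\chi$ without any new tracing argument. Your plan instead tries to prove directly that $F(x_k)\ge y_m$ for large $k$, and the mechanism you describe for that step does not work as written. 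Under your contradiction hypothesis the \emph{lower} vertex $x_k$ has $F(x_k)\not\ge y_m$ and the \emph{upper} vertex $x'_n$ has $F(x'_n)\ge y_m$, so the only transition guaranteed along $[x_k,x'_n]$ is a vertex $v$ with $F(v)\ge y_m$ whose parent satisfies $F(\hat v)\not\ge y_m$; a vertex $z$ of the type you ask for ($F(z)\not\ge y_m$ with parent mapping above $y_m$) need not exist. Moreover, the contradiction of Lemma~\ref{lem-towards-chi} does not transfer: there the lower bound on $|F(z)-\yhat|$ came from an anchor $x_j$ \emph{below} the tracing segment with $F(x_j)\ge y$ and $|F(x_j)-y|$ controlled through the choice~\eqref{eq-choose-x0}. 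In your situation the only vertex below is $x_k$, about whose image you know nothing (that is precisely the contradiction hypothesis), and $|z-x_k|$ can equal $1$, so ``bounding $|F(z)-\hat y_m|$ from below via $|z-x_k|$ being large'' has no source; also $k$ does not become large ``by choosing $n$ large'' — you must fix $k$ (depending on $m$) first and only then pick $n$ in the cofinal family with $x_k\le x'_n$.

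The step can be repaired, but by a different and in fact simpler mechanism. At the genuine transition vertex $v$ (with $F(v)\ge y_m$, $F(\hat v)\not\ge y_m$) the single edge $[\hat v,v]$ satisfies $|F(v)-F(\hat v)|\le L+\La$, and since the geodesic from $F(\hat v)$ to $F(v)$ must pass through $y_m$, this forces $|F(v)-y_m|\le L+\La$, hence $|F(v)|\le |y_m|+L+\La$. On the other hand $v>x_k$ gives, by Definition~\ref{rough-qiso},
\[
  |F(v)| \;\ge\; \frac{|v|}{L}-\La-|F(0_X)| \;\ge\; \frac{k+1}{L}-\La-|F(0_X)|,
\]
which is impossible once $k>L\bigl(|y_m|+L+2\La+|F(0_X)|\bigr)$. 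With this level-count replacing your appeal to the Lemma~\ref{lem-towards-chi} contradiction, your direct route does go through; alternatively, follow the paper and deduce the convergence of $F(x'_n)$ to $f(\z)$ from Lemma~\ref{lem-towards-chi} itself, using that each density point eventually dominates the relevant ray vertex.
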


\begin{proof}
Let $\om\in\bdry Y$ be arbitrary and let $\{y_j\}_{j=0}^\infty$ be a geodesic 
terminating at $\om$. By the density condition, for every $j=0,1,\ldots$\,, there
exists $x_j\in X$ such that $|F(x_j)-y_j|\le L+\La$.
Then $F(x_j)\to\om$ as $j\to\infty$.
By compactness, there is a subsequence (also denoted $\{x_j\}_{j=0}^\infty$)
converging in the metric $d_X$ to some $\z\in \itoverline{X}$. Since
\[
    |x_j|=|x_j-0_X| \ge \frac{|F(x_j)-F(0_X)|-\Lambda}{L} \to \infty
    \quad \text{as } j \to \infty,
\]
we see that $\zeta \in \bdy X$.

Letting $\xhat_j$ be the largest common ancestor of $\{x_i\}_{i=j}^\infty$,
we have that $\xhat_j\to\z$ along a geodesic from $0_X$ to $\z$. Hence 
$F(\xhat_j)\in Y_\z$ and $F(\xhat_j)\to f(\z)$. Lemma~\ref{lem-towards-chi} 
applied to $y=\yhat_k$, where $\yhat_k$
is the largest common ancestor of $F(\xhat_k)$ and $f(\z)$, then shows 
that $F(x_j)\ge\yhat_k$ for all sufficiently large $j$.
Thus $d_Y(F(x_j),f(\z))\le 2e^{\eps_y|\yhat_k|}/\eps_Y \to 0$ as $k\to\infty$, showing that $f(\z)=\om$.
\end{proof}

The proof of the main result of this section needs the following
Morse-type lemma.

\begin{lem}\label{lem-quasigeod}
Let $\gamma$ be a geodesic in $X$ connecting $x_0\in X$ to $\zeta\in\partial X$,
and $\ga'$ be the geodesic in $Y$ connecting $F(x_0)$ to $f(\z)\in\bdry Y$.
Let $  \tau > (L+\Lambda)(2L^2+3\Lambda L +1)$ and $\tau' > L +\Lambda$.
Then 
 \[
    F(\gamma)\subset\bigcup_{y\in\ga'} B_Y(y,\tau)
\quad \text{and} \quad
    \ga'\subset\bigcup_{x\in\ga}B_Y(F(x),\tau'),
 \]
where $B_Y$ denotes open balls in $Y$ with respect to the metric $|\cdot-\cdot|_Y$.
In particular,
 the Hausdorff distance $\dist_H(F(\gamma),\ga') < \tau$.
\end{lem}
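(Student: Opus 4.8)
The plan is to work, as throughout this section, with $F$ regarded as a map between vertices. Write $\chi=f(\zeta)$, let $x_0,x_1,x_2,\dots$ be the consecutive vertices of $\gamma$ (so $|x_n-x_{n+1}|=1$, and by Lemma~\ref{lem-towards-chi} we have $F(x_n)\to\chi$ as $x_n\to\zeta$ along $\gamma$), and recall that $\ga'$ is the geodesic in the metric tree $Y$ from $F(x_0)$ to $\chi$. Set $R=L+\Lambda$, so that consecutive images satisfy $|F(x_n)-F(x_{n+1})|\le R$. I will use two elementary facts about trees: a connected subset containing two points contains the geodesic between them; and the nearest‑point projection $\pi$ onto a geodesic exists and is unique, with $[q,w]\supset[q,\pi(q)]$ for every $w$ on that geodesic. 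In particular the ``filled‑in'' path $\Gamma_i:=\bigcup_{n=0}^{i-1}[F(x_n),F(x_{n+1})]$ is connected and contains $F(x_0)$ and $F(x_i)$, hence contains $[F(x_0),F(x_i)]$.

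For the inclusion $\ga'\subset\bigcup_{x\in\ga}B_Y(F(x),\tau')$, fix $y\in\ga'$. Since $F(x_i)\to\chi$ and $\chi\in\overline{\ga'}$, we get $d_Y(F(x_i),\ga')\to0$ and therefore $\pi(F(x_i))\to\chi$, so $\pi(F(x_i))$ runs out along $\ga'$ toward $\chi$; hence for $i$ large the subarc of $\ga'$ from $F(x_0)$ to $\pi(F(x_i))$ already contains $y$. As $[F(x_0),F(x_i)]\supset[F(x_0),\pi(F(x_i))]$, it follows that $y\in[F(x_0),F(x_i)]\subset\Gamma_i$, so $y\in[F(x_n),F(x_{n+1})]$ for some $n<i$ and $|F(x_n)-y|\le|F(x_n)-F(x_{n+1})|\le R<\tau'$. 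This gives the second inclusion (with the sharp constant $R$), and hence $\ga'$ lies in the $\tau'$‑neighbourhood of $F(\ga)$.

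The main point is the inclusion $F(\ga)\subset\bigcup_{y\in\ga'}B_Y(y,\tau)$, i.e.\ $\dist(F(x_j),\ga')<\tau$ for every vertex $x_j$ of $\gamma$. Let $p^*=\pi(F(x_j))$ and $D=|F(x_j)-p^*|=\dist(F(x_j),\ga')$; if $D=0$ we are done, so assume $D\ge1$ and let $q_1$ be the first vertex of $[p^*,F(x_j)]$ after $p^*$. Let $Y'$ be the component of $Y$ minus the edge $(p^*,q_1)$ that contains $q_1$. Then $F(x_j)\in Y'$; moreover $\ga'\subset Y\setminus Y'$ (for $v\in\ga'$ the subarc $[p^*,v]\subset\ga'$ never uses the edge $(p^*,q_1)$, since $q_1\notin\ga'$), and $\chi\notin\overline{Y'}$, because any geodesic from a vertex of $Y'$ to $\chi$ must leave $Y'$ through $(p^*,q_1)$ and, being a geodesic, never returns, so $d_Y(\cdot,\chi)$ is bounded below on $Y'$. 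Consequently $F(x_0)\notin Y'$ and $F(x_i)\notin Y'$ for all large $i$, while $F(x_j)\in Y'$; so there are a largest index $a<j$ and a smallest index $b>j$ with $F(x_a),F(x_b)\notin Y'$. The step $F(x_a)\to F(x_{a+1})$ and the step $F(x_{b-1})\to F(x_b)$ each cross the edge $(p^*,q_1)$, so their (length $\le R$) geodesics pass through $p^*$, whence $|F(x_a)-p^*|\le R$ and $|F(x_b)-p^*|\le R$. Therefore $|F(x_a)-F(x_b)|\le 2R$, while the rough quasiisometry lower bound gives $|F(x_a)-F(x_b)|\ge\frac1L|x_a-x_b|-\Lambda=\frac1L(b-a)-\Lambda$, so $b-a\le L(2R+\Lambda)=2L^2+3\Lambda L$. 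Finally, since $F(x_a)\notin Y'$ and $F(x_j)\in Y'$, the geodesic $[F(x_a),F(x_j)]$ passes through $p^*$, and so
\[
D=|p^*-F(x_j)|\le|F(x_a)-F(x_j)|\le\sum_{n=a}^{j-1}|F(x_n)-F(x_{n+1})|\le R(j-a)\le R(b-a)\le(L+\Lambda)(2L^2+3\Lambda L)<\tau,
\]
which is the desired bound. Combining the two inclusions yields $F(\ga)$ in the $\tau$‑neighbourhood of $\ga'$ and $\ga'$ in the $\tau'\le\tau$‑neighbourhood of $F(\ga)$, hence $\dist_H(F(\ga),\ga')<\tau$.

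The part I expect to require the most care is the ``trapping'' argument in the third paragraph: one must choose the right subtree $Y'$ cut off by the projection $p^*$, and then verify that the quasigeodesic $F(x_0),F(x_1),\dots$ is forced both to enter $Y'$ (to reach $F(x_j)$) and to leave it again, the latter because it converges to $\chi\notin\overline{Y'}$ — this is precisely what produces the controlling indices $a<j<b$, and for it one really needs Lemma~\ref{lem-towards-chi}. Once $a$ and $b$ are in hand, the numerical estimates follow at once from the step bound $R$ and the rough quasiisometry inequalities. The second inclusion, by contrast, is routine bookkeeping with the filled‑in path $\Gamma_i$.
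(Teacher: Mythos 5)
Your proof is correct, and it reaches the same constants as the paper, but the mechanism for the main inclusion is implemented differently. The paper works globally with the concatenated path $\phi=\sum_i[F(x_i),F(x_{i+1})]$: it first asserts $\ga'\subset\phi$ (which you instead verify carefully via projections and Lemma~\ref{lem-towards-chi} — a worthwhile improvement, since the paper leaves this one line), and then bounds the length of a maximal subpath of $\phi$ that avoids $\ga'$, using a halving subsequence $\{x_{i_k}\}$ and largest common ancestors to guarantee that $\phi$ meets $\ga'$ along segments $\ga_{j_k}$ arbitrarily far out, together with Lemma~\ref{lem-proj} applied to the whole avoiding subpath. You argue pointwise instead: project a single image $F(x_j)$ to $p^*\in\ga'$, cut the tree along the adjacent edge $(p^*,q_1)$, and use $F(x_0)\in\ga'$ on one side and $F(x_i)\to f(\zeta)\notin\overline{Y'}$ (again Lemma~\ref{lem-towards-chi}) on the other to trap entry/exit indices $a<j<b$ whose images both pass through $p^*$; the identical numerical count ($|F(x_a)-F(x_b)|\le 2(L+\Lambda)$, hence $b-a\le 2L^2+3\Lambda L$, hence $D\le(L+\Lambda)(2L^2+3\Lambda L)<\tau$) then finishes. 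So the two arguments share the key ideas (tree separation/unique projection, the rough quasiisometry bounds in both directions, and convergence to the boundary point), but your edge-cut trapping avoids the halving-subsequence and common-ancestor bookkeeping and is somewhat more direct, while the paper's version controls whole excursions of $\phi$ at once; either way the statement and constants come out the same.
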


The proof of this lemma is a straightforward modification of the one found in
Kapovich~\cite[Lemma~3.43]{Kap} and employs the following simple
projection lemma.

\begin{lem} \label{lem-proj}
Let $F$ and $F'$ be two disjoint nonempty pathconnected closed subsets of a tree 
$Y$, and\/ $\proj_{F}:Y\to F$ be the nearest point projection,  
that is, for each $y\in Y$, 
the point\/ $\proj_{F}(y)\in F$ is such that\/ $\dist(y,F)=|y-\proj_{F}(y)|$.  Then
\begin{enumerate}
\item \label{it-proj-a}
$\proj_{F}$ is well defined\/\textup{;}
\item \label{it-proj-b}
the set\/ $\proj_{F}(F')$ has exactly one element. 
\end{enumerate}
\end{lem}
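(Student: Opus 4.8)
The plan is to work directly with elementary facts about the metric tree $Y$: between any two points there is a unique geodesic, any three points have a well-defined median (the common point of the three pairwise geodesics), a point lying strictly between two others separates them in $Y$, and closed bounded subsets are compact (each vertex has finite degree). I will also use that a \emph{pathconnected} subset $S\subset Y$ is geodesically convex: if $u,v\in S$ and $w$ lies strictly between $u$ and $v$, then $w$ separates $u$ from $v$ in $Y$, so $w\notin S$ would disconnect $S$; hence $[u,v]\subset S$. In particular $F$ and $F'$ are subtrees. For part~\ref{it-proj-a}, fix $y\in Y$; if $y\in F$ the nearest point is $y$. Otherwise pick $z_0\in F$, put $R=|y-z_0|$, and note $\dist(y,F)=\dist(y,F\cap\{z\in Y:|z-y|\le R\})$, where the latter set is a closed subset of the compact ball $\{z\in Y:|z-y|\le R\}$, hence compact; so $|y-\cdot|$ attains its minimum there, giving existence. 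For uniqueness, if $p_1,p_2\in F$ both realize $d:=\dist(y,F)$, let $m$ be the median of $y,p_1,p_2$. Then $m\in[p_1,p_2]\subset F$ and $|y-p_i|=|y-m|+|m-p_i|$, so $d\le|y-m|\le|y-p_i|=d$, forcing $|m-p_i|=0$ and $p_1=m=p_2$.

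For part~\ref{it-proj-b} I would first establish a \emph{bridge lemma}: there is a unique pair $(a,b)$ with $a\in F$, $b\in F'$, $[a,b]\cap F=\{a\}$, $[a,b]\cap F'=\{b\}$, and moreover $[a,b]\subset[x,x']$ for every $x\in F$ and $x'\in F'$. Granting this, fix $y\in F'$. Applying the containment property with $x=a$, $x'=y$ gives $[a,y]=[a,b]\cup[b,y]$; and for any $p\in F$, applying it with $x=p$, $x'=y$ shows $[p,y]$ passes through $a$, whence $|p-y|=|p-a|+|a-b|+|b-y|\ge|a-b|+|b-y|=|a-y|$, with equality if and only if $p=a$. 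Therefore $\dist(y,F)=|a-y|$ and $\proj_F(y)=a$ for every $y\in F'$, i.e.\ $\proj_F(F')=\{a\}$.

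It remains to prove the bridge lemma. For existence, pick $a_0\in F$ and $b_0\in F'$; since $F$ and $F'$ are pathconnected and closed, $[a_0,b_0]\cap F$ and $[a_0,b_0]\cap F'$ are closed subarcs of $[a_0,b_0]$, of the form $[a_0,a]$ and $[b,b_0]$, and $F\cap F'=\emptyset$ forces $a\ne b$ with $a$ preceding $b$, so $[a,b]$ is a bridge. For uniqueness, let $(a,b)$ and $(a',b')$ be bridges. The median of $a,b,a'$ lies in $[a,a']\subset F$ and in $[a,b]$, hence equals $a$, so $a\in[a',b]$; likewise the median of $b,b',a'$ lies in $[b,b']\subset F'$ and in $[a',b']$, hence equals $b'$, so $b'\in[b,a']$. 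Writing the geodesic from $b$ to $a'$ in two ways, as $[b,a]\cup[a,a']$ and as $[b,b']\cup[b',a']$, and noting that the interior of $[a,b]$ misses $F'$ while $[b,b']\subset F'$, uniqueness of the decomposition of a geodesic forces the common initial piece at $b$ to be trivial, so $b=b'$; then $a\in[b',a']\subset[a',b']$ and $[a',b']\cap F=\{a'\}$ give $a=a'$. Finally, the same subarc construction applied to an arbitrary geodesic $[x,x']$ with $x\in F$, $x'\in F'$ produces a bridge, which by uniqueness is $(a,b)$, giving the containment property.

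The main obstacle is the uniqueness half of the bridge lemma: one has to keep careful track of which subarcs lie in $F$, in $F'$, or in neither, and it is precisely the uniqueness of a geodesic's decomposition into consecutive pieces that turns the observation "near $b$ the path both lies in $F'$ and avoids $F'$" into "that overlap has length zero." The remaining ingredients — compactness of closed balls in a locally finite tree, existence and basic properties of medians, and convexity of pathconnected subsets — are routine and can be stated with brief justifications.
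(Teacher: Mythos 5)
Your proposal is correct, but it takes a genuinely different route from the paper. The paper argues directly with two points $x_1,x_2\in F'$: it takes nearest points $z_1,z_2\in F$, lets $y_j$ be the last point of $[x_j,z_j]$ lying in $F'$, and concatenates the geodesic in $F'$ from $y_1$ to $y_2$, the segment from $y_2$ to $z_2$, and the geodesic in $F$ from $z_2$ to $z_1$; since this concatenation is an arc, uniqueness of arcs in a tree forces it to coincide with the geodesic from $y_1$ to $z_1$, which meets $F$ only at $z_1$, so $z_1=z_2$, and part (a) is obtained as the special case $x_1=x_2$, with existence of nearest points taken for granted. You instead isolate a structural bridge lemma --- the unique segment $[a,b]$ with $a\in F$, $b\in F'$ meeting $F$ and $F'$ only at its endpoints and contained in every geodesic from $F$ to $F'$ --- read off $\proj_{F}(F')=\{a\}$ from it, prove uniqueness in (a) by a median argument, and prove existence via convexity of pathconnected subsets together with compactness of closed balls in a locally finite tree. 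What your route buys: existence is made explicit (the paper only asserts it), the common projection point is identified concretely as the bridge endpoint (so you even get $\dist(y,F)=|a-b|+|b-y|$ for $y\in F'$), and the bridge lemma is reusable; the cost is length compared with the paper's five-line concatenation argument. Two small details are worth a line when you write it up: in the identity $|p-y|=|p-a|+|a-b|+|b-y|$ you should note that $a$ precedes $b$ along $[p,y]$ (if $b$ came first, then $b\in[p,a]\subset F$, impossible since $F\cap F'=\emptyset$); and the step you label ``uniqueness of the decomposition'' is really the observation, which your closing paragraph states correctly, that a nondegenerate initial piece of $[b,a']$ cannot simultaneously lie in $[b,b']\subset F'$ and in the interior of $[a,b]$, which avoids $F'$, so $|b-b'|=0$.
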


\begin{proof} 
We start with part~\ref{it-proj-b}. 
Let $x_1,x_2 \in F'$. 
Then for $j=1,2$ there is $z_j \in F$  with $\dist(x_j,F)=|z_j-x_j|$.
The geodesic from $x_j$ to $z_j$ has a last vertex $y_j \in F'$
and we let $\ga_j$ be the geodesic from $y_j$ to $z_j$. It only
hits $F'$ at $y_j$ and $F$ at $z_j$.
Moreover, let $\ga$ be the geodesic in $F'$ from $y_1$ to $y_2$,
and $\phi$ be the geodesic in $F$ from $z_2$ to $z_1$.
Then the concatenation of $\ga$, $\ga_2$ and $\phi$
is a geodesic from $y_1$ to $z_1$, which
must coincide with $\ga_1$ as $Y$ is a tree, and in particular $z_1=z_2$.
This completes the proof of \ref{it-proj-b}.  
Part~\ref{it-proj-a} follows by letting $x=x_1=x_2$.
\end{proof}

\begin{proof}[Proof of Lemma~\ref{lem-quasigeod}]
Let $\{x_i\}_{i=0}^\infty$ be the sequence of vertices in $X$ representing $\ga$
so that $x_i\to\z$ as $i\to\infty$.
For each $i$ let $\ga_i$ be the geodesic in $Y$ connecting $F(x_i)$ to 
$F(x_{i+1})$, and let $\pip=\sum_{i=0}^\infty\ga_i$ be the concatenation of these 
geodesics. Because $F$ is a rough quasiisometry, we know that 
\[
    F(\ga)\subset 
     \phi \subset \bigcup_{i=0}^\infty B_Y(F(x_i), \tau').
\]
Since $\ga' \subset \phi$ this proves the second inclusion.
Thus it suffices to show that $\pip\subset\bigcup_{y\in\ga'}B_Y(y,\tau)$.
Since $F(x_i)\to f(\z)$, 
we can find a subsequence $\{x_{i_k}\}_{k=1}^\infty$ such that  
\[
  d_Y(F(x_{i_{k+1}}),f(\z))<\tfrac12d_Y(F(x_{i_k}),f(\z))
   <\tfrac{1}{2}d_Y(F(x_0),f(\z))
\]
for each positive integer $k$. Let $y_i$ be the largest common 
ancestor of $F(x_i)$ and $f(\z)$, $i=0,1,\ldots$\,. Then
\[
       \tfrac{1}{2} d_Y(F(x_i),f(\z)) < d_Y(y_i,f(\z)) 
       \le d_Y(F(x_i),f(\z)),
       \quad i=0,1,\ldots,
\]
and thus
\[
    d_Y(y_{i_{k+1}},f(\z)) <     d_Y(y_{i_{k}},f(\z)) <     d_Y(y_{0},f(\z)).
\]
It follows that both $y_{i_k}$ and $y_{i_{k+1}}$ belong to $\ga'$, and also that
the geodesic connecting $F(x_{i_k})$ to $F(x_{i_{k+1}})$ 
contains $y_{i_k}$.
This geodesic is contained in the path
$\sum_{i=i_k}^{i_{k+1}-1} \ga_i$ between
$F(x_{i_k})$ and $F(x_{i_{k+1}})$, and thus $y_{i_k} \in \ga_{j_k}$ 
for some $j_k$ with $i_k \le j_k < i_{k+1}$.

Suppose next that a subpath $\pip':=\sum_{k=i}^{j-1}\ga_k$ of $\phi$, 
with $j>i$, does not intersect $\ga'$.
Because each $\ga_{j_k}$, $k=1,2,\ldots$\,, intersect $\ga'$,
we can choose $\pip'$ so that both
$\ga_{i-1}$ and $\ga_j$ intersect $\ga'$.
Since $F$ is a rough quasiisometry, both $\ga_{i-1}$ and $\ga_j$ have 
length at most $L+\La$. We can thus conclude that both 
$F(x_i)$ and $F(x_j)$ lie within the closed $(L+\Lambda)$-neighborhood 
of $\ga'$. By the rough quasiisometry again, we know that
\begin{equation}  \label{eq-est-end-pts}
  |F(x_i)-F(x_j)| \ge \frac{|x_i-x_j|}{L}-\Lambda = \frac{j-i}{L}-\Lambda.
\end{equation}
By Lemma~\ref{lem-proj}, we know that $\proj_{\ga'}(\pip')=\{a\}$ for some $a\in\ga'$, and hence
\begin{align}
  |F(x_i)-a|&=\dist(F(x_i),\ga')\le L+\Lambda, \label{eq-Fxi-a} \\
  |F(x_j)-a|&=\dist(F(x_j),\ga')\le L+\Lambda. \nonumber
\end{align}
It follows that $|F(x_i)-F(x_j)|\le 2(L+\Lambda)$, and inserting this
into~\eqref{eq-est-end-pts} yields 
\(
   j-i\le L(2L+3\Lambda).
\)
Using that $|F(x_k)-F(x_{k+1})|\le L+\Lambda$ for all $k$, we
find that 
\[
     \ell(\pip')\le (j-i)(L+\Lambda) \le L(L+\Lambda)(2L+3\Lambda).
\]
Together with \eqref{eq-Fxi-a} this shows the first inclusion.
\end{proof}

\begin{lem}              \label{lem-injective}
The mapping $f$ is injective.
\end{lem}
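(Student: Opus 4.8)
The plan is to argue by contradiction. Suppose $\zeta_1,\zeta_2\in\bdry X$ with $\zeta_1\ne\zeta_2$ but $f(\zeta_1)=f(\zeta_2)=:\omega$. Let $\hat{x}$ be the largest common ancestor of $\zeta_1$ and $\zeta_2$ in $X$, and let $\gamma_1,\gamma_2$ be the geodesic rays in $X$ from $\hat{x}$ to $\zeta_1$ and to $\zeta_2$, respectively. By the choice of $\hat{x}$ these rays leave $\hat{x}$ through two \emph{different} children, so for any vertex $x>\hat{x}$ on $\gamma_1$ and any vertex $x'>\hat{x}$ on $\gamma_2$ the geodesic from $x$ to $x'$ passes through $\hat{x}$, i.e.\ $|x-x'|=|x-\hat{x}|+|\hat{x}-x'|$. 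Let $\gamma'$ be the geodesic ray in $Y$ from $F(\hat{x})$ to $\omega$. Since $Y$ is a tree, $\gamma'$ is the unique geodesic from the vertex $F(\hat{x})$ to the boundary point $\omega$, so it is simultaneously the ray obtained by applying Lemma~\ref{lem-quasigeod} with basepoint $x_0=\hat{x}$ to \emph{both} $\gamma_1$ and $\gamma_2$.

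The heart of the argument is the second inclusion of Lemma~\ref{lem-quasigeod}, which (with $x_0=\hat{x}$) furnishes a constant $\tau'>L+\Lambda$ such that every vertex of $\gamma'$ lies within $|\cdot-\cdot|$-distance $\tau'$ of $F(\gamma_1)$ and also within distance $\tau'$ of $F(\gamma_2)$. Fix a vertex $p$ on $\gamma'$ with $|p-F(\hat{x})|=M$, where $M$ is large (possible since $\gamma'$ is an infinite ray). Choose vertices $x$ on $\gamma_1$ and $x'$ on $\gamma_2$ with $|F(x)-p|<\tau'$ and $|F(x')-p|<\tau'$. On the one hand, $|F(x)-F(\hat{x})|\ge M-\tau'$, so the rough quasiisometry inequality gives $|x-\hat{x}|\ge(M-\tau'-\Lambda)/L$, and likewise $|x'-\hat{x}|\ge(M-\tau'-\Lambda)/L$; once $M>\tau'+\Lambda$ this forces $x>\hat{x}$ and $x'>\hat{x}$, whence $|x-x'|=|x-\hat{x}|+|\hat{x}-x'|\ge 2(M-\tau'-\Lambda)/L$. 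On the other hand, $|F(x)-F(x')|\le|F(x)-p|+|p-F(x')|<2\tau'$, so the rough quasiisometry inequality gives $|x-x'|\le L(2\tau'+\Lambda)$. Combining the two bounds yields $2(M-\tau'-\Lambda)/L\le L(2\tau'+\Lambda)$, which fails as soon as $M$ exceeds an explicit constant depending only on $L$ and $\Lambda$. This contradiction shows $f(\zeta_1)\ne f(\zeta_2)$.

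The one genuinely nontrivial ingredient is Lemma~\ref{lem-quasigeod}: the naive approach, using only Lemma~\ref{lem-towards-chi} to say that $F$ of a far-enough vertex of $\gamma_i$ lies below a prescribed deep vertex $y<\omega$, fails because Lemma~\ref{lem-towards-chi} gives no quantitative control on ``how far is far enough'', so the level at which $F(\gamma_1)$ and $F(\gamma_2)$ are forced to be close is uncontrolled and no contradiction emerges. The Morse-type Lemma~\ref{lem-quasigeod} repairs exactly this: it says $F(\gamma_1)$ and $F(\gamma_2)$ each \emph{fill} a bounded neighbourhood of the common ray $\gamma'$, which pins down the level of the relevant vertices of $X$ and drives the quantitative estimate. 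A small bookkeeping point — and the reason one applies Lemma~\ref{lem-quasigeod} with basepoint $\hat{x}$ rather than $0_X$ — is that with basepoint $0_X$ the pulled-back vertices $x,x'$ could lie on the shared initial segment $[0_X,\hat{x}]$, where the divergence identity $|x-x'|=|x-\hat{x}|+|\hat{x}-x'|$ breaks down; localizing at $\hat{x}$ removes this difficulty.
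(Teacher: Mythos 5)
Your proof is correct and follows essentially the same route as the paper: both argue by contradiction from the largest common ancestor $\hat{x}$, invoke the Morse-type Lemma~\ref{lem-quasigeod} to keep the images of the two rays near the common geodesic $\gamma'$ in $Y$, and then contradict the lower rough quasiisometry bound since the rays diverge in $X$. The only (immaterial) difference is bookkeeping: you fix one far point $p$ on $\gamma'$ and pull it back to both rays via the second inclusion, whereas the paper pushes points of one ray forward (first inclusion) and matches them to the other ray (second inclusion), letting the point tend to $\zeta_1$.
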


\begin{proof}
Suppose that  there are $\z_1,\z_2\in\partial X$ with $\z_1\ne\z_2$ such that
$f(\zeta_1)=f(\zeta_2)=\chi$. 
Let $\hat{x}$ be the largest common ancestor of $\zeta_1$ and $\zeta_2$.
By Lemma~\ref{lem-quasigeod}, the geodesic from $F(\xhat)$ to $\chi$
lies within $\tau$-neighborhoods of both $Y_{\z_1}$ and $Y_{\z_2}$.
It follows that $Y_{\z_1}$ and $Y_{\z_2}$ lie within $2\tau$-neighborhoods of
each other.
Thus, for each $x_k$ in the geodesic from $\xhat$ to $\z_1$, there is
$x'_{k}$ in the geodesic from $\xhat$ to $\z_2$ such that 
$|F(x_k)-F(x'_k)|\le2\tau$.
This gives
\[ 
|x_k-x'_k| \le L(|F(x_k)-F(x'_k)|+\La) \le L(2\tau+\La).
\] 
On the other hand, $|x_k-x'_k| 
\ge |x_k-\xhat| \to \infty$, as $x_k \to \z_1$, giving a contradiction.
\end{proof}

Hence now we know that $f:\partial X\to \partial Y$ is a bijective mapping.   
To show that it is a homeomorphism, one can use Lemma~\ref{lem-towards-chi}
to prove continuity of $f$, and then the compactness of $\bdry X$ and
$\bdry Y$ gives the continuity of $f^{-1}$.
We shall instead in Lemma~\ref{lem-Holder-ct} below
improve upon this and show that $f$ is biH\"older continuous.
The following useful lemma is a rather simple consequence of Lemma~\ref{lem-quasigeod}.

\begin{lem}  \label{lem-dist-F(x)-y}
Let $x\in X$ be the largest common ancestor of $\z,\xi\in\bdry X$, $\z \ne \xi$,
and $y\in Y$ be the largest common ancestor of $f(\z)$ and $f(\xi)$.
Then\/ $|F(x)-y|\le C(L,\La)$.
\end{lem}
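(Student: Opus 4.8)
The plan is to derive the bound from Lemma~\ref{lem-quasigeod} together with the projection lemma, Lemma~\ref{lem-proj}. Let $\ga$ be a geodesic ray in $X$ from $0_X$ through $x$ to $\z$, and $\ga_\xi$ a geodesic ray from $0_X$ through $x$ to $\xi$; since $x$ is the largest common ancestor of $\z$ and $\xi$, these two rays agree up to $x$ and then separate. Let $\delta$ be the geodesic in $Y$ from $F(0_X)$ to $f(\z)$, and $\delta_\xi$ the geodesic from $F(0_X)$ to $f(\xi)$. Since $y$ is the largest common ancestor of $f(\z)$ and $f(\xi)$, the rays $\delta$ and $\delta_\xi$ agree up to $y$ and then split. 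By Lemma~\ref{lem-quasigeod} applied to $\ga$, the point $F(x)$ lies within distance $\tau$ of some point of $\delta$; similarly applied to $\ga_\xi$, $F(x)$ lies within $\tau$ of $\delta_\xi$. The idea is that a point close to both $\delta$ and $\delta_\xi$ cannot be far below $y$ on either ray, since below $y$ the two rays have separated.

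More precisely, first I would fix constants: take $\tau=(L+\Lambda)(2L^2+3\Lambda L+1)+1$ and $\tau'=L+\Lambda+1$ so that Lemma~\ref{lem-quasigeod} applies with strict inequalities. Write $F(x)\in B_Y(a,\tau)$ with $a\in\delta$ and $F(x)\in B_Y(b,\tau)$ with $b\in\delta_\xi$, so $|a-b|\le 2\tau$. Now I distinguish the position of $a$ and $b$ relative to $y$. The ray $\delta$ consists of a segment $[F(0_X),y]$ followed by the part beyond $y$ (towards $f(\z)$), and similarly $\delta_\xi$; the two parts beyond $y$ are disjoint except at $y$. If $a$ lies on $\delta$ beyond $y$ at distance $d_1$ from $y$, and $b$ lies on $\delta_\xi$ beyond $y$ at distance $d_2$ from $y$, then since $Y$ is a tree and these two segments only meet at $y$, the geodesic from $a$ to $b$ passes through $y$, so $|a-b|=d_1+d_2\le 2\tau$. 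Hence $\dist(F(x),y)\le |F(x)-a|+|a-y|\le \tau+d_1\le 3\tau$, and we are done in this case. If instead $a$ lies on the common segment $[F(0_X),y]$, then $y$ lies on the geodesic from $a$ to $f(\z)$; but one can then observe that moving $a$ up to $y$ only decreases distance to $F(x)$ — or more carefully, use that $F(x)$ is also within $\tau$ of $\delta_\xi$ and run a symmetric argument.

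The one genuinely delicate point — and the step I expect to be the main obstacle — is ruling out (or absorbing) the case where $a$ is strictly below $y$ on the common part of the two rays, i.e.\ between $F(0_X)$ and $y$. In that configuration $|a-y|$ could a priori be large, so the naive bound $|F(x)-y|\le |F(x)-a|+|a-y|$ is useless. The way to handle this is to use both approximations simultaneously: since $F(x)\in B_Y(b,\tau)$ with $b\in\delta_\xi$, and $\delta_\xi$ also runs through $y$, the nearest point of $\delta_\xi\cup\delta$ to $F(x)$ cannot be far from $y$ on $\delta$ if it is also close to a point on the other branch. Concretely, let $p=\proj_{\delta_\xi}(F(x))$ and $q=\proj_{\delta}(F(x))$; then $|F(x)-p|\le \tau$ and $|F(x)-q|\le\tau$, so $|p-q|\le 2\tau$. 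If $q$ is on $[F(0_X),y]$ at distance $s$ below $y$, then since $p\in\delta_\xi$ and the geodesic from $q$ to any point of $\delta_\xi$ strictly beyond $y$ passes through $y$, we get $|q-p|\ge s$ unless $p$ is also on $[F(0_X),y]$; in the latter subcase $q=p$ lies on $[F(0_X),y]$ and $q$ is then the nearest point of the whole ray $\delta_\xi$ to $F(x)$, forcing the geodesic from $F(x)$ to any boundary point to exit through $q$, so in fact $F(x)$'s approximating point on $\ga$ would have forced $q$ near $y$ by the matching inclusion $\ga'\subset\bigcup B_Y(F(x'),\tau')$ in Lemma~\ref{lem-quasigeod} applied appropriately. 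Tracking through these cases yields $|F(x)-y|\le C(L,\La)$ with, say, $C(L,\La)=3\tau=3(L+\Lambda)(2L^2+3\Lambda L+1)+3$, which is the asserted constant depending only on $L$ and $\Lambda$.
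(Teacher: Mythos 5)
Your approach has a genuine gap, and it is exactly the point you yourself flag as delicate. Two concrete problems. First, the structural claim on which your case analysis rests --- that the geodesics $\delta=[F(0_X),f(\z))$ and $\delta_\xi=[F(0_X),f(\xi))$ ``agree up to $y$ and then split'' --- is false in general: their branch point is the median of the triple $F(0_X)$, $f(\z)$, $f(\xi)$, which coincides with $y$ (the largest common ancestor of $f(\z)$ and $f(\xi)$ relative to the root $0_Y$) only when $F(0_X)$ happens to sit above it, e.g.\ when $F(0_X)=0_Y$; no such normalization is assumed here, $F$ being an arbitrary rough quasiisometry (note that in Lemma~\ref{lem-Holder-ct} the constants are explicitly allowed to depend on $|F(0_X)|$, which is a symptom of basing constructions at the root). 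Second, and more seriously, the case in which both approximating points (your $a,b$, or $p,q$) lie on the \emph{common} initial segment of the two geodesics, far from the branch point, is never actually ruled out. The first inclusion of Lemma~\ref{lem-quasigeod} only says that $F(x)$ lies within $\tau$ of each geodesic, and a point near the common segment is automatically near both, so no contradiction of the form $|p-q|\le 2\tau$ versus $|p-q|\ge s$ arises; your closing sentence (``forcing \dots\ $q$ near $y$ by the matching inclusion \dots\ applied appropriately'') is not an argument. Since bounding how far down that common segment $F(x)$ can sit is essentially the assertion of the lemma itself, this case cannot be absorbed by the tools you invoke in the way you invoke them, and your claimed constant $3\tau$ is unsupported.

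The paper's proof uses a different mechanism that bypasses this case entirely: it bases everything at $x$ rather than at $0_X$, so that the two rays $[x,\z)$ and $[x,\xi)$ in $X$ branch exactly at $x$, and it uses the \emph{second} inclusion of Lemma~\ref{lem-quasigeod} (each point of the $Y$-geodesic lies within $\tau'$ of an image point), not the first. This yields vertices $a\in[x,\z)$ and $b\in[x,\xi)$ with $|F(a)-y|\le\tau'$ and $|F(b)-y|\le\tau'$; because $a$ and $b$ lie on different branches above $x$, the geodesic between them passes through $x$, so $|x-a|\le|a-b|\le L\bigl(|F(a)-F(b)|+\La\bigr)\le L(2\tau'+\La)$, and then $|F(x)-y|\le|F(x)-F(a)|+|F(a)-y|\le L^2(2\tau'+\La)+\La+\tau'$. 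The two ingredients missing from your argument are precisely these: the lower rough-quasiisometry bound applied to a pair of points sitting on the two different branches emanating from $x$, and the direction of Lemma~\ref{lem-quasigeod} that places \emph{image points of the two rays near $y$}, rather than placing $F(x)$ near the two geodesics. To salvage your setup you would have to prove directly that $F(x)$ cannot lie close to the common segment far below the fork, and that is not easier than the lemma you are trying to prove.
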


\begin{proof}
By Lemma~\ref{lem-quasigeod}, the geodesics from $y$ to $f(\z)$ and to $f(\xi)$
are within $\tau'$-neighborhoods of the images of the geodesics from $x$
to $\z$ and to $\xi$, respectively.
It follows that there are vertices $a,b\in X$ such that $x\le a<\z$, $x\le b<\xi$,
\begin{equation}   \label{eq-Fa-Fb}
|F(a)-y|\le\tau' \quad \text{and} \quad |F(b)-y|\le\tau'.
\end{equation}
Hence, as $F$ is a rough quasiisometry,  we have
\[
|x-a| \le |b-a| \le L(|F(b)-F(a)|+\La) \le L(2\tau'+\La)
\]
and consequently, using~\eqref{eq-Fa-Fb} again, 
\begin{align*}
|F(x)-y| &\le |F(x)-F(a)| + |F(a)-y|  \\
     & \le L|x-a|+\La+\tau'
\le L^2(2\tau'+\La)+\La+\tau'. 
    \qedhere
\end{align*}
\end{proof}

\begin{lem}\label{lem-Holder-ct}
The mapping $f:\partial X\to\partial Y$ is\/
$(\al_2,\al_1)$-biH\"older continuous with 
\begin{equation} \label{eq-Holder-ct}
\al_1=\frac{L_1\eps_Y}{\eps_X}
\quad \text{and} \quad
\al_2=\frac{L_2\eps_Y}{\eps_X}, 
\end{equation}
that is,
\[
        C_1 d_Z(x,y)^{\al_2}\le  d_W(f(x),f(y))
        \le  C_2 d_Z(x,y)^{\alpha_1},
\]
where the constants $C_1$ and $C_2$ 
depend only on $L_1$, $L_2$, $\La$, $|F(0_X)|$, $\eps_X$ and $\eps_Y$.
\end{lem}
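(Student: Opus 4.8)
The plan is to translate the rough quasiisometry inequalities \eqref{eq-rough-qiso-L1-L2} (with constants $L_1,L_2,\La$ and $F(0_X)=0_Y$, which hold here since $F$ comes from Theorem~\ref{thm-qs2roughqiso} applied to $f^{-1}$ --- or, more directly, since by hypothesis $F$ is an $(L,\La)$-rough quasiisometry, we just run the argument with the available constants) into Hölder bounds for $f$ on the boundary by passing through largest common ancestors. First I would fix $\z\ne\xi$ in $\bdry X$, let $x$ be their largest common ancestor, and let $y$ be the largest common ancestor of $f(\z)$ and $f(\xi)$ in $Y$. By \eqref{bdy-metric} we have $d_X(\z,\xi)=2e^{-\eps_X|x|}/\eps_X$ and $d_Y(f(\z),f(\xi))=2e^{-\eps_Y|y|}/\eps_Y$, so the entire statement reduces to linear two-sided control of $|y|$ in terms of $|x|$.

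The key step is Lemma~\ref{lem-dist-F(x)-y}, which gives $|F(x)-y|\le C(L,\La)$. Combining this with the rough quasiisometry estimate \eqref{eq-rough-qiso-L1-L2} applied to the pair $x,0_X$ (using $F(0_X)=0_Y$, so $|F(x)|=|F(x)-F(0_X)|$ lies between $L_1|x|-\La$ and $L_2|x|+\La$) and the triangle inequality $\bigl||y|-|F(x)|\bigr|\le|y-F(x)|\le C(L,\La)$, I obtain
\[
   L_1|x| - \La - C(L,\La) \le |y| \le L_2|x| + \La + C(L,\La).
\]
Then, plugging these into the identities for $d_X(\z,\xi)$ and $d_Y(f(\z),f(\xi))$ above,
\[
\frac{2}{\eps_Y} e^{-\eps_Y(L_2|x|+\La+C)}
\le d_Y(f(\z),f(\xi)) \le \frac{2}{\eps_Y} e^{-\eps_Y(L_1|x|-\La-C)},
\]
and since $e^{-\eps_X|x|} \simeq d_X(\z,\xi)$ with an absolute comparison constant, raising to the appropriate powers gives
\[
    C_1\, d_X(\z,\xi)^{L_2\eps_Y/\eps_X} \le d_Y(f(\z),f(\xi)) \le C_2\, d_X(\z,\xi)^{L_1\eps_Y/\eps_X},
\]
with $C_1,C_2$ depending only on $L_1,L_2,\La,|F(0_X)|,\eps_X,\eps_Y$ (here $C(L,\La)$ is itself controlled by $L_1,L_2,\La$). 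Reading off $\al_1=L_1\eps_Y/\eps_X$ for the upper (more regular) exponent and $\al_2=L_2\eps_Y/\eps_X$ for the lower one matches \eqref{eq-Holder-ct}, so $f$ is $(\al_2,\al_1)$-biHölder as claimed.

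The main obstacle is really already packaged into Lemma~\ref{lem-dist-F(x)-y}, whose proof in turn relies on the Morse-type Lemma~\ref{lem-quasigeod}: one must know that the image under $F$ of the geodesic ray from $x$ to $\z$ stays within a bounded Hausdorff distance of the actual geodesic ray in $Y$ from $F(x)$ to $f(\z)$, so that the branch point $y$ cannot be far from $F(x)$. Once that geometric input is in hand, the rest is bookkeeping with the exponential weights. A minor care point is to make sure the roles of the two exponents are not swapped --- since $L_1\le L_2$, the exponent $\al_1$ governing the upper bound on $d_Y(f(\z),f(\xi))$ is the smaller one, consistent with $f$ being only Hölder (not Lipschitz) and with the ordering convention in \eqref{def-new-biHolder}.
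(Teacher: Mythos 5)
Your proposal is correct and follows essentially the same route as the paper: reduce everything to comparing $|y|$ with $|x|$ via the largest common ancestors and \eqref{bdy-metric}, invoke Lemma~\ref{lem-dist-F(x)-y} to get $|F(x)-y|\le C(L,\La)$, and then bound $|F(x)|$ linearly in $|x|$ by the rough quasiisometry. The only slip is your parenthetical assumption $F(0_X)=0_Y$ (and the claim that $F$ arises from Theorem~\ref{thm-qs2roughqiso}), which is not part of the hypotheses in this section; the paper instead writes $L_1|x|-\La-|F(0_X)|\le|F(x)|\le L_2|x|+\La+|F(0_X)|$ by the triangle inequality, which is exactly why $|F(0_X)|$ appears among the constants $C_1,C_2$ that you correctly list at the end, so the fix is immediate.
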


\begin{proof} 
Given $\z,\xi\in\bdry X$ with $\z\ne\xi$, let $x$ be the largest 
common ancestor of $\zeta$ and $\xi$.
Similarly, let $y$ denote the largest common ancestor of $f(\zeta)$ 
and $f(\xi)$. We shall estimate 
$d_{Y}(f(\zeta),f(\xi))\simeq e^{-\eps_Y|{y}|}$ in terms of 
$d_{X}(\zeta,\xi)\simeq e^{-\eps_X|{x}|}$.

Lemma~\ref{lem-dist-F(x)-y} implies that $|F(x)-y|\le C$ and hence
$e^{-\eps_Y|{y}|} \simeq e^{-\eps_Y|F(x)|}$.
At the same time, the rough quasiisometry 
property~\eqref{eq-rough-qiso-L1-L2}
gives
\begin{align*}
|F(x)| &\le |F(x)-F(0_X)| +|F(0_X)| \le L_2|x| +\La +|F(0_X)|
 \intertext{and} 
|F(x)| &\ge |F(x)-F(0_X)| -|F(0_X)| \ge  L_1|x| -\La -|F(0_X)|.
\end{align*}
From this we conclude that
\begin{align*}
d_{Y}(f(\zeta),f(\xi)) &\simeq e^{- \eps_Y|F(x)|}
\simge e^{-L_2 \eps_Y|x|} \simeq d_{X}(\zeta,\xi)^{L_2 \eps_Y/\eps_X}
 \intertext{and} d_{Y}(f(\zeta),f(\xi)) &\simeq e^{-\eps_Y|F(x)|}
 \simle e^{-L_1\eps_Y|x|} \simeq d_{X}(\zeta,\xi)^{L_1 \eps_Y/ \eps_X}.
\end{align*}
This gives the biH\"older condition \eqref{def-new-biHolder}
with $\alp_1$ and $\alp_2$ as in \eqref{eq-Holder-ct}.
\end{proof}

\begin{lem}\label{lem-order-preserv}
Let $\z,\xi,\chi\in\partial X$ be such that\/ $0<d_X(\z,\xi)<d_X(\z,\chi)$ and
assume that
\begin{equation}   \label{eq-dX-z-chi-r0}
d_X(\z,\chi) \le \biggl( \frac{C_1}{C_2} \bigl( \tfrac13 \diam\bdry X
              \bigr)^{\al_2}\biggr)^{1/\al_1}  =:r_0,
\end{equation}
where $\al_1$, $\al_2$, $C_1$ and $C_2$ are as in  Lemma~\ref{lem-Holder-ct}. Let $x\in X$ 
be the largest common ancestor of $\z$ and $\xi$, and let $y\in X$ 
be the largest common ancestor of $\z$ and $\chi$.
If\/ $|x-y|\ge L(3C(L,\Lambda)+2\Lambda+L)=:s_0$, where $C(L,\Lambda)$ is 
as in Lemma~\ref{lem-dist-F(x)-y}, then $d_Y(f(\z),f(\xi))\le d_Y(f(\z),f(\chi))$.
\end{lem}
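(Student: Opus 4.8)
The plan is to argue by contradiction: assume $d_Y(f(\z),f(\xi)) > d_Y(f(\z),f(\chi))$ and deduce $|x-y| < s_0$, against the hypothesis.

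First I would restate everything in terms of largest common ancestors. By~\eqref{bdy-metric} the visual metric on the boundary of a tree is an ultrametric, and for two distinct boundary points it equals $\frac{2}{\eps}e^{-\eps\ell}$, where $\ell$ is the level of their largest common ancestor. Writing $x'\in Y$ for the largest common ancestor of $f(\z)$ and $f(\xi)$, and $y'\in Y$ for that of $f(\z)$ and $f(\chi)$, the desired inequality $d_Y(f(\z),f(\xi))\le d_Y(f(\z),f(\chi))$ is equivalent to $|x'|\ge|y'|$. The hypothesis $0<d_X(\z,\xi)<d_X(\z,\chi)$ gives in particular that $\z,\xi,\chi$ are pairwise distinct, so $f(\z),f(\xi),f(\chi)$ are pairwise distinct by injectivity of $f$ (Lemma~\ref{lem-injective}); it also gives $|x|>|y|$, and since $x$ and $y$ both lie on the geodesic ray $[0_X,\z)$ this forces $y<x$ and $|x-y|=|x|-|y|$.

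Next comes the one genuinely combinatorial point: for three distinct boundary points of a rooted tree the largest common ancestors of the three pairs have the property that the two of smallest level coincide (trees are $0$-hyperbolic). Applied in $X$ to $\{\z,\xi,\chi\}$, since the largest common ancestor of $\z,\xi$ is $x$ and that of $\z,\chi$ is $y$ with $|y|<|x|$, the two shallow ancestors must be the remaining pair, so the largest common ancestor of $\xi$ and $\chi$ is also $y$. Now suppose, for contradiction, that $|x'|<|y'|$, and apply the same dichotomy in $Y$ to $\{f(\z),f(\xi),f(\chi)\}$: the only configuration compatible with the largest common ancestor $x'$ of $f(\z),f(\xi)$ being strictly shallower than the largest common ancestor $y'$ of $f(\z),f(\chi)$ is that $f(\xi)$ branches off first, i.e.\ that the largest common ancestor of $f(\xi)$ and $f(\chi)$ is $x'$ as well. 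With these identifications the contradiction is immediate. Writing $C:=C(L,\La)$, Lemma~\ref{lem-dist-F(x)-y} applied to the pair $(\z,\xi)$ (largest common ancestors $x$ in $X$, $x'$ in $Y$) gives $|F(x)-x'|\le C$, and applied to the pair $(\xi,\chi)$ (largest common ancestors $y$ in $X$, $x'$ in $Y$ by the previous sentence) gives $|F(y)-x'|\le C$, hence $|F(x)-F(y)|\le 2C$. On the other hand $F$ is an $(L,\La)$-rough quasiisometry (Definition~\ref{rough-qiso}) and $|x-y|\ge s_0=L(3C+2\La+L)$, so
\[
|F(x)-F(y)|\ge \tfrac1L|x-y|-\La\ge 3C+2\La+L-\La=3C+\La+L>2C,
\]
a contradiction. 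Therefore $|x'|\ge|y'|$, which is exactly the claimed inequality.

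The only delicate step is the tree-combinatorial identification of the largest common ancestor of $f(\xi)$ and $f(\chi)$ in the bad case; once that is pinned down, everything reduces to the rough-quasiisometry inequality together with Lemma~\ref{lem-dist-F(x)-y}, and the constant $s_0$ is tuned precisely so that $\tfrac1L s_0-\La$ exceeds $2C(L,\La)$. I do not expect to need the auxiliary bound~\eqref{eq-dX-z-chi-r0} (nor the biH\"older constants of Lemma~\ref{lem-Holder-ct}) for this particular implication; they are recorded in the statement because they are available in the intended applications of the lemma.
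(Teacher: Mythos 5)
Your argument is correct, but it is genuinely different from (and shorter than) the paper's proof. The paper also argues by contradiction, but it proceeds by showing $x_1\le F(y)$ (in your notation, $x'\le F(y)$), then introduces an auxiliary boundary point $\om$ with $d_X(\z,\om)\ge\max\bigl\{\tfrac13\diam\bdry X,d_X(\z,\chi)\bigr\}$, uses the biH\"older bounds of Lemma~\ref{lem-Holder-ct} together with hypothesis~\eqref{eq-dX-z-chi-r0} to place the largest common ancestor $z_1$ of $f(\z)$ and $f(\om)$ strictly below $x_1$, concludes $F(z)\not\ge x_1$, and finally locates a parent--child pair $\hat p<p$ on $[z,y]$ where the image crosses $x_1$, deriving a contradiction with the rough quasiisometry bound. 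You bypass all of this with the $0$-hyperbolicity of trees: under the contradiction hypothesis the largest common ancestor of $f(\xi)$ and $f(\chi)$ must be $x'$ (and that of $\xi,\chi$ in $X$ is $y$), so Lemma~\ref{lem-dist-F(x)-y} applied to the two pairs $(\z,\xi)$ and $(\xi,\chi)$ gives $|F(x)-F(y)|\le 2C(L,\La)$, which contradicts $|F(x)-F(y)|\ge\tfrac1L|x-y|-\La\ge 3C(L,\La)+\La+L$; the choice of $s_0$ makes the numbers work with room to spare. What your route buys is economy and a sharper statement: you never invoke Lemma~\ref{lem-Holder-ct} or the restriction~\eqref{eq-dX-z-chi-r0}, so the implication holds for all triples with $|x-y|\ge s_0$, whereas in the paper $r_0$ enters only because the authors' auxiliary-point comparison needs a definite lower bound on $d_Y(f(\z),f(\om))$ relative to $d_Y(f(\z),f(\xi))$ (and $r_0$ is in any case the scale at which the lemma is applied in the proof of Theorem~\ref{rough-qs}, so nothing is lost there). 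The one step you should make explicit when writing this up is the three-point identification itself (the ``two shallowest common ancestors coincide'' fact in both $X$ and $Y$), including the observation that $\xi\ne\chi$ because the ultrametric property forces $d_X(\xi,\chi)=d_X(\z,\chi)>0$; with that spelled out, your proof is complete.
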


To prove this we will use the following obvious fact:  If $u,v,w\in Y$ satisfy
$|u-v|\ge|w-v|$ and $u\le v$, then $u\le w$.

\begin{proof}
Suppose that $d_Y(f(\z),f(\xi))> d_Y(f(\z),f(\chi))$. 
Let $x_1\in Y$ be the largest common ancestor of
$f(\z)$ and $f(\xi)$, and $y_1$ be the largest common ancestor of 
$f(\z)$ and $f(\chi)$.  Note that $x_1 < y_1$. By
Lemma~\ref{lem-dist-F(x)-y} we know that $|F(x)-x_1|\le C(L,\Lambda)$ and 
$|F(y)-y_1|\le C(L,\Lambda)$. Therefore by the rough quasiisometry of $F$,
\begin{align}\label{eq-order-1}
   |x_1-y_1| &\ge |F(x)-F(y)|-|F(x)-x_1|-|F(y)-y_1| \nonumber\\
&\ge \frac{1}{L} |x-y|-\La -2C(L,\La) \ge C(L,\Lambda)+\Lambda+L.
\end{align}
Since $x_1<y_1$ and $|F(y)-y_1|\le C(L,\Lambda)\le|x_1-y_1|$, 
our remark before the proof shows that $x_1\le F(y)$.

Now let $\om\in\bdry X$ be such that 
$d_X(\z,\om)\ge \max\bigl\{\tfrac13\diam\bdry X,d_X(\z,\chi)\bigr\}$.
Let $z$ be the largest common ancestor of $\z$ and $\omega$, and
$z_1$ be the largest common ancestor of $f(\z)$ and $f(\omega)$. 
Note that $z\le y\le x$.
Then by Lemma~\ref{lem-Holder-ct} and~\eqref{eq-dX-z-chi-r0} we have that
\[
d_Y(f(\z),f(\om)) \ge C_1 d_X(\z,\om)^{\al_2} \ge C_2 d_X(\z,\chi)^{\al_1}
> C_2 d_X(\z,\xi)^{\al_1} \ge d_Y(f(\z),f(\xi))
\]
from which it follows that
$z_1< x_1$. As in~\eqref{eq-order-1} we have
\[
   |x_1-z_1| \ge \frac{1}{L} |x-z|-\La -2C(L,\La) \ge C(L,\Lambda)+\La+L.
\]

Suppose that $F(z)\ge x_1$. Then
by Lemma~\ref{lem-dist-F(x)-y},
\[
  C(L,\La) \ge |F(z)-z_1|  \ge |x_1-z_1| \ge C(L,\Lambda)+\La+L,
\]
which is not possible. Hence
$F(z)\not\ge x_1$. Since $F(y)\ge x_1$, we can find  
$p\in X$ and its parent $\hat{p}$ with $z\le\hat{p}<p\le y\le x$
such that $F(p)\ge x_1$ and $F(\hat{p})\not\ge x_1$.  It follows that 
\[
    |F(p)-x_1|< |F(p)-F(\hat{p})|\le L+\Lambda.
\]
This leads to a contradiction because by Lemma~\ref{lem-dist-F(x)-y} again,
\begin{align*}
   L+\Lambda+C(L,\Lambda)&> |F(p)-x_1| + |F(x)-x_1| \ge |F(p)-F(x)|\\ 
&\ge \frac1L |p-x|-\Lambda \ge \frac1L|y-x|-\Lambda \ge 3C(L,\Lambda)+\Lambda+L,
\end{align*}
which is not possible. 
Thus the assumption that $d_Y(f(\z),f(\xi))> d_Y(f(\z),f(\chi))$ 
is false, and the lemma is proved.
\end{proof}

Finally, we are ready to prove the main result of this section.

\begin{thm}\label{rough-qs}
If the rough quasiisometry $F:X\to Y$ satisfies 
\[
L_1|x_1-x_2| -\La \le |F(x_1)-F(x_2)| \le L_2|x_1-x_2| +\La
\quad \text{for all } x_1,x_2\in X,
\]
then the mapping $f:\partial X\to\partial Y$
is 
an $\eta$-quasisymmetric map,
where
\begin{equation} \label{eq-eta}
   \eta (t) = \begin{cases}
A t^{\alpha_1}, & \text{if } t \leq 1,\\
A t ^{\alpha_2}, & \text{if } t \geq 1,
\end{cases}
 \quad
  \alp_1 = \frac{L_1\eps_Y}{\eps_X},
  \quad
  \alp_2 = \frac{L_2\eps_Y}{\eps_X}
  \quad \text{and} \quad
  A > 0.
\end{equation}
\end{thm}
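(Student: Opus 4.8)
The plan is to verify the quasisymmetry inequality \eqref{def-quasisym} directly for the bijection $f:\bdry X\to\bdry Y$, whose bijectivity and biH\"older continuity have already been established in Lemmas~\ref{lem-surjective}, \ref{lem-injective} and~\ref{lem-Holder-ct} (and the stated $\eta$ is visibly a homeomorphism of $[0,\infty)$ since $\al_1,\al_2,A>0$). Fix $\z,\xi,\chi\in\bdry X$ with $\z\ne\chi$; we may assume $\z\ne\xi$ as well, since otherwise the inequality reads $0\le\eta(0)=0$. Let $x$ be the largest common ancestor of $\z,\xi$ and $y$ that of $\z,\chi$; both being ancestors of $\z$, they are $\le$-comparable, and by \eqref{bdy-metric}
\[
   t:=\frac{d_X(\z,\xi)}{d_X(\z,\chi)}=e^{-\eps_X(|x|-|y|)},
\]
so $t\le1$ precisely when $y\le x$, $t\ge1$ precisely when $x\le y$, and $|x-y|=\bigl||x|-|y|\bigr|=\eps_X^{-1}|\log t|$. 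Let $x_1,y_1$ be the largest common ancestors in $Y$ of $f(\z),f(\xi)$ and of $f(\z),f(\chi)$; these lie on the geodesic ray $[0_Y,f(\z)]$, so by \eqref{bdy-metric} the left-hand side of \eqref{def-quasisym} equals $e^{-\eps_Y(|x_1|-|y_1|)}$. By Lemma~\ref{lem-dist-F(x)-y} there is $C=C(L,\La)$ with $|F(x)-x_1|\le C$ and $|F(y)-y_1|\le C$, so $\bigl||x_1|-|y_1|\bigr|$ differs from $\bigl||F(x)|-|F(y)|\bigr|\le|F(x)-F(y)|$ by at most $2C$. Thus everything reduces to comparing $|x_1|-|y_1|$ with $\pm\eps_X^{-1}\log t$.

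When $t\ge1$ (so $x\le y$, $|x-y|=|y|-|x|$) the estimate is immediate: from $|y_1|\le|F(y)|+C$, $|x_1|\ge|F(x)|-C$ and the upper bound in the hypothesis, $|y_1|-|x_1|\le|F(y)-F(x)|+2C\le L_2|x-y|+\La+2C$, whence the left-hand side of \eqref{def-quasisym} is at most $e^{\eps_Y(\La+2C)}\,t^{\,L_2\eps_Y/\eps_X}=e^{\eps_Y(\La+2C)}\,t^{\al_2}$.

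The case $t<1$ (so $y\le x$) is the substantive one: here the difficulty is that $f$ need not preserve the order of a triple of boundary points, so $|x_1|-|y_1|$ need not be large even when $|x-y|$ is. I would split into three subcases governed by the thresholds $s_0$ and $r_0$ of Lemma~\ref{lem-order-preserv}. (a) If $|x-y|\ge s_0$ and $d_X(\z,\chi)\le r_0$, Lemma~\ref{lem-order-preserv} yields $d_Y(f(\z),f(\xi))\le d_Y(f(\z),f(\chi))$, i.e.\ $y_1\le x_1$ on the ray $[0_Y,f(\z)]$; then $|x_1|-|y_1|=|x_1-y_1|\ge|F(x)-F(y)|-2C\ge L_1|x-y|-\La-2C$, so the left-hand side of \eqref{def-quasisym} is at most $e^{\eps_Y(\La+2C)}e^{-\eps_Y L_1|x-y|}=e^{\eps_Y(\La+2C)}t^{\al_1}$. (b) If $|x-y|<s_0$, then $\bigl||x_1|-|y_1|\bigr|\le|F(x)-F(y)|+2C$ is bounded while $t=e^{-\eps_X|x-y|}\ge e^{-\eps_X s_0}$ is bounded below, so the left-hand side is $\le A' t^{\al_1}$ for a suitable $A'$. (c) If $|x-y|\ge s_0$ but $d_X(\z,\chi)>r_0$, then $|y|$ is bounded, hence so are $|F(y)|$ (through $|F(0_X)|$ and $L_2$) and $|y_1|$, so $d_Y(f(\z),f(\chi))$ is bounded below; estimating the numerator by the biH\"older bound of Lemma~\ref{lem-Holder-ct}, $d_Y(f(\z),f(\xi))\simle d_X(\z,\xi)^{\al_1}=(t\,d_X(\z,\chi))^{\al_1}\simle t^{\al_1}$, the left-hand side is again $\le A'' t^{\al_1}$. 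Taking $A\ge1$ to be the maximum of $e^{\eps_Y(\La+2C)}$, $A'$, $A''$ (all depending only on $L_1,L_2,\La,\eps_X,\eps_Y$ and $|F(0_X)|$) yields the required $\eta$ of the form \eqref{eq-eta}.

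I expect the main obstacle to be exactly the failure of order preservation by $f$, which is what forces the three-way split when $t<1$: Lemma~\ref{lem-order-preserv} rescues the generic subcase (a), the biH\"older estimate of Lemma~\ref{lem-Holder-ct} handles the large-scale subcase (c), and subcase (b) is the coarse small-scale regime where everything is bounded. The rest is bookkeeping — tracking how $C(L,\La)$, $s_0$, $r_0$, $C_1$, $C_2$ and $|F(0_X)|$ feed into the single constant $A$ — together with the observation that the exponents that emerge are precisely $\al_1=L_1\eps_Y/\eps_X$ and $\al_2=L_2\eps_Y/\eps_X$, matching those produced in the opposite direction by Theorem~\ref{thm-qs2roughqiso}.
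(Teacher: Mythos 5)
Your proposal is correct and follows essentially the same route as the paper: the same reduction to largest common ancestors $x,y$ in $X$ and $x_1,y_1$ in $Y$, the same key Lemmas~\ref{lem-dist-F(x)-y}, \ref{lem-order-preserv} and~\ref{lem-Holder-ct}, and the same thresholds $s_0$ and $r_0$, with the cases merely packaged a little differently. Your only genuine streamlining is the case $t\ge1$, where you note that the upper bound $|y_1|-|x_1|\le |F(x)-F(y)|+2C(L,\La)$ requires no order information, so Lemma~\ref{lem-order-preserv} (which the paper invokes with the roles of $\xi$ and $\chi$ swapped in its Case~B) is needed only for $t<1$; this is valid and slightly cleaner bookkeeping rather than a different proof.
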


\begin{remark} \label{rmk-sharpness-L-alp}
Suppose that each vertex 
in $X$ and $Y$ has 
at least two children.
Let $f: \bdy X \to \bdy Y$ be an $\eta$-quasisymmetric map
with $\eta$ as in \eqref{eq-eta}.
If we first extend it to a rough quasiisometry $F: X \to Y$ using
Theorem~\ref{thm-qs2roughqiso}
and then apply Theorem~\ref{rough-qs} we get $f$ back
with the same exponents $\alp_1$ and $\alp_2$.
This shows that the formulas for $\alp_1$ and $\alp_2$ in 
Theorem~\ref{rough-qs} as well as the formulas
for $L_1$ and $L_2$ in Theorem~\ref{thm-qs2roughqiso} 
are all sharp.

Similarly, 
if $F:X\to Y$ is an $(L,\Lambda)$-rough quasiisometry, 
then
Theorem~\ref{rough-qs} gives us a quasisymmetry $f:\bdry X\to\bdry Y$,
which in turn, by Theorem~\ref{thm-qs2roughqiso},
 induces an $(L,\Lambda')$-rough quasiisometry $G:X\to Y$ 
(called $F$ in Section~\ref{sect-ext-qs})
with a better behavior than the 
original map $F$.
By construction, $G(0_X)=0_Y$ and $G$ is order-preserving, 
i.e.\ $G(x)\le G(y)$ whenever $x\le y$.
Moreover, for every $x\in X$ we have $|F(x)-G(x)|\le \tau$
for some $\tau$.
To see this, let $x\in X$ be arbitrary and let $\z,\chi\in\bdry X$ 
be such that $\z,\chi > x$ and
$f(\z)$ and $f(\chi)$ are descendants of two distinct children of 
$G(x)$, which is possible by the construction of $G$.
Let $z \ge x$ be the largest common ancestor of $\z$ and $\chi$.
Since $x$ has at least two children, there exists $\xi\in\bdry X$ such that $x$ 
is the largest common ancestor of $\z$ and $\xi$.
Because $\xi > x$ and $G$ is order-preserving (and induces $f$),
we get
that $f(\xi)>G(x)$. 
Therefore $d_Y(f(\z),f(\chi))$ equals either $d_Y(f(\z),f(\xi))$ or $d_Y(f(\chi),f(\xi))$.
In the first case we have, as $f$ is an $\eta$-quasisymmetry, that
\[
1 = \frac{d_Y(f(\z),f(\chi))}{d_Y(f(\z),f(\xi))} \le \eta\biggl( \frac{d_X(\z,\chi)}{d_X(\z,\xi)} \biggr)
= \eta (e^{\eps_X(|x|-|z|)}),
\]
which yields $|x-z|=|z|-|x|\le \tau_0:=(\log\eta^{-1}(1))/\eps_X$.
In the second case, a similar argument with the roles of $\z$ and $\chi$ interchanged gives 
$|x-z|\le\tau_0$.
Thus, in either case we have $|F(x)-F(z)|\le L\tau_0+\La$. Together with the estimate
$|F(z)-G(x)|\le C(L,\La)$ of Lemma~\ref{lem-dist-F(x)-y}, this gives
$|F(x)-G(x)|\le \tau$.
\end{remark}

Theorem~\ref{rough-qs}, for more general Gromov hyperbolic spaces, seems to have been stated 
in Bourdon--Pajot~\cite{BP00}, where the credit for it 
is given to Gromov~\cite{Gr}. However, we were not able to find this result in~\cite{Gr},
and so we give a self-contained proof here.
Our proof of Theorem~\ref{rough-qs} 
uses tools inspired by the proof of Kapovich~\cite[Theorem~3.47]{Kap}.
Similar ideas can be found in Jeffers~\cite{Jeff} for a result
characterizing isometries of $\mathbb{H}^n$.

\begin{proof}[Proof of Theorem~\ref{rough-qs}]
Let $\zeta,\xi,\chi\in\partial X$ be such that $\chi\ne\zeta\ne \xi$, and let
 \[
   t=\frac{d_X(\zeta,\xi)}{d_X(\zeta,\chi)}.
 \]
Assume first that $d_X(\z,\xi)\le r_0$ and 
$d_X(\z,\chi)\le r_0$, where $r_0$ is as in Lemma~\ref{lem-order-preserv}.
Adopting the notation from  Lemma~\ref{lem-order-preserv} we let
$x,y\in X$ and $x_1,y_1\in Y$ be the largest common ancestors of the pairs
$\z$ and $\xi$, $\z$ and $\chi$,
$f(\z)$ and $f(\xi)$, and $f(\z)$ and $f(\chi)$, respectively.
By Lemma~\ref{lem-dist-F(x)-y} and the rough quasiisometry of $F$,
\begin{align}   \label{eq-comp-x1-y1-ge}
   |x_1-y_1| &\ge |F(x)-F(y)|-|F(x)-x_1|-|F(y)-y_1| \nonumber\\
&\ge {L_1} |x-y|-\La -2C(L,\La),
\end{align}
where $L=\max\{1/L_1,L_2\}$,
and similarly,
\begin{equation}   \label{eq-comp-x1-y1-le}
   |x_1-y_1|
\le |F(x)-F(y)|+|F(x)-x_1|+|F(y)-y_1|
\le {L_2}|x-y|+\Lambda+2C(L,\Lambda).
\end{equation}
We have $t=e^{-\eps_X(|x|-|y|)}$.
With $t_0=e^{-\eps_X s_0}<1$ and $t_1=1/t_0=e^{\eps_X s_0}>1$, where
$s_0=L(3C(L,\La)+2\La+L)$ is as in Lemma~\ref{lem-order-preserv},
we consider three cases.

\medskip

\noindent \emph{Case}~A. \emph{$0<t\le t_0$.}
In this case $x > y$ and
 $|x-y|\ge s_0$.
So by Lemma~\ref{lem-order-preserv}, $x_1 \ge y_1$.
Thus by \eqref{eq-comp-x1-y1-ge},
\begin{align*}
\frac{d_Y(f(\z),f(\xi))}{d_Y(f(\z),f(\chi))}&= e^{-\eps_Y(|x_1|-|y_1|)}
     =  e^{-\eps_Y|x_1-y_1|}
     \simle e^{-\eps_Y {L_1}|x-y|} = t^{ {L_1} \eps_Y/\eps_X}.
\end{align*}

\medskip

\noindent \emph{Case}~B. \emph{$t\ge t_1$.}
In this case $x < y$ and $|x-y|\ge s_0$.
So by Lemma~\ref{lem-order-preserv},
applied with the roles of $\xi$ and $\chi$ swapped,
we see that $x_1 \le y_1$. Thus by \eqref{eq-comp-x1-y1-le},
\begin{align*}
\frac{d_Y(f(\z),f(\xi))}{d_Y(f(\z),f(\chi))}&= e^{-\eps_Y(|x_1|-|y_1|)}
     = e^{\eps_Y|x_1-y_1|}\simle e^{\eps_Y  {L_2}|x-y|} = t^{  {L_2}\eps_Y/\eps_X}.
\end{align*}

\noindent \emph{Case}~C. \emph{$t_0\le t\le t_1$.}
In this case  $|x-y|\le s_0$ and
thus  by~\eqref{eq-comp-x1-y1-le},
$
  |x_1-y_1| \le  L_2 s_0 + \La+2C(L,\La),
$
from which it follows that
\[
\frac{d_Y(f(\z),f(\xi))}{d_Y(f(\z),f(\chi))}=e^{-\eps_Y(|x_1|-|y_1|)}
     \le e^{\eps_Y|x_1-y_1|} \simle 1.
\]
We have thus shown that if  $\z,\xi,\chi\in X$,  $\z\ne\xi\ne\chi$,
$d_X(\z,\xi)\le r_0$ and $d_X(\z,\chi)\le r_0$,
then 
\[ 
\frac{d_Y(f(\z),f(\xi))}{d_Y(f(\z),f(\chi))}
  \le \tilde{\eta} \biggl( \frac{d_X(\zeta,\xi)}{d_X(\zeta,\chi)} \biggr),
\quad \text{where} \quad
\tilde{\eta}(t) \simeq \begin{cases}
 t^{ L_1 \eps_Y/\eps_X}, & \text{if } 0 \le t \le t_0,\\
1, & \text{if } t_0 < t < t_1, \\
 t^{ L_2\eps_Y/\eps_X}, & \text{if } t \ge t_1,
\end{cases}
\] 
and the comparison constant depends only on $L$ and $\La$.

Assume now that $d_X(\z,\xi)\ge r_0$ or $d_X(\z,\chi)\ge r_0$.
We shall again distinguish three cases. Note that $r_0\simeq\diam\bdry X$.
\medskip

\noindent \emph{Case}~1. \emph{$d_X(\z,\xi)\le r_0 \le d_X(\z,\chi) \le\diam\bdry X$.}
Then $t\le1$ and by Lemma~\ref{lem-Holder-ct},
\[ 
\frac{d_Y(f(\z),f(\xi))}{d_Y(f(\z),f(\chi))} 
\le \frac{C_2 d_X(\z,\xi)^{\al_1}}{C_1r_0^{\al_2}}
\simle \biggl( \frac{d_X(\z,\xi)}{d_X(\z,\chi)} \biggr)^{\al_1}
\simle \tilde{\eta} \biggl( \frac{d_X(\z,\xi)}{d_X(\z,\chi)} \biggr).
\] 

\noindent \emph{Case}~2. \emph{$\diam\bdry X \ge d_X(\z,\xi)\ge r_0 \ge d_X(\z,\chi)$.}
Then $t\ge1$ and by Lemma~\ref{lem-Holder-ct},
\[ 
\frac{d_Y(f(\z),f(\xi))}{d_Y(f(\z),f(\chi))} 
\le \frac{\diam\bdry Y}{C_1 d_X(\z,\chi)^{\al_2}}
\simle \biggl( \frac{d_X(\z,\xi)}{d_X(\z,\chi)} \biggr)^{\al_2}
\simle \tilde{\eta} \biggl( \frac{d_X(\z,\xi)}{d_X(\z,\chi)} \biggr).
\] 

\noindent \emph{Case}~3. \emph{$r_0\le d_X(\z,\xi)\le\diam\bdry X$
and $r_0\le d_X(\z,\chi)\le\diam\bdry X$.} Then $t\simeq1$ and
by Lemma~\ref{lem-Holder-ct} again,
\[
\frac{d_Y(f(\z),f(\xi))}{d_Y(f(\z),f(\chi))}
\simeq 1 \simeq \frac{d_X(\z,\xi)}{d_X(\z,\chi)}
\simle \tilde{\eta} \biggl( \frac{d_X(\z,\xi)}{d_X(\z,\chi)} \biggr).
\]
Thus there is $A'$ such that
\[
\frac{d_Y(f(\z),f(\xi))}{d_Y(f(\z),f(\chi))}
  \le A'\tilde{\eta} \biggl( \frac{d_X(\zeta,\xi)}{d_X(\zeta,\chi)} \biggr)
\]
for all $\zeta,\xi,\chi\in\partial X$ with $\chi\ne\zeta\ne \xi$.
A homeomorphism $\eta:[0,\infty)\to[0,\infty)$
of the form \eqref{eq-eta} such that $\eta \ge A'\tilde{\eta}$, shows that $f$ is 
$\eta$-quasisymmetric. 
\end{proof}

We conclude this paper by considering a Mostow-type rigidity 
result. The setting of trees where the edges are hyperbolic regions  
pasted together in a combinatorial way was studied in Bourdon--Pajot~\cite{BP00a}. 
It was shown in~\cite{BP00a} that if there is a rough quasiisometry (called a 
quasiisometry in~\cite{BP00a}) between two such hyperbolic trees (called hyperbolic buildings there), then  
that rough quasiisometry is a bounded distance from an isometry between the trees; in particular, the two 
hyperbolic trees, if rough quasiisometrically equivalent, are necessarily isometric. Their proof needs the boundaries of hyperbolic trees to be connected 
(and in fact to support a Poincar\'e inequality). 
In contrast, in our setting the boundaries 
of the two trees are totally disconnected, and support no Poincar\'e inequality.

There are many other rigidity theorems of various types in geometry.
It is shown in Beardon--Minda~\cite{BM} and Jeffers~\cite{Jeff} that
any bijective self-map of the hyperbolic space or the Euclidean space must be an isometry if it 
preserves complete geodesics. Given that trees are  naturally Gromov hyperbolic, it is natural to 
ask similar questions in our setting. We show below that if an injective and almost surjective
map between trees maps geodesics into geodesics, then it is an isometry.
As Example~\ref{Example} below shows, in general we have no rigidity theorem of the
Bourdon--Pajot type for rough quasiisometries.

\begin{prop}\label{prop-dist-preserve}
Let $X$ and $Y$ be two rooted trees such that each vertex has  at least two children.
Assume that $G:X\to Y$ 
satisfies the following assumptions\/\textup{:}
\begin{enumerate}
\item \label{G-inj}
$G$ is injective\/\textup{;}
\item \label{G-geod}
$G$ maps geodesics into geodesics\/\textup{;}
\item \label{G-density}
for each $y \in Y$ there is $x \in X$ such that $G(x) > y$\textup{;}
\item \label{G-0}
$G(0_X)=0_Y$ or\/ $0_Y$ has at least three children.
\end{enumerate}
Then
$G$ is an isometry\/ {\rm(}with respect to\/ $|\cdot-\cdot|$\/{\rm)}. 
\end{prop}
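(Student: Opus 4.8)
The plan is to show that $G$ preserves adjacency in both directions and sends the root to the root (up to reducing to that case), since an adjacency-preserving bijection between connected graphs is a graph isomorphism, hence an isometry for the edge-counting metric $|\cdot-\cdot|$.

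\textbf{Step 1: $G$ maps neighbors to neighbors.} Let $x\in X$ and let $c$ be a child of $x$. Pick two distinct children $c_1,c_2$ of $c$ (possible since every vertex has at least two children). Then the geodesic $[c_1,c_2]$ passes through $c$, and the geodesic from $c_1$ through $x$ to any point below a different child of $x$ is a geodesic containing the path $c_1,c,x$. Applying \ref{G-geod} to longer and longer geodesics of the form $[\xi,\omega]$ with $\xi$ a descendant of $c_1$ and $\omega$ a descendant of $x$ avoiding $c$, we learn that $G$ maps the vertex sequence $\dots,c_1,c,x,\dots$ to a geodesic in $Y$, so $G(c_1),G(c),G(x)$ lie on a common geodesic in that order. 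In particular $G(c)$ lies strictly between $G(c_1)$ and $G(x)$ on a geodesic, and similarly using $c_2$. Combining, $G(x)$ and $G(c)$ are joined by a geodesic segment which contains no other image point of the fiber $[x,c]$'s endpoints' neighborhoods; the point is to bound $|G(x)-G(c)|$. To get that the distance is exactly $1$: consider any geodesic $\gamma$ in $X$ of length $n$ through the edge $[x,c]$; its image is a geodesic, and by injectivity \ref{G-inj} the $n+1$ vertices of $\gamma$ map to $n+1$ distinct vertices lying on a geodesic of length $|G(\gamma\text{-endpoints})|$. Since a geodesic of length $\ell$ in a tree has exactly $\ell+1$ vertices, injectivity forces the image geodesic to have length $\geq n$, i.e. $G$ is non-contracting along geodesics. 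Conversely I need an upper bound of $1$ on $|G(x)-G(c)|$; this is the delicate part and is handled in Step 2.

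\textbf{Step 2: $G$ does not expand adjacent vertices, using the density hypothesis.} Suppose $|G(x)-G(c)|\geq 2$ for some edge $[x,c]$ with $c$ a child of $x$. Let $w$ be the vertex on $[G(x),G(c)]$ adjacent to $G(x)$, so $G(x)<w\le G(c)$ if $G(x)<G(c)$ (orientation to be checked: since $G$ maps the infinite geodesic from $0_X$ through $x$ through $c$ to an infinite geodesic, and that image must go to $\partial Y$, one checks $G$ is order-compatible along each ray once $G(0_X)=0_Y$). Now $w$ has another child $w'\neq$ the one pointing to $G(c)$. By density \ref{G-density} applied to $w'$, there is $x'\in X$ with $G(x')>w'$, hence $G(x')$ is not comparable to $G(c)$ but lies below $w$, so the geodesic $[G(c),G(x')]$ passes through $w$ and $G(x)$ is \emph{not} on it. On the other hand, in $X$ the geodesic $[c,x']$ passes through $x$ (since $x'$ cannot be a descendant of $c$: if it were, $G(x')>G(c)>w'$ would contradict $G(x')>w'$ with $w'$ incomparable to $G(c)$). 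So $G$ maps the geodesic $[c,x']\ni x$ to a geodesic, which by \ref{G-geod} must contain $G(x)$ between $G(c)$ and $G(x')$ — contradicting that $G(x)\notin[G(c),G(x')]$. Hence $|G(x)-G(c)|=1$. One must carefully set up the orientation bookkeeping (which is where hypothesis \ref{G-0} enters: if $G(0_X)\neq 0_Y$, three children at $0_Y$ give enough room to still run the incomparability argument at the root), but this is routine once the picture is fixed.

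\textbf{Step 3: conclude.} By Steps 1--2, $G$ sends each edge of $X$ to an edge of $Y$, so $G$ is a graph homomorphism; being injective \ref{G-inj} it is an embedding of the connected graph $X$ into the tree $Y$, and its image is a connected subtree. Density \ref{G-density} forces the image to meet every vertex of $Y$ (every $y$ lies below some $G(x)$, and then the geodesic from $G(0_X)$ — which equals $0_Y$ after the reduction — to $G(x)$ passes through $y$ and is the image of a geodesic in $X$, so $y\in G(X)$). Hence $G$ is a bijective graph homomorphism between trees, i.e.\ a graph isomorphism, and therefore an isometry for $|\cdot-\cdot|$. The main obstacle is Step 2: ruling out expansion of edges, which genuinely needs the combination of injectivity, the geodesic-preservation property, and the density hypothesis, together with a correct orientation/root analysis invoking \ref{G-0}.
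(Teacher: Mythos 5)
There is a genuine gap, and it sits exactly where you locate the difficulty. Hypothesis \ref{G-geod} only says that the images of three points lying on a common geodesic of $X$ again lie on some common geodesic of $Y$; it gives no control on \emph{betweenness} or on the ancestor order. Yet your Step 2 starts from the configuration ``$G(x)<w\le G(c)$'', i.e.\ it assumes that $G(c)$ is a descendant of $G(x)$ and that the vertex $w$ adjacent to $G(x)$ on $[G(x),G(c)]$ is a child of $G(x)$; and the sub-argument that ``$x'$ cannot be a descendant of $c$'' invokes $x'>c\Rightarrow G(x')>G(c)$, which is precisely the order-preservation you are in the middle of proving, so it is circular. (That particular sub-claim is in fact unnecessary: whether or not $x'\ge c$, the three points $x,c,x'$ always lie on a common geodesic of $X$, and in your favourable configuration no geodesic line of $Y$ can contain $G(x)$, $G(c)>w''$ and $G(x')>w'$, so the tripod contradiction goes through.) The real problem is the deferred ``orientation bookkeeping'': nothing you have established rules out that $G(c)$ is an ancestor of $G(x)$, or incomparable to it, and the remark that the image of a ray ``must go to $\partial Y$'' does not follow from \ref{G-geod} (the image of a ray is only an infinite subset of a geodesic line, with no monotonicity). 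The same applies to the root case \ref{G-0}, which you do not actually treat. Similarly, in Step 1 the assertion that $G(c_1),G(c),G(x)$ lie on a geodesic ``in that order'', and the non-contraction claim (the images of the endpoints of a geodesic need not be the extreme points among the images), are unjustified — though neither is needed for your plan.

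This missing orientation information is the substance of the paper's proof, which is organized precisely to supply it: one first shows (using \ref{G-density} and the branching of $Y$) that the children of $0_X$ map bijectively onto the children of $0_Y$, and then proves by induction on the level that each $G(x)$ is a child of $G(\hat{x})$, eliminating the two bad cases ($G(x)>G(\hat{x})$ but not a child; $G(x)\not>G(\hat{x})$) by density-built tripod configurations whose source points are collinear in $X$ while their images cannot lie on any geodesic of $Y$; the induction hypothesis (the ancestors of $G(\hat{x})$ are exactly the images of the ancestors of $\hat{x}$) is what legitimizes the case analysis your sketch presupposes, and \ref{G-0} enters via rerooting $Y$ at $G(0_X)$ when $G(0_X)\ne 0_Y$. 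Your Step 3 is fine once edges are known to map to edges (an injective edge-preserving map between trees sends vertex geodesics to paths, hence to geodesics of the same length, and density plus $|G(x)|=|x|$ gives surjectivity), so to complete the argument you would essentially have to reproduce this root-based induction rather than treat it as routine.
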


Note that Condition~\ref{G-density} is satisfied by a rough quasiisometry 
$F:X\to Y$
because of the density property $\dist_H(Y,F(X))<\infty$.

Here we say that a mapping \emph{maps geodesics into geodesics} if  every geodesic line 
(that is, a geodesic connecting 
two boundary points) in $X$ gets mapped into a geodesic line in 
$Y$ (but not necessarily onto).
Note that each geodesic path [x,y] in
a tree lies inside a geodesic 
line, but we do not require its image 
to lie in the geodesic $[G(x),G(y)]$.

As a consequence of the above proposition, if the map $G$ mentioned in Remark~\ref{rmk-sharpness-L-alp}
is injective and maps geodesics into geodesics, then it must be an isometry and the boundary map $f$ must be a snowflake 
map. 

In the proof below we strongly use the 
density assumption \ref{G-density}.
That \ref{G-density} cannot be dropped from the assumptions
of Proposition~\ref{prop-dist-preserve} is obvious,
and it is easy to construct an example showing that \ref{G-0} cannot
be dropped either.
The following example shows that 
neither \ref{G-inj} nor \ref{G-geod} can be dropped.

\begin{example}\label{Example}
Let $X$ be a binary rooted tree and $Y$ be a ternary rooted tree. 
We inductively
map $X$ to $Y$ 
as follows: The root $0_X$ is mapped to $0_Y$, and with $x_1$ and $x_2$ 
being the children of $0_X$, we set
$G(x_1)=y_1$ and $G(x_2)=0_Y$, where $y_1$, $y_2$ and $y_3$ 
are the three children of $0_Y$. With $x_{2,1}$ and $x_{2,2}$ being the 
children of $x_2$, we set $G(x_{2,1})=y_2$ and $G(x_{2,2})=y_3$.  
Repeating this procedure  for the regular subtrees rooted at  
$x_1$,  $x_{2,1}$, and $x_{2,2}$, we extend $G$ to the next generation. 
Iterating this process, we obtain a rough quasiisometry $G:X\to Y$ 
satisfying 
\[
\tfrac12 |x-y| - 2 \le |G(x)-G(y)| \le |x-y| \quad
\text{for } x,y\in X.
\]
(The worst case being when both $x$ and $y$ are mapped to the same vertices
as their parents, and the same holds for every other of their ancestors.)
Observe that $G$ is surjective and maps geodesics into geodesics.
However, it is not injective, and 
there is no isometry between $\partial X$ and $\partial Y$, nor
between $X$ and $Y$. Thus, the conclusion of 
Proposition~\ref{prop-dist-preserve} fails here,
showing that the injectivity assumption cannot
be dropped. 

However, by Theorem~\ref{rough-qs}, $G$ still induces a quasisymmetry
between the boundaries $\bdry X$ and $\bdry Y$ with $\eta$ as in~\eqref{eq-eta},
$\al_1=\eps_Y/2\eps_X$ and $\al_2=\eps_Y/\eps_X$.
Note that if we equip $\bdry X$ and $\bdry Y$ with the visual metrics
given by~\eqref{bdy-metric} with $\eps_X=\log3$ and $\eps_Y=\log2$, then
$\bdry X$ can be identified with the usual ternary Cantor dust, while 
$\bdry Y$ corresponds to a totally disconnected variant of the 
Sierpi\'nski gasket, see Example~\ref{ex-fractal}, and that in this case
$0<\al_1,\al_2<1$.

Furthermore, for $y\in Y$ let $H(y)$ be the smallest (with respect to $\le$)
$x\in X$ such that $G(x)=y$.
Then $H:Y\to X$ is an order-preserving rough quasiisometry with $H(0_Y)=0_X$
which fulfills all the requirements in 
Proposition~\ref{prop-dist-preserve} but for
\ref{G-geod}
since the geodesic
from $y_2$ to $y_3$ is mapped to $0_Y$, $x_{2,1}$ and $x_{2,2}$ which are not
contained in any common geodesic.
As $H$ is not an isometry, this shows that 
\ref{G-geod} 
cannot be dropped from Proposition~\ref{prop-dist-preserve}.
\end{example}

\begin{proof}[Proof of Proposition~\ref{prop-dist-preserve}.]
We use $\xhat$ to denote the parent of a vertex $x$.  
We seek to show that the children of $0_X$ map bijectively 
to the children of $0_Y$.
Assume first that $G(0_X)=0_Y$ and let $x$ be a child of $0_X$.
If $\widehat{G(x)}\ne 0_Y$, then
by the density 
assumption \ref{G-density}
and the fact that each vertex of $Y$ has at least two children,  
there exists $t\in X$ such that $G(t)>\widehat{G(x)}$ 
and $G(t)$ is not in the geodesic ray containing $0_Y$ and $G(x)$.
In other words, $G(x)$, $G(t)$ and $0_Y$ cannot belong to any geodesic in $Y$.
On the other hand, it is always possible to find a geodesic in $X$ containing
$x$, $t$ and $0_X$. 
This violates 
\ref{G-geod},
and hence $G(x)$ must be 
a child of $0_Y$.
Conversely, if $y$ is a child of $0_Y$, then there exists $u\in X$ such that 
$G(u)>y$, by the density assumption \ref{G-density} again.
Let $a$ be the child of $0_X$ which is an ancestor of $u$, 
and $a'$ be another child of $0_X$.
By the above, $G(a)$ and $G(a')$ are children of $0_Y$.
If $G(a)\ne y\ne G(a')$, then $G(u)$, $G(a)$ and $G(a')$ do not belong to any 
geodesic in $Y$, but it is always possible to find a geodesic in $X$ containing 
$u$, $a$ and $a'$, which is a contradiction.
Thus $y=G(a)$ or $y=G(a')$.
Hence the children of $0_X$ map bijectively to the children of $0_Y$.

Next, we proceed by induction. 
Assume that for all $z\in X$ with 
$1 \le |z|\le n$, we have that 
$G(z)$ is a child of $G(\zhat)$,
and let $x\in X$ be arbitrary with $|x|=n+1\ge2$.
We distinguish two cases:

\medskip
\emph{Case} 1. If $G(x)>G(\xhat)$ is not a child of $G(\xhat)$ 
then as above, by the density 
assumption \ref{G-density},
we can find 
$v\in X$ such that $G(x)$ and $G(v)$ are not comparable 
with respect to $\le$ and
their largest common ancestor is the parent of $G(x)$.
Then $G(x)$, $G(\xhat)$ and $G(v)$ do not belong to any
geodesic in $Y$, but $x$, $\xhat$ and $v$ belong to a geodesic in $X$.
This is a contradiction.

\medskip

\emph{Case} 2. If $G(x)\not>G(\xhat)$ then, as the ancestors
of $G(\xhat)$ are exactly the images of the ancestors of $\xhat$
(by the induction hypothesis),
we see that $G(x)$ and $G(\xhat)$ are not comparable 
with respect to the partial ordering.
Let $z$ be their largest common ancestor.
If $z\ne0_Y$ then the image of the geodesic from $0_X$ to $x$ 
must contain $G(x)$, $G(\xhat)$ and $0_Y$, which is impossible.
On the other hand, if $z=0_Y$, then by the above, $G(x)$ is not a child
of $0_Y$, i.e\ $z$ is not the parent of $G(x)$.
Again by the density assumption \ref{G-density}, 
we can find $w\in X$ such that
$G(w)$ and $G(x)$ are not comparable 
with respect to the partial ordering, but their largest common
ancestor is the parent of $G(x)$.
Thus, the vertices $G(x)$, $G(\xhat)$ and $G(w)$ do not belong 
to any geodesic in $Y$ but there exists a geodesic in $X$ containing
$x$, $\xhat$ and $w$.
This final contradiction shows that $G(x)$ must be a child of $G(\xhat)$.
By induction, this holds for all $x\in X$ and hence  $|G(x)-G(y)| = |x-y|$
for all $x,y\in X$.

\medskip

To show that $G$ is surjective, let $y'\in Y$ be arbitrary.
By the density assumption \ref{G-density}, 
there
exists $x'\in X$ such that $G(x')>y'$.
Since $|G(x')|=|x'|$, we can find $t'\le x'$ such that $|y'|=|t'|$.
By the above, we have $G(t')\le G(x')$ and $|G(t')|=|t'|=|y'|$, showing that
$G(t')=y'$.
Thus $G$ is surjective, and hence an isometry.

Finally, if $G(0_X) \ne 0_Y$ then let
$Y'$ be the tree $Y$ rerooted at $G(0_X)$.
Then $Y$ and $Y'$ are isometric with respect to $|\cdot-\cdot|$.
Let $G': X \to Y'$ be the map induced by $G$.
Note that $G'(0_X)=0_{Y'}$ and that
each vertex in $Y'$ has at least two children.
Thus we
can apply the above result to $G'$ to show that $G'$ is an isometry,
which is equivalent to $G$ being an isometry.
\end{proof}

\end{document}